\newlength\myindent 
\newcommand{\Uad}{\mathcal{U}_{\textrm{ad}}}
\newtheorem{lemma}{Lemma}
\newtheorem{theorem}{Theorem}
\newtheorem{remark}{Remark}
\newcommand{\mb}[1]{\boldsymbol{#1}}
\newcommand{\p}{\partial}
\newcommand{\numflux}[1]{{#1}^{\text{NF}}}
\newcommand{\highorderflux}[1]{{#1}^{\text{H}}}
\newcommand{\iniguessflux}[1]{{#1}^{\text{IG}}}
\newcommand{\new}[1]{{#1}^{\text{new}}}
\newcommand{\old}[1]{{#1}^{\text{old}}}
\definecolor{com}{RGB}{0, 100, 100}
\definecolor{spl}{RGB}{0, 140, 10}
\definecolor{mac}{RGB}{150,0,211}
\definecolor{prt}{RGB}{200, 0, 50}
\definecolor{mth}{RGB}{100, 100, 100}
\title{Constraints Preserving Lax-Wendroff Flux Reconstruction for Relativistic Hydrodynamics with General Equations of State}
\author{
	Sujoy Basak \orcidlink{0009-0009-0612-6361}\thanks{Corresponding author}\\
	Department of Mathematics\\
	Indian Institute of Technology Delhi\\
	New Delhi -- 110016, India\\
	Indian Institute of Technology Delhi-Abu Dhabi,\\ Khalifa City B, Abu Dhabi, UAE\\
	\texttt{sujoybasak42@gmail.com} \\
	\And
	Arpit~Babbar \orcidlink{0000-0002-9453-370X} \\
	Institute of Mathematics\\
	Johannes Gutenberg University Mainz\\
	Staudingerweg 9, 55122 Mainz, Germany\\
	\texttt{ababbar@uni-mainz.de} \\
	\And
	Harish Kumar \orcidlink{0000-0003-4746-2336}\\
	Department of Mathematics\\
	Indian Institute of Technology Delhi\\
	New Delhi -- 110016, India\\
	Indian Institute of Technology Delhi-Abu Dhabi,\\ Khalifa City B, Abu Dhabi, UAE\\
	\texttt{hkumar@iitd.ac.in} \\
	\And
	Praveen Chandrashekar \orcidlink{0000-0003-1903-4107}\\
	Centre for Applicable Mathematics\\
	Tata Institute of Fundamental Research\\
	Bangalore -- 560065, India\\
	\texttt{praveen@math.tifrbng.res.in}
}
\begin{document}
	\maketitle
	\begin{abstract}
		In the realm of relativistic astrophysics, the ideal equation of state with a constant adiabatic index provides a poor approximation due to its inconsistency with relativistic kinetic theory. However, it is a common practice to use it for relativistic fluid flow equations due to its simplicity. Here we develop a high-order Lax-Wendroff flux reconstruction method on Cartesian grids for solving relativistic hydrodynamics equations with several general equations of state available in the literature. We also study the conversion from conservative to primitive variables, which depends on the equation of state in use, and provide an alternative method of conversion when the existing approach does not succeed. For the admissibility of the solution, we blend the high-order method with a low-order method on sub-cells and prove its physical admissible property in the case of all the equations of state used here. Lastly, we validate the scheme by several test cases having strong discontinuities, large Lorentz factor, and low density or pressure in one and two dimensions.
	\end{abstract}
	\keywords{Relativistic hydrodynamics \and Equation of state \and Lax-Wendroff flux reconstruction \and Constraints preservation}
	\section{Introduction} 
	The equations of relativistic hydrodynamics (RHD hereafter) play a crucial role in studying various astrophysical phenomena like astrophysical jets, black hole formation, gamma-ray bursts, X-ray binaries~\cite{begelman1984theory,bottcher2012relativistic,mirabel1999sources,zensus1997parsec}, etc. However, an analytical study of these equations is infeasible for many applications due to the presence of strong non-linearities, particularly due to the Lorentz factor arising from relativistic effects. Consequently, numerical simulation becomes the primary way to study these equations. A numerical study of these equations is also not trivial because of the presence of strong shock waves in the solution. In the literature, a wide variety of numerical methods have been developed to solve the RHD equations over the years. Starting with the artificial viscosity technique to capture the shock wave~\cite{wilson1972numerical} given by Wilson, there are other shock capturing methods also present in the literature~\cite{marti1991numerical,marti1994analytical,BALSARA1994,dai1997iterative,ibanez1999riemann}. Various high-order methods like essentially non-oscillatory and weighted essentially non-oscillatory methods~\cite{dolezal1995relativistic,del2002efficient,tchekhovskoy2007wham,chen2022physical}, discontinuous Galerkin method~\cite{radice2011discontinuous}, entropy stable discontinuous Galerkin method~\cite{biswas2022entropy}, approximate piece-wise parabolic reconstruction method~\cite{marti1996extension,aloy1999genesis,mignone2005piecewise}, and other advanced numerical approaches~\cite{wu2014finite,wu2014third,wu2021minimum,xu2024high} have also been developed over the years to solve the RHD equations. For the numerical methods related to other relativistic models, one can also refer to~\cite{bhoriya2023entropy,bhoriya2023high,agnihotri2025second}.
	
	The physically meaningful solution of the RHD equations should satisfy certain {\em constraints} like positivity of density and pressure, and an upper bound on material speeds; this type of solution is also called a {\em physically admissible solution}. However, maintaining the admissibility constraints of the solution is challenging for the numerical schemes, especially at high-orders of accuracy. One remedy would be to use highly diffusive schemes~\cite{zhang2006ram,hughes2002three}, but this approach is time consuming because of the need to use fine spatial grids and small time steps, emphasizing the importance of developing high-order constraints preserving schemes. There has been a significant advancement in this approach in the last decade~\cite{wu2015high,wu2016physical,qin2016bound,wu2017design}. One can also refer to~\cite{wu2023geometric}, where the authors have presented a general framework for transforming the non-linear constraints to linear constraints by introducing some auxiliary variables.
	
	To close the system of RHD equations, an equation of state is needed in addition to the conservation laws. Most of the numerical methods available in the literature use the ideal equation of state for this purpose, but it gives a poor approximation in relativistic cases. This non-realistic equation of state is one of the major concerns in computational astrophysics. Here we will consider general equations of state available in the literature~\cite{mathews1971hydromagnetic,mignone2005piecewise,sokolov2001simple,ryu2006equation} to close the RHD equations. The challenge in developing constraints preserving numerical methods with general equations of state lies in the highly non-linear coupling coming from the effect of general equations of state and the Lorentz factor. More specifically, the difficulty comes from the absence of explicit expressions of primitive variables in terms of laboratory variables, which necessitates solving a non-linear equation for each equation of state.
	
	In~\cite{wu2016physical}, the authors have developed a physical constraints preserving central discontinuous Galerkin method to solve the RHD equations with a general equation of state using Runge-Kutta methods or multi-step methods~\cite{gottlieb2009high} for the time update, which is also a common practice. However, each internal time stage of the Runge-Kutta method involves sharing data between the parallel nodes, and the multi-step methods need information from multiple previous time levels for higher order of accuracy, making these methods inefficient in memory and time constrained environments.
	
	Here, as an alternative, we use the Lax-Wendroff method, which needs only a single step for arbitrary order of accuracy, in contrast to the Runge-Kutta methods. The roots of this method lie in the works of Lax and Wendroff in~\cite{LW1}, where it was introduced as a second-order finite difference method. It was later combined with other finite element frameworks like discontinuous Galerkin~\cite{LWDG2} and flux reconstruction~\cite{lou2020flux}, giving methods with an arbitrary high-order of accuracy. The flux reconstruction method was initially proposed in~\cite{huynh2007flux}, where a continuous approximation of the flux is implemented with the help of a correction function and the numerical fluxes at the cell boundaries. The flux reconstruction method is also suitable for use with the modern vector processors in parallel computing~\cite{vincent2016towards,lopez2014verification,vandenhoeck2019implicit}. Recently, in~\cite{BABBAR2022111423}, the authors have used the approximate Lax-Wendroff procedure~\cite{zorio2017approximate} to give a Jacobian-free Lax-Wendroff flux reconstruction (LWFR hereafter) method that is computationally efficient compared to the earlier version, which uses the chain rule to find the derivative of the fluxes. Here, our primary aim is to design a high-order constraints preserving LWFR method for the RHD equations with various equations of state available in the literature.
	
	The main contributions of this work can be summarized as below:
	\vspace{-0.5em}
	\begin{itemize}
		\item A new method of conversion from conservative to primitive variables is proposed for the case of the equation of state introduced in~\cite{ryu2006equation}. This plays an important role in the successful simulations of several test cases having a high Lorentz factor.
		\item An additional scaling of the non-admissible flux arguments is introduced in Section~\ref{sec:Numerical scheme}. This is essential for the computation of flux functions, as the expressions for the flux functions have primitive variables, and the conversion from conservative to primitive variables needs the quantities to be in the admissible region.
		\item Blending of the high-order scheme with a constraints preserving low-order scheme is used. This assures admissibility of the solution of resulting blended scheme and controls the Gibbs oscillations. We also rigorously prove the constraints preserving nature of the low-order scheme.
		\item Implementation of the proposed scheme in one and two dimensions is carried out for simulating several numerical test cases to validate its accuracy and effectiveness.
	\end{itemize}
	
	The rest of the paper is organized as follows: In Section~\ref{sec:Governing equations}, we discuss some properties of RHD equations along with equations of state and conversion from conservative to primitive variables. In Section~\ref{sec:Numerical scheme}, we discuss the numerical scheme and its constraints preservation property. Then we validate the scheme numerically in one and two dimensions in Section~\ref{sec: Numerical_simulations} for RHD equations with different equations of state using various numerical test cases having large Lorentz factor, low density or pressure, strong discontinuities, etc. Finally, we conclude our work with a summary in Section~\ref{sec: summary}.
	
	\section{Governing equations}\label{sec:Governing equations}
	The equations of  RHD can be written in terms of energy and momentum conservation equations~\cite{sokolov2001simple},
	\begin{align}
		&\frac{1}{c}\frac{\p T^{00}}{\p t}+\sum_{i=1}^d\frac{\p T^{0i}}{\p x_i}=0, \label{eq: energy_momentum con}\\
		&\frac{1}{c}\frac{\p T^{0j}}{\p t}+\sum_{i=1}^d\frac{\p T^{ji}}{\p x_i}=0, \ \ \ \forall\ j=1,\dots,d, \label{eq: momentum con}
	\end{align}
	where,
	\begin{align}
		T^{00} = \frac{w}{1-|\mb{v}|^2/c^2} + p,\quad T^{0j} = \frac{w v_j}{c(1-|\mb{v}|^2/c^2)},\quad T^{ij}=\frac{w v_j v_i}{c^2(1-|\mb{v}|^2/c^2)} + p\delta_{ji},
	\end{align}
	with $w$ being the enthalpy density of the fluid in the local rest frame, $\mb{v}=(v_1, v_2,\dots,v_p)^\top$ is the velocity vector, $p$ is the pressure, and $c$ is the speed of light. One more conservation equation is given by~\cite{sokolov2001simple},
	\begin{align}\label{eq: density_con}
		\frac{\p}{\p t}\frac{\rho}{\sqrt{1-|\mb{v}|^2/c^2}} + \sum_{i=1}^d \frac{\p}{\p x_i}\frac{\rho v_i}{c \sqrt{1-|\mb{v}|^2/c^2}} = 0,
	\end{align}
	where $\rho$ is the fluid density in the local rest frame.  However, in the ultra-relativistic limit, the rest mass density in equation~\eqref{eq: density_con} makes no sense, but this difficulty can be overcome by considering a correct equation of state~\cite{sokolov2001simple}. 
	
	In the rest of this paper, we take the velocity of light to be unity, $c=1$, which is possible by changing the units of time and spatial coordinates.  Now introducing,
	\begin{align*}
		&\textrm{Lorentz factor}, \quad \Gamma = \frac{1}{\sqrt{1-|\mb{v}|^2}},\\ &\textrm{relativistic density}, \quad D = \rho \Gamma,\\ &\textrm{enthalpy}, \quad h=\frac{w}{\rho},
	\end{align*}
	the equations~\eqref{eq: energy_momentum con},~\eqref{eq: momentum con}~and~\eqref{eq: density_con} can be re-written in the following form~\cite{sokolov2001simple},
	\begin{align}\label{eq: RHD_system}
		\frac{\p \mb{u}}{\p t}+\sum_{i=1}^d\frac{\p \mb{f}_i(\mb{u})}{\p x_i}=0,
	\end{align}
	with $\mb{u}$ as the vector of conservative variables, and $\mb{f}_i$ as the flux vector in $x_i$ direction, given by,
	\begin{align}
		\mb{u}&=(D, m_1,m_2,\dots,m_{d}, E)^\top,\\
		\mb{f}_i(\mb{u})&=(Dv_i, m_1v_i+p\delta_{1,i}, m_2v_i+p\delta_{2,i},\dots, m_dv_i+p\delta_{d,i}, m_i)^\top,\mspace{4mu} \forall i=1,\dots, d.\label{eq:RHD.flux}
	\end{align}
	Here,
	\begin{align*}
		&\textrm{energy density}, \quad E={\rho h\Gamma^2}-p, \\
		&\textrm{and the momentum density}, \quad \mb{m}=(m_1,m_2,\dots,m_{d})^\top = \rho h \mb{v}\Gamma^2.
	\end{align*}
	To close the system~\eqref{eq: RHD_system}, we need to express the primitive variables $\rho$, $\mb{v}$, $p$ in terms of the conservative variables $\mb{u}$ with the help of an equation of state.
	
	\subsection{Equation of state}
	Without loss of generality, the equation of state can be given as,
	\begin{align}\label{eq: general_EOS}
		h(p,\rho) = 1 + \epsilon(p,\rho) + \frac{p}{\rho},
	\end{align}
	where $\epsilon$ denotes the specific internal energy. The sound speed $c_s$ and the polytropic index $n$ are given by,
	\begin{align}\label{eq:sound.speed}
		c_s^2 = -\frac{\rho}{n h} \frac{\p h}{\p \rho},\qquad n=\rho \frac{\p h}{\p p}-1.
	\end{align}
	For the hyperbolicity of the system~\eqref{eq: RHD_system}, the sound speed should satisfy $0<c_s<1$ and by the relativistic kinetic theory it can be shown that~\cite{wu2016physical,xu2024high}, 
	\begin{align}\label{eq: weaker_taub}
		h(p,\rho) \geq \sqrt{1+\frac{p^2}{\rho^2}}+\frac{p}{\rho}.
	\end{align}
	The most commonly used equation of state in the literature is the {\em ideal equation of state} given by,
	\begin{align}\label{eq: ID_eos}
		h = 1+\frac{\gamma}{\gamma -1}\frac{p}{\rho},
	\end{align}
	where $\gamma \in (1,2]$ is a constant called the specific heat ratio. Here, $\gamma > 2$ gives sound speed $c_s >1$, violating the principle of special relativity. Usually, one considers $\gamma = \frac{5}{3}$ for sub-relativistic flows and $\gamma = \frac{4}{3}$ for ultra-relativistic flows~\cite{wu2016physical}. But, these two values are the rational upper and lower bounds for it and can not be considered as a constant~\cite{sokolov2001simple}. In fact, the equation of state~\eqref{eq: ID_eos} is derived from the non-relativistic thermodynamics, and as mentioned in~\cite{ryu2006equation,wu2016physical}, it is a poor choice for many relativistic flows, primarily for the case of semi-relativistic fluids or two-component fluids.
	
	In the relativistic framework, the correct equation of state for a single-component perfect gas is given by~\cite{MR0088362},
	\begin{align}\label{eq: synge_eos}
		h = \frac{B_3(\rho/p)}{B_2(\rho/p)}.
	\end{align}
	
	Here, $B_2$ and $B_3$ are the modified Bessel functions of the second kind having order two and three, respectively, which makes the implementation of this equation of state expensive~\cite{falle1996upwind}. Hence, there are several general equations of state studied in the literature, which are more accurate than~\eqref{eq: ID_eos} and less complicated than~\eqref{eq: synge_eos}. Here, we will limit our study to the following three such equations of state along with the ideal equation of state~\eqref{eq: ID_eos}.
	
	The first equation of state that we will consider and which gives a better approximation than the ideal equation of state~\eqref{eq: ID_eos} in the relativistic regime, was derived in~\cite{mathews1971hydromagnetic} and was later used in~\cite{mignone2005piecewise},
	\begin{align}\label{eq: TM_eos}
		h = \frac{5p}{2\rho} + \sqrt{\frac{9p^2}{4\rho^2}+1}.
	\end{align}
	The second equation of state can be found in~\cite{sokolov2001simple}, which is given by,
	\begin{align}\label{eq: IP_eos}
		h = \frac{2p}{\rho}+\sqrt{\frac{4p^2}{\rho^2} + 1}.
	\end{align}
	The third equation of state that we will consider is derived in~\cite{ryu2006equation} and shown as a better approximation of~\eqref{eq: synge_eos} than~\eqref{eq: TM_eos}. This can be expressed as,
	\begin{align}\label{eq: RC_eos}
		h = \frac{2(6p^2 + 4p\rho +\rho^2)}{\rho (3p+ 2\rho)}.
	\end{align}
	All these equations of state,~\eqref{eq: ID_eos}-\eqref{eq: RC_eos} satisfy the equation~\eqref{eq: weaker_taub}~\cite{wu2016physical,xu2024high}. Following~\cite{ryu2006equation,xu2024high}, we will denote these equations of state \eqref{eq: ID_eos}, \eqref{eq: TM_eos}, \eqref{eq: IP_eos}, and \eqref{eq: RC_eos} as ID-EOS, TM-EOS, IP-EOS, and RC-EOS, respectively, for the rest of the paper.
	
	From equation~\eqref{eq:sound.speed}, after some calculations, we get the expression for the sound speed $c_s$ for the TM-EOS as,
	\begin{equation}\label{eq:sound.TMEOS}
		c_s^2 = \frac{5p \sqrt{9p^2 +4\rho^2} + 9p^2}{12p\sqrt{9p^2 +4\rho^2} + 36p^2 +6\rho^2},
	\end{equation}
	for the IP-EOS,
	\begin{equation}\label{eq:sound.IPEOS}
		c_s^2 = \frac{2p \sqrt{4p^2 + \rho^2}}{4p \sqrt{4p^2 + \rho^2} + 4p^2 + \rho^2},
	\end{equation}
	for the RC-EOS,
	\begin{equation}\label{eq:sound.RCEOS}
		c_s^2 = \frac{p (3p +2\rho) (18p^2 + 24 p\rho + 5\rho^2)}{3 (6p^2 + 4p\rho + \rho^2) (9p^2 + 12 p\rho + 2\rho^2)},
	\end{equation}
	and for ID-EOS, it is given by,
	\begin{equation}\label{eq:sound.IDEOS}
		c_s^2 = \frac{\gamma p}{h \rho}.
	\end{equation}
	
	\subsection{Admissible set and extraction of primitive variables}\label{sec: admissible set and extraction of primitive}
	The solutions of the system~\eqref{eq: RHD_system} with~\eqref{eq: general_EOS} should satisfy certain constraints to be physically meaningful and belong to the so-called {\em admissible set},
	\begin{align}\label{eq: ad_region_1}
		\Uad &=\{\mb{u}=(D,\mb{m},E)^\top: \rho (\mb{u}) >0, p(\mb{u})>0, \epsilon(\mb{u})>0, 1 -|\mb{v}(\mb{u})|>0\}.
	\end{align}
	It is also important that the solution of the numerical scheme belongs to this set at every step. If the solution goes out of this set, the hyperbolicity of the system will be violated, resulting in possible failure of the computations. 
	
	The admissible set $\Uad$ has constraints involving primitive variables, and the conversion from conservative to primitive variables involves solving a non-linear equation. Hence, the verification of the admissibility of the solution becomes computationally expensive, which needs to be done in every time step for applying the positivity limiter. Moreover, we need the constraints of the admissible set $\rho$, $p$, $\epsilon$, and $1-|v|$ to be concave functions of the conservative variables, but the pressure $p$ in the RHD equations is not concave; it is shown in~\cite{wu2015high} for the ID-EOS. As a remedy, we consider a different characterization of the admissible set, given by
	\begin{align}\label{eq: ad_region_2}
		\Uad' &= \{\mb{u}=(D,\mb{m},E)^\top: D(\mb{u}) = D >0,\ q(\mb{u}):= E-\sqrt{D^2 + |\mb{m}|^2}>0\}.
	\end{align}
	The equivalence of $\Uad$~\eqref{eq: ad_region_1} and $\Uad'$~\eqref{eq: ad_region_2} is proved in~\cite{wu2016physical} with the equations of state~(\ref{eq: ID_eos}, \ref{eq: TM_eos}, \ref{eq: IP_eos}, \ref{eq: RC_eos}). The representation~$\Uad'$ makes use of constraint functions $D(\mb{u})$, $q(\mb{u})$, which can be directly calculated from the conservative variables. Moreover, these constraint functions $D(\mb{u})$ and $q(\mb{u})$ are concave functions of the conservative variables; for a detailed proof, one can refer to~\cite{my_paper}.
	
	Even though the representation~\eqref{eq: ad_region_2} eliminates the need of converting the conservative variables to the primitive variables for checking the admissibility, the conversion is still needed in other stages of execution of the scheme, as can be seen later in Section~\ref{sec:Numerical scheme}. For ID-EOS~\eqref{eq: ID_eos} we follow the method of conversion from~\cite{cai2024provably}. More specifically, they have proposed three methods of conversion from conservative to primitive, and we follow the second method of conversion (Section~2.3. NR-II method in~\cite{cai2024provably}) because of its high accuracy (see Appendix~\ref{sec: appendix_cons2prim_IDEOS}).
	For TM-EOS~\eqref{eq: TM_eos}, we follow the method of conversion from~\cite{ryu2006equation}, more specifically, Section~3.2 in~\cite{ryu2006equation}. For IP-EOS~\eqref{eq: IP_eos} we follow Section~4 in~\cite{sokolov2001simple}. 
	
	The method of conversion for RC-EOS~\eqref{eq: RC_eos}, given in~\cite{ryu2006equation} by iteratively solving a nonlinear equation for $\Gamma$, does not match expectations. Some illustrative cases for the same can be found in the Jupyter Notebook \texttt{con2prim\_RCEOS.ipynb} in~\cite{RHDTenkai}, where we have shown the failure of the iterative method to converge to the correct root. Hence, we propose a new way of conversion using an iterative method on a different non-linear equation, which behaves nicely as explained below.
	
	Let us first assume that the solution $\mb{u}$ is in the admissible region $\Uad'$~\eqref{eq: ad_region_2}. The RC-EOS~\eqref{eq: RC_eos} can be expressed as,
	\[
	p = \rho \left(\frac{3h-8}{24}\pm \frac{\sqrt{(3h+8)^2-96}}{24}\right),
	\]
	and we take,
	\begin{equation}\label{eq: p_RC}
		p = \rho \left(\frac{3h-8}{24} + \frac{\sqrt{(3h+8)^2-96}}{24}\right),
	\end{equation}
	which gives a positive pressure ($p > 0$), since $\rho > 0$ and $h > 1$; the other choice gives a negative pressure in the admissible region. The equation~\eqref{eq: p_RC} can be expressed as,
	\begin{equation}\label{eq:pf.RC}
		p = \rho T(h),
	\end{equation}
	with $T(h)$ being a non-linear function of $h$. Now, taking $\Pi = E + p$, we have,
	\begin{equation}\label{eq:gamma.in.pi}
		\Pi = \rho h \Gamma^2 = Dh\Gamma \quad\implies\quad \Gamma = \frac{\Pi}{Dh},
	\end{equation}
	and 
	\begin{align}\label{eq:h.L.invariant}
		hD = h \rho \Gamma = h \rho \Gamma^2 \sqrt{1-|\mb{v}|^2} = \sqrt{\Pi^2 - |\mb{m}|^2} \qquad \implies \qquad h = \frac{\sqrt{\Pi^2 - |\mb{m}|^2}}{D}.
	\end{align}
	Since
	\begin{align}
		&\Pi - E = p = \frac{D}{\Gamma} T(h)
		= \frac{D^2 h}{\Pi} T(h), \qquad \text{using~(\ref{eq:pf.RC},\ref{eq:gamma.in.pi})}
	\end{align}
	we get an equation for $\Pi$,
	\begin{equation}\label{eq: pi_eq}
		S(\Pi) := \Pi^2 - \Pi E - D^2 h  T(h) = 0.
	\end{equation}
	
	\begin{lemma}
		The function $S(\Pi)$~\eqref{eq: pi_eq} has a unique real root in $(E, \infty)$.
	\end{lemma}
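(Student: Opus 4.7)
The plan is to combine sign analysis at the boundary $\Pi = E$ and the asymptotic behavior as $\Pi \to \infty$ with a monotonicity argument on $(E,\infty)$. I would execute the following steps.

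First, evaluate $S(E)$. Since $\mb{u} \in \Uad'$, the constraint $q(\mb{u}) > 0$ gives $E^2 > D^2 + |\mb{m}|^2$, hence $h(E) = \sqrt{E^2 - |\mb{m}|^2}/D > 1$. By the observation following equation~\eqref{eq: p_RC} (that the chosen branch yields $p > 0$ whenever $\rho > 0$ and $h > 1$), $T(h) > 0$ for $h > 1$, so $S(E) = -D^2 h(E)\,T(h(E)) < 0$.

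Second, determine the limit of $S$ as $\Pi \to \infty$. Since $h(\Pi) = \sqrt{\Pi^2 - |\mb{m}|^2}/D \sim \Pi/D$, an expansion of the explicit form $T(h) = [(3h-8) + \sqrt{9h^2 + 48h - 32}]/24$ gives $T(h) \sim h/4$ for large $h$, so $D^2 h\,T(h) \sim \Pi^2/4$, and consequently $S(\Pi) \sim (3/4)\Pi^2 - \Pi E \to +\infty$.

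Third, establish strict monotonicity. Using $h'(\Pi) = \Pi/(D^2 h)$ one finds
\begin{equation*}
S'(\Pi) = 2\Pi - E - \Pi\,\frac{T(h) + h\,T'(h)}{h}.
\end{equation*}
The main obstacle (and the only nontrivial computation) is the algebraic bound $T(h)/h + T'(h) \leq 1/2$ for all $h \geq 1$. Using the explicit form of $T$, this bound rewrites as $6h - 8 + (18h^2 + 72h - 32)/\sqrt{9h^2 + 48h - 32} \leq 12h$, and after isolating the square root and squaring (both sides are nonnegative for $h \geq 1$) it reduces to a linear inequality in $h$ that is satisfied whenever $h \geq 2/3$, and in particular whenever $h \geq 1$. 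Granted this bound, for any $\Pi > E > 0$ we obtain
\begin{equation*}
S'(\Pi) \geq 2\Pi - E - \Pi/2 = 3\Pi/2 - E > E/2 > 0,
\end{equation*}
so $S$ is strictly increasing on $(E,\infty)$.

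Combining $S(E) < 0$, $S(\Pi) \to +\infty$, and strict monotonicity, the intermediate value theorem yields exactly one root of $S$ in $(E,\infty)$.
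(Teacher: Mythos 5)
Your proof is correct and follows the same skeleton as the paper's: $S(E)<0$, $S(\Pi)\to+\infty$, and strict monotonicity of $S$ on $[E,\infty)$ via the sign of $S'(\Pi)$ in~\eqref{eq:s'pi}, with the conclusion by the intermediate value theorem. The only genuine divergence is in how the crucial derivative bound is established. The paper (Appendix~\ref{sec: appendix_s'pi_proof}) sets $R(h)=2-T(h)/h-T'(h)$, shows $R'(h)<0$ for $h>1$, and combines $R(1)=8/5$ with $\lim_{h\to\infty}R(h)=3/2$ to conclude $R(h)>1$, hence $S'(\Pi)>\Pi-E\geq 0$. You instead prove the equivalent statement $T(h)/h+T'(h)\leq \tfrac12$ (i.e.\ $R(h)\geq\tfrac32$) by a direct algebraic verification: isolating the square root and squaring reduces the claim to a linear inequality equivalent to $h\geq \tfrac23$, which I have checked and which indeed holds. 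This is more elementary (no second derivative of $T$, no limit computation), gives the quantitative lower bound $S'(\Pi)\geq\tfrac32\Pi-E>E/2>0$, and your expansion $T(h)\sim h/4$ also makes the divergence $S(\Pi)\sim\tfrac34\Pi^2-\Pi E\to+\infty$ explicit, where the paper merely asserts it. The paper's route, in exchange, records the full monotone behaviour of $R$ on $(1,\infty)$, which is slightly more information than the lemma needs. Both arguments are sound; yours is self-contained where the paper defers to an appendix.
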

	\begin{proof}
		Differentiating $S(\Pi)$ with respect to $\Pi$ we get,
		\begin{equation}\label{eq:s'pi}
			S'(\Pi)= \left(2 - \frac{T(h)}{h}-T'(h)\right)\Pi - E.
		\end{equation}
		For $\Pi \geq E$, we have $h>1$ and $S'(\Pi) > 0$, see Appendix~\ref{sec: appendix_s'pi_proof}, hence $S(\Pi)$ is an increasing function in $[E, +\infty)$. At $\Pi = E$ we have,
		\begin{align*}
			S(E) = - D \sqrt{E^2 - |\mb{m}|^2} T\left(\frac{\sqrt{E^2 - |\mb{m}|^2}}{D}\right) < 0,
		\end{align*}
		and for $\Pi \to +\infty$, $S(\Pi)\to +\infty$ for any fixed $(D, \mb{m}, E)$ in the admissible region. Hence, there exists a unique real root of $S(\Pi)$ in the interval $(E, +\infty)$.
		\qed
	\end{proof}
	We apply the Newton-Raphson method
	\begin{equation}\label{eq:NR}
		\Pi_{r+1} = \Pi_r - \frac{S(\Pi_r)}{S'(\Pi_r)}, \qquad r = 0,1,2,\ldots
	\end{equation}
	taking the initial guess $\Pi_0 = E$, to approximate the unique real root $\Pi_*$ of $S(\Pi)$~\eqref{eq: pi_eq} in $(E, \infty)$.
	\begin{lemma}
		For any $\Pi_r\in [E,\Pi_*)$ the Newton-Raphson method~\eqref{eq:NR} generates increasing iterates,  that is $\Pi_{r+1}>\Pi_r$.
	\end{lemma}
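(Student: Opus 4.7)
The plan is to deduce the monotonicity $\Pi_{r+1}>\Pi_r$ directly from the sign information already established for $S$ and $S'$ in the preceding lemma, without requiring any additional convexity argument (since the claim is only that iterates increase, not that they stay below $\Pi_*$).

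First I would recall that the previous lemma gives $S'(\Pi)>0$ on $[E,+\infty)$, so $S$ is strictly increasing on this interval, and that $S(E)<0$ while $S(\Pi_*)=0$ by definition of $\Pi_*$. Combining strict monotonicity with these two values, I would conclude that $S(\Pi_r)<0$ whenever $\Pi_r\in[E,\Pi_*)$. Next I would write the Newton-Raphson increment as
\begin{equation*}
\Pi_{r+1}-\Pi_r = -\frac{S(\Pi_r)}{S'(\Pi_r)},
\end{equation*}
observe that the numerator $-S(\Pi_r)$ is strictly positive by the previous step while the denominator $S'(\Pi_r)$ is strictly positive by the previous lemma, and conclude $\Pi_{r+1}>\Pi_r$.

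There is essentially no hard step: the whole argument reduces to reading off the sign of $S(\Pi_r)$ on $[E,\Pi_*)$ from the already-proved strict monotonicity of $S$ together with $S(E)<0$. The only subtlety I would be careful about is making sure the result is stated as proved, namely the iterates increase, and not the stronger statement that they converge monotonically to $\Pi_*$, which would additionally require either convexity of $S$ on $[E,\Pi_*]$ or a separate bounding argument; those are not claimed here.
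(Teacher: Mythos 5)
Your proposal is correct and follows essentially the same route as the paper: both deduce $S(\Pi_r)<0$ on $[E,\Pi_*)$ from $S(E)<0$ together with the strict monotonicity of $S$, invoke $S'(\Pi_r)>0$ from the appendix, and read off the sign of the Newton increment. Your closing remark that monotone convergence to $\Pi_*$ is not being claimed matches the paper, which defers that issue to a remark about a damped iteration.
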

	\begin{proof}
		We have, $S(E)<0$ and $S(\Pi)$ is increasing in $[E,\infty)$ with $\Pi_*$ as the unique root in $(E,\infty)$. Hence,
		\begin{equation*}
			S(\Pi_r)<0, \ \forall\ \Pi_r\in [E,\Pi_*).
		\end{equation*}
		Again, $S'(\Pi_r)>0,\ \forall\ \Pi_r\in[E,\infty)$ (Appendix~\ref{sec: appendix_s'pi_proof}). Hence, $\forall\ \Pi_r\in [E,\Pi_*)$,
		\begin{align*}
			&\frac{S(\Pi_r)}{S'(\Pi_r)} < 0 \quad \implies \quad \Pi_{r+1}>\Pi_r
		\end{align*} 
		\qed
	\end{proof}
	\begin{remark}
		The expressions of $S(\Pi),\ S'(\Pi)$ are too complicated to algebraically prove that all the iterations in the Newton-Raphson method~\eqref{eq:NR} satisfy $\Pi_{r+1} > E$, but we always get the approximated root to satisfy this inequality for all the test cases in Section~\ref{sec: Numerical_simulations}. We also find that the average number of iterations that the Newton-Raphson method needs for convergence is between 4 to 5, even for problems with low density ($\rho$), low pressure ($p$), and absolute velocity ($|\mb{v}|$) near unity.
		One can always use a damped Newton-Raphson method
		\begin{equation}\label{eq:damped_NR}
			\Pi_{r+1} = \Pi_r - k_r\frac{S(\Pi_r)}{S'(\Pi_r)}, \qquad r = 0,1,2,\ldots
		\end{equation}
		to find the root of $S(\Pi)$~\eqref{eq: pi_eq}, where $k_r\in(0,1]$ can be used to ensure that the iterates do not cross $\Pi_*$, and thereby $\Pi_{r+1} > E$ is satisfied at all steps. However, we did not need this damped method for any of the test cases in this paper, and the standard Newton-Raphson method itself generated increasing iterates converging to the root.
	\end{remark}
	
	After the Newton-Raphson iterations terminate, we get a value of $\Pi > E > |\mb{m}|$. Then the primitive variables can be found as,
	\begin{equation*}
		p = \Pi - E,\qquad
		\mb{v} = \frac{\mb{m}}{\Pi},\qquad
		\rho = D \sqrt{1-|\mb{v}|^2}.
	\end{equation*}
	Here, since $\Pi > E$ we have $p > 0$ and $\Pi > |\mb{m}|$ implying $|\mb{v}|<1$ and $\rho > 0$.
	A Julia code snippet (Algorithm~\ref{alg:cons2prim}) is added in Appendix~\ref{sec: appendix_cons2prim}, which is used for the extraction of primitive variables in the case of RC-EOS.
	
	\section{Numerical scheme}\label{sec:Numerical scheme}
	For the sake of simplicity, we will discuss the scheme in one dimension, which can be generalized to higher dimensions by applying the same idea in a dimension-by-dimension manner; also refer to~\cite{my_paper}, where the LWFR scheme for the RHD equation with ID-EOS is discussed in two dimensions. Let the computational domain be $[x_a, x_b]$ and we partition it into cells/elements $\Omega_e = [x_{e-\frac{1}{2}}, x_{e+\frac{1}{2}}]$; the length of the cells is $\Delta x_e = x_{e+\frac{1}{2}}- x_{e-\frac{1}{2}}$.
	
	Following the framework of~\cite{BABBAR2022111423,babbar2024admissibility}, we take the reference element to be $[0, 1]$, and the function for mapping the physical elements to the reference element is denoted by,
	\[
	x \mapsto \xi = \frac{x - x_{e-\frac{1}{2}}}{\Delta x_e}.
	\]
	Now, in each element we want to approximate the solution with a polynomial of degree $N \geq 0$ and hence we take the $N+1$ solution points inside the reference element as $0\leq \xi_0 < \xi_1 < \cdots < \xi_N\leq 1$. For this work, we will take these solution points as the Gauss-Legendre nodes, because they evaluate integrals exactly in the quadrature rule for polynomials of degree up to $2N + 1$ with the corresponding Gauss-Legendre weights. Finally, the solution inside each element can be expressed as,
	\[
	\mb{u}_h(\xi, t) = \sum_{i=0}^N \mb{u}_i^e(t) \ell_i(\xi), \qquad x \in \Omega_e,
	\]
	where the Lagrange polynomial $\ell_i$ of degree $N$ is given by,
	\[
	\ell_i(\xi) =\prod_{\substack{j=0 \\ j \neq i}}^N \frac{\xi - \xi_j}{\xi_i - \xi_j}.
	\]
	The coefficients $\mb{u}_i^e$, which represent the solutions at the solution points, also called the degrees of freedom, need to be evolved in time.
	
	Expanding $\mb{u}_i^e(t)$ around time $t_n$ using Taylor's series and using \eqref{eq: RHD_system} with $d=1$, we get the update equation of the one dimensional LWFR scheme~\cite{BABBAR2022111423},
	\begin{align}\label{eq: high order update}
		(\mb{u}_{i}^e)^{n+1}= (\mb{u}_{i}^e)^n - \frac{\Delta t}{\Delta x_e} \frac{d \mb{F}_h}{d\xi}(\xi_i), \qquad 0\leq i\leq N,
	\end{align}
	where the continuous approximation of the time average flux $\mb{F}_h$ is found as,
	\begin{equation}\label{eq:time.avg.flux}
		\mb{F}_h(\xi) = \Big[\mb{F}_{e-\frac{1}{2}} - \mb{F}_h^\delta(0) \Big] h_L(\xi) + \mb{F}_h^\delta(\xi) + \Big[\mb{F}_{e+\frac{1}{2}} - \mb{F}_h^\delta(1) \Big] h_R(\xi).
	\end{equation}
	Here $h_L, h_R$ are the correction functions used in the flux reconstruction method~\cite{huynh2007flux,vincent2011new}, and $\mb{F}_h^\delta$ is the local approximation of the time average flux which is possibly discontinuous across the elements and is given by,
	\begin{equation}\label{eq:time.avg.flux.loc.approx}
		\mb{F}_h^\delta(\xi) = \sum_{i=0}^N \mb{F}_i\ell_i(\xi),
	\end{equation}
	where $\mb{F}_i$ is an approximation to the time average flux at the solution point $\xi_i$,
	\[
	\mb{F}_i \approx \frac{1}{\Delta t}\int_{t_n}^{t_{n+1}} \mb{f}\left(\mb{u}(\xi_i,t)\right)dt.
	\]
	For the scheme~\eqref{eq: high order update} to be accurate to order $N+1$ in the smooth regions, we expand the flux $\mb{f}(\mb{u})$ with the Taylor's series expansion as below to get the time average fluxes at the solution points as,
	\begin{align}\label{eq:time.avg.flux.sol.point.1}
		\mb{F}_i \approx \frac{1}{\Delta t}\int_{t_n}^{t_{n+1}}\left[ \sum_{r=0}^{N}\frac{(t-t_n)^r}{r!}\frac{\p^r \mb{f}(\mb{u_i^n})}{\p t^r}\right]\textrm{d}t
		= \sum_{r=0}^N \frac{\Delta t^r}{(r+1)!}\frac{\p^r \mb{f}(\mb{u}_i^n)}{\p t^r},
	\end{align}
	where $\mb{u}_i^n$ is the approximation of the solution at solution point $\xi_i$ and time $t_n$. Now, one way to proceed would be to use the chain rule to replace the temporal derivatives in the last equation with spatial derivatives, but due to the large computational cost in this approach~\cite{burger2017approximate}, we approximate the temporal derivatives using finite difference formulae~\cite{zorio2017approximate} as will be explained in Section~\ref{sec:time.avg.flux}.
	
	The inter-element fluxes $\mb{F}_{e\pm \frac{1}{2}}$ in~\eqref{eq:time.avg.flux} are found by blending the high-order flux,
	\begin{equation}\label{eq:high.order.numerical.flux}
		\highorderflux{\mb{F}}_{e+\frac{1}{2}} = \frac{1}{2}[\mb{F}_{e+\frac{1}{2}}^- + \mb{F}_{e+\frac{1}{2}}^+] - \frac{1}{2} \lambda_{e+\frac{1}{2}} [\mb{U}_{e+\frac{1}{2}}^+ - \mb{U}_{e+\frac{1}{2}}^-],
	\end{equation}
	with a low-order flux as will be explained in Section~\ref{sec:blending}. In the above expression of high-order flux, the trace values $\mb{F}_{e+\frac{1}{2}}^\pm$ are calculated at the face by the idea of the approximate Lax-Wendroff procedure after extrapolating the required quantities to the faces. This procedure is termed as the EA (Extrapolate and Average) procedure in~\cite{BABBAR2022111423}, which overcomes the sub-optimal convergence rate coming from the direct extrapolation of $\mb{F}_h^\delta(\xi)$~\eqref{eq:time.avg.flux.loc.approx}; for more details, please refer to Section~5.2 in~\cite{BABBAR2022111423}. The coefficient $\lambda_{e+\frac{1}{2}}$ is found as,
	\begin{equation}
		\lambda_{e+\frac{1}{2}} = \max\left\{\lambda_{\max}\left(\mb{f}'(\Bar{\mb{u}}^n_e)\right), \lambda_{\max}\left(\mb{f}'(\Bar{\mb{u}}^n_{e+1})\right)\right\},
	\end{equation}
	with $\lambda_{\max}(\cdot)$ denoting the spectral radius and $\Bar{\mb{u}}^n_e$ denoting the element average of the solution in the element $\Omega_e$,
	\begin{equation}
		\Bar{\mb{u}}^n_e = \sum\limits_{i=0}^N w_i(\mb{u}_i^e)^n.
	\end{equation}
	Here, $w_i$'s are the Gauss-Legendre quadrature weights. In the dissipative part of the numerical flux, $\mb{U}_{e+\frac{1}{2}}^+$, $\mb{U}_{e+\frac{1}{2}}^-$ are the trace values of the time average solution,
	\begin{equation}
		\mb{U} = \sum^N_{r=0}\frac{\Delta t^r}{(r+1)!}\frac{\p^r \mb{u}}{\p t^r},
	\end{equation}
	from the right and left elements of the face $x_{e+\frac{1}{2}}$ respectively.
	Using the time average solution in the dissipative part results in a stable scheme with a higher CFL number compared to using the solution at time $t_n$~\cite{BABBAR2022111423}.
	
	\subsection{Time average flux}\label{sec:time.avg.flux}
	For notational simplicity, we ignore the time index and take,
	\begin{align}
		\mb{f}_i= \mb{f}(\mb{u}_i),\quad \mb{u}^{(r)} = \Delta t^r \frac{\p^r \mb{u}}{\p t^r}, \quad \mb{f}^{(r)} = \Delta t^r \frac{\p^r \mb{f}}{\p t^r}, \qquad \text{for}\ r=0,1,2\dots,
	\end{align}
	and get a simplified form of the expression~\eqref{eq:time.avg.flux.sol.point.1},
	\begin{equation}\label{eq:time.avg.flux.2}
		\mb{F}_i\approx \sum\limits_{r=0}^{N} \frac{1}{(r+1)!}\mb{f}_{i}^{(r)}.
	\end{equation}
	The quantities $\mb{f}_i^{(r)}$ for different degree $N$ as explained in~\cite{BABBAR2022111423,my_paper} are given below.
	
	\paragraph{For $N=1$.}
	\begin{align}\label{eq:ALW.N1}
		\begin{split}
			&\mb{f}_i^{(1)} = \frac{1}{2}\Big[\mb{f}\Big(\mb{u}_{i}+\mb{u}_{i}^{(1)}\Big)-\mb{f}\Big(\mb{u}_{i}-\mb{u}_{i}^{(1)}\Big) \Big],\\
			&\mb{u}_{i}^{(1)} = -\frac{\Delta t}{\Delta x_e} \mb{f}_{i,\xi},
		\end{split}
	\end{align}
	where $\mb{f}_{i,\xi}$ is calculated by taking the derivative with respect to $\xi$ of the polynomial approximation of flux $\mb{f}(\mb{u})$ at the solution point $\xi_i$.
	
	\paragraph{For $N=2$.}
	\begin{align}\label{eq:ALW.N2}
		\begin{split}
			\mb{f}_{i}^{(1)} &= \frac{1}{2}\left[\mb{f}\left(\sum\limits_{k=0}^1 \frac{1}{k!}\mb{u}_{i}^{(k)}\right) - \mb{f}\left(\sum\limits_{k=0}^1 \frac{(-1)^2}{k!}\mb{u}_{i}^{(k)}\right) \right],\\
			\mb{f}_{i}^{(2)} &= \mb{f}\left(\sum\limits_{k=0}^2 \frac{1}{k!}\mb{u}_{i}^{(k)}\right) - 2 \mb{f}(\mb{u}_{i}) + \mb{f}\left(\sum\limits_{k=0}^2 \frac{(-1)^2}{k!}\mb{u}_{i}^{(k)}\right).
		\end{split}
	\end{align}
	
	\paragraph{For $N=3$.}
	\begin{align}\label{eq:ALW.N3}
		\begin{split}
			\mb{f}_{i}^{(1)} &= \frac{1}{12}\Bigg[-\mb{f}\left(\sum\limits_{k=0}^1 \frac{2^k}{k!}\mb{u}_{i}^{(k)}\right) + 8\mb{f}\left(\sum\limits_{k=0}^1 \frac{1}{k!}\mb{u}_{i}^{(k)}\right) - 8\mb{f}\left(\sum\limits_{k=0}^1 \frac{(-1)^k}{k!}\mb{u}_{i}^{(k)}\right)  + \mb{f}\left(\sum\limits_{k=0}^1 \frac{(-2)^k}{k!}\mb{u}_{i}^{(k)}\right)\Bigg],\\
			\mb{f}_{i}^{(2)} &= \mb{f}\left(\sum\limits_{k=0}^2 \frac{1}{k!}\mb{u}_{i}^{(k)}\right) - 2 \mb{f}(\mb{u}_{i}) + \mb{f}\left(\sum\limits_{k=0}^2 \frac{(-1)^2}{k!}\mb{u}_{i}^{(k)}\right),\\
			\mb{f}_{i}^{(3)} &= \frac{1}{2}\Bigg[\mb{f}\left(\sum\limits_{k=0}^3 \frac{2^k}{k!}\mb{u}_{i}^{(k)}\right) - 2\mb{f}\left(\sum\limits_{k=0}^3 \frac{1}{k!}\mb{u}_{i}^{(k)}\right) + 2\mb{f}\left(\sum\limits_{k=0}^3 \frac{(-1)^k}{k!}\mb{u}_{i}^{(k)}\right)  - \mb{f}\left(\sum\limits_{k=0}^3 \frac{(-2)^k}{k!}\mb{u}_{i}^{(k)}\right) \Bigg].
		\end{split}
	\end{align}
	
	\paragraph{For $N=4$.}
	\begin{align}\label{eq:ALW.N4}
		\begin{split}
			\mb{f}_{i}^{(1)} &= \frac{1}{12}\Bigg[-\mb{f}\left(\sum\limits_{k=0}^1 \frac{2^k}{k!}\mb{u}_{i}^{(k)}\right) + 8\mb{f}\left(\sum\limits_{k=0}^1 \frac{1}{k!}\mb{u}_{i}^{(k)}\right) - 8\mb{f}\left(\sum\limits_{k=0}^1 \frac{(-1)^k}{k!}\mb{u}_{i}^{(k)}\right)  + \mb{f}\left(\sum\limits_{k=0}^1 \frac{(-2)^k}{k!}\mb{u}_{i}^{(k)}\right)\Bigg],\\
			\mb{f}_{i}^{(2)} &= \frac{1}{12}\Bigg[-\mb{f}\left(\sum\limits_{k=0}^2 \frac{2^k}{k!}\mb{u}_{i}^{(k)}\right) + 16\mb{f}\left(\sum\limits_{k=0}^2 \frac{1}{k!}\mb{u}_{i}^{(k)}\right) - 30\mb{f}(\mb{u}_{i})\\
			& \mspace{350mu} + 16\mb{f}\left(\sum\limits_{k=0}^2 \frac{(-1)^k}{k!}\mb{u}_{i}^{(k)}\right) - \mb{f}\left(\sum\limits_{k=0}^2 \frac{(-2)^k}{k!}\mb{u}_{i}^{(k)}\right) \Bigg],\\
			\mb{f}_{i}^{(3)} &= \frac{1}{2}\Bigg[\mb{f}\left(\sum\limits_{k=0}^3 \frac{2^k}{k!}\mb{u}_{i}^{(k)}\right) - 2\mb{f}\left(\sum\limits_{k=0}^3 \frac{1}{k!}\mb{u}_{i}^{(k)}\right) + 2\mb{f}\left(\sum\limits_{k=0}^3 \frac{(-1)^k}{k!}\mb{u}_{i}^{(k)}\right) - \mb{f}\left(\sum\limits_{k=0}^3 \frac{(-2)^k}{k!}\mb{u}_{i}^{(k)}\right) \Bigg],\\
			\mb{f}_{i}^{(4)} &= \mb{f}\left(\sum\limits_{k=0}^4 \frac{2^k}{k!}\mb{u}_{i}^{(k)}\right) - 4\mb{f}\left(\sum\limits_{k=0}^4 \frac{1}{k!}\mb{u}_{i}^{(k)}\right) + 6\mb{f}(\mb{u}_{i}) - 4\mb{f}\left(\sum\limits_{k=0}^4 \frac{(-1)^k}{k!}\mb{u}_{i}^{(k)}\right) + \mb{f}\left(\sum\limits_{k=0}^4 \frac{(-2)^k}{k!}\mb{u}_{i}^{(k)}\right).
		\end{split}
	\end{align}
	Here $\mb{u}_i^{(r)}$ for $r=1,2,3,4$ are found as,
	\begin{equation*}
		\mb{u}_{i}^{(r)} = -\frac{\Delta t}{\Delta x_e} \mb{f}_{i,\xi}^{(r-1)}.
	\end{equation*}
	
	The arguments of the flux function in the expressions of $\mb{f}_i^{(r)}$ in equations~(\ref{eq:ALW.N1},~\ref{eq:ALW.N2},~\ref{eq:ALW.N3},~\ref{eq:ALW.N4}) are in general of the form,
	\begin{equation}\label{eq:flux_arg_ALW}
		\mb{u}^{[r]}_i = \sum_{k=0}^r \frac{a_k}{k!} \mb{u}_i^{(k)},
	\end{equation}
	with $a_k$ as an integer. Here, $\mb{u}^{[r]}_i$ does not necessarily belong to the admissible region~$\Uad'$~\eqref{eq: ad_region_2} for any $0\leq i\leq N$, see for example the 1-D Riemann problem 1 in Section~\ref{sec:1DRP1}. However, to find the flux~\eqref{eq:RHD.flux} we need to convert $\mb{u}^{[r]}_i$ to its primitive equivalent and this process needs $\mb{u}^{[r]}_i$ to be in the admissible region~$\Uad'$ (Section~\ref{sec: admissible set and extraction of primitive}). Hence, we scale the quantities $\mb{u}^{[r]}_i$ to the admissible region~$\Uad'$, which is explained below.
	
	We have $\mb{u}^{[0]}_i\in \Uad'$ for all $0\leq i\leq N$ as these are equal to the solution values $\mb{u}_i$. Now, the corresponding cell average,
	\begin{align}
		\Bar{\mb{u}}^{[0]} = \sum\limits_{i=0}^N w_i \mb{u}^{[0]}_i \in \Uad',
	\end{align}
	since $\Uad'$ is a convex set~\cite{my_paper,wu2015high}. Here, $w_i$'s are the corresponding quadrature weights which satisfy $\sum\limits_{i=0}^N w_i = 1$.
	
	Now, we start by checking the admissibility of the quantities $\mb{u}^{[1]}_i$ for $0\leq i\leq N$. If $\mb{u}^{[1]}_i \notin \Uad'$ for some $i$ then we scale it with respect to $\Bar{\mb{u}}^{[0]}$ as explained in the following steps.
	
	\paragraph{Step~1.} If $D\left(\mb{u}^{[1]}_i\right) > 0$ for all $1\leq i\leq N$, go to \textit{Step~2}. But if $D\left(\mb{u}^{[1]}_i\right) \leq 0$ for some $i$, take $\varepsilon_D$ as a small positive real number, and
	\begin{equation*}
		\theta_D = \min \left\{1, \frac{\left|\varepsilon_D - D\left(\Bar{\mb{u}}^{[0]}\right)\right|}{\left|D\left(\mb{u}^{[1]}_{\text{min}}\right) -  D\left(\Bar{\mb{u}}^{[0]}\right)\right|}\right\},
	\end{equation*}
	where $(\cdot)_{\text{min}}$ denotes the minimum over all the solution points in the cell. Then, the scaling is performed as,
	\begin{equation*}
		\Tilde{\mb{u}}^{[1]}_i = \theta_D \mb{u}^{[1]}_i + (1-\theta_D) \Bar{\mb{u}}^{[0]}, \quad \forall\ 0\leq i\leq N.
	\end{equation*}
	
	\paragraph{Step~2.} Repeat \textit{Step~1} for the other admissibility constraint $q$ and take updated quantities $\Tilde{\mb{u}}^{[1]}_i$ in the place of $\mb{u}^{[1]}_i$ for $0\leq i\leq N$.
	
	In this work, we have taken $\varepsilon_D = \min\left\{\frac{1}{10} D\left(\Bar{\mb{u}}^{[0]}\right), 10^{-13}\right\}$ and $ \varepsilon_q = \min\left\{\frac{1}{10} q\left(\Bar{\mb{u}}^{[0]}\right), 10^{-13}\right\}$ in \textit{Step~1} and \textit{Step~2} respectively, which enforces the positivity of the admissibility constraints~\eqref{eq: ad_region_2}. To elaborate, after the first step, using the concavity of the first admissibility constraint $D$,
	\begin{align}
		\begin{split}
			D\left(\Tilde{\mb{u}}^{[1]}_i\right) = D\left(\theta_D \mb{u}^{[1]}_i + (1-\theta_D) \Bar{\mb{u}}^{[0]} \right) 
			&\geq \theta_D D\left( \mb{u}^{[1]}_i\right) + (1-\theta_D) D\left(\Bar{\mb{u}}^{[0]} \right)\\ 
			&> \varepsilon_D > 0.
		\end{split}
	\end{align}
	Similarly, using the concavity of $q$, we can again show,
	\begin{align}
		q\left(\Tilde{\mb{u}}^{[1]}_i\right) > \varepsilon_q > 0.
	\end{align}
	
	Now following the same procedure, we will again scale $\mb{u}_i^{[r]}$ with respect to the cell average of the updated quantity of $\mb{u}_i^{[r-1]}$ for required $r>1$ and $0\leq i\leq N$. Finally, using the final updated quantities in equations~(\ref{eq:ALW.N1},~\ref{eq:ALW.N1},~\ref{eq:ALW.N3},~\ref{eq:ALW.N4}) we calculate the approximation of time average fluxes at solution points, denoted by $\mb{F}_i$ \eqref{eq:time.avg.flux.2}.

	\subsection{Blending of the scheme}\label{sec:blending}
	The next hurdle that needs to be addressed is controlling the spurious Gibbs oscillations, which generally arise when using high-order methods for discontinuous solutions. In literature, one can often find works with TVD limiter and its modifications~\cite{cockburn1991runge,cockburn1989tvb} which can handle this problem, but they have their own disadvantages~\cite{babbar2024admissibility,my_paper}. In this work, we will use the blending limiter~\cite{babbar2024admissibility,my_paper} where the high-order method gets blended with a low-order method which is known for producing minimal oscillation, to control the oscillation in the final result. Specifically, the solution at time $t_{n+1}$ will be,
	\begin{equation}\label{eq:blending.solution}
		(\mb{u}^e)^{n+1} = (1-\alpha_e)(\mb{u}^e)^{H,n+1}+\alpha_e (\mb{u}^e)^{L,n+1},
	\end{equation}
	where the superscript $H$ denotes the high-order update~\eqref{eq: high order update} and $L$ denotes the low-order update as will be explained below. The blending coefficient $\alpha_e$ in each element is found from the discontinuity indicator model designed in~\cite{my_paper} by taking a modal representation of the indicator quantity $K=\rho p \Gamma$, as taken in~\cite{my_paper}. 
	
	Now, for finding the low-order update at time level $n+1$, we divide the element $\Omega_e$ into $N+1$ sub-elements with the length of the sub-elements as,
	\begin{equation}\label{eq:sub.element.length}
		x_{i+\frac{1}{2}} - x_{i-\frac{1}{2}} = w_i \Delta x_e, \qquad \forall\ 0 \leq i \leq N,
	\end{equation}
	where $x_{i\pm\frac{1}{2}}$ denotes the sub-faces of the sub-elements. Here, the sub-faces $x_{-\frac{1}{2}}, x_{N+\frac{1}{2}}$ are the same as the faces of the parent element $x_{e-\frac{1}{2}}$, $x_{e+\frac{1}{2}}$, respectively. The relation~\eqref{eq:sub.element.length} is necessary for maintaining the conservative property of the scheme~\cite{babbar2024admissibility,my_paper}. The solutions inside each sub-element is found as,
	\begin{align}\label{eq: low order update}
		(\mb{u}_{i}^e)^{L,n+1}= (\mb{u}_{i}^e)^n - \frac{\Delta t}{w_i\Delta x_e} [\mb{f}^L_{i+\frac{1}{2}} - \mb{f}^L_{i-\frac{1}{2}}], \qquad 0\leq i\leq N,
	\end{align}
	where $w_i$'s are the corresponding quadrature weights according to the reference coordinates. The fluxes $\mb{f}^L_{i\pm\frac{1}{2}}$ for the interior sub-element faces are given by,
	\begin{align}\label{eq:low order update notations}
		\begin{split}
			&\mb{f}^L_{-\frac{1}{2}} = \mb{F}_{e-\frac{1}{2}},\\
			&\mb{f}^L_{i+\frac{1}{2}} = \numflux{\mb{f}}(\mb{u}_i^e, \mb{u}_{i+1}^e),\quad 0\leq i\leq N-1,\\
			&\mb{f}^L_{N+\frac{1}{2}} = \mb{F}_{e+\frac{1}{2}},
		\end{split}
	\end{align}
	where $\numflux{\mb{f}}$ is the Rusanov flux~\cite{rusanov1962calculation}, since we are blending with first-order finite volume scheme with Rusanov flux following~\cite{my_paper}; and is given by,
	\begin{equation}\label{eq:rusanov flux}
		\mb{f}^{\text{NF}}(\mb{u}^-, \mb{u}^+) = \frac{1}{2}[\mb{f}(\mb{u}^-) + \mb{f}(\mb{u}^+)] - \frac{1}{2} \lambda [\mb{u}^+ - \mb{u}^-],
	\end{equation}
	with
	\[
	\lambda = \max\{\lambda_{\max}\left( \mb{f}'(\mb{u}^-\right), \lambda_{\max}\left( \mb{f}'(\mb{u}^+\right) \},
	\]
	where $\lambda_{\max}$ is the spectral radius of the flux Jacobian matrix. In~\eqref{eq:low order update notations}, the inter-element fluxes $\mb{F}_{e\pm\frac{1}{2}}$ should be the same as the inter-element fluxes used for the high-order scheme for the conservative property of the blended scheme (Section~4.1 in~\cite{my_paper}). But using the high-order fluxes in the low-order scheme generates spurious oscillations and hence we use a blended inter-element flux coming from a convex combination of high and low-order fluxes~\cite{babbar2024admissibility,my_paper},
	\begin{equation}\label{eq:initial.guess.flux}
		\iniguessflux{\mb{F}}_{e\pm\frac{1}{2}} = (1 - \alpha_{e\pm\frac{1}{2}}) \highorderflux{\mb{F}}_{e\pm\frac{1}{2}} + \alpha_{e\pm\frac{1}{2}} \mb{f}^{L}_{e\pm\frac{1}{2}},
	\end{equation}
	where,
	\begin{equation}\label{eq:low.order.num.flux}
		\mb{f}^{L}_{e-\frac{1}{2}} = \mb{f}^{\text{NF}}(\mb{u}^{e-1}_N, \mb{u}^{e}_0), \qquad \mb{f}^{L}_{e+\frac{1}{2}} = \mb{f}^{\text{NF}}(\mb{u}^e_N, \mb{u}^{e+1}_0), \qquad \alpha_{e\pm\frac{1}{2}} = \frac{1}{2}(\alpha_{e\pm 1} + \alpha_{e}),
	\end{equation}
	as the initial guess, which needs to be blended again for admissibility.
	
	For the admissibility of the solution, the key is to make the low-order update~\eqref{eq: low order update} admissible, as the element means of the high and low-order updates are same (Section~4.1 in~\cite{my_paper}). In our work, we take the low-order scheme to be the first-order finite volume scheme with Rusanov flux~\eqref{eq:rusanov flux}, which is admissibility constraints preserving as stated in the following theorem.
	\begin{theorem}\label{theorem: admissibility of first order FVM}
		At any solution point, the solution of the RHD equations~\eqref{eq: RHD_system} at time $t=t_{n+1}$ with equations of state ID-EOS~\eqref{eq: ID_eos}, TM-EOS~\eqref{eq: TM_eos}, IP-EOS~\eqref{eq: IP_eos}, and RC-EOS~\eqref{eq: RC_eos} computed with first-order finite volume method using Rusanov flux~\cite{rusanov1962calculation} is in the admissible set $\Uad'$~\eqref{eq: ad_region_2} under some CFL type restrictions, provided the solution is admissible at the previous time $t = t_n$.
	\end{theorem}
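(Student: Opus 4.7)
The plan is to recast the 1D first-order update as a convex combination of states whose admissibility can be verified individually. Expanding the Rusanov flux \eqref{eq:rusanov flux} at both faces of cell $j$ and collecting terms yields
\begin{align*}
\mb{u}_j^{n+1} = \left(1 - \frac{\alpha_{j+\frac{1}{2}} + \alpha_{j-\frac{1}{2}}}{2}\right)\mb{u}_j^n + \frac{\alpha_{j+\frac{1}{2}}}{2}\left[\mb{u}_{j+1}^n - \frac{1}{\lambda_{j+\frac{1}{2}}}\mb{f}(\mb{u}_{j+1}^n)\right] + \frac{\alpha_{j-\frac{1}{2}}}{2}\left[\mb{u}_{j-1}^n + \frac{1}{\lambda_{j-\frac{1}{2}}}\mb{f}(\mb{u}_{j-1}^n)\right],
\end{align*}
where $\alpha_{j\pm\frac{1}{2}} := \lambda_{j\pm\frac{1}{2}}\Delta t/\Delta x$. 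Imposing the CFL-type restriction $\max_{j}\alpha_{j+\frac{1}{2}} \le 1$ makes the three coefficients nonnegative and summing to one.

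\textbf{Reduction via convexity of $\Uad'$.} Because $D(\mb{u})$ and $q(\mb{u})$ are concave on the phase space (stated immediately after \eqref{eq: ad_region_2}), the set $\Uad'$ is convex. It therefore suffices to show that each of the three states in the convex combination lies in $\Uad'$. The first is $\mb{u}_j^n$, which is admissible by hypothesis. The remaining two reduce to a single key lemma: for every $\mb{u}\in\Uad'$ and every $\lambda \ge \lambda_{\max}(\mb{f}'(\mb{u}))$,
\[
\mb{u} \pm \frac{1}{\lambda}\,\mb{f}(\mb{u}) \in \Uad'.
\]
The definition of $\lambda_{j\pm\frac{1}{2}}$ in \eqref{eq:low.order.num.flux} guarantees that this hypothesis on $\lambda$ is met at the neighbouring states $\mb{u}_{j\pm 1}^n$.

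\textbf{Proving the key lemma.} The $D$-component is immediate: $D\pm Dv_1/\lambda = D(1\pm v_1/\lambda) > 0$, since $|v_1|<1$ and $\lambda\ge|v_1|+c_s>|v_1|$. The $q$-component is the main computation. Substituting the RHD flux \eqref{eq:RHD.flux} together with $\mb{m}=\rho h\Gamma^2\mb{v}$ and $E=\rho h\Gamma^2-p$, the inequality $q(\mb{u}\pm\mb{f}(\mb{u})/\lambda)>0$ becomes a purely algebraic inequality in $(\rho,p,\mb{v},h)$. The decisive structural input that unifies all four equations of state is the weaker Taub inequality \eqref{eq: weaker_taub}, which is satisfied simultaneously by ID-EOS, TM-EOS, IP-EOS, and RC-EOS. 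Combined with the upper bound $\lambda \ge |v_1|+c_s$ on the spectral radius of $\mb{f}'(\mb{u})$ and $0<c_s<1$, this inequality forces $\sqrt{(D\pm Dv_1/\lambda)^2+|\mb{m}\pm(\mb{m}v_1+p\,\mathbf{e}_1)/\lambda|^2} < E\pm m_1/\lambda$, which is exactly the positivity of $q$ on the shifted state.

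\textbf{Main obstacle and multidimensional extension.} The hard step is the $q$-positivity bound just above: without the Taub-type estimate \eqref{eq: weaker_taub} the calculation collapses, and with it the same algebraic manipulation succeeds for all four EOS at once. In two dimensions the first-order update over the $2\!\times\!2$ stencil decomposes into an analogous convex combination, one direction at a time, and the CFL condition becomes $\sum_{i=1}^{d}\alpha_{i,\max} \le 1$; the 1D key lemma, applied in each coordinate direction with the directional flux $\mb{f}_i$, then yields admissibility of the multidimensional update and completes the proof.
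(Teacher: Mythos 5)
Your overall architecture---rewriting the Rusanov update as a convex combination of $\mb{u}_j^n$ and the shifted states $\mb{u}\pm\mb{f}(\mb{u})/\lambda$, then invoking concavity of $D$ and $q$ and convexity of $\Uad'$---is exactly the paper's strategy. The gap is in the step you call ``the main computation.'' You assert that the weaker Taub inequality \eqref{eq: weaker_taub}, combined with a bound on the spectral radius, \emph{forces} $q(\mb{u}\pm\mb{f}(\mb{u})/\lambda)>0$; but \eqref{eq: weaker_taub} points the wrong way for the decisive estimate. Carrying out the algebra with the exact spectral radius $\Lambda=\frac{|v_1|+c_s}{1+c_s|v_1|}$, the quantity $(D_\Lambda^{\pm})^2+|(m_1)_\Lambda^{\pm}|^2-(E_\Lambda^{\pm})^2$ reduces to a positive multiple of $c_s^2\left(\rho^2-(\rho h-p)^2\right)+p^2$, so one must prove $c_s^2\left((\rho h-p)^2-\rho^2\right)>p^2$. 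The Taub inequality only gives $(\rho h-p)^2-\rho^2\ge p^2$, whence $c_s^2\left(\rho^2-(\rho h-p)^2\right)+p^2\le p^2(1-c_s^2)$, which is \emph{positive}---an upper bound of the wrong sign. The needed inequality is strictly stronger than Taub whenever $c_s<1$, and it is genuinely EOS-dependent: the paper verifies it separately for each equation of state by substituting the explicit sound-speed formulas \eqref{eq:sound.TMEOS}--\eqref{eq:sound.IDEOS} and simplifying. That EOS-by-EOS computation is the real content of the theorem, and it is absent from your proposal. (Taub does enter the paper's proof, but only in a supporting role, to justify replacing $(1\pm v_1/\Lambda)^2$ by $(1-|v_1|/\Lambda)^2$ against the non-positive coefficient $\rho^2+p^2-(\rho h-p)^2$.)

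Three further points. First, to deduce $q_\Lambda^\pm>0$ from $(E_\Lambda^\pm)^2>(D_\Lambda^\pm)^2+|(m_1)_\Lambda^\pm|^2$ you must first establish $E_\Lambda^\pm>0$; the paper proves this separately, again EOS by EOS, and you omit it. Second, your stated bound ``$\lambda\ge|v_1|+c_s$'' is false for the spectral radius itself, since $\Lambda=\frac{|v_1|+c_s}{1+c_s|v_1|}<|v_1|+c_s$; what is actually used is the exact expression for $\Lambda$, which produces the clean factor $c_s^2/(1-c_s^2)$ in the reduction above (the weaker fact $\Lambda>|v_1|$ does suffice for the $D$-component). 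Third, the face speed $\Lambda_{i\pm\frac{1}{2}}$ is a maximum over two neighbouring states, so admissibility of $\mb{u}\pm\mb{f}(\mb{u})/\Lambda(\mb{u})$ must be upgraded to admissibility of $\mb{u}\pm\mb{f}(\mb{u})/\Lambda_{i\pm\frac{1}{2}}$ via a monotonicity-in-$\lambda$ argument (the paper cites Lemma~A.1 of~\cite{my_paper}). None of this breaks the approach, but as written the proof does not go through.
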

	The proof of Theorem~\ref{theorem: admissibility of first order FVM} can be found in Appendix~\ref{sec: appendix}. 
	
	\begin{figure}[htbp]
		\centering
		\begin{tikzpicture}[scale=0.9]
			\draw (-0.5,0)-- (10.5,0); 
			\draw (0.5,0.4)--(0.5,-0.4) node[below]{$x_{e-\frac{1}{2}}$};
			\draw (1.749,0.3)--(1.749,-0.3) node[below]{};
			\draw (3.7885,0.3)--(3.7885,-0.3) node[below]{};
			\draw (6.211,0.3)--(6.211,-0.3) node[below]{};
			\draw (8.25,0.3)--(8.25,-0.3) node[below]{};
			\draw (9.5,0.4)--(9.5,-0.4) node[below]{$x_{e+\frac{1}{2}}$};
			\fill[red] (0.079, 0) circle (1.0mm);
			\fill[red] (0.921, 0) circle (1.0mm);
			\fill[black] (2.577, 0) circle (1.0mm);
			\fill[black] (5.0, 0) circle (1.0mm);
			\fill[black] (7.422, 0) circle (1.0mm);
			\fill[red] (9.078, 0) circle (1.0mm);
			\fill[red] (9.921, 0) circle (1.0mm);
		\end{tikzpicture}
		\caption{Part of the domain with red dots as extremal solution points} ($N=4$).\label{fig:subcell}
	\end{figure}
	
	However, as discussed above, in the element interfaces it is necessary to use a blended inter-element flux~\eqref{eq:initial.guess.flux}; which violates the admissibility nature of the low-order update at the solution points adjacent to the inter-element faces, referred as the \textit{extremal solution points} (denoted with red dots in Figure~\ref{fig:subcell}). Regarding the admissibility of the low-order update at the extremal solution points, we have the following theorem.
	
	\begin{theorem}
		Suppose $c(\mb{u})$ is a concave function of the conservative variables, then the solution at time $t_{n+1}$ at the extremal solution points adjacent to the face $x_{e+\frac{1}{2}}$ with the low-order scheme~\eqref{eq: low order update} satisfy $c(\mb{u}^{L, n+1}_l) > 0,\ c(\mb{u}^{L, n+1}_r) > 0$ if we use the following blended inter-element flux,
		\begin{equation}\label{eq:final.blended.flux}
			\mb{F}_{e+\frac{1}{2}} = (1 - \theta_c) \iniguessflux{\mb{F}}_{e+\frac{1}{2}} + \theta_c \mb{f}^{L}_{e+\frac{1}{2}},
		\end{equation}
		with
		\begin{equation}\label{eq:ad.blend.coeff}
			\theta_c = \min\Bigg\{\Bigg|\frac{\frac{1}{10}c\Big(\hat{\mb{u}}^{L,n+1}_{l}\Big)-c\Big(\hat{\mb{u}_{l}^{L,n+1}} \Big)}{c\Big(\mb{u}^{L,n+1}_{l,\text{old}}\Big)-c\Big(\hat{\mb{u}}_{l}^{L,n+1} \Big)} \Bigg|, 
			\ \Bigg|\frac{\frac{1}{10}c\Big(\hat{\mb{u}}^{L,n+1}_{r}\Big)-c\Big(\hat{\mb{u}_{r}^{L,n+1}} \Big)}{c\Big(\mb{u}^{L,n+1}_{r,\text{old}}\Big)-c\Big(\hat{\mb{u}}_{r}^{L,n+1} \Big)} \Bigg|,\ 1\Bigg\}, 
		\end{equation}
		where $(\cdot)_{\text{old}}$ denotes the update before applying the above blended flux~\eqref{eq:final.blended.flux} and $\hat{(\cdot)}$ denotes the low-order update with the low-order flux~\eqref{eq:rusanov flux} at the inter-element face. Again $(\cdot)_l,\ (\cdot)_r$ denote the solution point $x_N$ in element $\Omega_{e}$ and solution point $x_0$ in element $\Omega_{e+1}$ respectively.
	\end{theorem}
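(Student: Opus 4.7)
The strategy exploits the affine dependence of the low-order update \eqref{eq: low order update} on the inter-element flux $\mb{F}_{e+\frac{1}{2}}$. Since the interior sub-face flux $\mb{f}^L_{N-\frac{1}{2}}$ at the left extremal point $x_N$ of $\Omega_e$ does not depend on the choice of $\mb{F}_{e+\frac{1}{2}}$, substituting the blended flux \eqref{eq:final.blended.flux} into the low-order update and rearranging yields
\begin{equation*}
\mb{u}^{L,n+1}_l \ =\ (1-\theta_c)\,\mb{u}^{L,n+1}_{l,\text{old}} \ +\ \theta_c\,\hat{\mb{u}}^{L,n+1}_l,
\end{equation*}
where $\mb{u}^{L,n+1}_{l,\text{old}}$ is produced by using $\iniguessflux{\mb{F}}_{e+\frac{1}{2}}$ alone and $\hat{\mb{u}}^{L,n+1}_l$ by using the pure Rusanov flux $\mb{f}^L_{e+\frac{1}{2}}$ alone. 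An identical identity holds for $\mb{u}^{L,n+1}_r$ at the right extremal point in $\Omega_{e+1}$, since $\mb{f}^L_{\frac{1}{2}}$ in that cell is likewise independent of the choice at $x_{e+\frac{1}{2}}$.

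Theorem~\ref{theorem: admissibility of first order FVM} applies verbatim to the two pure-low-order updates $\hat{\mb{u}}^{L,n+1}_l$ and $\hat{\mb{u}}^{L,n+1}_r$, so both are in $\Uad'$ and, by the hypothesized concavity of $c$ on the admissible set, satisfy $c(\hat{\mb{u}}^{L,n+1}_l)>0$ and $c(\hat{\mb{u}}^{L,n+1}_r)>0$. Applying concavity of $c$ to the convex-combination identity above then gives
\begin{equation*}
c(\mb{u}^{L,n+1}_l) \ \geq\ (1-\theta_c)\, c(\mb{u}^{L,n+1}_{l,\text{old}}) \ +\ \theta_c\, c(\hat{\mb{u}}^{L,n+1}_l),
\end{equation*}
together with the analogous inequality for $r$. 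The whole job is now to pick a single $\theta_c \in [0,1]$ that forces these two lower bounds above the positive threshold $\tfrac{1}{10} c(\hat{\mb{u}}^{L,n+1}_l)$ and $\tfrac{1}{10} c(\hat{\mb{u}}^{L,n+1}_r)$, respectively.

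Rearranging the threshold inequality linearly in $\theta_c$ produces exactly the ratio that appears inside the absolute values in \eqref{eq:ad.blend.coeff}; the bars and the truncation at $1$ handle the sign of $c(\mb{u}^{L,n+1}_{l,\text{old}})-c(\hat{\mb{u}}^{L,n+1}_l)$ and keep $\theta_c\in[0,1]$, so that \eqref{eq:final.blended.flux} remains a genuine convex combination and no additional assumption on the initial-guess flux is needed. The main obstacle I anticipate is the sign/case analysis: $c(\mb{u}^{L,n+1}_{l,\text{old}})$ can be positive, negative, or zero, and it can lie above or below $c(\hat{\mb{u}}^{L,n+1}_l)$, so one must verify in each case that the single $\theta_c$ produced by \eqref{eq:ad.blend.coeff}, which simultaneously accounts for the $l$ and $r$ contributions, is large enough to satisfy both lower-bound inequalities at once. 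Once this case analysis is carried out, concavity delivers $c(\mb{u}^{L,n+1}_l)>0$ and $c(\mb{u}^{L,n+1}_r)>0$, completing the proof.
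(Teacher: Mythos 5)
Your skeleton is the same as the paper's: the sub-cell update \eqref{eq: low order update} is affine in the interface flux, so the blended flux yields a convex combination of the ``old'' and hatted updates, and concavity of $c$ plus the threshold $\tfrac{1}{10}c(\hat{\mb{u}}^{L,n+1})$ finishes the job (your explicit appeal to Theorem~\ref{theorem: admissibility of first order FVM} to secure $c(\hat{\mb{u}}^{L,n+1}_{l}),\,c(\hat{\mb{u}}^{L,n+1}_{r})>0$ is a worthwhile addition the paper leaves implicit). The gap is precisely the step you defer. Reading \eqref{eq:final.blended.flux} literally, as you do, gives $\mb{u}^{L,n+1}_{l}=(1-\theta_c)\,\mb{u}^{L,n+1}_{l,\text{old}}+\theta_c\,\hat{\mb{u}}^{L,n+1}_{l}$; writing $a=c(\mb{u}^{L,n+1}_{l,\text{old}})$ and $b=c(\hat{\mb{u}}^{L,n+1}_{l})>0$, the requirement $(1-\theta_c)a+\theta_c b\geq b/10$ rearranges (for $a<b$) to the \emph{lower} bound $\theta_c\geq (b/10-a)/(b-a)$. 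The quantity in \eqref{eq:ad.blend.coeff} is instead $\bigl|b/10-b\bigr|/\bigl|a-b\bigr|=\tfrac{9}{10}b/|a-b|$, and it is aggregated by a \emph{minimum} over $l$, $r$ and $1$ --- the wrong ratio and the wrong aggregation for a lower bound. Concretely, if $a=-b$ the requirement is $\theta_c> 11/20$ while \eqref{eq:ad.blend.coeff} returns $9/20$, so your inequality chain cannot close; the claim that the rearrangement ``produces exactly the ratio that appears'' is false under your convention.

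The resolution is that the weights are intended the other way around: the paper's own proof writes $\mb{u}^{L,n+1}_{l}=\theta_c\,\mb{u}^{L,n+1}_{l,\text{old}}+(1-\theta_c)\,\hat{\mb{u}}^{L,n+1}_{l}$, i.e.\ $\theta_c$ weights the initial-guess flux, consistent with \cite{rueda2021subcell}. Under that convention the requirement becomes $\theta_c(a-b)\geq-\tfrac{9}{10}b$, which is vacuous when $a\geq b$ and is the \emph{upper} bound $\theta_c\leq \tfrac{9}{10}b/(b-a)$ when $a<b$ --- exactly the ratio in \eqref{eq:ad.blend.coeff} --- so taking the minimum over $l$, $r$ and $1$ is the correct aggregation, smaller $\theta_c$ always being safe. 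There is therefore a $\theta_c\leftrightarrow(1-\theta_c)$ inconsistency between \eqref{eq:final.blended.flux} as printed and the proof; you adopted the printed convention and asserted without verification that the algebra works out, which it does not. To repair your argument, swap the weights in your convex-combination identity (equivalently, replace $\theta_c$ by $1-\theta_c$ in \eqref{eq:final.blended.flux}) and then carry out the short two-case analysis ($a\geq b$ versus $a<b$) for $l$ and $r$ simultaneously.
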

	\begin{proof}
		For the low-order evolution $\mb{u}^{L,n+1}_{l}$,
		\begin{align*}              
			c\Big(\mb{u}^{L,n+1}_{l}\Big)
			&=c\Big(\theta_c \mb{u}_{l, \text{old}}^{L,n+1} + (1-\theta_c) \hat{\mb{u}}_{l}^{L,n+1}\Big)\\
			&\geq \theta_c c\Big(\mb{u}_{l, \text{old}}^{L,n+1}\Big) + (1-\theta_c) c\Big(\hat{\mb{u}}_{l}^{L,n+1}\Big)\qquad \text{since, $c$ is concave}\\
			&> \frac{1}{10} \bigg(c\Big(\hat{\mb{u}}_{l}^{L,n+1}\Big)\bigg).
		\end{align*}
		
		Similarly we can also prove for $\mb{u}^{L,n+1}_{r}$.
	\end{proof}
	
	Here the factor $\frac{1}{10}$ is taken following~\cite{rueda2021subcell}. For our case, since we have two admissibility constraints~\eqref{eq: ad_region_2}, we need to blend the inter-element flux two times as in equation~\eqref{eq:final.blended.flux} for the admissibility of the low-order updates~\eqref{eq: low order update} at each of the extremal solution points. For more details, refer to~\cite{babbar2024admissibility,my_paper}. 
	
	Once we get the element means of the solution of the blended scheme admissible, we use the scaling limiter from~\cite{zhang2010maximum} to scale the final solution in the admissible region.
	
	Before going to the numerical validations, let us present a high-level overview of the scheme in Algorithm~\ref{alg:high.level.algorithm}.
	\begin{algorithm}
		\caption{High-level overview of the scheme}
		\label{alg:high.level.algorithm}
		\begin{algorithmic}
			\While{$t < T$}
			\State Compute blending coefficient $\{\alpha_e\}$ (Section~5 of~\cite{my_paper})
			\For{$e$ in \texttt{eachelement(mesh)}}
			\Comment{Compute time average flux at solution points}
			\For{$r$ in \texttt{1:N}}
			\For{$i$ in \texttt{eachpoint(element)}}
			\State Compute $\mb{u}_i^{[r]}$'s~\eqref{eq:flux_arg_ALW}  and scale it with corresponding $\mb{u}_i^{[r-1]}$'s
			\State Compute $\mb{f}_i^{(r)}$~\eqref{eq:ALW.N1}-\eqref{eq:ALW.N4} and add its contribution for the computation of $\mb{F}_i$~\eqref{eq:time.avg.flux.sol.point.1} 
			\EndFor
			\EndFor
			\EndFor
			\For{$e+\frac{1}{2}$ in 
				\texttt{eachinterface(mesh)}}
			\Comment{Compute interface flux}
			\State Compute the high-order inter-element flux $\highorderflux{\mb{F}}_{e+\frac{1}{2}}$~\eqref{eq:high.order.numerical.flux}
			\State Compute the low-order inter-element flux $\mb{f}_{e+\frac{1}{2}}^L$~\eqref{eq:low.order.num.flux}
			\State Compute the initial guess of the blended inter-element flux $\iniguessflux{\mb{F}}_{e+\frac{1}{2}}$~\eqref{eq:initial.guess.flux}
			\For{$c$ in \texttt{eachconstraint($\Uad'$)}}
			\State Compute $\theta_c$~\eqref{eq:ad.blend.coeff}
			\State Blend the inter-element flux to compute $\mb{F}_{e+\frac{1}{2}}$~\eqref{eq:final.blended.flux}
			\EndFor
			\EndFor
			\For{$e$ in \texttt{eachelement(mesh)}}
			\Comment{Update solution}
			\For{$i$ in \texttt{eachpoint(element)}}
			\State Calculate the high-order update~\eqref{eq: high order update}
			\State Calculate the low-order update~\eqref{eq: low order update}
			\State Blend the high and low-order solutions~\eqref{eq:blending.solution}
			\EndFor
			\EndFor
			\State Apply positivity correction at solution points using \cite{zhang2010maximum}
			\State $t \gets t + \Delta t$
			\EndWhile
		\end{algorithmic}
	\end{algorithm}
	
	\section{Numerical simulations}\label{sec: Numerical_simulations}
	Several numerical simulations are shown in this section to demonstrate the robustness of the scheme and its capability to capture different wave structures. The simulations are done using \texttt{RHDTenkai.jl}~\cite{RHDTenkai}, which is developed using \texttt{Tenkai.jl}~\cite{tenkai} as a library.
	
	\subsection{One dimensional experiments}
	For all the one dimensional test cases here, we take the time step as,
	\begin{equation} \label{eq:time_step.1d}
		\Delta t= l_s \text{CFL}(N)\min_{e}{\left(\frac{\lambda_{\max}\left(\mb{f}'(\Bar{\mb{u}}_{e})\right)}{\Delta x_e}\right)^{-1}},
	\end{equation}
	where $l_s\leq 1$ is a safety factor, taken to be $0.95$ for all the test cases unless mentioned otherwise. Here, $\lambda_{\max}(\cdot)$ denotes the spectral radius and CFL($N$) is the optimal CFL number for the solution polynomial of degree $N$ as obtained in~\cite{BABBAR2022111423} using Fourier stability analysis (Table~1 in~\cite{BABBAR2022111423}).
	
	In this section, we will test the scheme with one dimensional test cases having strong shocks and other discontinuities along with high Lorentz factor, smooth wave adjacent to discontinuity and thin structures in the solution. However, before going to the main problems of interest, we check the accuracy and numerical order of the scheme with a smooth initial data with different resolutions.
	
	\subsubsection{Accuracy test}\label{sec:1Daccuracy}
	We take a smooth flow of a fluid having initial density $\rho(x,0) = 1 + 0.999\sin\left( 2 \pi x\right)$ and velocity $v_1(x,0) = 0.99$. The initial pressure is taken as $p(x,0) = 0.01$ in the computational domain. The density of the fluid will change with time $t$ as $\rho(x,t) = 1 + 0.999\sin\left( 2 \pi (x-0.99 t)\right)$. We present the results of our simulations at time $t=0.2$ in the Tables~\ref{table: N3_ID5/3}-\ref{table: N4_RC} with different equations of state taking periodic boundaries at $x=0,1$.
	
	\begin{table}[!htbp]
		\centering
		\caption{Numerical results for the fluid density ($\rho$) with $N=3$ using ID-EOS~\eqref{eq: ID_eos} with $\gamma = \frac{5}{3}$.}
		\label{table: N3_ID5/3}
		\begin{tabular}{lllllll}
			\hline\noalign{\smallskip}
			Cells & $L^1$ error  & $L^1$ Order     & $L^2$ error   & $L^2$ Order     & $L^{\infty}$ error & $L^{\infty}$ Order\\
			\noalign{\smallskip}\hline\noalign{\smallskip}
			8   &    1.35447e-04     &      -             &          2.74800e-04       &        -           &          9.79990e-04           &          - \\ 
			16   &    1.51842e-05     &      3.15708       &      3.67970e-05      &       2.90072      &       1.69608e-04        &          2.53057\\ 
			32   &    1.56589e-07     &      6.59945       &      1.86208e-07      &       7.62653      &       3.71109e-07        &          8.83614\\ 
			64   &    9.22607e-09     &      4.08512       &      1.08760e-08      &       4.09770      &       2.19672e-08        &          4.07842\\ 
			128   &    6.09975e-10     &      3.91889       &      7.21485e-10      &       3.91403      &       1.44395e-09        &          3.92725\\ 
			256   &    3.84068e-11     &      3.98932       &      4.54083e-11      &       3.98994      &       9.08849e-11        &          3.98984\\ 
			\noalign{\smallskip}\hline
		\end{tabular}
		\vspace{0.5em}
		\centering
		\caption{Numerical results for the fluid density ($\rho$) with $N=4$ using ID-EOS~\eqref{eq: ID_eos} with $\gamma = \frac{5}{3}$.}
		\label{table: N4_ID5/3}
		
		\begin{tabular}{lllllll}
			\hline\noalign{\smallskip}
			Cells & $L^1$ error  & $L^1$ Order     & $L^2$ error   & $L^2$ Order     & $L^{\infty}$ error & $L^{\infty}$ Order\\
			\noalign{\smallskip}\hline\noalign{\smallskip}
			8   &    2.88032e-05     &      -             &          5.80292e-05       &        -           &          2.10470e-04           &          - \\ 
			16   &    4.67826e-08     &      9.26604       &      5.58369e-08      &       10.02134      &       1.03861e-07        &          10.98475\\ 
			32   &    1.68419e-09     &      4.79584       &      2.04167e-09      &       4.77340      &       3.90200e-09        &          4.73429\\ 
			64   &    5.26892e-11     &      4.99841       &      6.06662e-11      &       5.07271      &       1.12791e-10        &          5.11249\\ 
			\noalign{\smallskip}\hline
		\end{tabular}
		\vspace{0.5em}
		\centering
		\caption{Numerical results for the fluid density ($\rho$) with $N=3$ using ID-EOS~\eqref{eq: ID_eos} with $\gamma = \frac{4}{3}$.}
		\label{table: N3_ID4/3}
		
		\begin{tabular}{lllllll}
			\hline\noalign{\smallskip}
			Cells & $L^1$ error  & $L^1$ Order     & $L^2$ error   & $L^2$ Order     & $L^{\infty}$ error & $L^{\infty}$ Order\\
			\noalign{\smallskip}\hline\noalign{\smallskip} 
			8   &    1.36588e-04     &      -             &          2.76937e-04       &        -           &          9.86693e-04           &          - \\ 
			16   &    1.53319e-05     &      3.15522       &      3.72004e-05      &       2.89617      &       1.72322e-04        &          2.51749\\ 
			32   &    1.56452e-07     &      6.61467       &      1.85966e-07      &       7.64413      &       3.70561e-07        &          8.86118\\ 
			64   &    9.02627e-09     &      4.11545       &      1.06441e-08      &       4.12692      &       2.14578e-08        &          4.11014\\ 
			\noalign{\smallskip}\hline
		\end{tabular}
		\vspace{0.5em}
		\centering
		\caption{Numerical results for the fluid density ($\rho$) with $N=4$ using ID-EOS~\eqref{eq: ID_eos} with $\gamma = \frac{4}{3}$.}
		\label{table: N4_ID4/3}
		
		\begin{tabular}{lllllll}
			\hline\noalign{\smallskip}
			Cells & $L^1$ error  & $L^1$ Order     & $L^2$ error   & $L^2$ Order     & $L^{\infty}$ error & $L^{\infty}$ Order\\
			\noalign{\smallskip}\hline\noalign{\smallskip}
			8   &    2.89947e-05     &      -             &          5.83639e-05       &        -           &          2.11448e-04           &          - \\ 
			16   &    4.75493e-08     &      9.25215       &      5.69429e-08      &       10.00134      &       1.05717e-07        &          10.96589\\ 
			32   &    1.73403e-09     &      4.77722       &      2.08869e-09      &       4.76884      &       3.99009e-09        &          4.72764\\ 
			64   &    5.08093e-11     &      5.09289       &      5.88751e-11      &       5.14880      &       1.08820e-10        &          5.19640\\ 
			\noalign{\smallskip}\hline
		\end{tabular}
		\vspace{0.5em}
		\centering
		\caption{Numerical results for the fluid density ($\rho$) with $N=3$ using TM-EOS~\eqref{eq: TM_eos}.}
		\label{table: N3_TM}
		
		\begin{tabular}{lllllll}
			\hline\noalign{\smallskip}
			Cells & $L^1$ error  & $L^1$ Order     & $L^2$ error   & $L^2$ Order     & $L^{\infty}$ error & $L^{\infty}$ Order\\
			\noalign{\smallskip}\hline\noalign{\smallskip}
			8   &    1.77713e-02     &      -             &          2.16153e-02       &        -           &          4.57774e-02           &          - \\ 
			16   &    1.99109e-03     &      3.15792       &      3.92807e-03      &       2.46016      &       1.22346e-02        &          1.90367\\ 
			32   &    8.24227e-06     &      7.91630       &      2.22302e-05      &       7.46515      &       1.12005e-04        &          6.77126\\ 
			64   &    1.82978e-07     &      5.49330       &      6.70969e-07      &       5.05013      &       5.12770e-06        &          4.44910\\ 
			128   &    7.78286e-09     &      4.55522       &      3.45242e-08      &       4.28057      &       2.84067e-07        &          4.17401\\ 
			256   &    4.39237e-10     &      4.14723       &      1.84301e-09      &       4.22747      &       1.65366e-08        &          4.10250\\ 
			\noalign{\smallskip}\hline
		\end{tabular}
	\end{table}
	
	\begin{table}[!htbp]
		\centering
		\caption{Numerical results for the fluid density ($\rho$) with $N=4$ using TM-EOS~\eqref{eq: TM_eos}.}
		\label{table: N4_TM}
		
		\begin{tabular}{lllllll}
			\hline\noalign{\smallskip}
			Cells & $L^1$ error  & $L^1$ Order     & $L^2$ error   & $L^2$ Order     & $L^{\infty}$ error & $L^{\infty}$ Order\\
			\noalign{\smallskip}\hline\noalign{\smallskip}
			8   &    6.85694e-03     &      -             &          1.02719e-02       &        -           &          2.41027e-02           &          - \\ 
			16   &    5.56375e-04     &      3.62344       &      1.21292e-03      &       3.08214      &       3.86563e-03        &          2.64042\\ 
			32   &    6.77174e-07     &      9.68232       &      2.08020e-06      &       9.18755      &       1.35358e-05        &          8.15778\\ 
			64   &    1.50046e-08     &      5.49605       &      7.02886e-08      &       4.88729      &       7.35403e-07        &          4.20210\\ 
			128   &    4.20525e-10     &      5.15707       &      1.98336e-09      &       5.14727      &       2.30749e-08        &          4.99414\\ 
			\noalign{\smallskip}\hline
		\end{tabular}
		\vspace{0.5em}
		\centering
		\caption{Numerical results for the fluid density ($\rho$) with $N=3$ using IP-EOS~\eqref{eq: IP_eos}.}
		\label{table: N3_IP}
		
		\begin{tabular}{lllllll}
			\hline\noalign{\smallskip}
			Cells & $L^1$ error  & $L^1$ Order     & $L^2$ error   & $L^2$ Order     & $L^{\infty}$ error & $L^{\infty}$ Order\\
			\noalign{\smallskip}\hline\noalign{\smallskip}
			8   &    1.81996e-02     &      -             &          2.42778e-02       &        -           &          5.64409e-02           &          - \\ 
			16   &    2.22355e-03     &      3.03296       &      4.34419e-03      &       2.48248      &       1.29502e-02        &          2.12377\\ 
			32   &    3.79531e-06     &      9.19443       &      1.03238e-05      &       8.71696      &       5.16936e-05        &          7.96877\\ 
			64   &    1.53555e-07     &      4.62739       &      6.05417e-07      &       4.09191      &       4.90159e-06        &          3.39866\\ 
			128   &    7.33559e-09     &      4.38770       &      2.99379e-08      &       4.33789      &       2.59443e-07        &          4.23976\\ 
			256   &    4.26056e-10     &      4.10580       &      1.63895e-09      &       4.19113      &       1.33523e-08        &          4.28026\\ 
			\noalign{\smallskip}\hline
		\end{tabular}
		
		\vspace{0.5em}
		\centering
		\caption{Numerical results for the fluid density ($\rho$) with $N=4$ using IP-EOS~\eqref{eq: IP_eos}.}
		\label{table: N4_IP}
		
		\begin{tabular}{lllllll}
			\hline\noalign{\smallskip}
			Cells & $L^1$ error  & $L^1$ Order     & $L^2$ error   & $L^2$ Order     & $L^{\infty}$ error & $L^{\infty}$ Order\\
			\noalign{\smallskip}\hline\noalign{\smallskip}
			8   &    9.96247e-03     &      -             &          1.45910e-02       &        -           &          3.45552e-02           &          - \\ 
			16   &    6.11523e-04     &      4.02603       &      1.33604e-03      &       3.44904      &       4.29264e-03        &          3.00897\\ 
			32   &    5.69459e-07     &      10.06860       &      1.83876e-06      &       9.50502      &       1.12573e-05        &          8.57486\\ 
			64   &    1.17153e-08     &      5.60313       &      5.12770e-08      &       5.16427      &       4.86726e-07        &          4.53161\\ 
			128   &    3.60158e-10     &      5.02362       &      1.57435e-09      &       5.02549      &       1.75990e-08        &          4.78954\\ 
			\noalign{\smallskip}\hline
		\end{tabular}
		
		\vspace{0.5em}
		\centering
		\caption{Numerical results for the fluid density ($\rho$) with $N=3$ using RC-EOS~\eqref{eq: RC_eos}.}
		\label{table: N3_RC}
		
		\begin{tabular}{lllllll}
			\hline\noalign{\smallskip}
			Cells & $L^1$ error  & $L^1$ Order     & $L^2$ error   & $L^2$ Order     & $L^{\infty}$ error & $L^{\infty}$ Order\\
			\noalign{\smallskip}\hline\noalign{\smallskip}
			8   &    1.61621e-02     &      -             &          1.94391e-02       &        -           &          4.08798e-02           &          - \\ 
			16   &    1.68963e-03     &      3.25784       &      3.23141e-03      &       2.58873      &       1.00716e-02        &          2.02110\\ 
			32   &    1.40743e-05     &      6.90750       &      3.98978e-05      &       6.33971      &       1.59256e-04        &          5.98280\\ 
			64   &    9.80485e-08     &      7.16535       &      3.80577e-07      &       6.71198      &       2.97660e-06        &          5.74154\\ 
			128   &    4.23334e-09     &      4.53363       &      1.71584e-08      &       4.47120      &       1.53399e-07        &          4.27830\\ 
			256   &    2.35897e-10     &      4.16557       &      8.59274e-10      &       4.31966      &       8.06991e-09        &          4.24860\\
			\noalign{\smallskip}\hline
		\end{tabular}
		
		\vspace{0.5em}
		\centering
		\caption{Numerical results for the fluid density ($\rho$) with $N=4$ using RC-EOS~\eqref{eq: RC_eos}.}
		\label{table: N4_RC}
		
		\begin{tabular}{lllllll}
			\hline\noalign{\smallskip}
			Cells & $L^1$ error  & $L^1$ Order     & $L^2$ error   & $L^2$ Order     & $L^{\infty}$ error & $L^{\infty}$ Order\\
			\noalign{\smallskip}\hline\noalign{\smallskip}
			8   &    5.71667e-03     &      -             &          8.81007e-03       &        -           &          2.06541e-02           &          - \\ 
			16   &    2.68642e-04     &      4.41142       &      5.69776e-04      &       3.95069      &       1.80839e-03        &          3.51365\\ 
			32   &    5.49479e-07     &      8.93340       &      1.59137e-06      &       8.48398      &       8.41902e-06        &          7.74684\\ 
			64   &    8.21426e-09     &      6.06379       &      3.84648e-08      &       5.37059      &       3.55182e-07        &          4.56702\\ 
			128   &    2.11005e-10     &      5.28278       &      9.38232e-10      &       5.35745      &       9.00065e-09        &          5.30239\\ 
			\noalign{\smallskip}\hline
		\end{tabular}
	\end{table}
	
	We observe that for all the equations of state, the scheme converges with order $O(\Delta x)^{N+1}$ for the degrees of solution polynomial $N=3,4$.
	
	\subsubsection{1-D Riemann problem 1}\label{sec:1DRP1} 
	This problem is used in~\cite{ryu2006equation} for comparing results with three different equations of state. The solution of this problem has a shock wave, a contact discontinuity, and a rarefaction wave, which makes it a suitable test to check the robustness of the scheme and the effect of different equations of state. The computational domain is taken as $[0,1]$ with initial discontinuity at $x=0.5$. Specifically, the initial condition is taken as,
	\[
	(\rho, v_1, p) = \begin{cases}
		(10, 0, 13.3) \quad &\text{if}\ x<0.5\\
		(1, 0, 10^{-6}) \quad &\text{if}\ x>0.5
	\end{cases}
	\]
	and the boundaries with outflow boundary conditions. The problem is solved with solution polynomial degree $N=3,4$ and with $500$ cells till time $t=0.45$. The results of the scheme for ID-EOS are compared with a reference solution obtained using the exact solver in~\cite{marti2003numerical} and shown in Figure~\ref{fig:1DRyuRPP1.eos1.exact}. For the other equations of state, we have compared the results with a reference solution obtained using the Rusanov scheme~\cite{rusanov1962calculation} with a fine mesh of $100000$ uniform cells and show the results in Figure~\ref{fig:1DRyuRPP1.eos234.exact}. We observe that the scheme can capture the shock, rarefaction, and contact discontinuity effectively in the solution. We also observe that the solutions converge to the exact or reference solution for all the equations of state by increasing the degree $N$ from 3 to 4. 
	This behavior of converging to the reference solution is seen for all the test cases in the paper, although we do not show the reference solution in subsequent test cases in order to save space.
	\begin{figure}
		\centering
		\hspace{0.3mm}\begin{subfigure}{0.32\textwidth}
			\includegraphics[width=\linewidth]{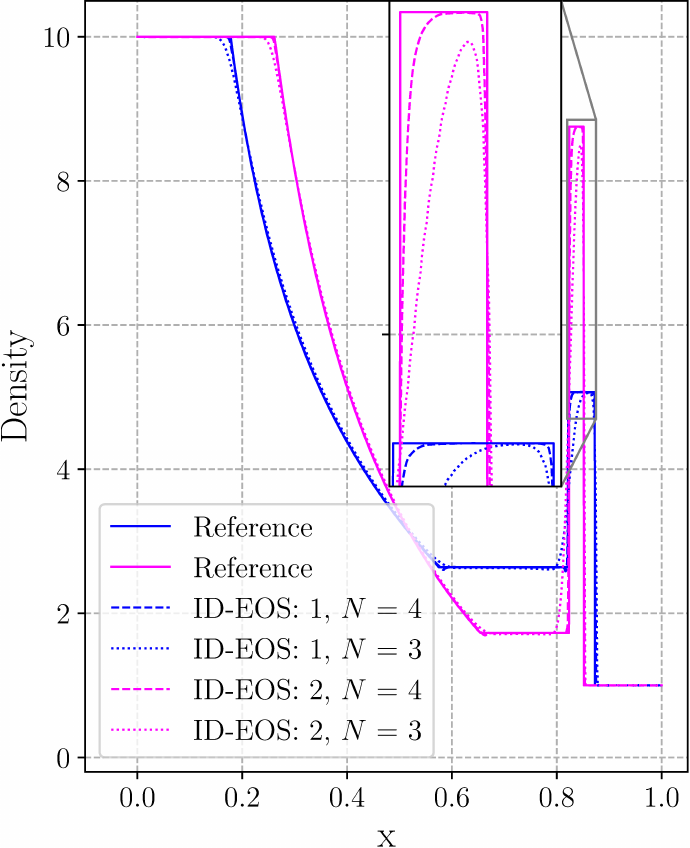}
			\caption{Density ($\rho$)}
		\end{subfigure}
		\begin{subfigure}{0.32\textwidth}
			\includegraphics[width=\linewidth]{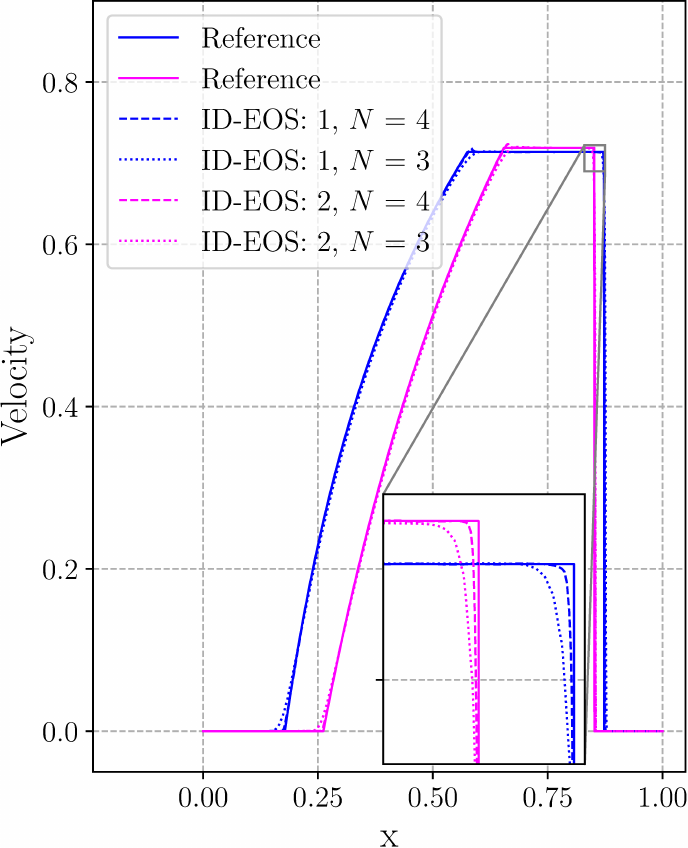}
			\caption{Velocity ($v_1$)}
		\end{subfigure}
		\begin{subfigure}{0.32\textwidth}
			\includegraphics[width=\linewidth]{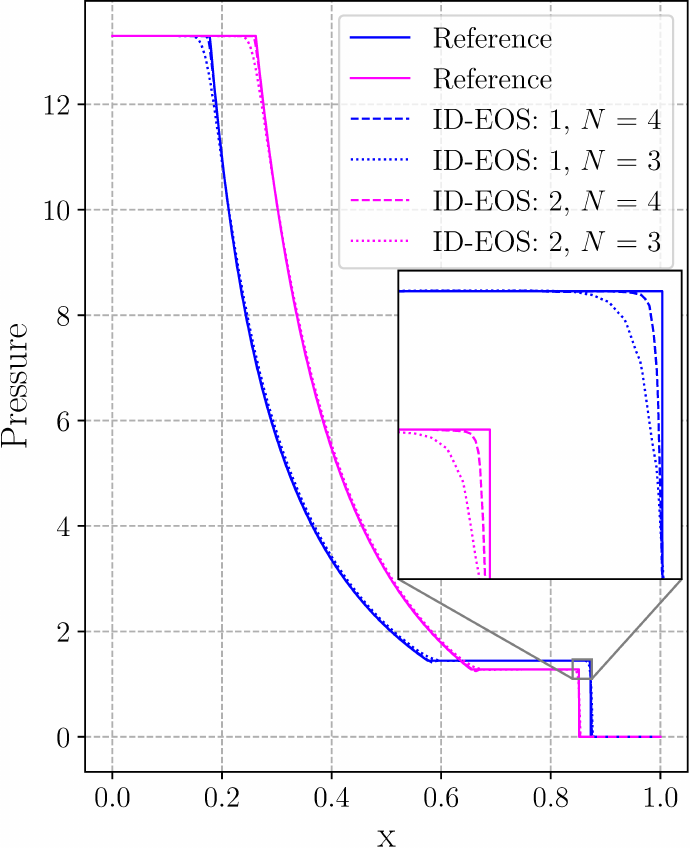}
			\caption{Pressure ($p$)}
		\end{subfigure}
		\caption{1-D Riemann problem 1: Plot with $500$ cells with reference solution. ID-EOS: 1 and ID-EOS: 2 refer to ID-EOS with $\gamma=\frac{5}{3}$ and $\gamma=\frac{4}{3}$ respectively.}
		\label{fig:1DRyuRPP1.eos1.exact}
	\end{figure}
	
	\begin{figure}
		\centering
		\hspace{0.3mm}\begin{subfigure}{0.32\textwidth}
			\includegraphics[width=\linewidth]{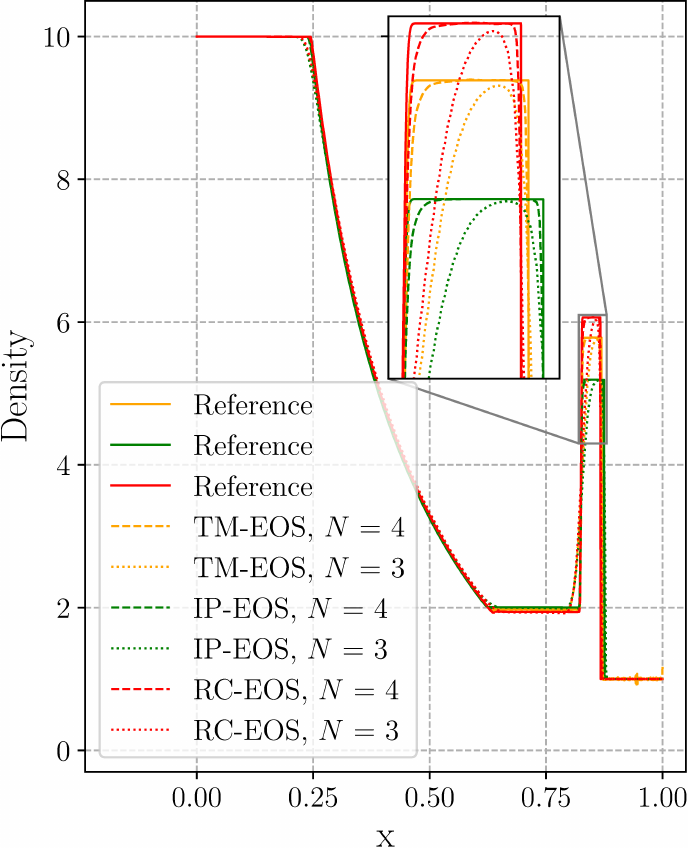}
			\caption{Density ($\rho$)}
		\end{subfigure}
		\begin{subfigure}{0.32\textwidth}
			\includegraphics[width=\linewidth]{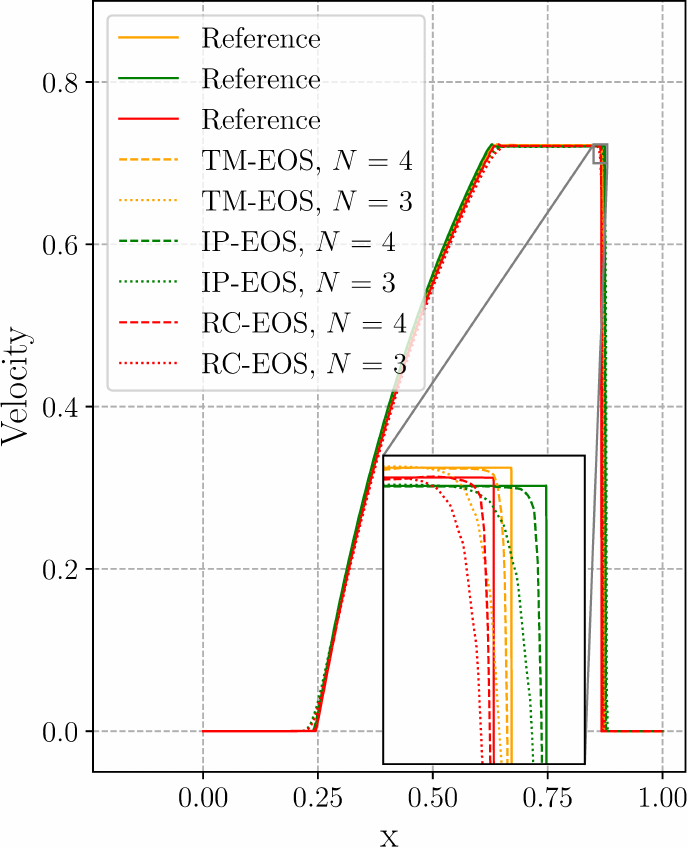}
			\caption{Velocity ($v_1$)}
		\end{subfigure}
		\begin{subfigure}{0.32\textwidth}
			\includegraphics[width=\linewidth]{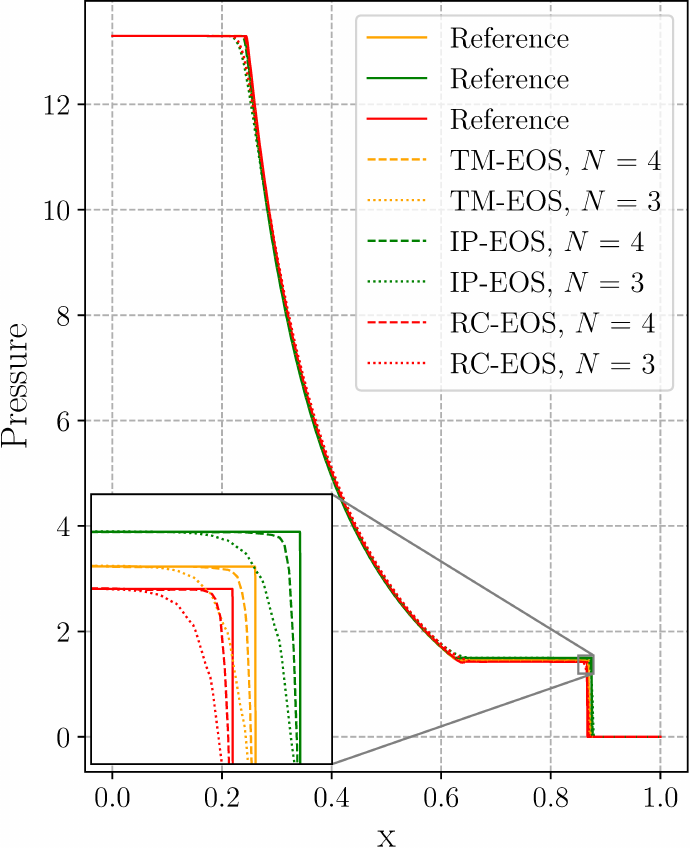}
			\caption{Pressure ($p$)}
		\end{subfigure}
		\caption{1-D Riemann problem 1: Plot with $500$ cells with reference solution.}
		\label{fig:1DRyuRPP1.eos234.exact}
	\end{figure}
	
	\begin{figure}
		\centering
		\hspace{0.3mm}\begin{subfigure}{0.32\textwidth}
			\includegraphics[width=\linewidth]{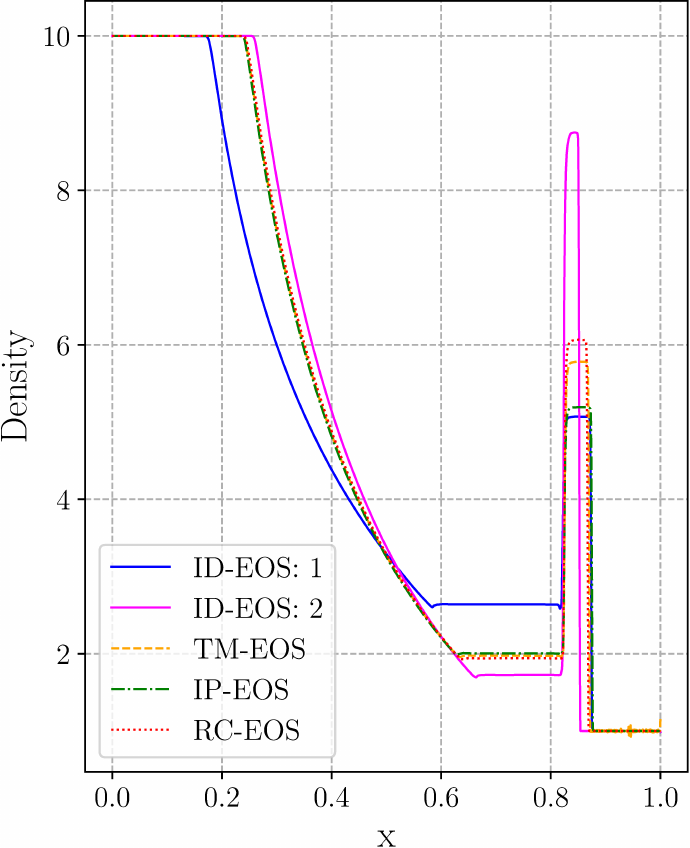}
			\caption{Density ($\rho$)}
		\end{subfigure}
		\begin{subfigure}{0.32\textwidth}
			\includegraphics[width=\linewidth]{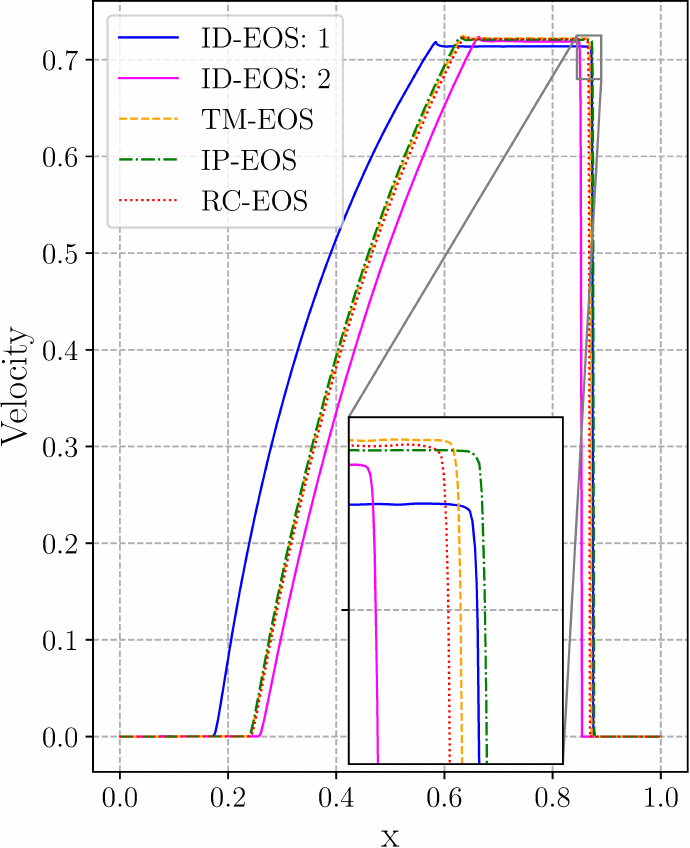}
			\caption{Velocity ($v_1$)}
		\end{subfigure}
		\begin{subfigure}{0.32\textwidth}
			\includegraphics[width=\linewidth]{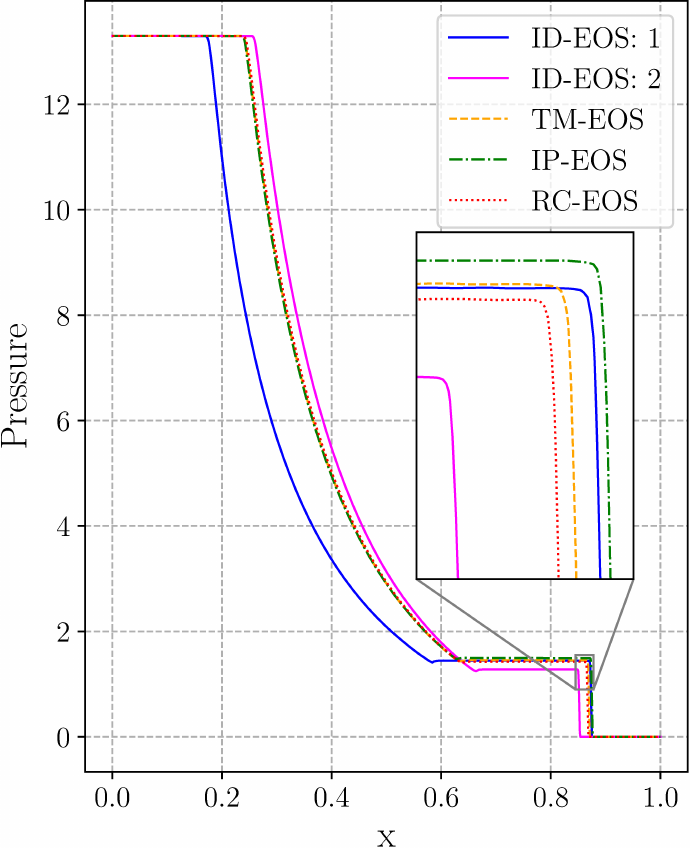}
			\caption{Pressure ($p$)}
		\end{subfigure}
		\caption{1-D Riemann problem 1: Plot with $500$ cells and $N=4$. ID-EOS: 1 and ID-EOS: 2 refer to ID-EOS with $\gamma=\frac{5}{3}$ and $\gamma=\frac{4}{3}$ respectively.}
		\label{fig:1DRyuRPP1}
	\end{figure}
	Now, presenting the results using all the equations of state using degree $N = 4$ in Figure~\ref{fig:1DRyuRPP1}, we see that the solutions with ID-EOS have noticeable differences between $\gamma=\frac{4}{3}$ and $\gamma=\frac{5}{3}$. The solution with ID-EOS with $\gamma=\frac{4}{3}$ has taller and thinner shell-structure between the contact discontinuity and the shock wave, and less elongated rarefaction wave, as also observed in~\cite{ryu2006equation}. Again, similar to the results in~\cite{ryu2006equation}, the solution with TM-EOS, IP-EOS, and RC-EOS is better approximated by ID-EOS with $\gamma = \frac{4}{3}$ compared to $\gamma = \frac{5}{3}$ in the region left of the contact discontinuity. The solutions with TM-EOS, IP-EOS, and RC-EOS have less deviation from each other, showing the similarity in the distribution of specific enthalpy $h$, which is one more property that was observed in~\cite{ryu2006equation}.
	
	\subsubsection{1-D Riemann problem 2}
	This problem is taken from~\cite{marti2003numerical} where the solution has a very thin structure in the post-shock state. It is also used in~\cite{ryu2006equation} for the simulations with three different equations of state. The initial condition is given by,
	\[
	(\rho, v_1, p) = \begin{cases}
		(1, 0, 10^3) \quad &\text{if}\ x<0.5\\
		(1, 0, 10^{-2}) \quad &\text{if}\ x>0.5
	\end{cases}
	\]
	with a jump discontinuity at $x=0.5$ in pressure $p$. The boundaries are taken as outflow boundaries with the computational domain $[0,1]$. The simulations for this problem are performed with $500$ cells and degree $N=4$ with different equations of state, and results at time $t=0.4$ are shown in Figure~\ref{fig:1DRyuRPP2}.
	\begin{figure}
		\centering
		\hspace{0.3mm}\begin{subfigure}{0.32\textwidth}
			\includegraphics[width=\linewidth]{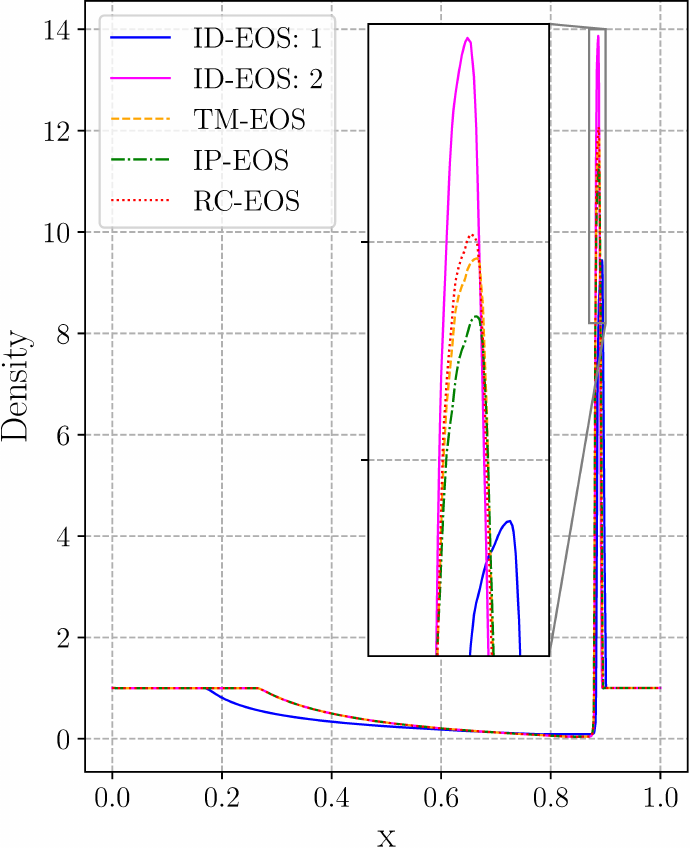}
			\caption{Density ($\rho$)}
		\end{subfigure}
		\begin{subfigure}{0.32\textwidth}
			\includegraphics[width=\linewidth]{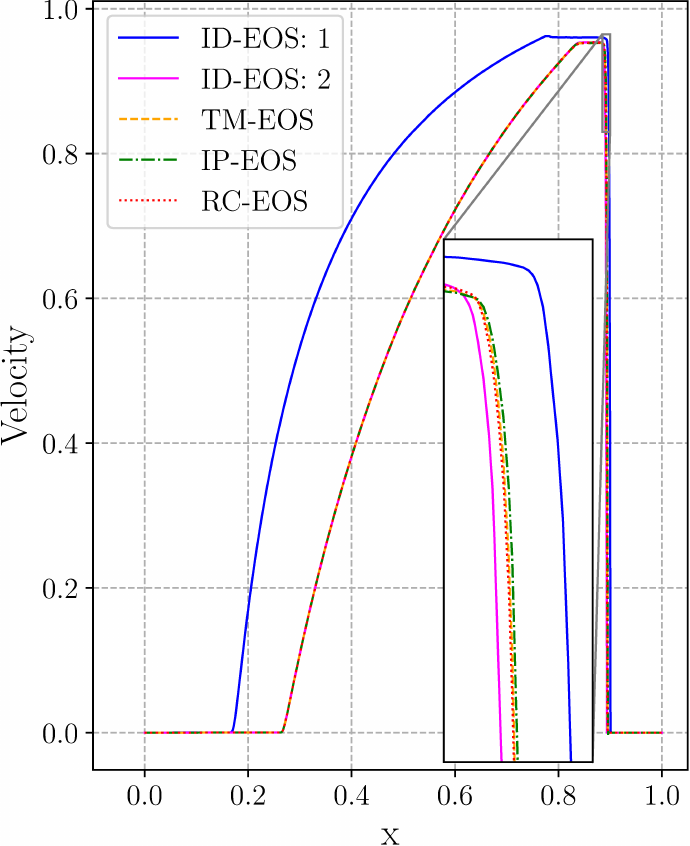}
			\caption{Velocity ($v_1$)}
		\end{subfigure}
		\begin{subfigure}{0.32\textwidth}
			\includegraphics[width=\linewidth]{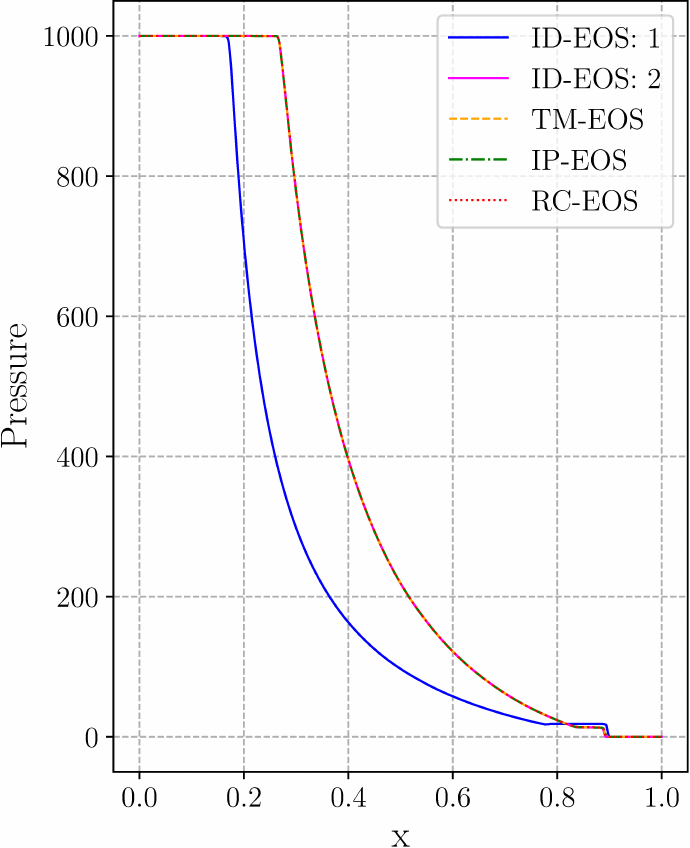}
			\caption{Pressure ($p$)}
		\end{subfigure}
		\caption{1-D Riemann problem 2: Plot with $500$ cells and $N=4$. ID-EOS: 1 and ID-EOS: 2 refer to ID-EOS with $\gamma=\frac{5}{3}$ and $\gamma=\frac{4}{3}$ respectively.}
		\label{fig:1DRyuRPP2}
	\end{figure}
	
	From the figure, we observe that the solution using ID-EOS with $\gamma=\frac{5}{3}$ has a more elongated rarefaction wave than the other equations of state because of the higher sound speed~\cite{ryu2006equation}. We can again observe that the solution with ID-EOS with $\gamma = \frac{4}{3}$ is nearly indistinguishable from the solutions with TM-EOS, IP-EOS, and RC-EOS in the left region to the contact discontinuity, where the domain has ultra-relativistic temperature, $\frac{p}{\rho} \gg 1$. The results with other equations of state are nearly overlapping throughout the domain, which is a behavior similar to~\cite{ryu2006equation}.
	
	\subsubsection{1-D Riemann problem 3}
	This problem is taken from~\cite{marti1994analytical,mignone2005hllc} where two rarefaction waves move away from each other with time and form a contact discontinuity in between. Here, the initial state is taken as,
	\[
	(\rho, v_1, p) = \begin{cases}
		(1, -0.6, 10) \quad &\text{if}\ x<0.5\\
		(10, 0.5, 20) \quad &\text{if}\ x>0.5
	\end{cases}
	\]
	with outflow boundaries at $x=0,1$. We run the simulations using $500$ cells and $N=4$ till time $t=0.4$ and plot the results in Figure~\ref{fig:1DRP1}.
	\begin{figure}
		\centering
		\hspace{0.3mm}\begin{subfigure}{0.32\textwidth}
			\includegraphics[width=\linewidth]{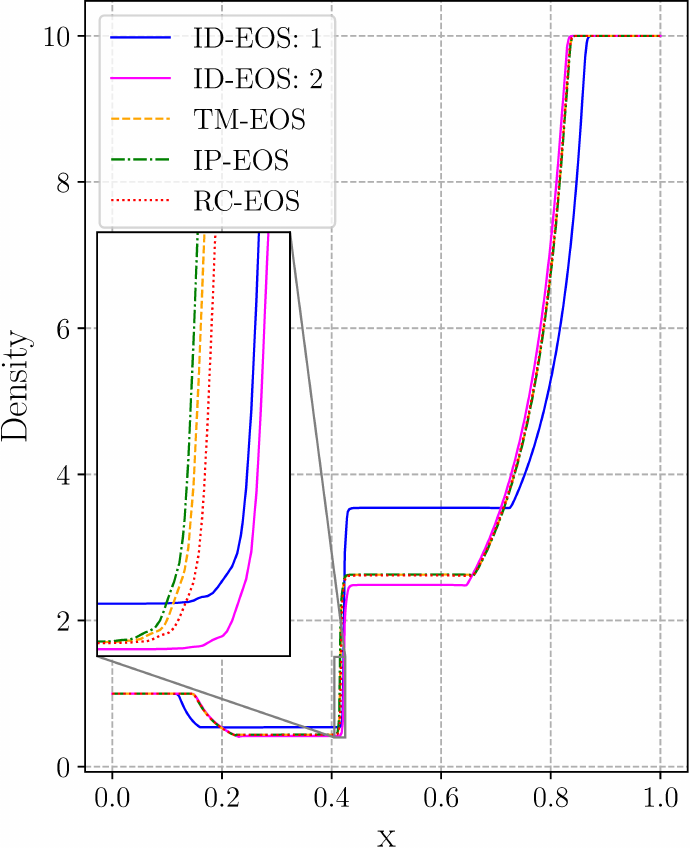}
			\caption{Density ($\rho$)}
		\end{subfigure}
		\begin{subfigure}{0.32\textwidth}
			\includegraphics[width=\linewidth]{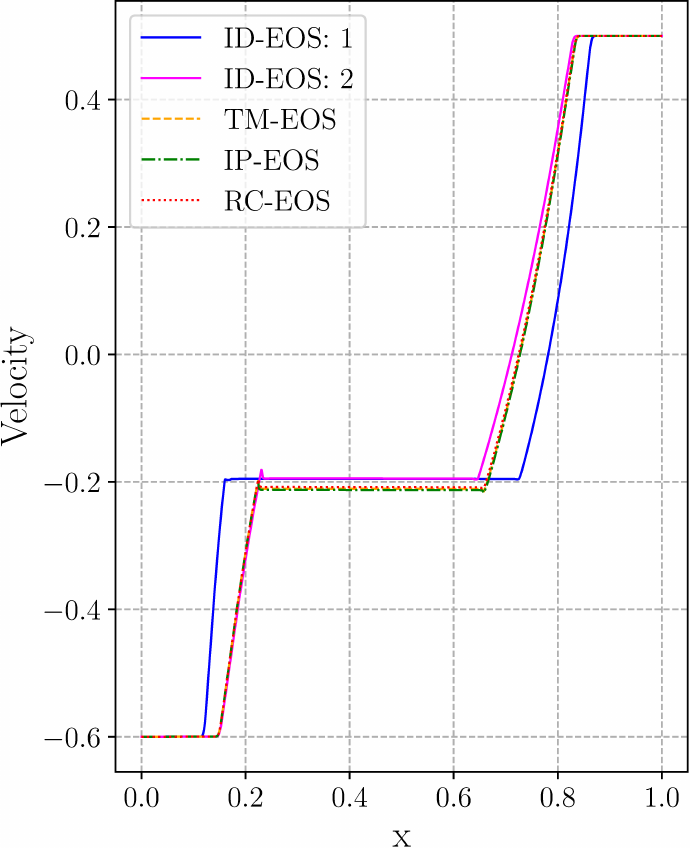}
			\caption{Velocity ($v_1$)}
		\end{subfigure}
		\begin{subfigure}{0.32\textwidth}
			\includegraphics[width=\linewidth]{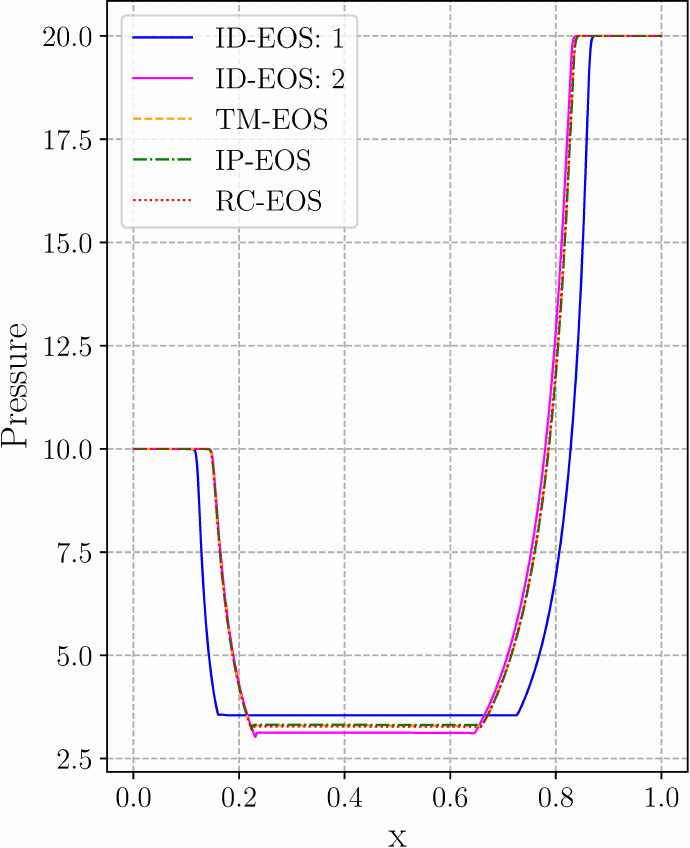}
			\caption{Pressure ($p$)}
		\end{subfigure}
		\caption{1-D Riemann problem 3: Plot with $500$ cells and $N=4$. ID-EOS: 1 and ID-EOS: 2 refer to ID-EOS with $\gamma=\frac{5}{3}$ and $\gamma=\frac{4}{3}$ respectively.}
		\label{fig:1DRP1}
	\end{figure}
	
	We observe from the figure that the solutions with TM-EOS, IP-EOS, and RC-EOS are very close to each other, a characteristic which is observed in previous simulations as well. We can also observe that the ID-EOS approximates the solutions with other equations of state more closely with $\gamma = \frac{4}{3}$ than with $\gamma = \frac{5}{3}$ and the domain has relativistic temperature, $\frac{p}{\rho}>1$, justifying the choice of $\gamma$ as $\frac{4}{3}$ for ultra-relativistic cases~\cite{wu2016physical}.
	
	\subsubsection{1-D density perturbation problem}
	This problem is taken from~\cite{del2002efficient} where a sinusoidal profile is introduced in the fluid density, which makes the problem interesting as the scheme needs to resolve the smooth wave structures while avoiding the generation of spurious oscillations. A similar problem was used in~\cite{dolezal1995relativistic} to show the effectiveness of an ENO-based scheme in treating both discontinuous and smooth features as they were physically close to each other. The initial data for this problem is taken as,
	\[
	(\rho, v_1, p) = \begin{cases}
		(5, 0, 50) \quad &\text{if}\ x<0.5\\
		\big(2 + 0.3\sin(50 x), 0, 5\big) \quad &\text{if}\ x>0.5
	\end{cases}
	\]
	with outflow boundaries at $x=0,1$. The results of the simulations at time $t=0.4$ with $N=4$ and $500$ cells are shown in Figure~\ref{fig:1DDP}.
	\begin{figure}
		\centering
		\hspace{0.3mm}\begin{subfigure}{0.32\textwidth}
			\includegraphics[width=\linewidth]{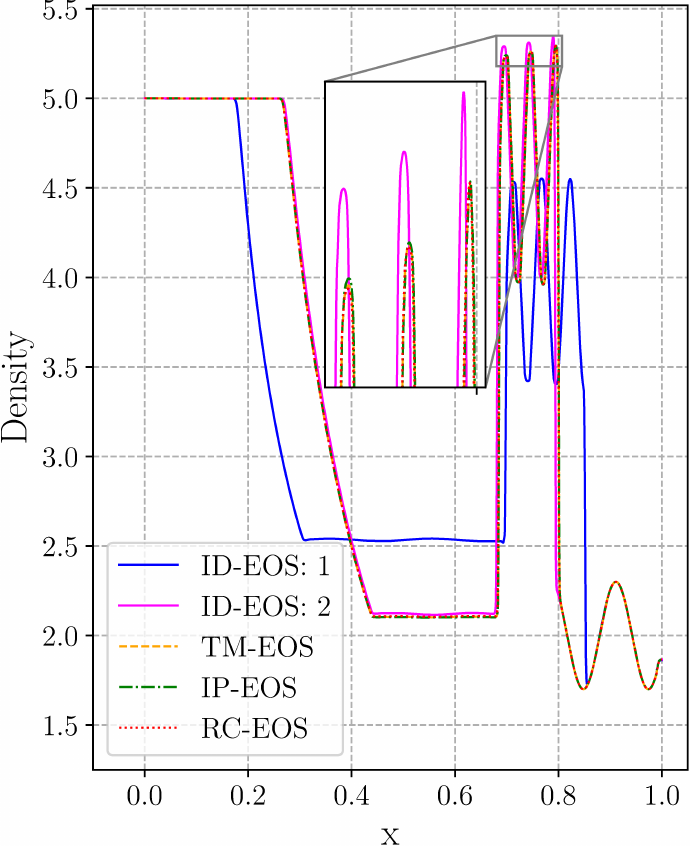}
			\caption{Density ($\rho$)}
		\end{subfigure}
		\begin{subfigure}{0.32\textwidth}
			\includegraphics[width=\linewidth]{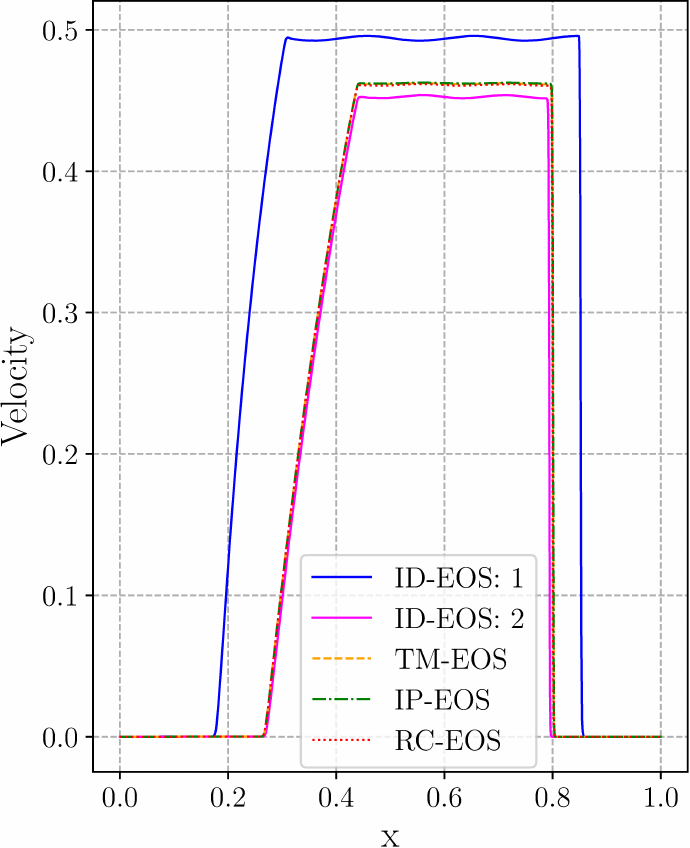}
			\caption{Velocity ($v_1$)}
		\end{subfigure}
		\begin{subfigure}{0.32\textwidth}
			\includegraphics[width=\linewidth]{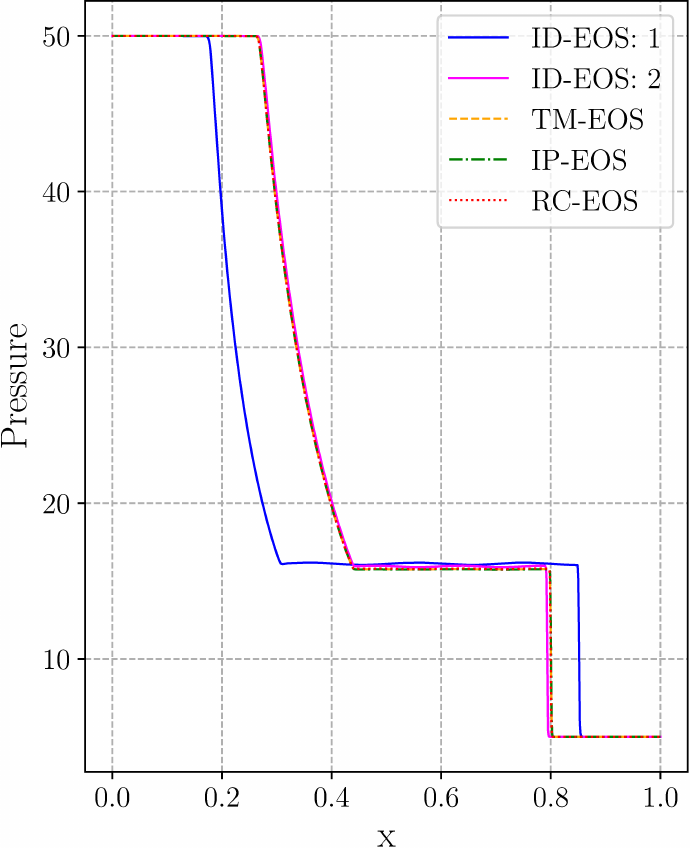}
			\caption{Pressure ($p$)}
		\end{subfigure}
		\caption{1-D density perturbation problem: Plot with $500$ cells and $N=4$. ID-EOS: 1 and ID-EOS: 2 refer to ID-EOS with $\gamma=\frac{5}{3}$ and $\gamma=\frac{4}{3}$ respectively.}
		\label{fig:1DDP}
	\end{figure}
	
	Here we can see that our scheme can capture the smooth sinusoidal profile in the solution effectively, along with the rarefaction, contact discontinuity, and shock waves for all the equations of state. Moreover the solution using ID-EOS with $\gamma = \frac{4}{3}$ approximates the solutions using other equations of state more closely compared to $\gamma = \frac{5}{3}$, because of the relativistic temperature, $\frac{p}{\rho}>1$,  throughout the domain~\cite{ryu2006equation}. Here too, we see the equivalence of the solution graphs using TM-EOS, IP-EOS, and RC-EOS.
	
	\subsubsection{1-D blast wave problem}
	In this problem, the collision of two strong blast waves takes place, and a very thin structure gets formed after a finite time. This test case was used several times in literature \cite{marti1996extension,yang2011direct,wu2015high,wu2016physical} to check the efficiency of the schemes, as it has a very narrow structure to capture and strong shock interaction. The initial condition for this problem is,
	\[
	(\rho, v_1, p) = \begin{cases}
		(1, 0, 10^3) \quad &\text{if}\ x<0.1\\
		(1, 0, 10^{-2}) \quad &\text{if}\ 0.1<x<0.9\\
		(1, 0, 10^2) \quad &\text{if}\ 0.9<x
	\end{cases}
	\]
	where two jump discontinuities are taken at $x=0.1,0.9$ in the pressure profile. We run the simulations with different equations of state in the domain $[0,1]$ with outflow boundaries and taking $5000$ cells, $N=4$, and $l_s = 0.8$. The specific heat ratio $\gamma$ for the ID-EOS is taken as $1.4$ following~\cite{wu2016physical}, and the results are presented in Figure~\ref{fig:1DWublast} at time $t=0.43$.
	\begin{figure}
		\centering
		\hspace{0.3mm}\begin{subfigure}{0.32\textwidth}
			\includegraphics[width=\linewidth]{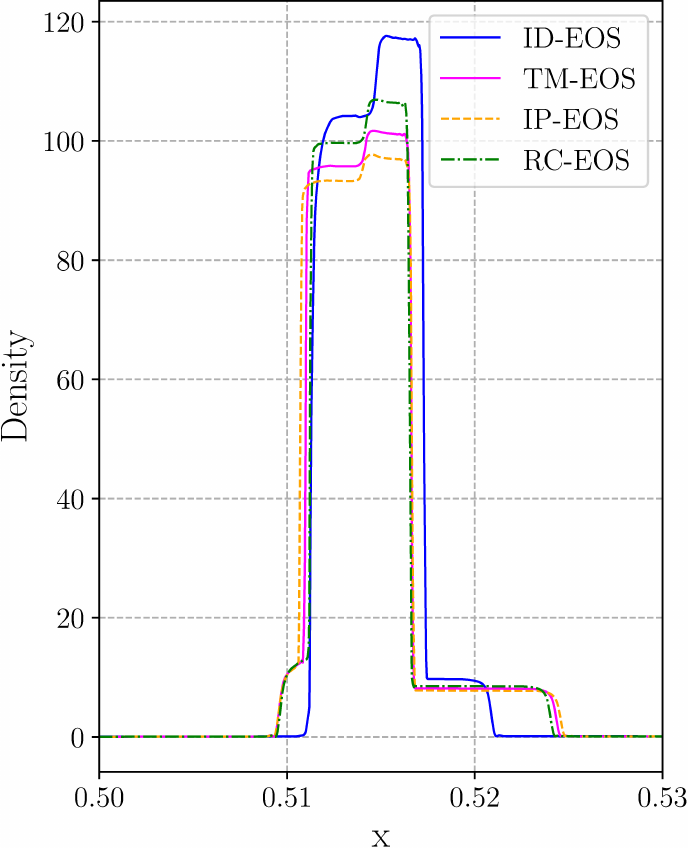}
			\caption{Density ($\rho$) plot zooming in $[0.5, 0.53]$.\\}
		\end{subfigure}
		\begin{subfigure}{0.32\textwidth}
			\includegraphics[width=\linewidth]{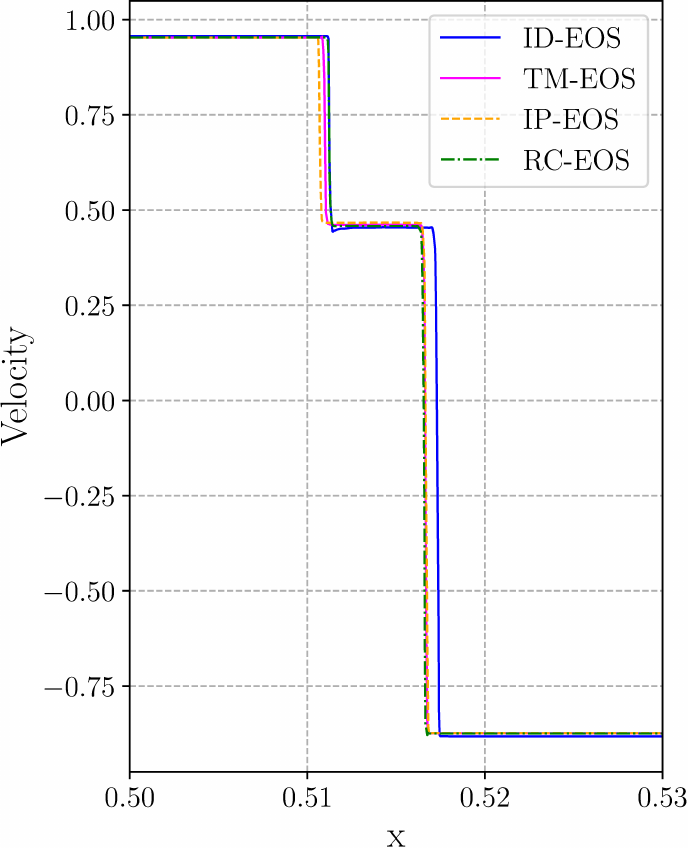}
			\caption{Velocity ($v_1$) plot zooming in $[0.5, 0.53]$.}
		\end{subfigure}
		\begin{subfigure}{0.32\textwidth}
			\includegraphics[width=\linewidth]{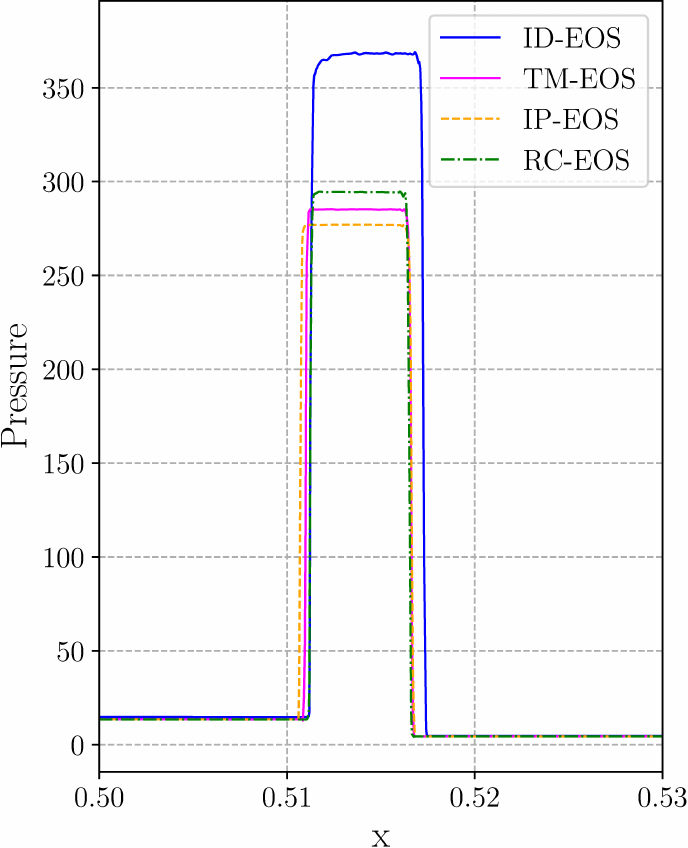}
			\caption{Pressure ($p$) plot zooming in $[0.5, 0.53]$.}
		\end{subfigure}
		\caption{1-D blast wave problem: Plot with $5000$ cells and $N=4$.}
		\label{fig:1DWublast}
	\end{figure}
	
	We observe that our scheme can capture all the structures effectively with all the equations of state. The solutions with TM-EOS, IP-EOS, and RC-EOS are close to each other in terms of the waves that arise, compared to the ID-EOS, but have a noticeable difference in the zoomed views for $x \in [0.5,0.53]$.
	
	\subsection{Two dimensional experiments}
	For the two dimensional test cases, we take the time step as in equation~(42) of~\cite{my_paper} with the safety factor $l_s = 0.95$ for all the test cases, unless mentioned otherwise.
	Like in the one dimensional case, we will perform a grid-convergence study of the scheme with a two dimensional test with known exact solution before attempting more challenging problems.
	
	\subsubsection{Accuracy test}
	Here we have extended the 1-D accuracy test of Section~\ref{sec:1Daccuracy} to two spatial dimensions. The initial condition of the fluid is taken as,
	\[
	(\rho, v_1, v_2, p) = \left(1 + 0.999\sin\left(2 \pi (x+y)\right), \frac{0.99}{\sqrt{2}}, \frac{0.99}{\sqrt{2}}, 0.01\right).
	\]
	in the domain $[0,1]\times [0,1]$ with periodic boundaries. The wave in the initial density profile  propagates diagonally in the domain with time, and the exact solution is given by,
	\[
	(\rho, v_1, v_2, p) = \Bigg(1 + 0.999\sin\left(2 \pi \left(x+y- \frac{0.99t}{\sqrt{2}}\right)\right), \frac{0.99}{\sqrt{2}}, \frac{0.99}{\sqrt{2}}, 0.01\Bigg).
	\]
	We run the simulations till the time $t=0.2$ and present the grid convergence study in Tables~\ref{table:2DN3_ID5/3}-\ref{table:2DN4_RC} with different equations of state.
	\setlength{\tabcolsep}{5pt}
	\begin{table}[!htbp]
		\centering
		\caption{Numerical results for the fluid density ($\rho$) with $N=3$ using ID-EOS~\eqref{eq: ID_eos} with $\gamma = \frac{5}{3}$.}
		\label{table:2DN3_ID5/3}
		
		\begin{tabular}{lllllll}
			\hline\noalign{\smallskip}
			Cells & $L^1$ error  & $L^1$ Order     & $L^2$ error   & $L^2$ Order     & $L^{\infty}$ error & $L^{\infty}$ Order\\
			\noalign{\smallskip}\hline\noalign{\smallskip}
			8 X  8    &    1.20973e-03     &          -             &       1.72898e-03       &        -           &        6.68491e-03       &          -    \\ 
			16 X 16    &    9.38741e-05     &        3.68781         &       1.74901e-04       &     3.30531        &        8.11048e-04       &       3.04305 \\ 
			32 X 32    &    2.14104e-06     &        5.45434         &       2.85220e-06       &     5.93832        &        1.07594e-05       &       6.23612 \\ 
			64 X 64    &    1.32624e-07     &        4.01289         &       1.76838e-07       &     4.01158        &        6.14558e-07       &       4.12990 \\ 
			128 X 128    &    8.27857e-09     &        4.00182         &       1.10521e-08       &     4.00004        &        4.02156e-08       &       3.93372 \\ 
			256 X 256    &    5.17396e-10     &        4.00004         &       6.90960e-10       &     3.99957        &        2.52713e-09       &       3.99218 \\ 
			\noalign{\smallskip}\hline
		\end{tabular}
		\vspace{0.5em}
		\centering
		\caption{Numerical results for the fluid density ($\rho$) with $N=4$ using ID-EOS~\eqref{eq: ID_eos} with $\gamma = \frac{5}{3}$.}
		\label{table:2DN4_ID5/3}
		
		\begin{tabular}{lllllll}
			\hline\noalign{\smallskip}
			Cells & $L^1$ error  & $L^1$ Order     & $L^2$ error   & $L^2$ Order     & $L^{\infty}$ error & $L^{\infty}$ Order\\
			\noalign{\smallskip}\hline\noalign{\smallskip}
			8 X  8    &    3.47437e-04     &          -             &       6.11209e-04       &        -           &        2.56687e-03       &          -    \\ 
			16 X 16    &    6.97557e-07     &        8.96022         &       9.20406e-07       &     9.37518        &        8.73027e-07       &       11.52170 \\ 
			32 X 32    &    2.23360e-08     &        4.96487         &       3.01785e-08       &     4.93068        &        1.10961e-08       &       6.29791 \\ 
			64 X 64    &    6.64866e-10     &        5.07016         &       8.81046e-10       &     5.09816        &        6.60429e-10       &       4.07050 \\ 
			128 X 128    &    2.39666e-11     &        4.79396         &       3.10542e-11       &     4.82636        &        2.29007e-11       &       4.84994 \\  
			\noalign{\smallskip}\hline
		\end{tabular}
		\vspace{0.5em}
		\centering
		\caption{Numerical results for the fluid density ($\rho$) with $N=3$ using ID-EOS~\eqref{eq: ID_eos} with $\gamma = \frac{4}{3}$.}
		\label{table:2DN3_ID4/3}
		
		\begin{tabular}{lllllll}
			\hline\noalign{\smallskip}
			Cells & $L^1$ error  & $L^1$ Order     & $L^2$ error   & $L^2$ Order     & $L^{\infty}$ error & $L^{\infty}$ Order\\
			\noalign{\smallskip}\hline\noalign{\smallskip}
			8 X  8    &    1.20701e-03     &          -             &       1.72004e-03       &        -           &        6.66764e-03       &          -    \\ 
			16 X 16    &    1.07539e-04     &        3.48851         &       2.07389e-04       &     3.05203        &        9.49574e-04       &       2.81183 \\ 
			32 X 32    &    2.13061e-06     &        5.65744         &       2.83217e-06       &     6.19429        &        1.06309e-05       &       6.48095 \\ 
			64 X 64    &    1.33558e-07     &        3.99573         &       1.77836e-07       &     3.99329        &        6.27284e-07       &       4.08300 \\ 
			128 X 128    &    8.24365e-09     &        4.01804         &       1.09660e-08       &     4.01944        &        3.85680e-08       &       4.02364 \\ 
			256 X 256    &    5.22042e-10     &        3.98105         &       6.96164e-10       &     3.97746        &        2.53310e-09       &       3.92843 \\  
			\noalign{\smallskip}\hline
		\end{tabular}
		\vspace{0.5em}
		\centering
		\caption{Numerical results for the fluid density ($\rho$) with $N=4$ using ID-EOS~\eqref{eq: ID_eos} with $\gamma = \frac{4}{3}$.}
		\label{table:2DN4_ID4/3}
		
		\begin{tabular}{lllllll}
			\hline\noalign{\smallskip}
			Cells & $L^1$ error  & $L^1$ Order     & $L^2$ error   & $L^2$ Order     & $L^{\infty}$ error & $L^{\infty}$ Order\\
			\noalign{\smallskip}\hline\noalign{\smallskip}
			8 X  8    &    3.47675e-04     &          -             &       6.10396e-04       &        -           &        2.60346e-03       &          -    \\ 
			16 X 16    &    6.90562e-07     &        8.97575         &       9.10668e-07       &     9.38861        &        8.39637e-07       &       11.59838 \\ 
			32 X 32    &    2.22796e-08     &        4.95398         &       3.01035e-08       &     4.91892        &        1.10837e-08       &       6.24325 \\ 
			64 X 64    &    6.63455e-10     &        5.06958         &       8.77410e-10       &     5.10054        &        6.47817e-10       &       4.09671 \\ 
			128 X 128    &    2.19940e-11     &        4.91482         &       2.87276e-11       &     4.93274        &        2.23230e-11       &       4.85898 \\
			\noalign{\smallskip}\hline
		\end{tabular}
		\vspace{0.5em}
		\centering
		\caption{Numerical results for the fluid density ($\rho$) with $N=3$ using TM-EOS~\eqref{eq: TM_eos}.}
		\label{table:2DN3_TM}
		
		\begin{tabular}{lllllll}
			\hline\noalign{\smallskip}
			Cells & $L^1$ error  & $L^1$ Order     & $L^2$ error   & $L^2$ Order     & $L^{\infty}$ error & $L^{\infty}$ Order\\
			\noalign{\smallskip}\hline\noalign{\smallskip}
			8 X  8    &    1.48610e-03     &          -             &       2.07644e-03       &        -           &        6.48470e-03       &          -    \\ 
			16 X 16    &    1.29184e-04     &        3.52402         &       2.37612e-04       &     3.12744        &        1.07060e-03       &       2.59862 \\ 
			32 X 32    &    1.27860e-05     &        3.33680         &       2.91775e-05       &     3.02567        &        1.33807e-04       &       3.00019 \\ 
			64 X 64    &    8.93192e-07     &        3.83945         &       2.82710e-06       &     3.36747        &        1.95143e-05       &       2.77755 \\ 
			128 X 128    &    4.55783e-08     &        4.29255         &       1.81141e-07       &     3.96414        &        1.80504e-06       &       3.43443 \\ 
			256 X 256    &    2.61695e-09     &        4.12239         &       9.86811e-09       &     4.19819        &        1.01499e-07       &       4.15249 \\ 
			\noalign{\smallskip}\hline
		\end{tabular}
	\end{table}
	\begin{table}[!htbp]
		\centering
		\caption{Numerical results for the fluid density ($\rho$) with $N=4$ using TM-EOS~\eqref{eq: TM_eos}.}
		\label{table:2DN4_TM}
		
		\begin{tabular}{lllllll}
			\hline\noalign{\smallskip}
			Cells & $L^1$ error  & $L^1$ Order     & $L^2$ error   & $L^2$ Order     & $L^{\infty}$ error & $L^{\infty}$ Order\\
			\noalign{\smallskip}\hline\noalign{\smallskip}
			8 X  8    &    4.64078e-04     &          -             &       7.10855e-04       &        -           &        1.47841e-03       &          -    \\ 
			16 X 16    &    4.14113e-05     &        3.48627         &       8.57190e-05       &     3.05187        &        3.73919e-04       &       1.98325 \\ 
			32 X 32    &    2.69768e-06     &        3.94023         &       7.42961e-06       &     3.52826        &        4.67069e-05       &       3.00102 \\ 
			64 X 64    &    8.88519e-08     &        4.92417         &       3.62725e-07       &     4.35634        &        3.18951e-06       &       3.87223 \\ 
			128 X 128    &    2.37090e-09     &        5.22789         &       1.13280e-08       &     5.00092        &        1.08212e-07       &       4.88140 \\ 
			\noalign{\smallskip}\hline
		\end{tabular}
		\vspace{0.5em}
		\centering
		\caption{Numerical results for the fluid density ($\rho$) with $N=3$ using IP-EOS~\eqref{eq: IP_eos}.}
		\label{table:2DN3_IP}
		
		\begin{tabular}{lllllll}
			\hline\noalign{\smallskip}
			Cells & $L^1$ error  & $L^1$ Order     & $L^2$ error   & $L^2$ Order     & $L^{\infty}$ error & $L^{\infty}$ Order\\
			\noalign{\smallskip}\hline\noalign{\smallskip}
			8 X  8    &    1.38954e-03     &          -             &       2.01461e-03       &        -           &        7.25200e-03       &          -    \\ 
			16 X 16    &    1.43020e-04     &        3.28032         &       2.64917e-04       &     2.92689        &        1.25114e-03       &       2.53514 \\ 
			32 X 32    &    1.25677e-05     &        3.50843         &       2.90901e-05       &     3.18694        &        1.40756e-04       &       3.15197 \\ 
			64 X 64    &    7.86051e-07     &        3.99895         &       2.44765e-06       &     3.57106        &        1.22787e-05       &       3.51897 \\ 
			128 X 128    &    4.02499e-08     &        4.28757         &       1.44383e-07       &     4.08343        &        9.00698e-07       &       3.76897 \\ 
			256 X 256    &    2.33386e-09     &        4.10820         &       7.93866e-09       &     4.18486        &        7.09017e-08       &       3.66715 \\ 
			512 X 512    &    1.49692e-10     &        3.96265         &       4.72607e-10       &     4.07018        &        4.30933e-09       &       4.04029 \\
			\noalign{\smallskip}\hline
		\end{tabular}
		\vspace{0.5em}
		\centering
		\caption{Numerical results for the fluid density ($\rho$) with $N=4$ using IP-EOS~\eqref{eq: IP_eos}.}
		\label{table:2DN4_IP}
		
		\begin{tabular}{lllllll}
			\hline\noalign{\smallskip}
			Cells & $L^1$ error  & $L^1$ Order     & $L^2$ error   & $L^2$ Order     & $L^{\infty}$ error & $L^{\infty}$ Order\\
			\noalign{\smallskip}\hline\noalign{\smallskip}
			8 X  8    &    5.50223e-04     &          -             &       8.10582e-04       &        -           &        2.09780e-03       &          -    \\ 
			16 X 16    &    4.67000e-05     &        3.55852         &       9.57590e-05       &     3.08148        &        3.84591e-04       &       2.44748 \\ 
			32 X 32    &    2.43152e-06     &        4.26350         &       6.57403e-06       &     3.86456        &        2.19757e-05       &       4.12934 \\ 
			64 X 64    &    6.37992e-08     &        5.25217         &       2.66344e-07       &     4.62542        &        2.02697e-06       &       3.43851 \\ 
			128 X 128    &    1.87842e-09     &        5.08595         &       8.16970e-09       &     5.02686        &        7.69111e-08       &       4.71999 \\ 
			\noalign{\smallskip}\hline
		\end{tabular}
		\vspace{0.5em}
		\centering
		\caption{Numerical results for the fluid density ($\rho$) with $N=3$ using RC-EOS~\eqref{eq: RC_eos}.}
		\label{table:2DN3_RC}
		
		\begin{tabular}{lllllll}
			\hline\noalign{\smallskip}
			Cells & $L^1$ error  & $L^1$ Order     & $L^2$ error   & $L^2$ Order     & $L^{\infty}$ error & $L^{\infty}$ Order\\
			\noalign{\smallskip}\hline\noalign{\smallskip}
			8 X  8    &    1.20185e-03     &          -             &       1.70112e-03       &        -           &        6.57762e-03       &          -    \\ 
			16 X 16    &    1.14457e-04     &        3.39237         &       2.12693e-04       &     2.99964        &        9.49896e-04       &       2.79173 \\ 
			32 X 32    &    9.59948e-06     &        3.57571         &       2.09787e-05       &     3.34177        &        8.40240e-05       &       3.49890 \\ 
			64 X 64    &    5.26568e-07     &        4.18827         &       1.63985e-06       &     3.67729        &        1.37448e-05       &       2.61192 \\ 
			128 X 128    &    2.42613e-08     &        4.43989         &       8.24832e-08       &     4.31332        &        7.57173e-07       &       4.18212 \\ 
			256 X 256    &    1.33361e-09     &        4.18525         &       4.02644e-09       &     4.35652        &        3.30096e-08       &       4.51966 \\
			\noalign{\smallskip}\hline
		\end{tabular}
		\vspace{0.5em}
		\centering
		\caption{Numerical results for the fluid density ($\rho$) with $N=4$ using RC-EOS~\eqref{eq: RC_eos}.}
		\label{table:2DN4_RC}
		
		\begin{tabular}{lllllll}
			\hline\noalign{\smallskip}
			Cells & $L^1$ error  & $L^1$ Order     & $L^2$ error   & $L^2$ Order     & $L^{\infty}$ error & $L^{\infty}$ Order\\
			\noalign{\smallskip}\hline\noalign{\smallskip}
			8 X  8    &    3.97883e-04     &          -             &       6.57848e-04       &        -           &        1.90436e-03       &          -    \\ 
			16 X 16    &    2.89484e-05     &        3.78079         &       5.90687e-05       &     3.47729        &        2.44736e-04       &       2.96001 \\ 
			32 X 32    &    2.09639e-06     &        3.78750         &       5.72200e-06       &     3.36780        &        3.44920e-05       &       2.82689 \\ 
			64 X 64    &    4.18159e-08     &        5.64771         &       1.89596e-07       &     4.91552        &        1.74939e-06       &       4.30134 \\ 
			128 X 128    &    1.00866e-09     &        5.37354         &       4.62049e-09       &     5.35874        &        4.47439e-08       &       5.28902 \\ 
			\noalign{\smallskip}\hline
		\end{tabular}
	\end{table}
	We can observe that the scheme converges with optimal order of accuracy for all the equations of state. In particular, we observe fourth-order of accuracy for $N=3$ and fifth-order of accuracy for $N=4$.

	\subsubsection{2-D Riemann problem 1}
	This is a Riemann problem in two dimensions, with four constant states in the four quadrants of the square $[0,1]\times [0,1]$ at the initial time. This problem is used in the literature several times~\cite{del2002efficient,lucas2004assessment,wu2016physical} to test the numerical schemes, as the initial condition has two contact waves with very high jumps in the transverse velocity from zero to near the speed of light. Specifically, the initial state of the fluid is given by,
	\[
	(\rho, v_1, v_2, p) = \begin{cases}
		(0.1, 0, 0, 0.01) & \text{if}\ x > 0.5,\ y > 0.5\\
		(0.1, 0.99, 0, 1) & \text{if}\ x < 0.5,\ y>0.5\\
		(0.5, 0, 0, 1) & \text{if}\ x < 0.5,\ y < 0.5\\
		(0.1, 0, 0.99, 1) & \text{if}\ x > 0.5,\ y<0.5.
	\end{cases}
	\]
	We run the simulations for this problem with $400\times 400$ cells and $N=4$ with outflow boundaries using different equations of state and present the results in Figure~\ref{fig:2dwurp1.lnden} and Figure~\ref{fig:2dwurp1.lnpres} at time $t=0.4$. For the ID-EOS~\eqref{eq: ID_eos}, we use $\gamma = \frac{5}{3}$, and $\gamma = \frac{4}{3}$ for the simulations.
	
	\begin{figure}[]
		\centering
		\begin{subfigure}{0.31\textwidth}
			\includegraphics[width=\linewidth]{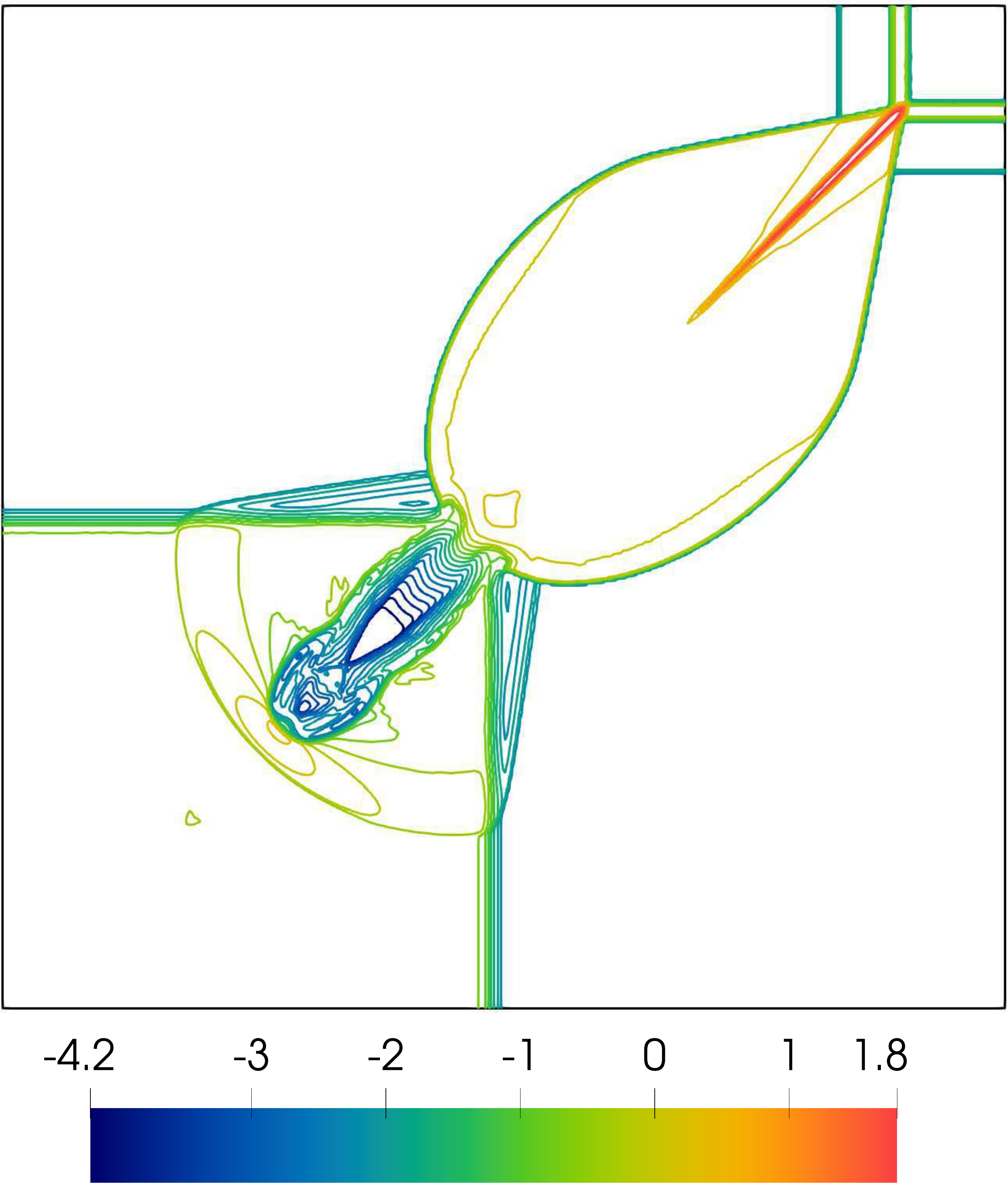}
			\caption{ID-EOS with $\gamma = \frac{5}{3}$: 25 contours in $[-4.2, 1.8]$.}
		\end{subfigure}
		\begin{subfigure}{0.31\textwidth}
			\includegraphics[width=\linewidth]{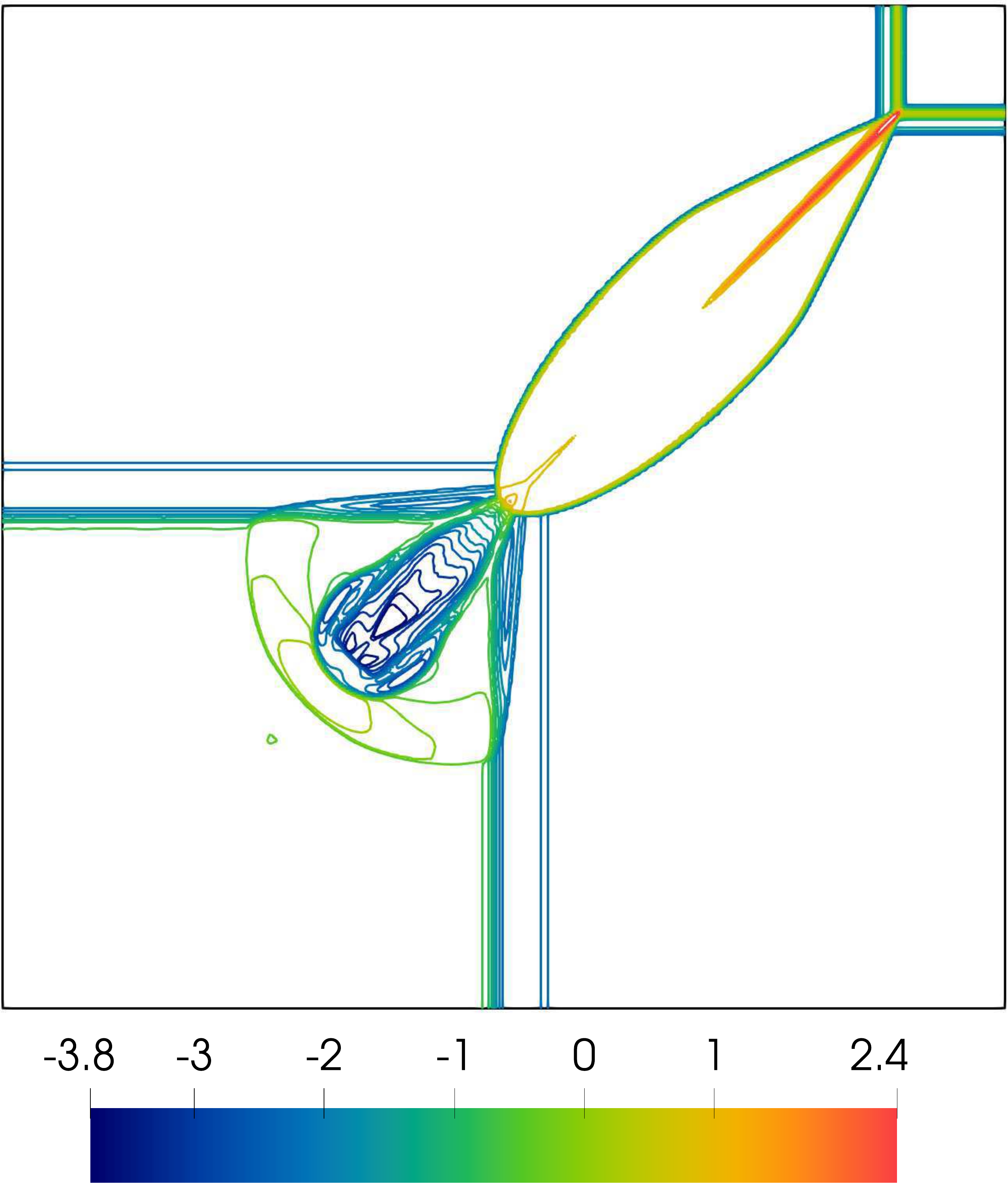}
			\caption{ID-EOS with $\gamma = \frac{4}{3}$: 25 contours in $[-3.8, 2.4]$.}
		\end{subfigure}
		\begin{subfigure}{0.31\textwidth}
			\includegraphics[width=\linewidth]{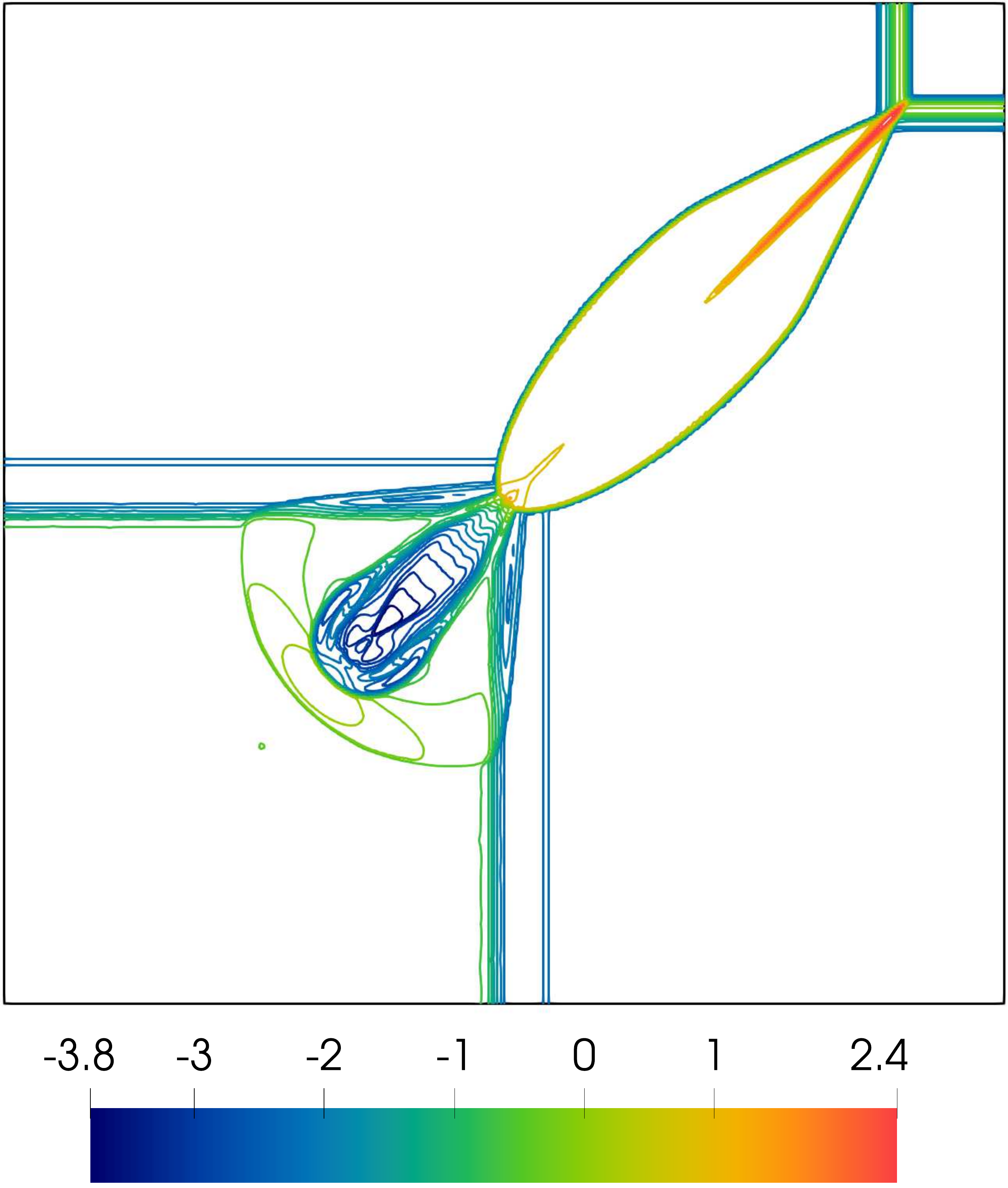}
			\caption{TM-EOS: 25 contours in $[-3.8, 2.4]$.\\}
		\end{subfigure}
		\vspace{0.2cm}
		\begin{subfigure}{0.31\textwidth}
			\includegraphics[width=\linewidth]{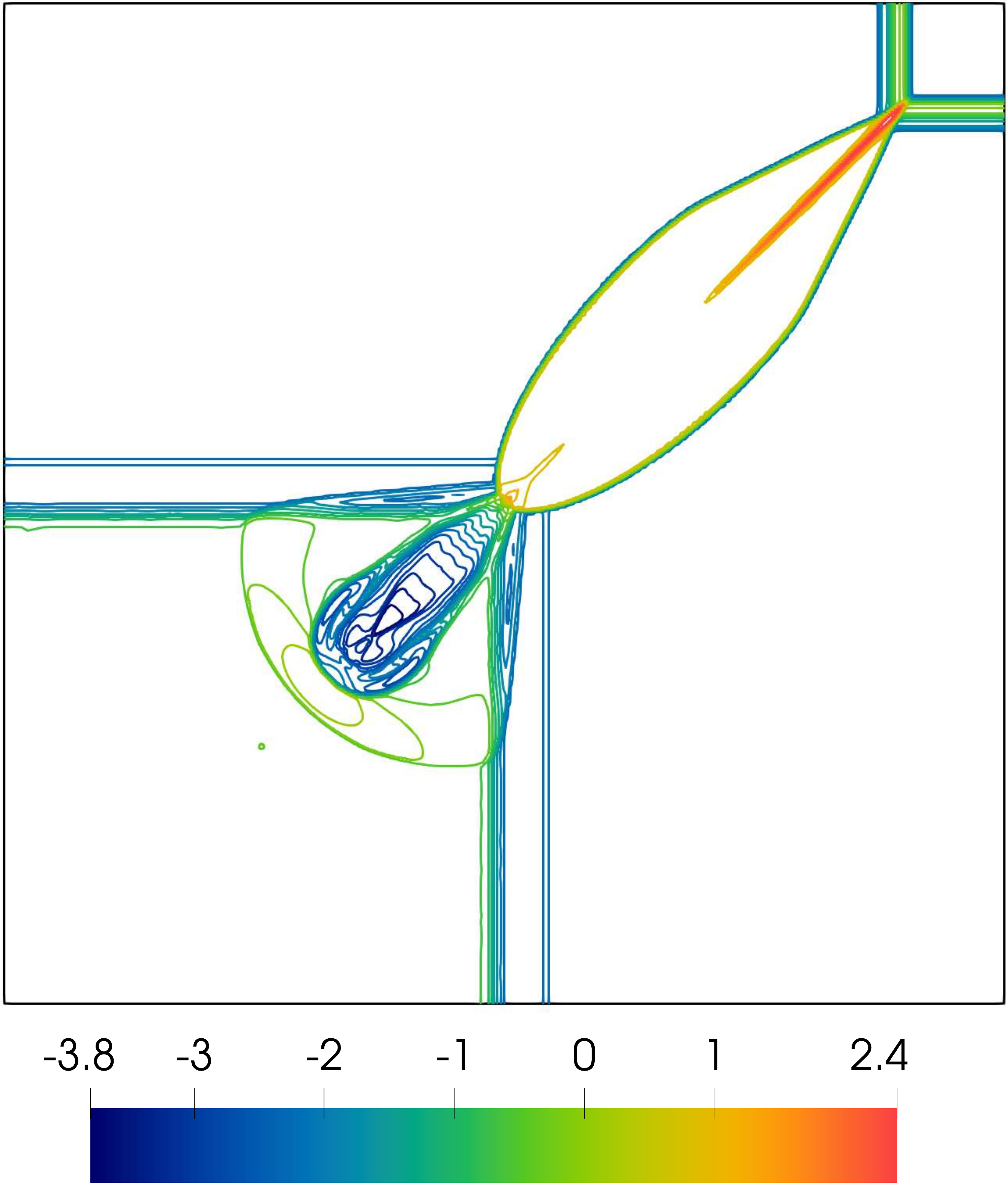}
			\caption{IP-EOS: 25 contours in $[-3.8, 2.4]$.}
		\end{subfigure}
		\begin{subfigure}{0.31\textwidth}
			\includegraphics[width=\linewidth]{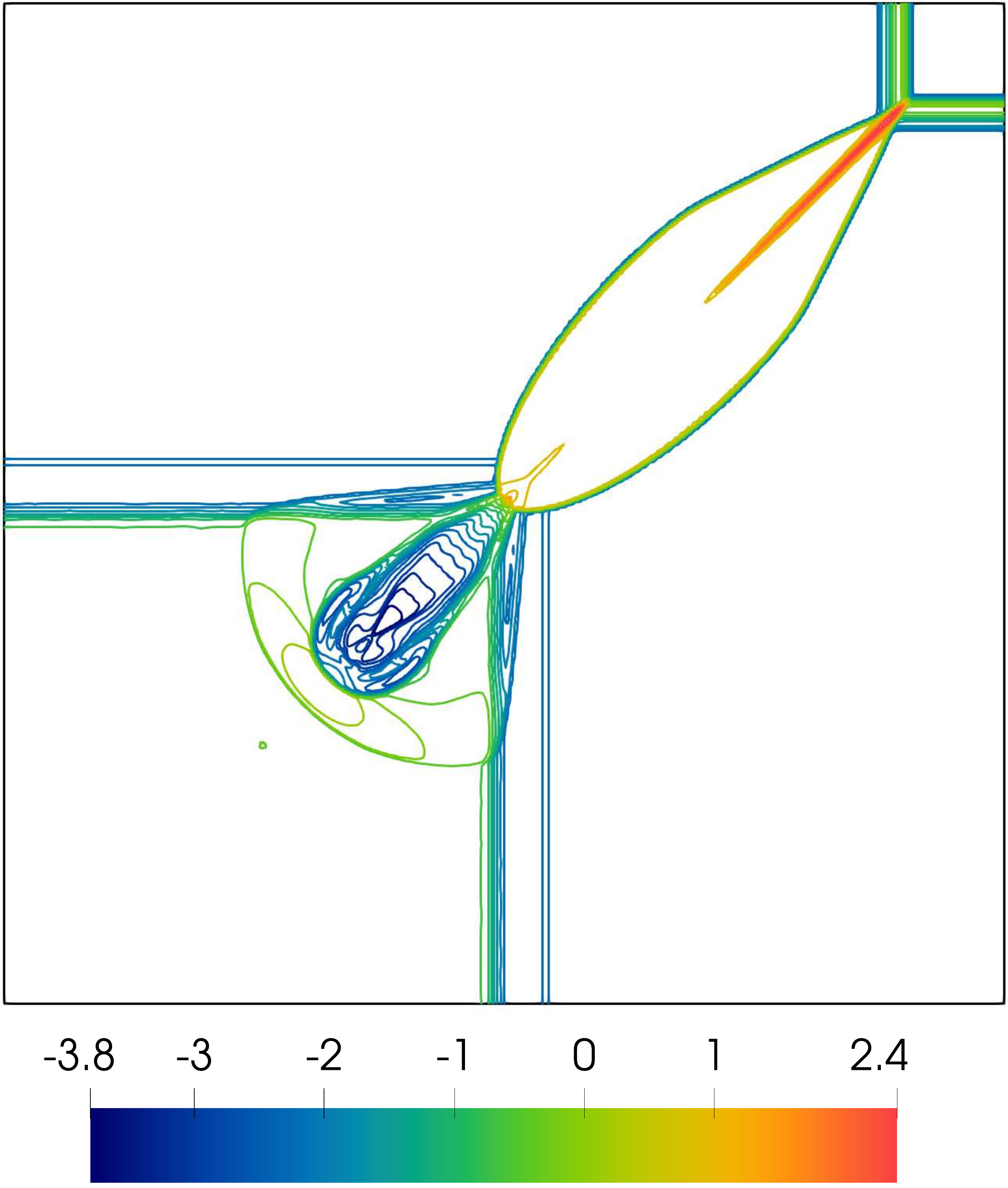}
			\caption{RC-EOS: 25 contours in $[-3.8, 2.4]$.}
		\end{subfigure}
		\begin{subfigure}{0.31\textwidth}
			\includegraphics[width=\linewidth]{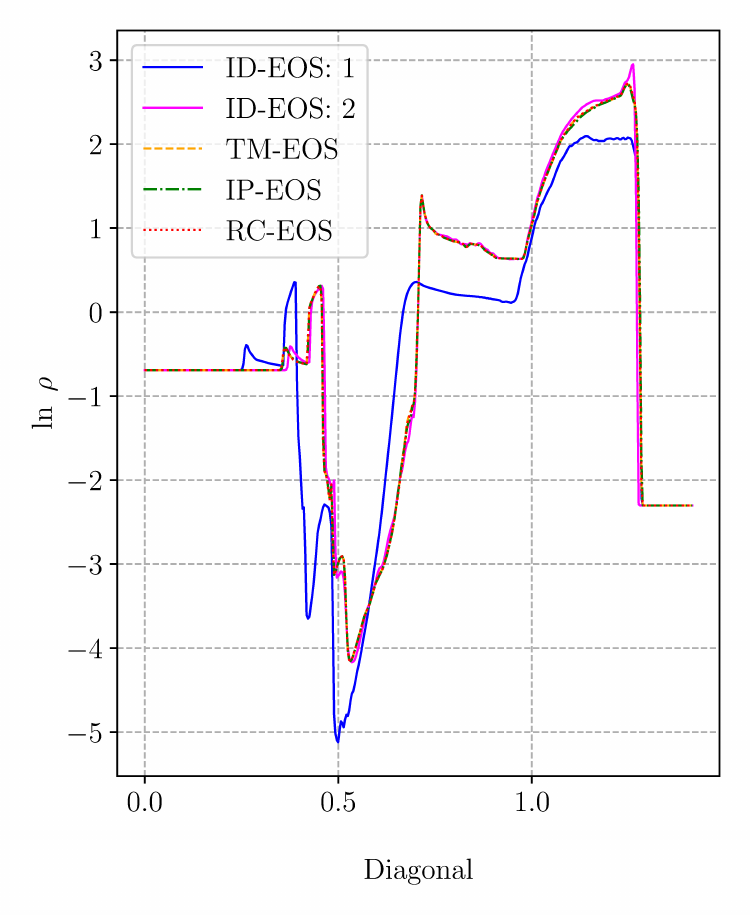}
			\caption{Cut plot from lower-left to upper-right.}
			\label{fig:2dwurp1.cut.lnrho}
		\end{subfigure}
		\vspace{0.2cm}
		\caption{2-D Riemann problem 1: Plot of $\ln \rho$ with $400$ cells and $N=4$.}
		\label{fig:2dwurp1.lnden}
	\end{figure}
	\begin{figure}[]
		\centering
		\begin{subfigure}{0.31\textwidth}
			\includegraphics[width=\linewidth]{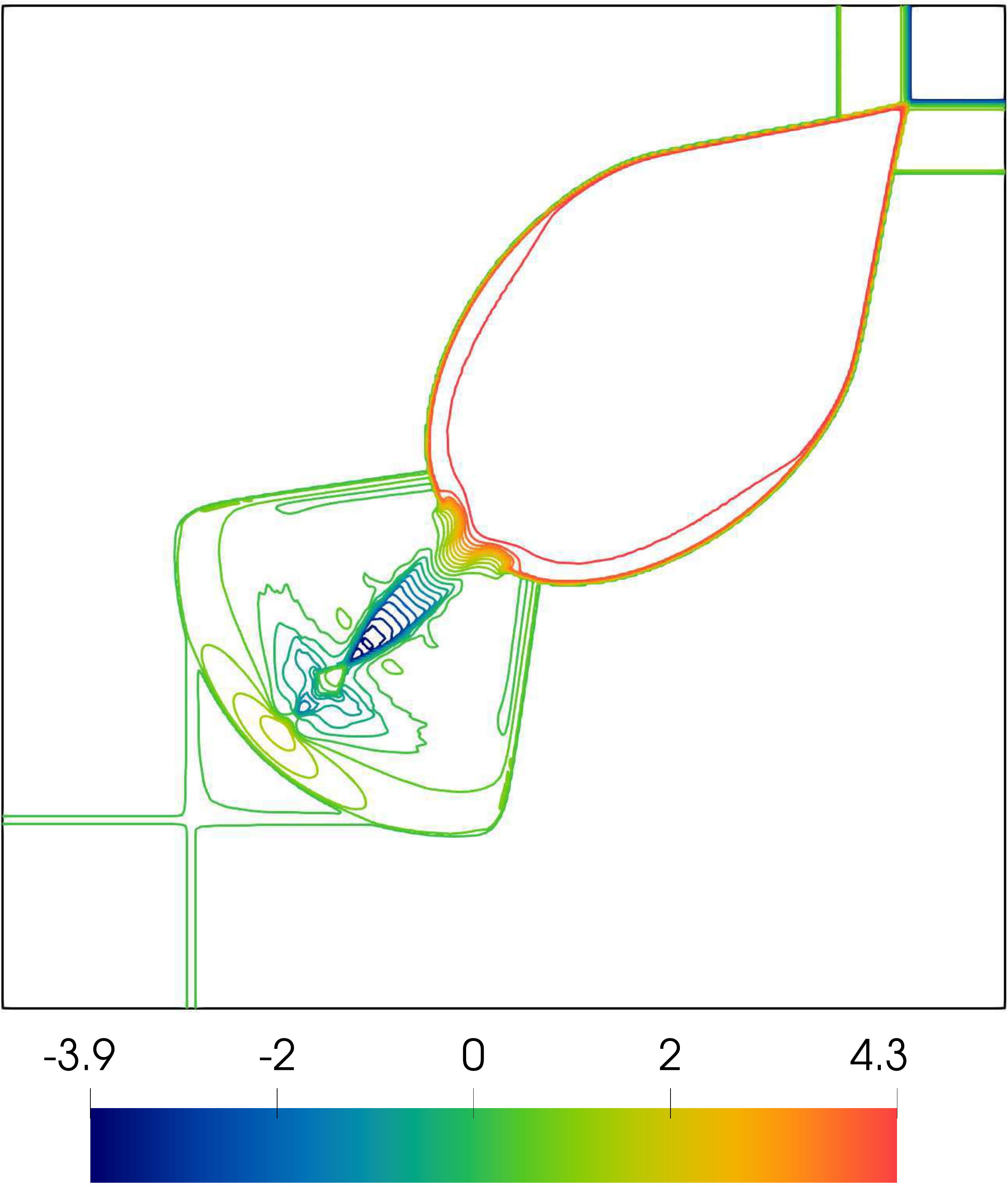}
			\caption{ID-EOS with $\gamma = \frac{5}{3}$: 25 contours in $[-3.9, 4.3]$.}
		\end{subfigure}
		\begin{subfigure}{0.31\textwidth}
			\includegraphics[width=\linewidth]{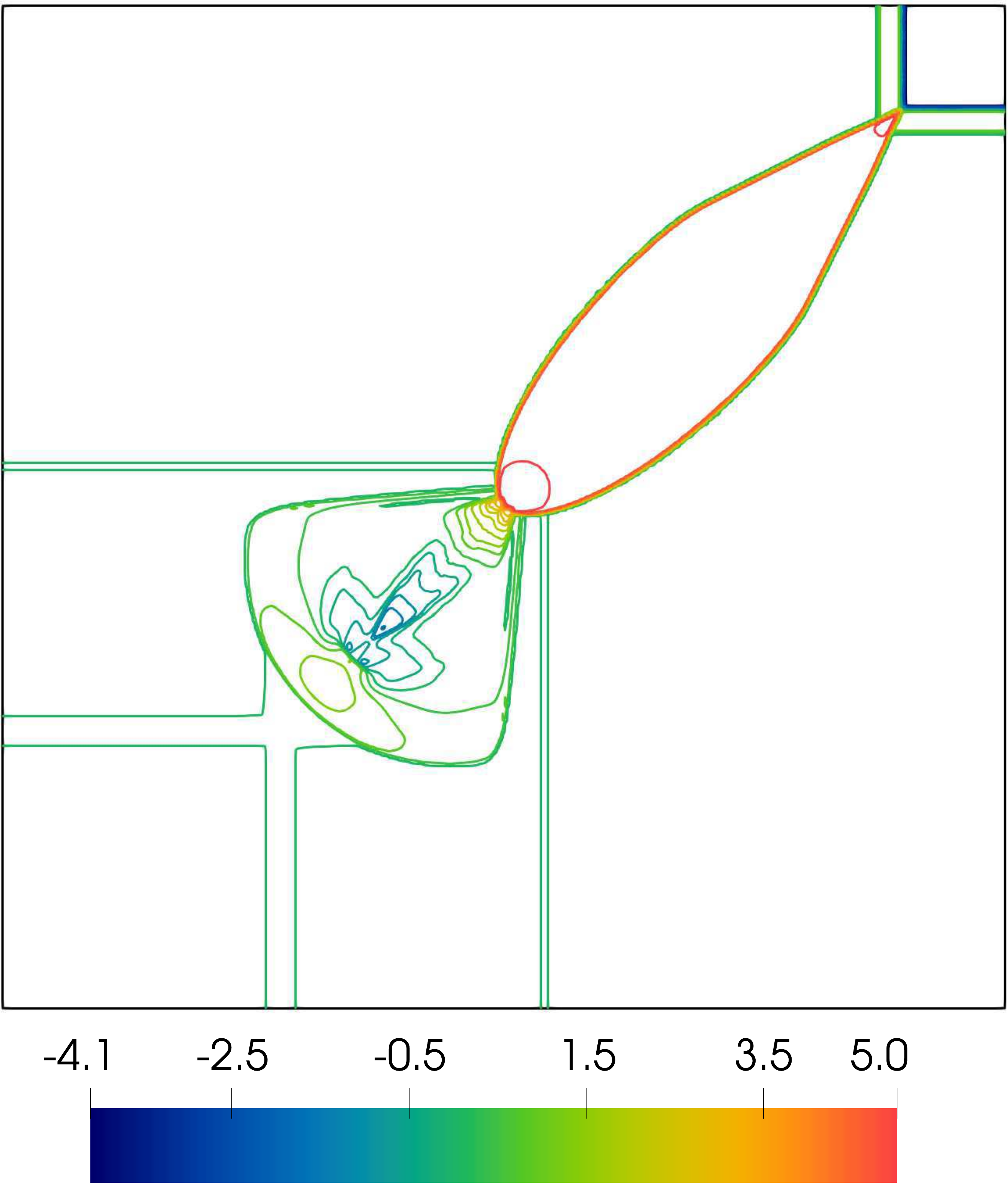}
			\caption{ID-EOS with $\gamma = \frac{4}{3}$: 25 contours in $[-4.1, 5.0]$.}
		\end{subfigure}
		\begin{subfigure}{0.31\textwidth}
			\includegraphics[width=\linewidth]{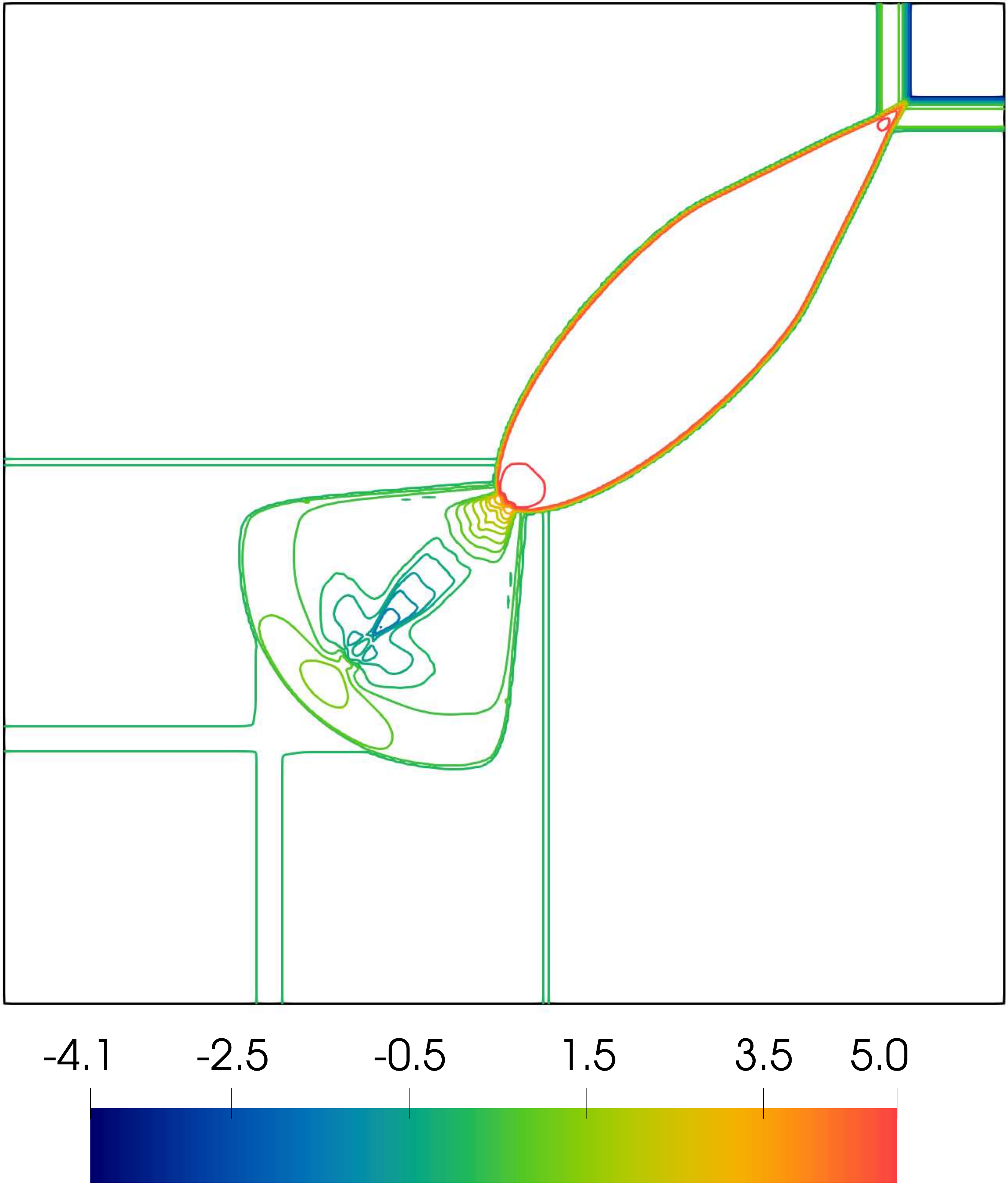}
			\caption{TM-EOS: 25 contours in $[-4.1, 5.0]$.\\}
		\end{subfigure}
		\vspace{0.2cm}
		\begin{subfigure}{0.31\textwidth}
			\includegraphics[width=\linewidth]{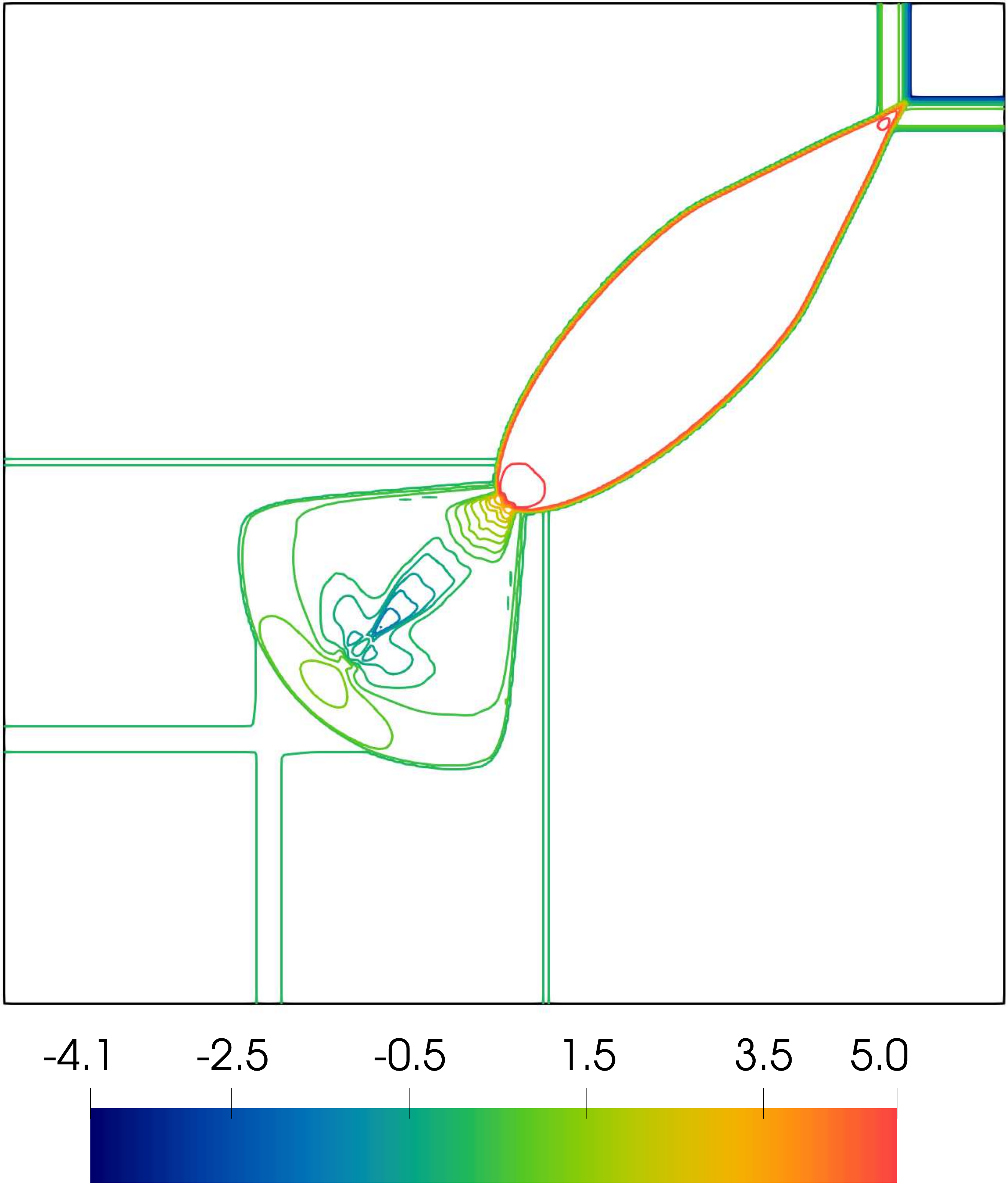}
			\caption{IP-EOS: 25 contours in $[-4.1, 5.0]$.}
		\end{subfigure}
		\begin{subfigure}{0.31\textwidth}
			\includegraphics[width=\linewidth]{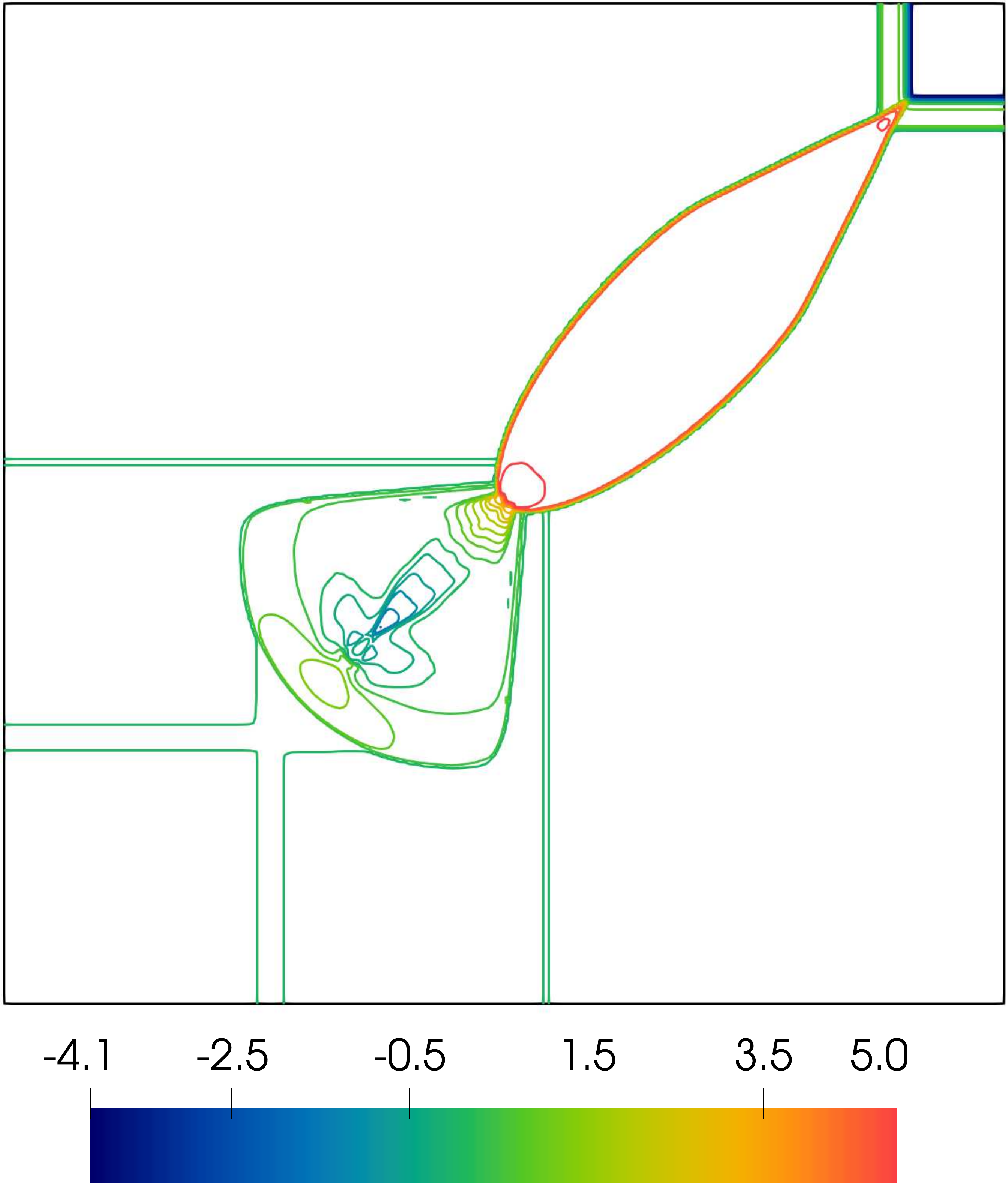}
			\caption{RC-EOS: 25 contours in $[-4.1, 5.0]$.}
		\end{subfigure}
		\begin{subfigure}{0.31\textwidth}
			\includegraphics[width=\linewidth]{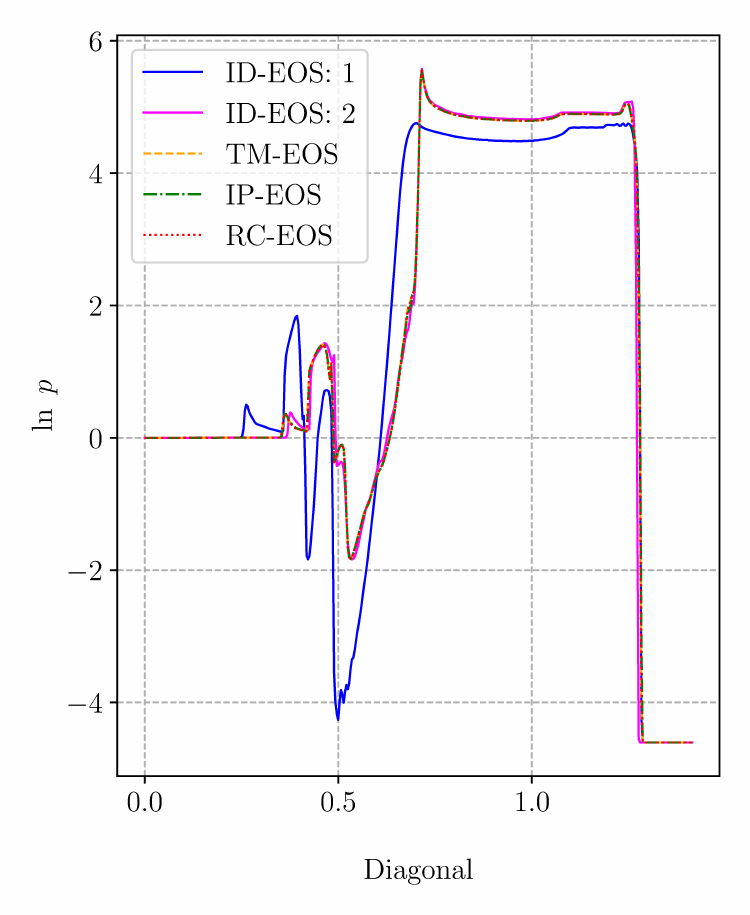}
			\caption{Cut plot from lower-left to upper-right.}
			\label{fig:2dwurp1.cut.lnp}
		\end{subfigure}
		\vspace{0.2cm}
		\caption{2-D Riemann problem 1: Plot of $\ln p$ with $400$ cells and $N=4$.}
		\label{fig:2dwurp1.lnpres}
	\end{figure}
	
	Because of the interactions of the discontinuities, a jet-like structure gets formed in the solution with time, and a mushroom-like structure gets formed in the lower-left quadrant with all the equations of state. The solution also has two curved shock waves, which move with a higher speed when using the ID-EOS with $\gamma =\frac{5}{3}$ compared to the other cases, and the scheme can capture all the waves in the solution effectively. We can also observe from the figure that the solutions with ID-EOS with $\gamma = \frac{4}{3}$, TM-EOS, IP-EOS, and RC-EOS are very similar, hence we have compared the results with cut-plots from the lower-left corner to the upper-right corner of the domain in Figure~\ref{fig:2dwurp1.cut.lnrho} and Figure~\ref{fig:2dwurp1.cut.lnp}. Here and in all the cut-plots hereafter, ID-EOS with $\gamma=\frac{5}{3}$ and $\gamma=\frac{4}{3}$ are denoted as ID-EOS: 1 and ID-EOS: 2, respectively.
	\subsubsection{2-D Riemann problem 2}
	This problem is considered from~\cite{wu2015high}, which also has four constant states in four quadrants of the domain $[0,1]\times[0,1]$ at initial time given by,
	\begin{align*}
		(\rho&, v_1, v_2, p)\\
		&= \begin{cases}
			(0.1, 0, 0, 20) & \text{if}\ x > 0.5,\ y > 0.5\\
			(0.00414329639576, 0.9946418833556542, 0, 0.05) & \text{if}\ x < 0.5,\ y>0.5\\
			(0.01, 0, 0, 0.05) & \text{if}\ x < 0.5,\ y < 0.5\\
			(0.00414329639576, 0,0.9946418833556542, 0.05) & \text{if}\ x > 0.5,\ y<0.5.\\
		\end{cases}
	\end{align*}
	The initial state has two contact discontinuities and two shock waves in it, which interact with each other, forming a mushroom-cloud in the lower-left quadrant. We run the simulations with outflow boundaries and using $400\times 400$ cells and $N=4$ till time $t=0.4$. The results of our simulations are shown in Figure~\ref{fig:2dwurp2.lnden} and Figure~\ref{fig:2dwurp2.lnpres}.
	\begin{figure}[]
		\centering
		\begin{subfigure}{0.31\textwidth}
			\includegraphics[width=\linewidth]{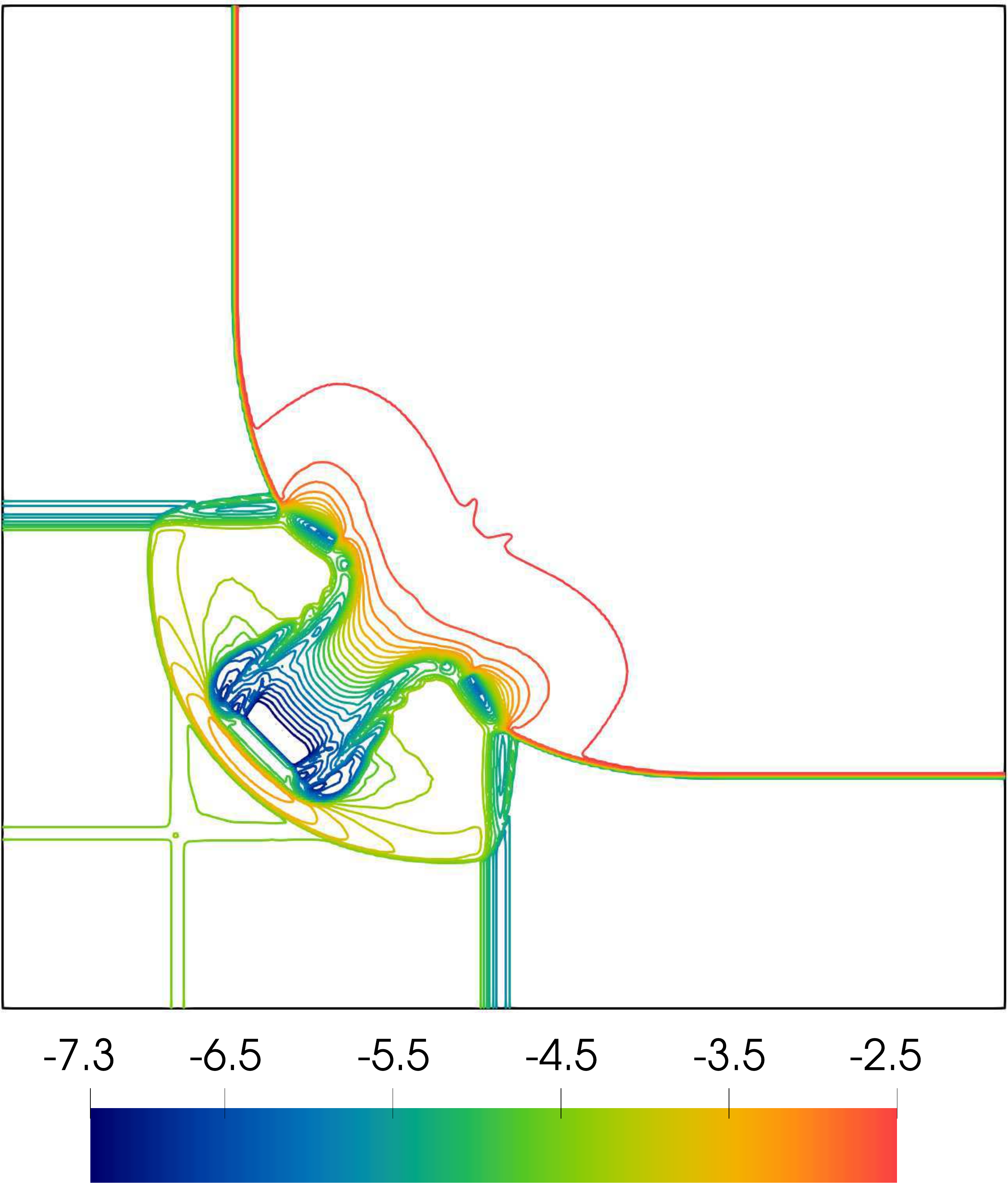}
			\caption{ID-EOS with $\gamma = \frac{5}{3}$: 25 contours in $[-7.3, -2.5]$.}
		\end{subfigure}
		\begin{subfigure}{0.31\textwidth}
			\includegraphics[width=\linewidth]{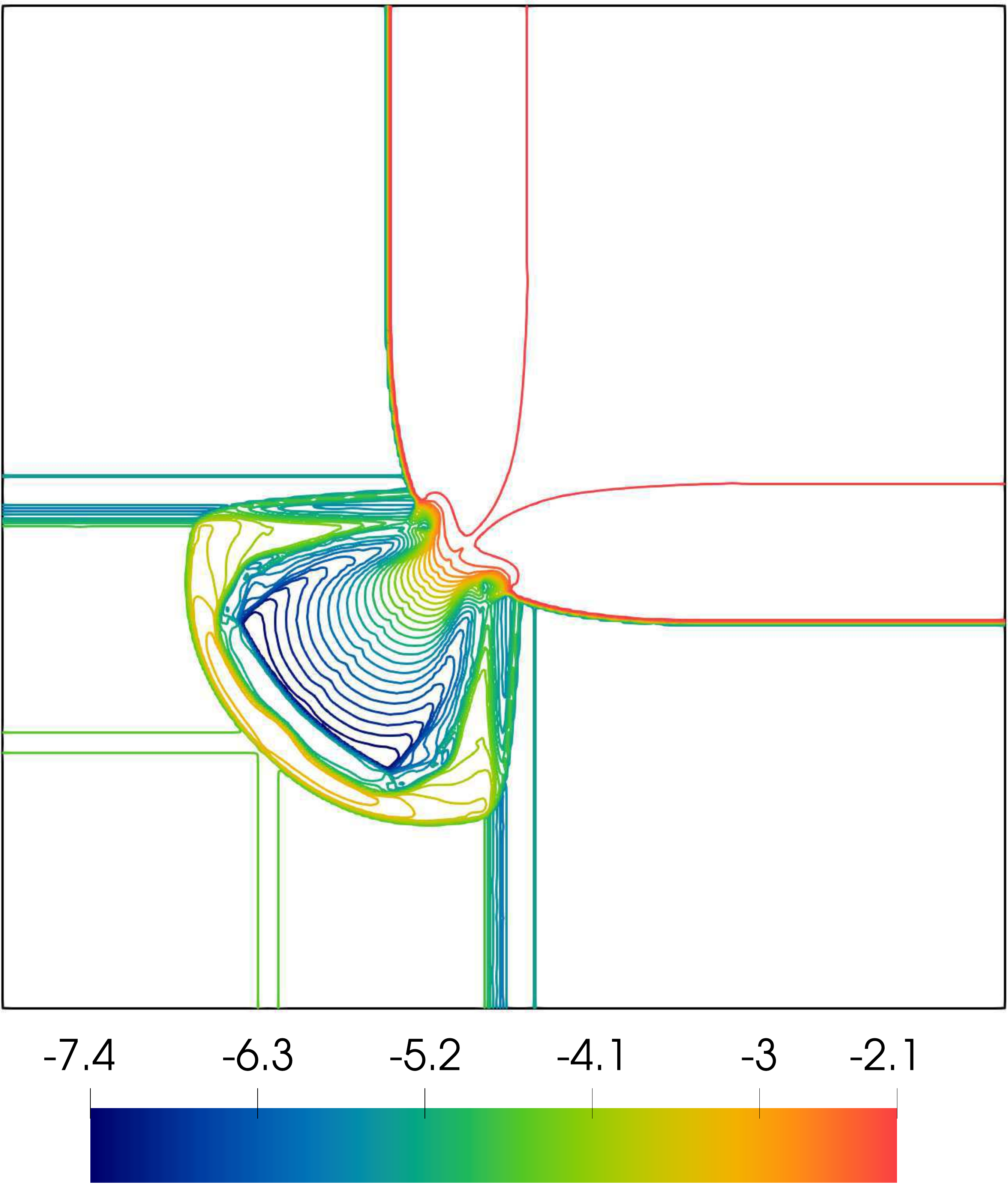}
			\caption{ID-EOS with $\gamma = \frac{4}{3}$: 25 contours in $[-7.4, -2.1]$.}
		\end{subfigure}
		\begin{subfigure}{0.31\textwidth}
			\includegraphics[width=\linewidth]{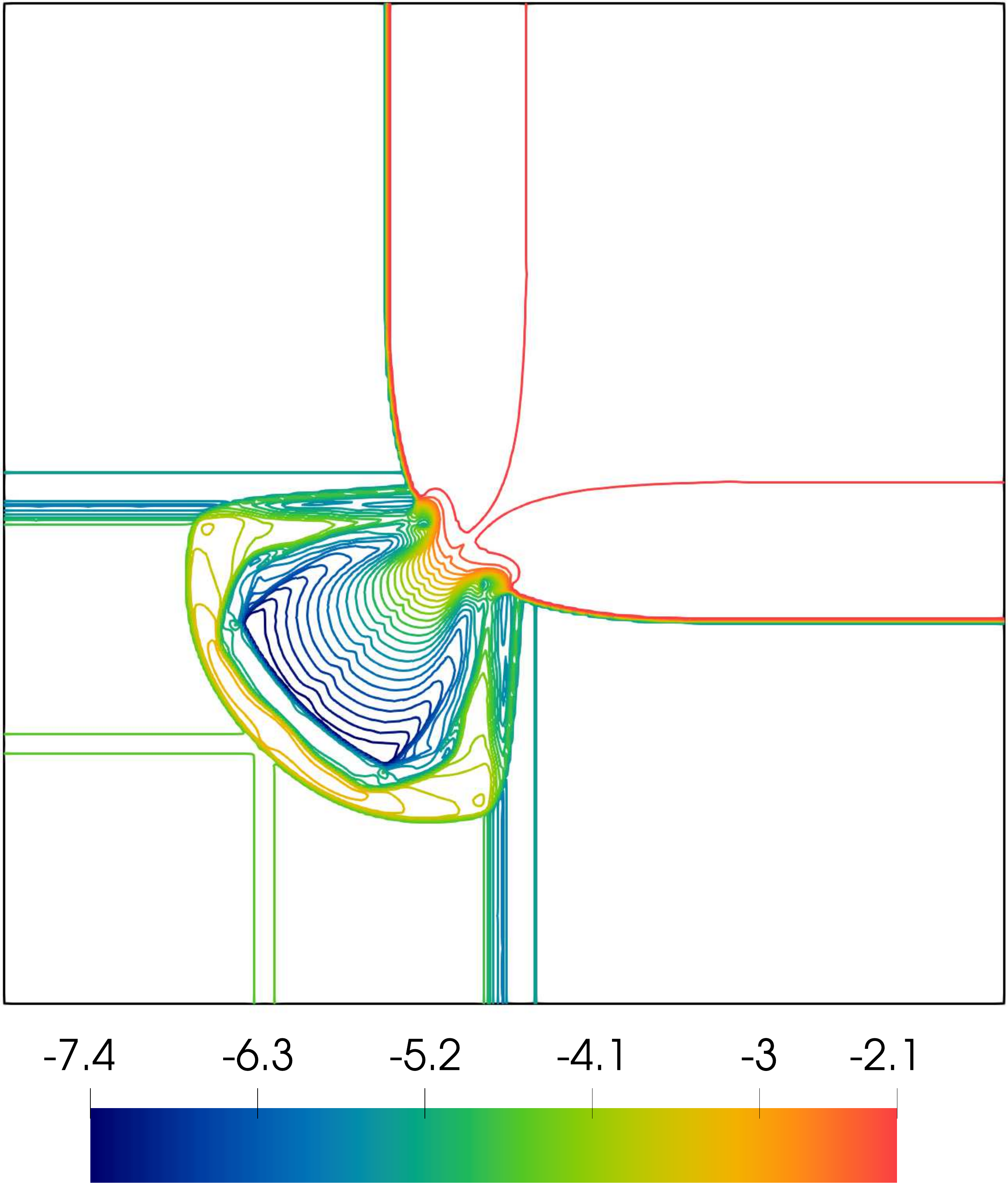}
			\caption{TM-EOS: 25 contours in $[-7.4, -2.1]$.}
		\end{subfigure}
		\begin{subfigure}{0.31\textwidth}
			\includegraphics[width=\linewidth]{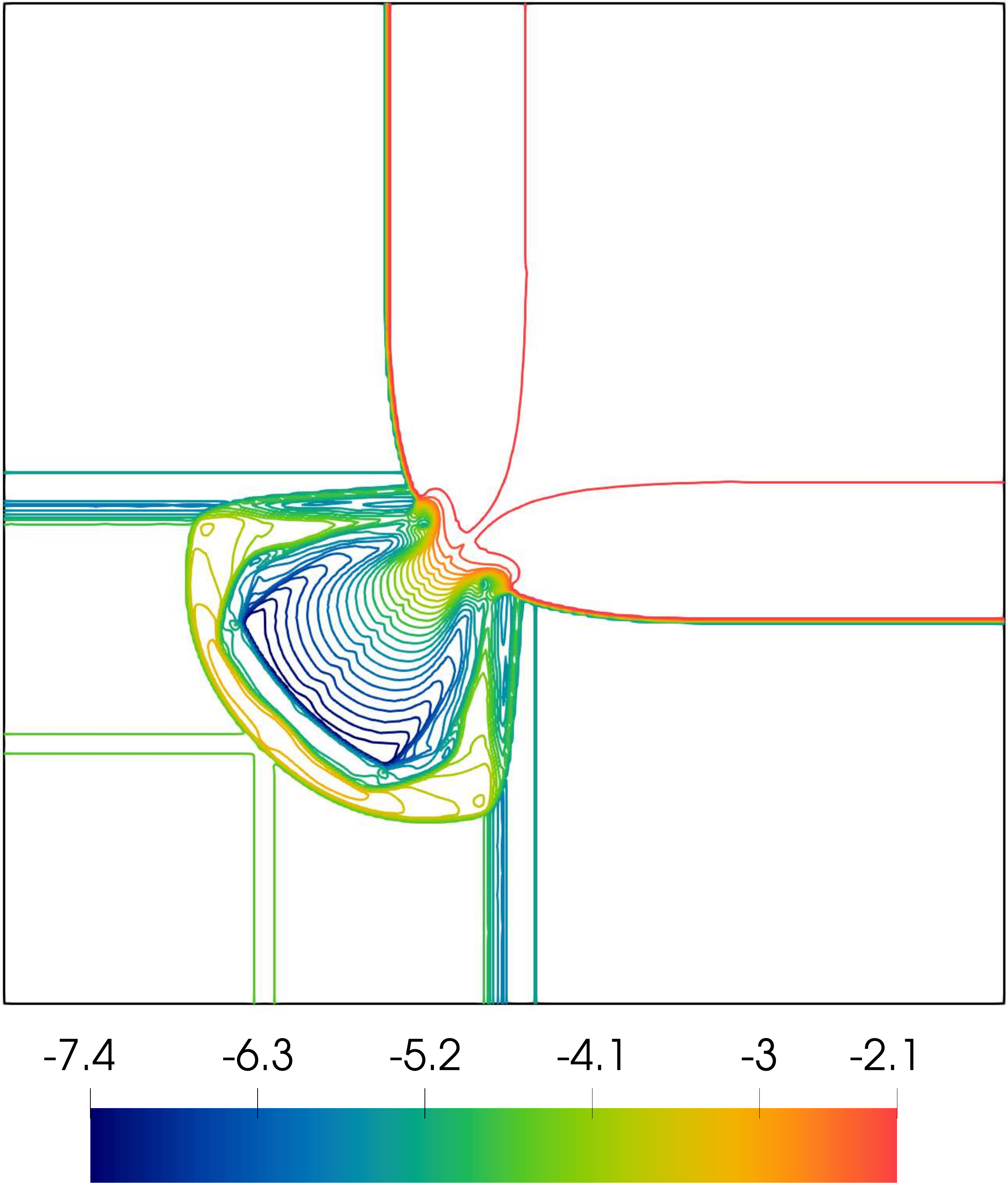}
			\caption{IP-EOS: 25 contours in $[-7.4, -2.1]$.\\}
		\end{subfigure}
		\begin{subfigure}{0.31\textwidth}
			\includegraphics[width=\linewidth]{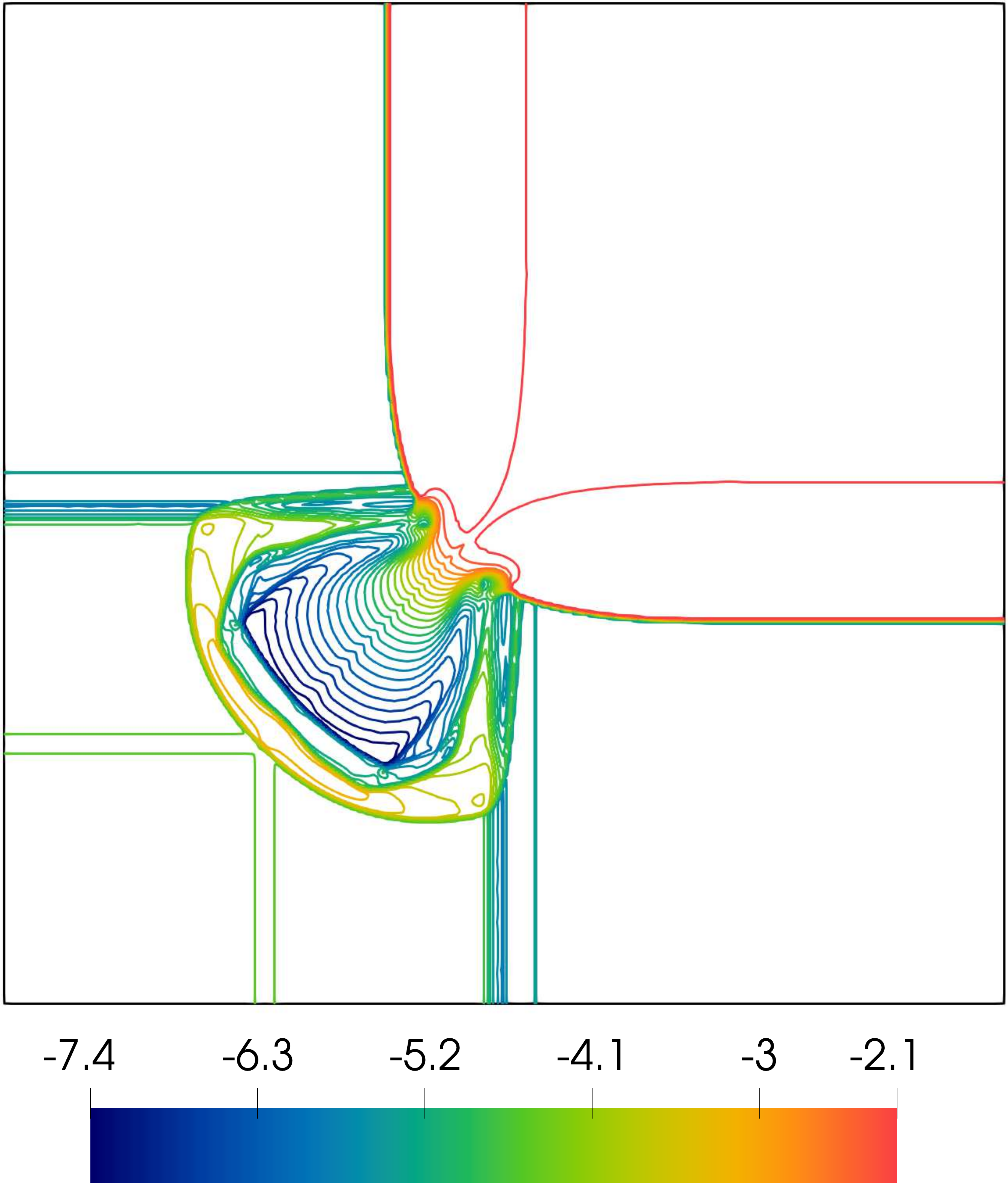}
			\caption{RC-EOS: 25 contours in $[-7.4, -2.1]$.}
		\end{subfigure}
		\begin{subfigure}{0.31\textwidth}
			\includegraphics[width=\linewidth]{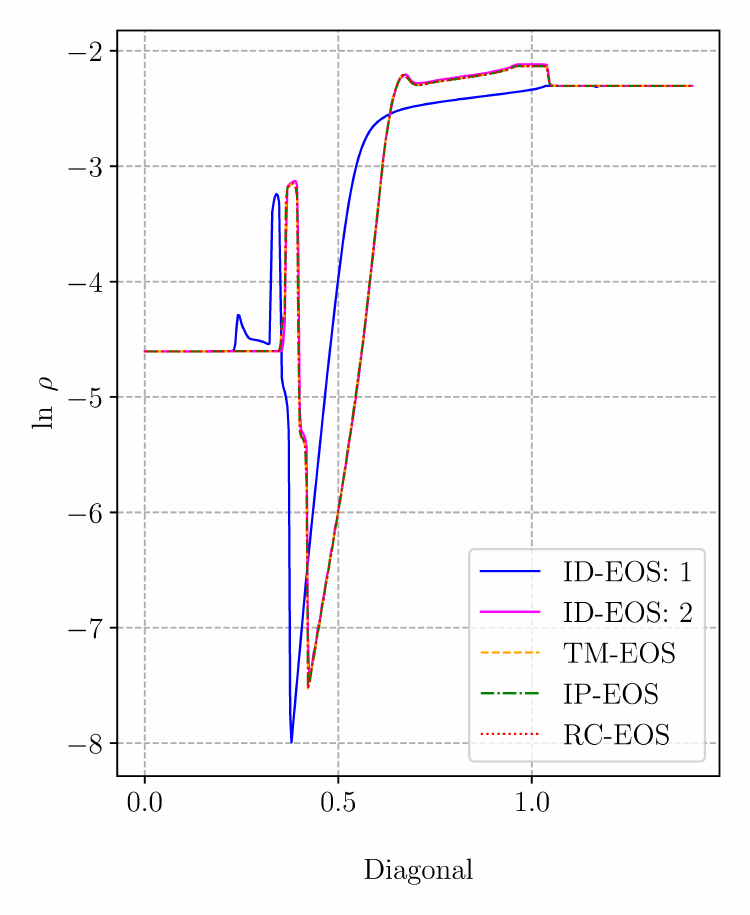}
			\caption{Cut plot from lower-left to upper-right.\\}
		\end{subfigure}
		\vspace{0.2cm}
		\caption{2-D Riemann problem 2: Plot of $\ln \rho$ with $400$ cells and $N=4$.}
		\label{fig:2dwurp2.lnden}
	\end{figure}
	\begin{figure}[]
		\centering
		\begin{subfigure}{0.31\textwidth}
			\includegraphics[width=\linewidth]{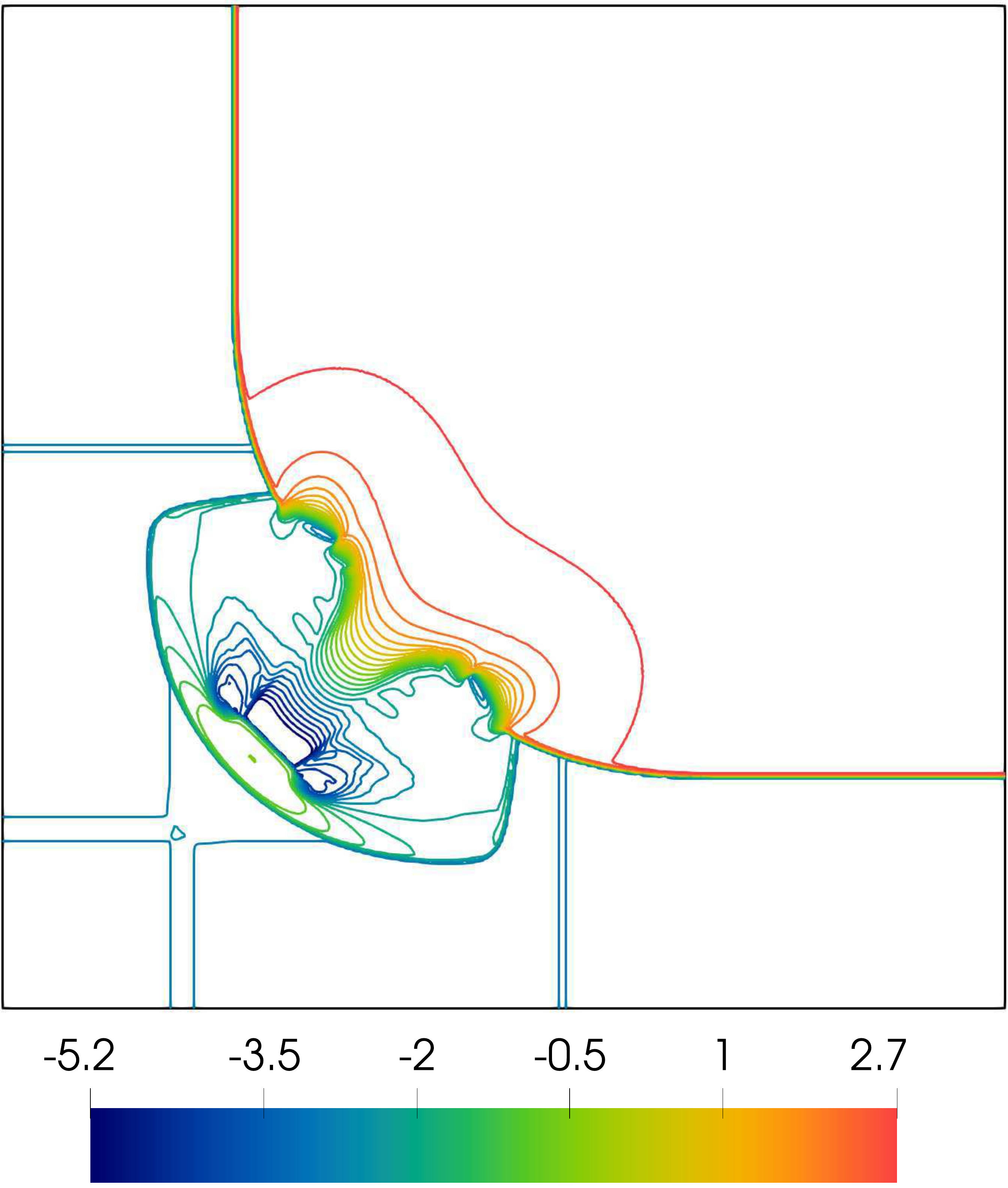}
			\caption{ID-EOS with $\gamma = \frac{5}{3}$: 25 contours in $[-5.2, 2.7]$.}
		\end{subfigure}
		\begin{subfigure}{0.31\textwidth}
			\includegraphics[width=\linewidth]{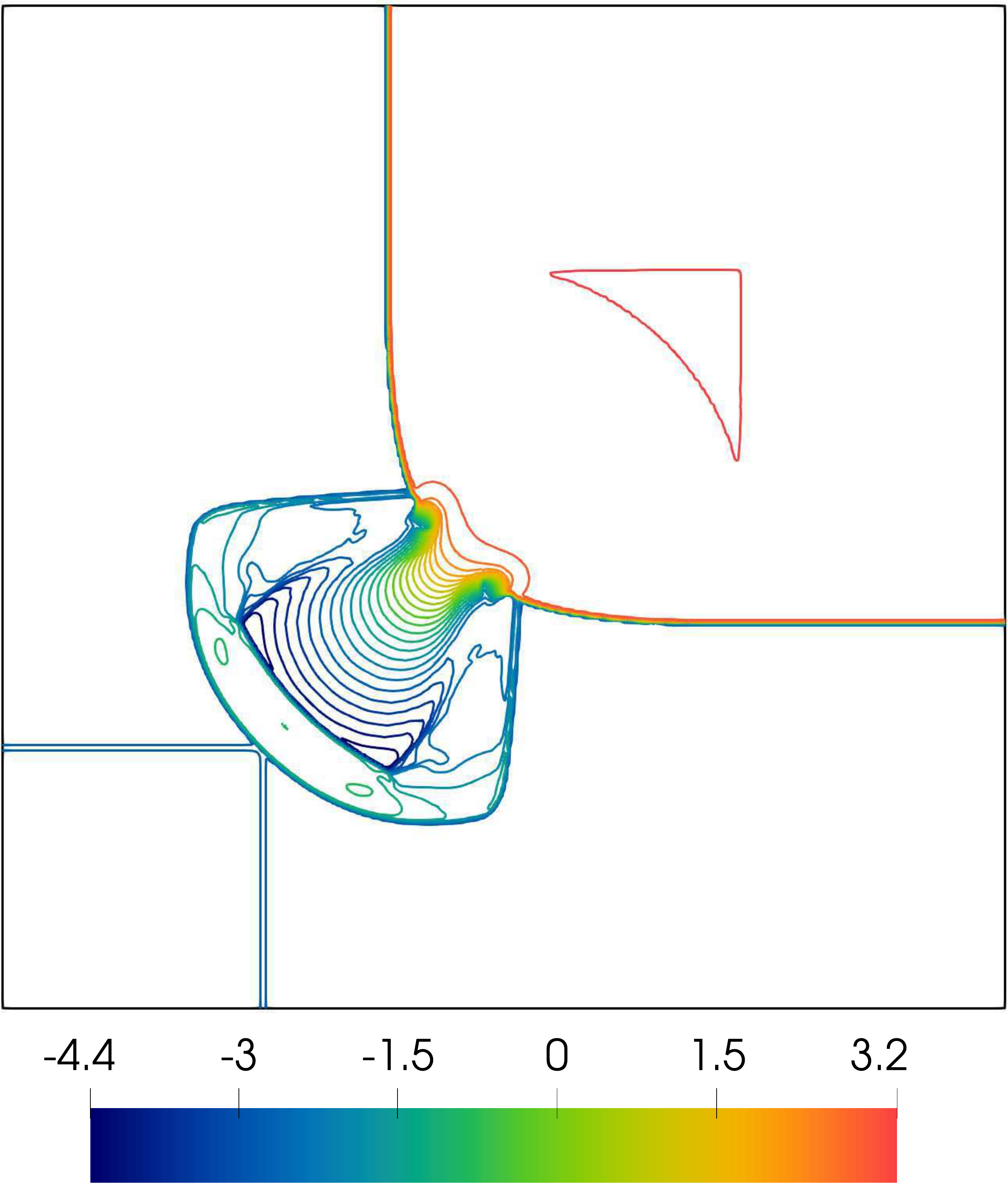}
			\caption{ID-EOS with $\gamma = \frac{4}{3}$: 25 contours in $[-4.4, 3.2]$.}
		\end{subfigure}
		\begin{subfigure}{0.31\textwidth}
			\includegraphics[width=\linewidth]{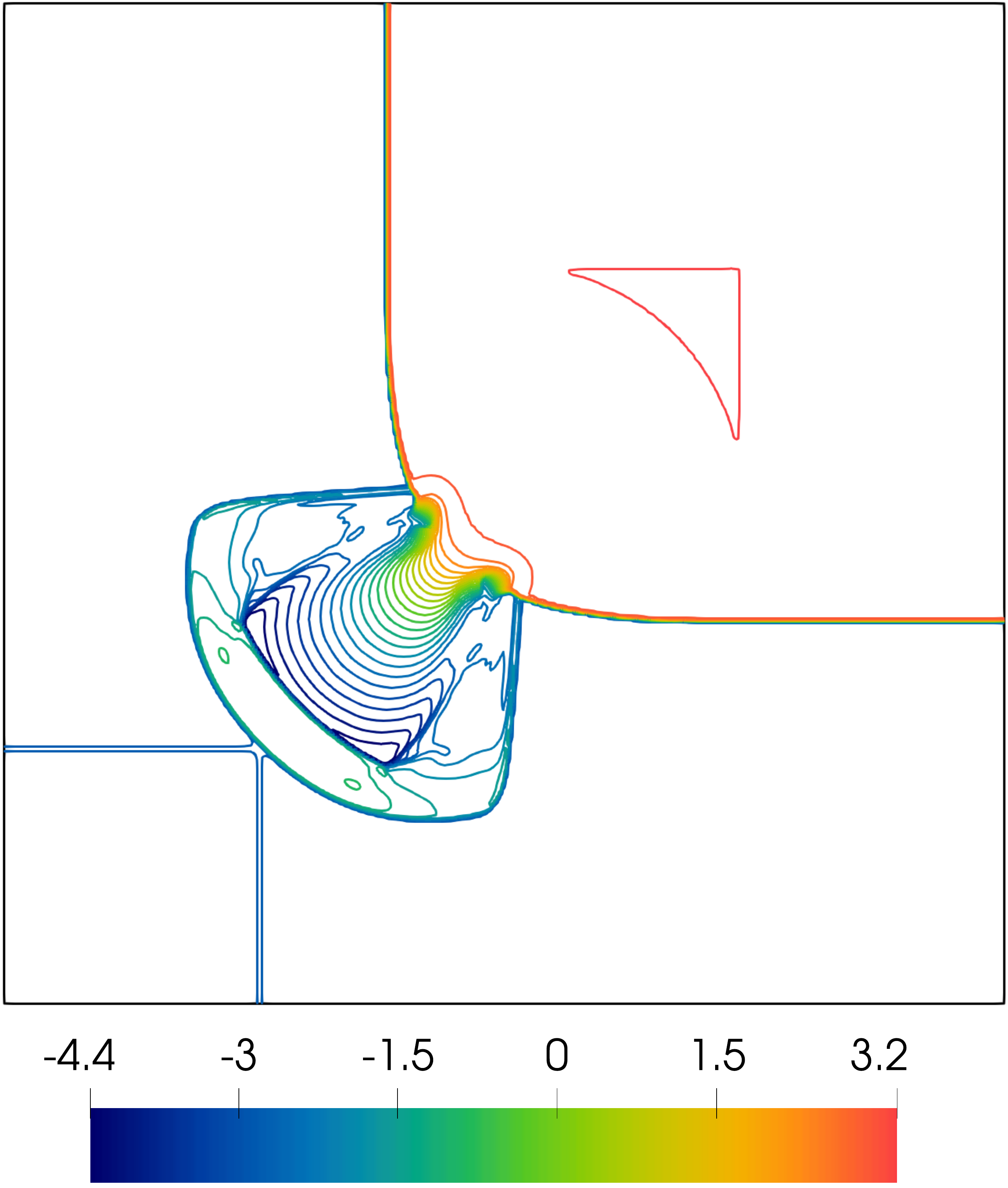}
			\caption{TM-EOS: 25 contours in $[-4.4, 3.2]$.\\}
		\end{subfigure}
		\begin{subfigure}{0.31\textwidth}
			\includegraphics[width=\linewidth]{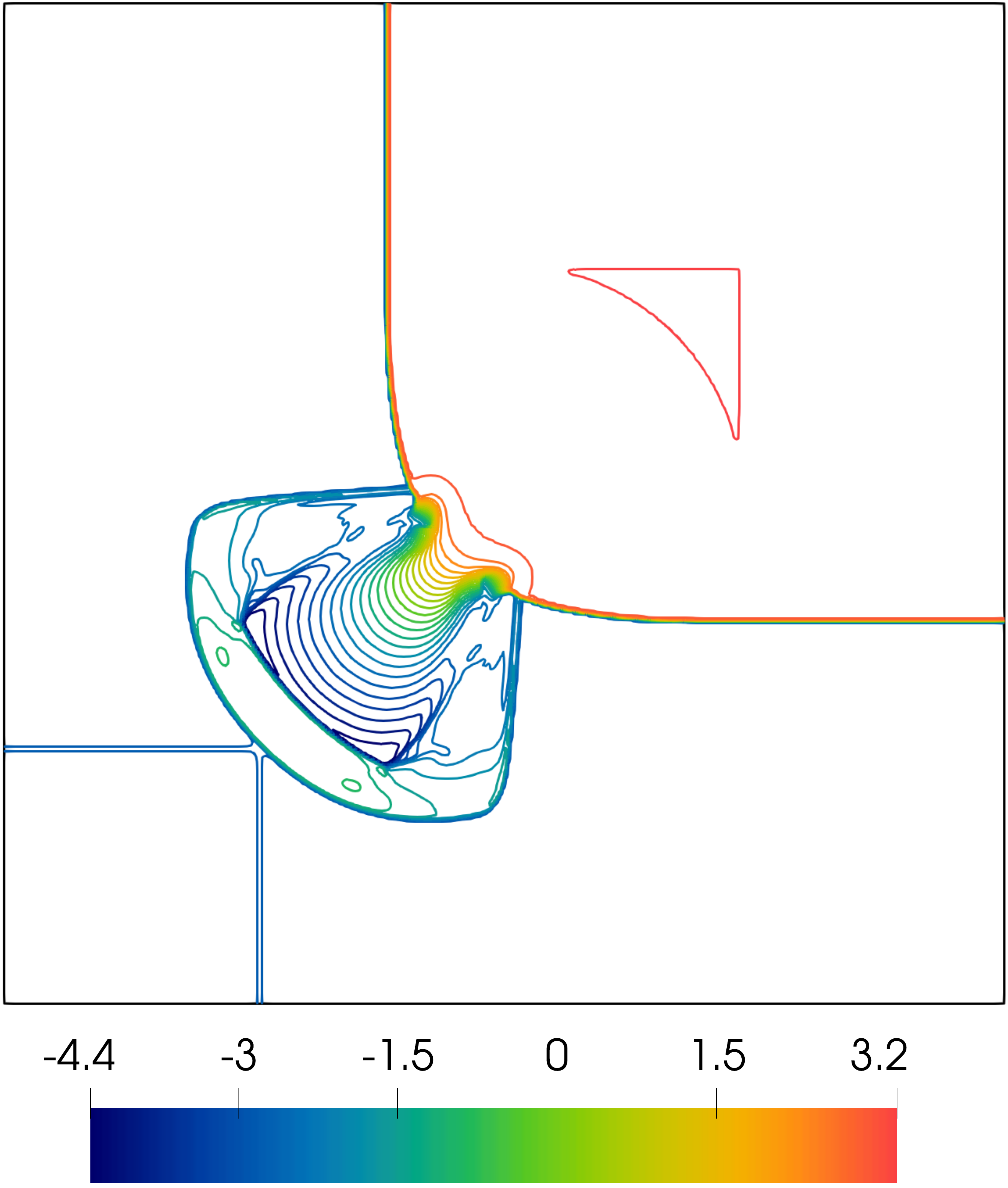}
			\caption{IP-EOS: 25 contours in $[-4.4, 3.2]$.}
		\end{subfigure}
		\begin{subfigure}{0.31\textwidth}
			\includegraphics[width=\linewidth]{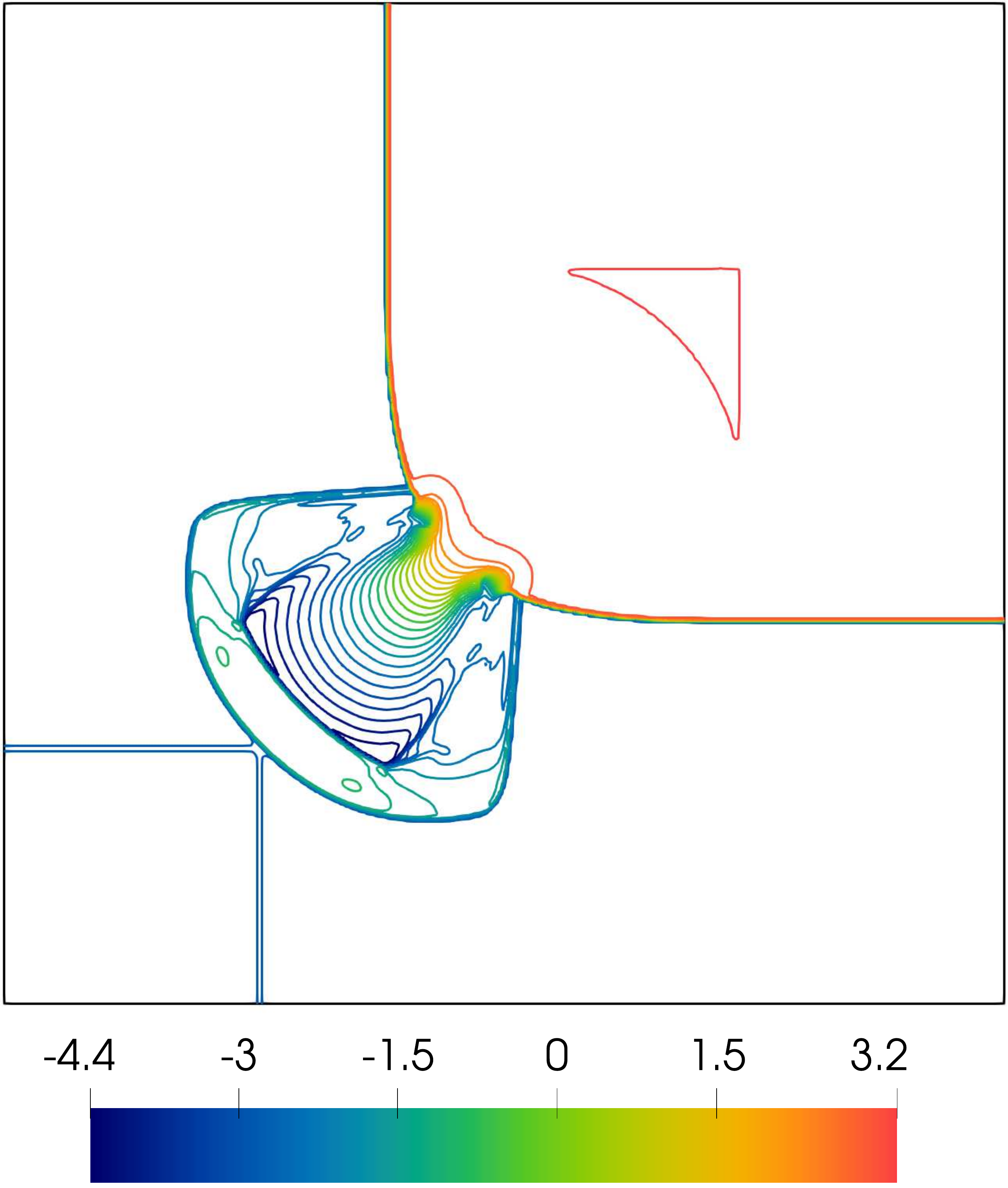}
			\caption{RC-EOS: 25 contours in $[-4.4, 3.2]$.}
		\end{subfigure}
		\begin{subfigure}{0.31\textwidth}
			\includegraphics[width=\linewidth]{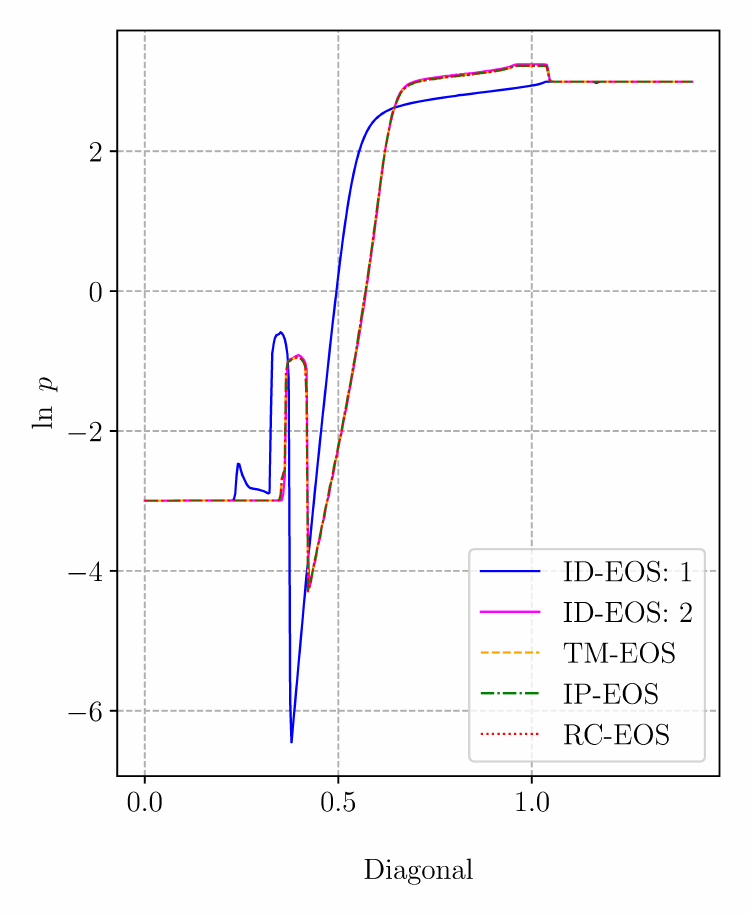}
			\caption{Cut plot from lower-left to upper-right.}
		\end{subfigure}
		\vspace{0.2cm}
		\caption{2-D Riemann problem 2: Plot of $\ln p$ with $400$ cells and $N=4$.}
		\label{fig:2dwurp2.lnpres}
	\end{figure}
	
	We can observe from the figures that our scheme can capture all the structures in the solution effectively, with an obvious difference in ID-EOS with $\gamma=\frac{5}{3}$ compared to the other cases. The solution using ID-EOS approximates the solutions with other equations of state more closely with $\gamma=\frac{4}{3}$. Here as well, we can observe that the results obtained using ID-EOS with $\gamma=\frac{4}{3}$, TM-EOS, IP-EOS, and RC-EOS are very similar.
	
	\subsubsection{2-D Riemann problem 3}
	This Riemann problem is taken from~\cite{he2012adaptive} which is also used in~\cite{nunez2016xtroem}. It has four contact discontinuities in the initial condition, given by,
	\[
	(\rho, v_1, v_2, p) = \begin{cases}
		(0.5, 0.5, -0.5, 5) & \text{if}\ x > 0.5,\ y > 0.5\\
		(1, 0.5, 0.5, 5) & \text{if}\ x < 0.5,\ y>0.5\\
		(3, -0.5, 0.5, 5) & \text{if}\ x < 0.5,\ y < 0.5\\
		(1.5, -0.5, -0.5, 5) & \text{if}\ x > 0.5,\ y<0.5.
	\end{cases}
	\]
	We simulate this problem in domain $[0,1]\times [0,1]$ with outflow boundaries using $400\times 400$ cells and $N=4$ up to time $t=0.4$, and present the results in Figure~\ref{fig:2dwu2rp1.lnden} and Figure~\ref{fig:2dwu2rp1.lnpres}.
	\begin{figure}[]
		\centering
		\begin{subfigure}{0.31\textwidth}
			\includegraphics[width=\linewidth]{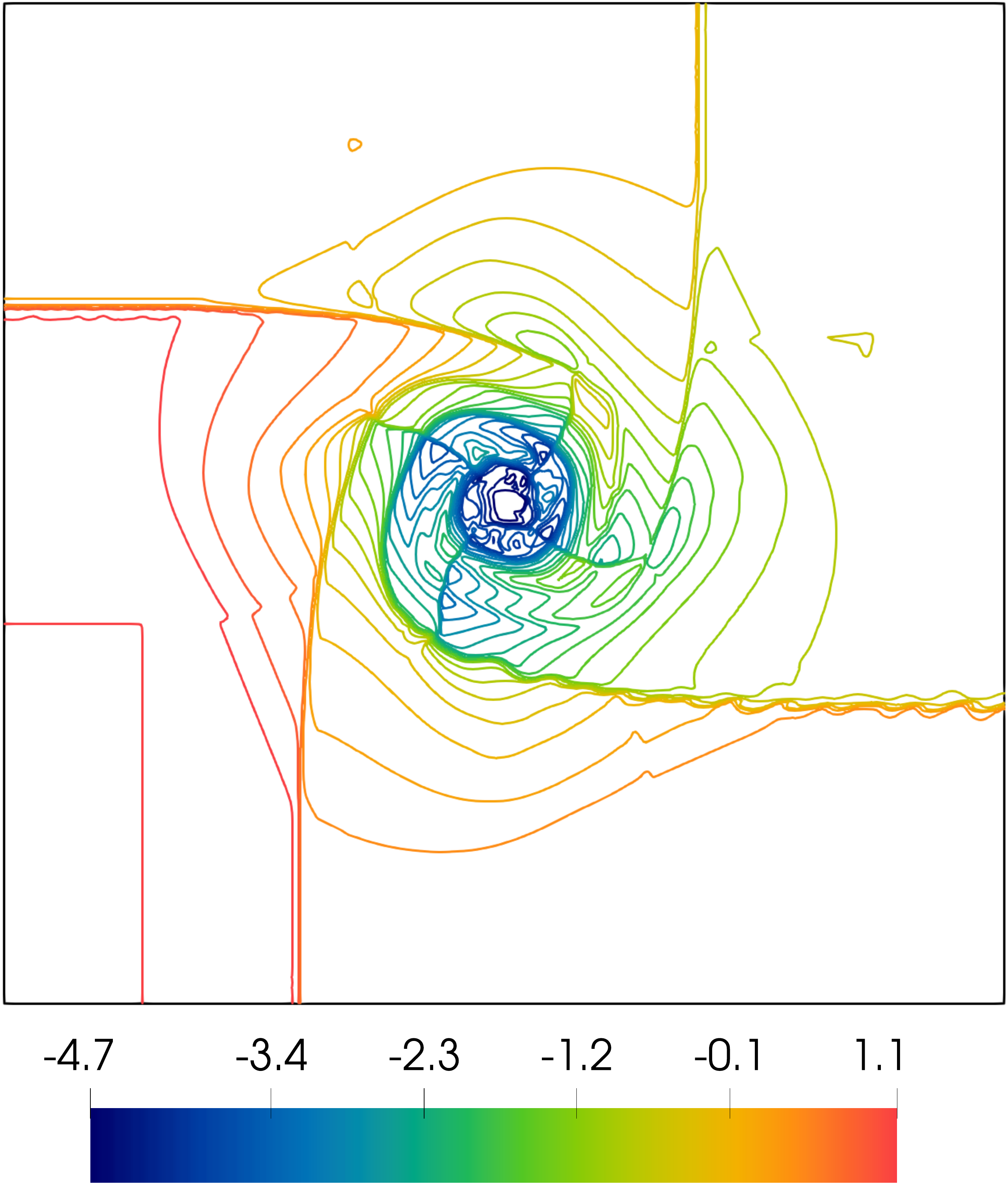}
			\caption{ID-EOS with $\gamma = \frac{5}{3}$: 25 contours in $[-4.7, 1.1]$.}
		\end{subfigure}
		\begin{subfigure}{0.31\textwidth}
			\includegraphics[width=\linewidth]{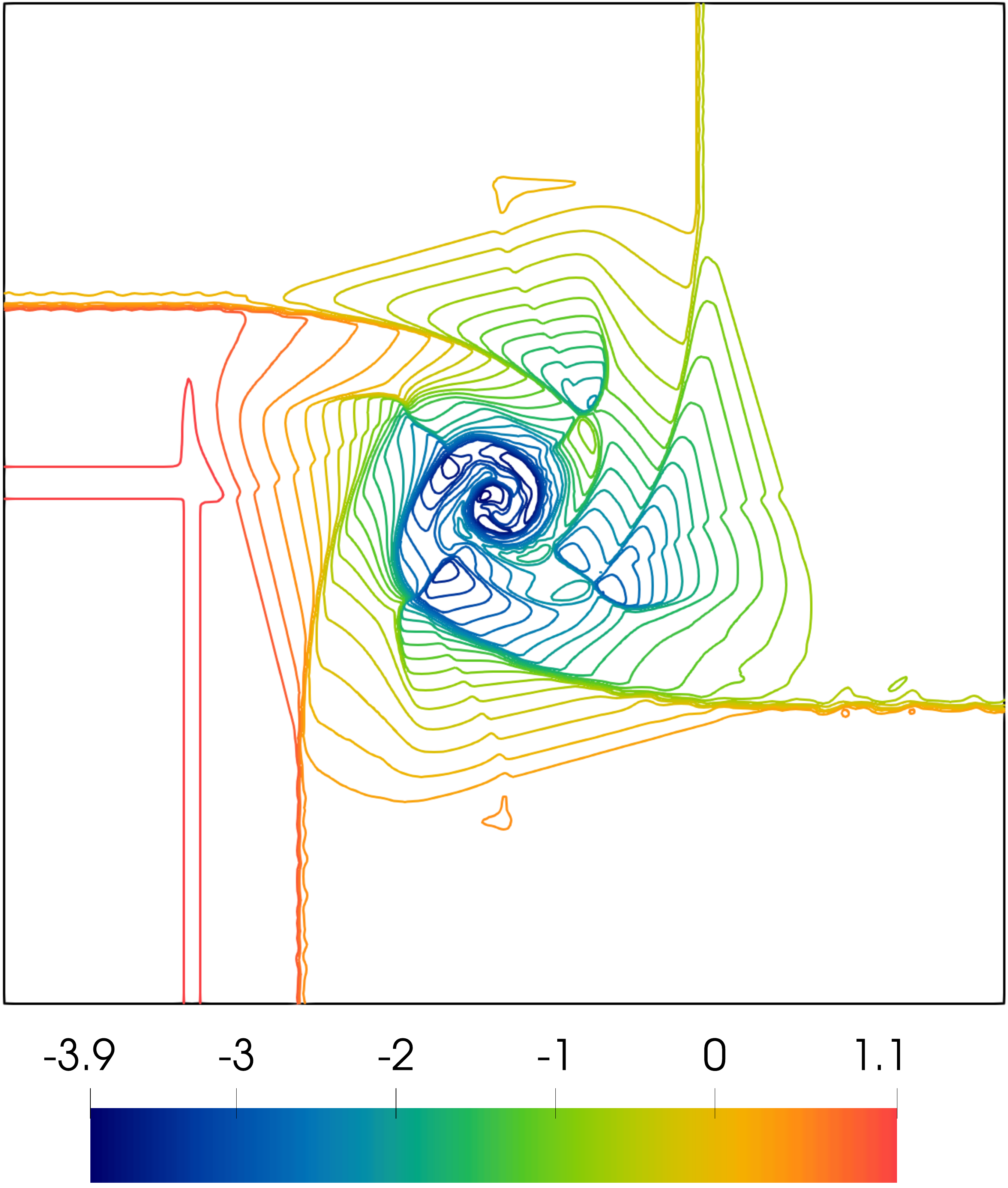}
			\caption{ID-EOS with $\gamma = \frac{4}{3}$: 25 contours in $[-3.9, 1.1]$.}
		\end{subfigure}
		\begin{subfigure}{0.31\textwidth}
			\includegraphics[width=\linewidth]{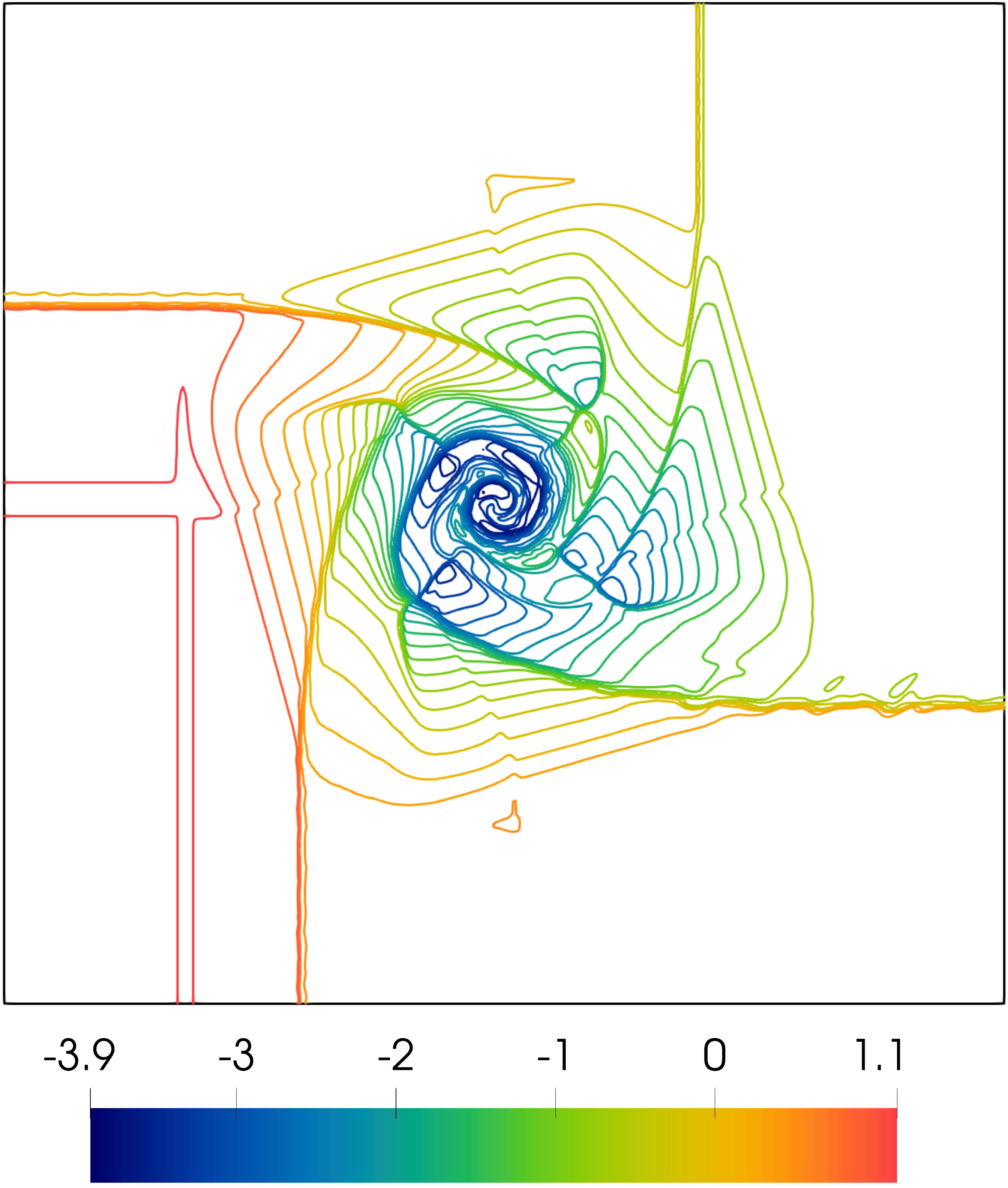}
			\caption{TM-EOS: 25 contours in $[-3.9, 1.1]$.\\}
		\end{subfigure}
		\begin{subfigure}{0.31\textwidth}
			\includegraphics[width=\linewidth]{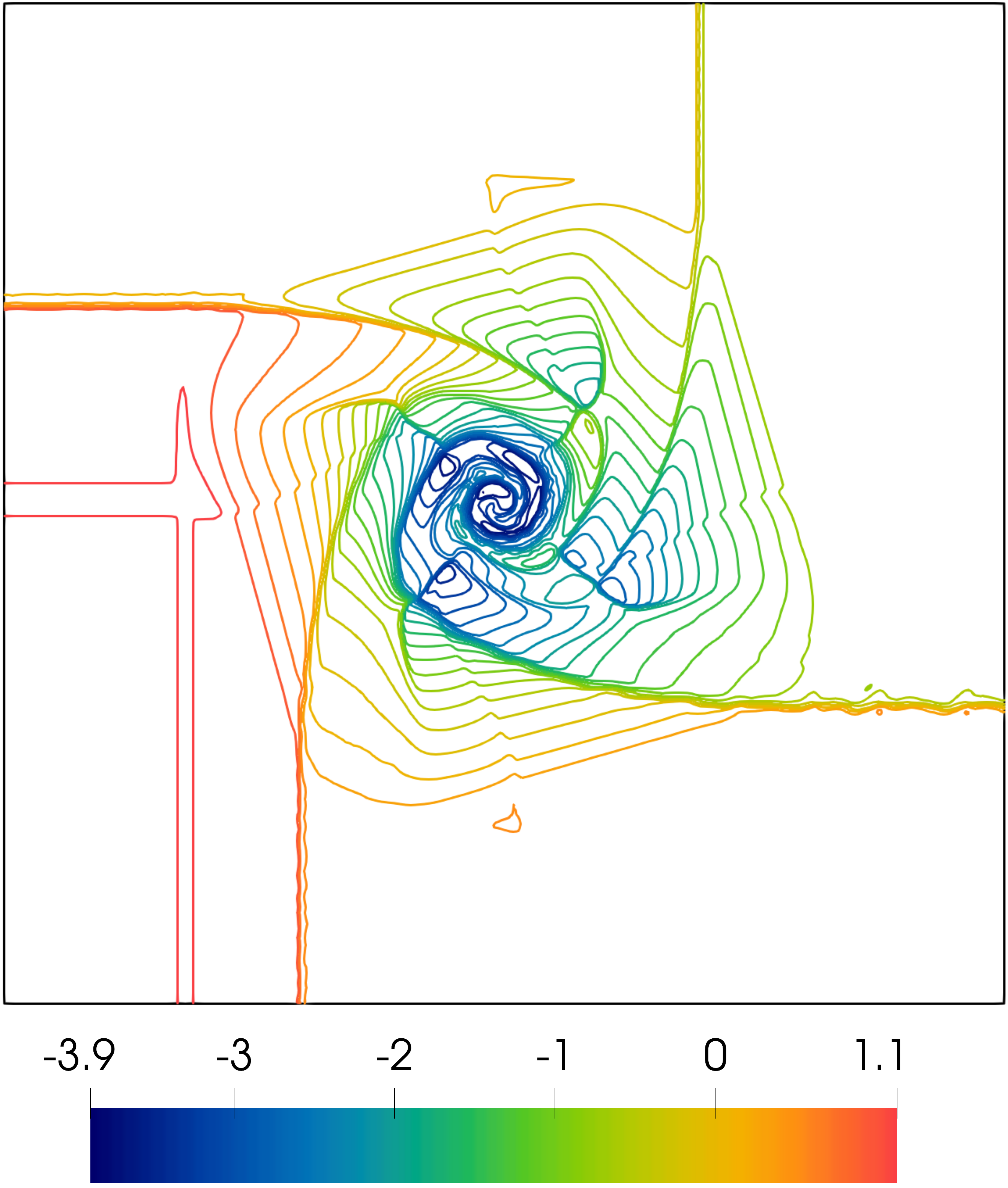}
			\caption{IP-EOS: 25 contours in $[-3.9, 1.1]$.}
		\end{subfigure}
		\begin{subfigure}{0.31\textwidth}
			\includegraphics[width=\linewidth]{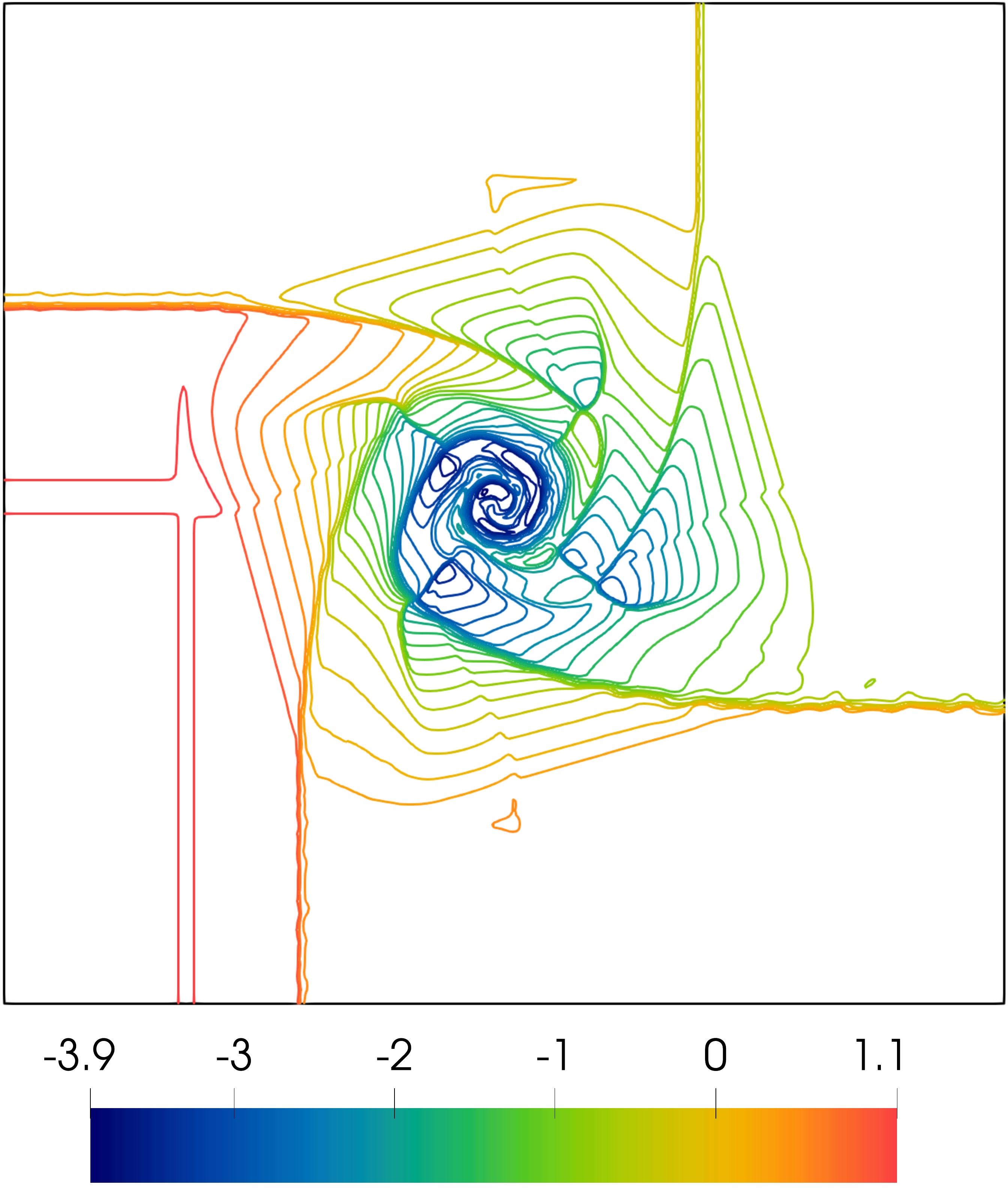}
			\caption{RC-EOS: 25 contours in $[-3.9, 1.1]$.}
		\end{subfigure}
		\begin{subfigure}{0.31\textwidth}
			\includegraphics[width=\linewidth]{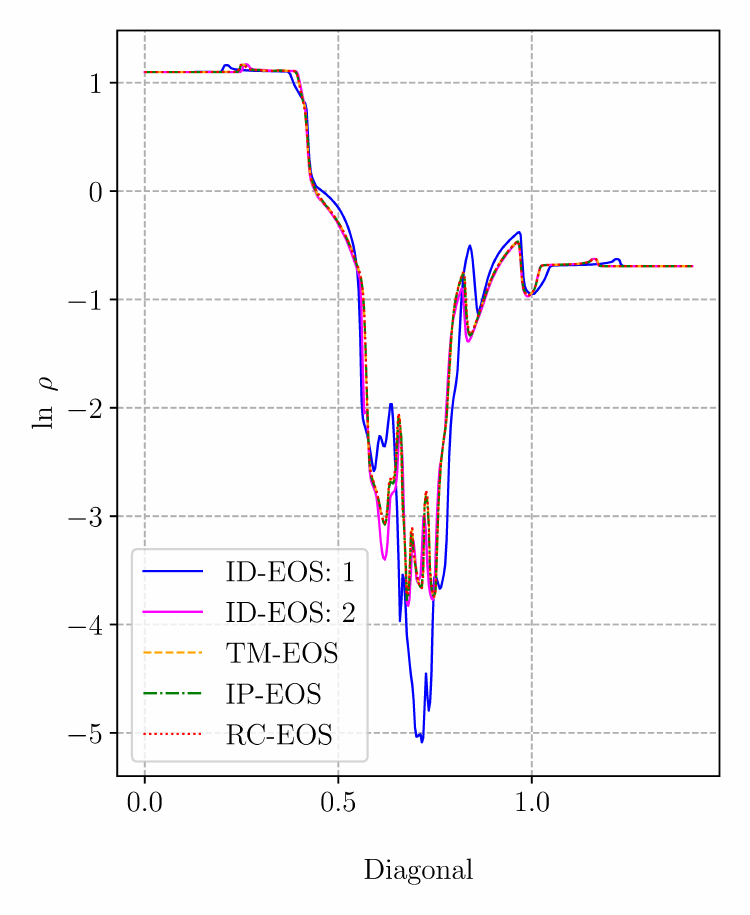}
			\caption{Cut plot from lower-left to upper-right.}
		\end{subfigure}
		\vspace{0.2cm}
		\caption{2-D Riemann problem 3: Plot of $\ln \rho$ with $400$ cells and $N=4$.}
		\label{fig:2dwu2rp1.lnden}
	\end{figure}
	\begin{figure}[]
		\centering
		\begin{subfigure}{0.31\textwidth}
			\includegraphics[width=\linewidth]{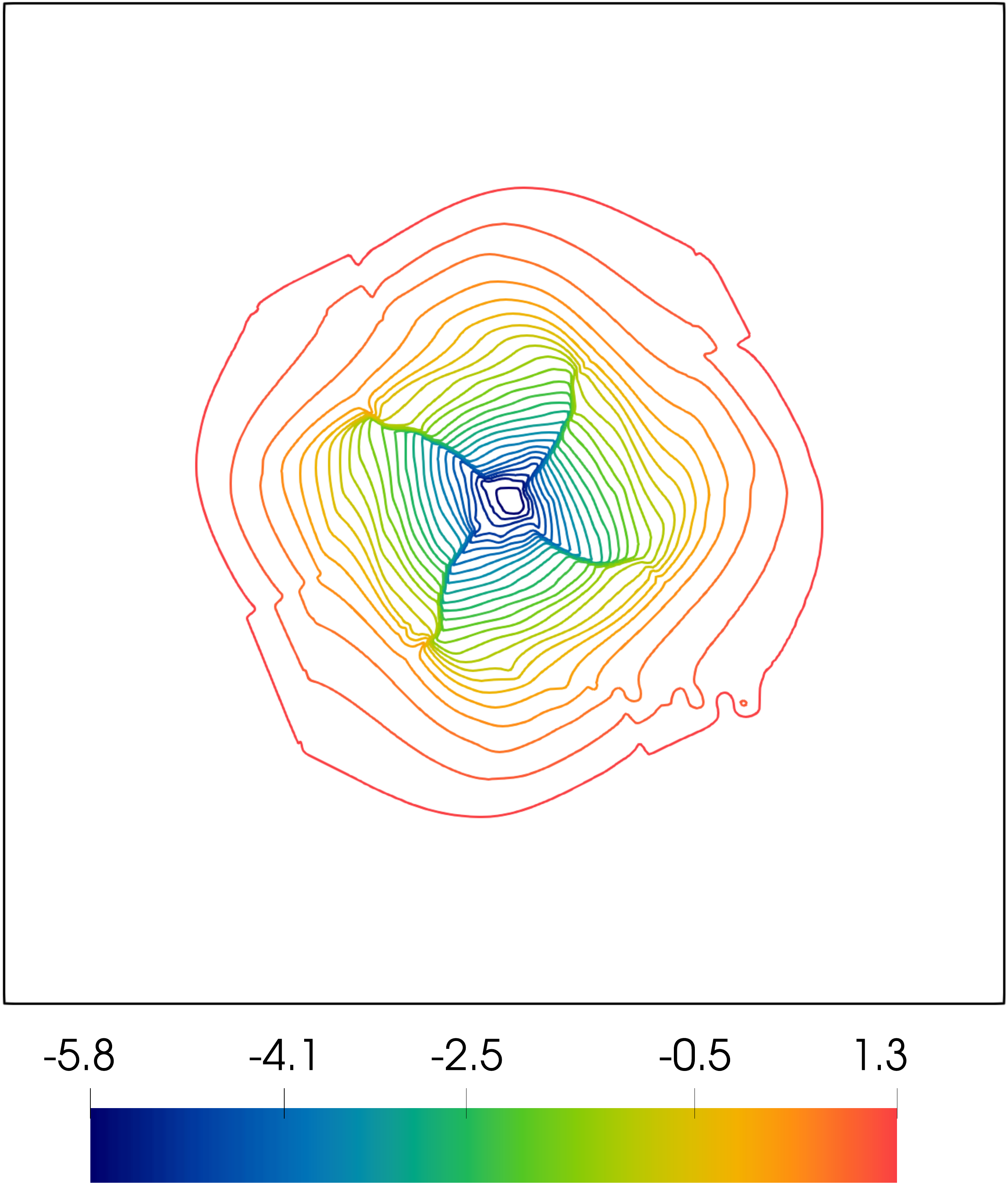}
			\caption{ID-EOS with $\gamma = \frac{5}{3}$: 25 contours in $[-5.8, 1.3]$.}
		\end{subfigure}
		\begin{subfigure}{0.31\textwidth}
			\includegraphics[width=\linewidth]{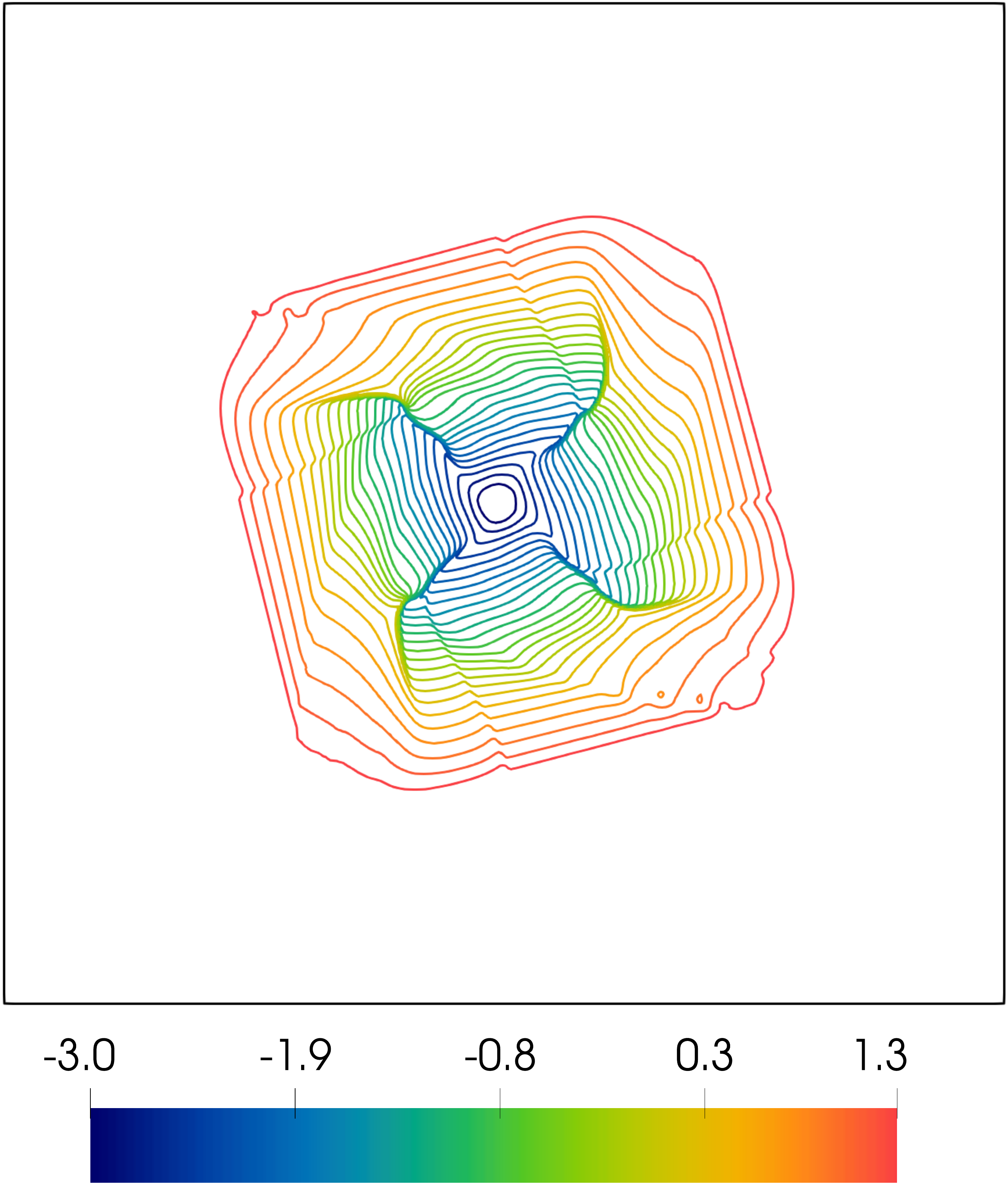}
			\caption{ID-EOS with $\gamma = \frac{4}{3}$: 25 contours in $[-3.0, 1.3]$.}
		\end{subfigure}
		\begin{subfigure}{0.31\textwidth}
			\includegraphics[width=\linewidth]{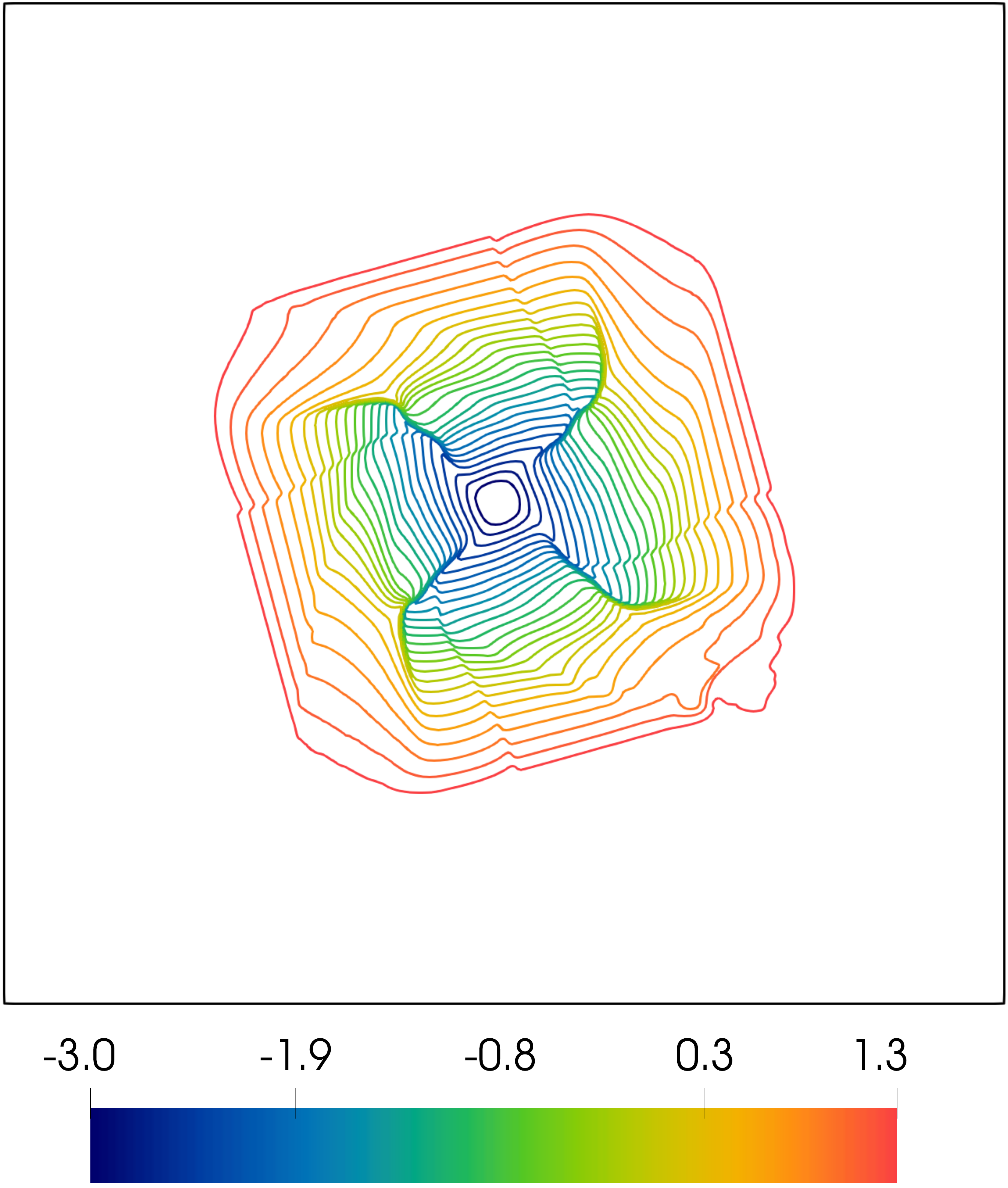}
			\caption{TM-EOS: 25 contours in $[-3.0, 1.3]$.\\}
		\end{subfigure}
		\begin{subfigure}{0.31\textwidth}
			\includegraphics[width=\linewidth]{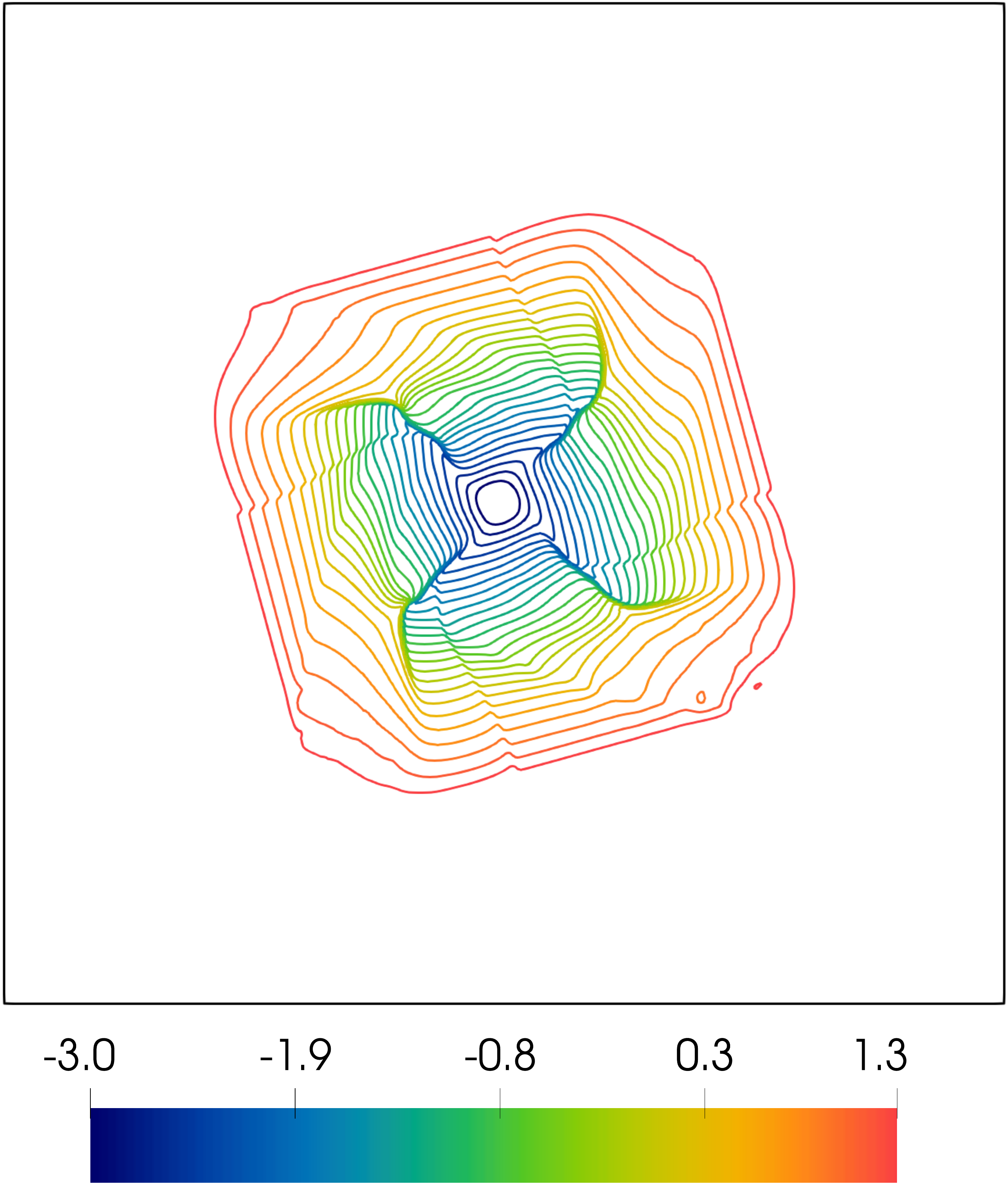}
			\caption{IP-EOS: 25 contours in $[-3.0, 1.3]$.}
		\end{subfigure}
		\begin{subfigure}{0.31\textwidth}
			\includegraphics[width=\linewidth]{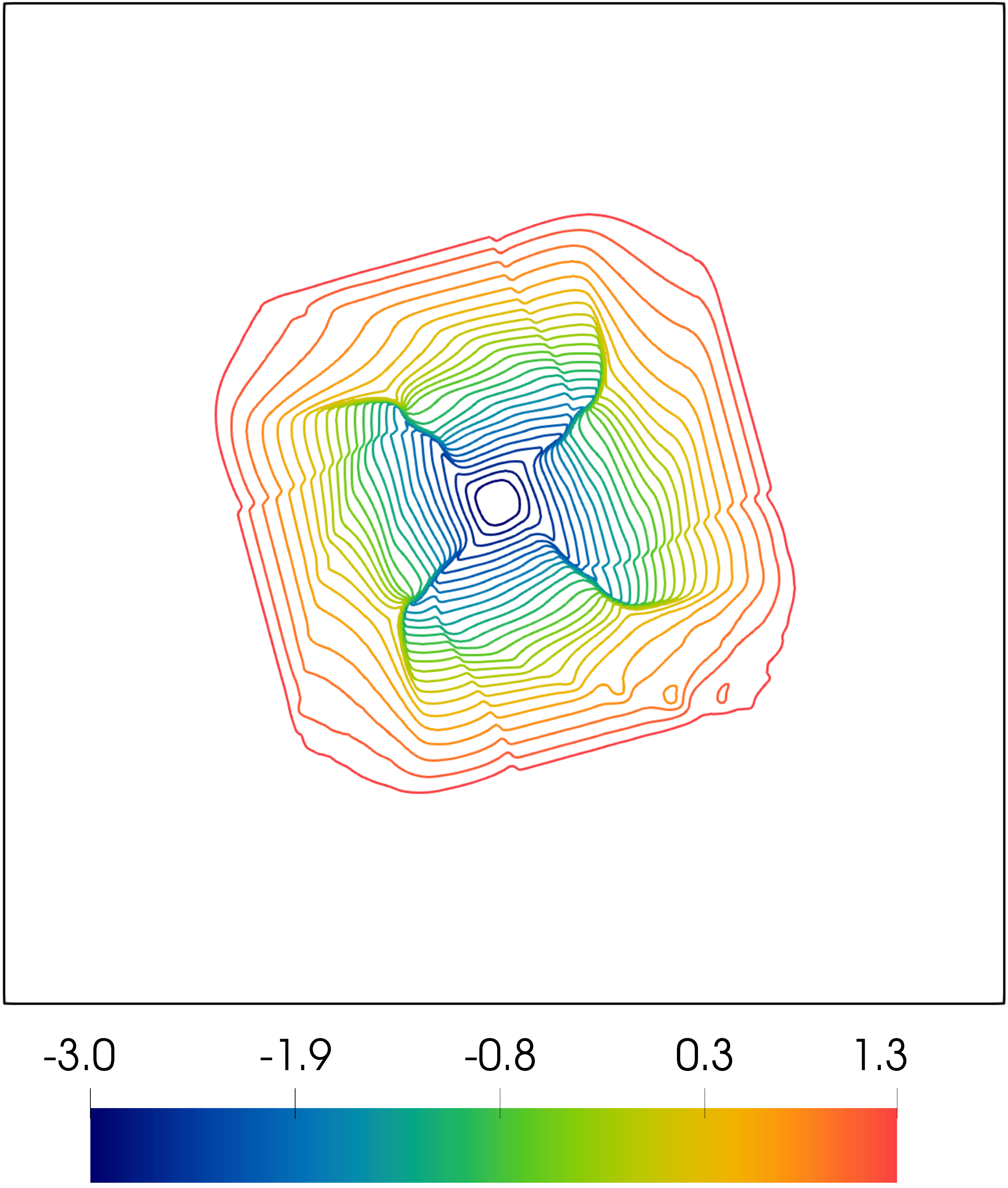}
			\caption{RC-EOS: 25 contours in $[-3.0, 1.3]$.}
		\end{subfigure}
		\begin{subfigure}{0.31\textwidth}
			\includegraphics[width=\linewidth]{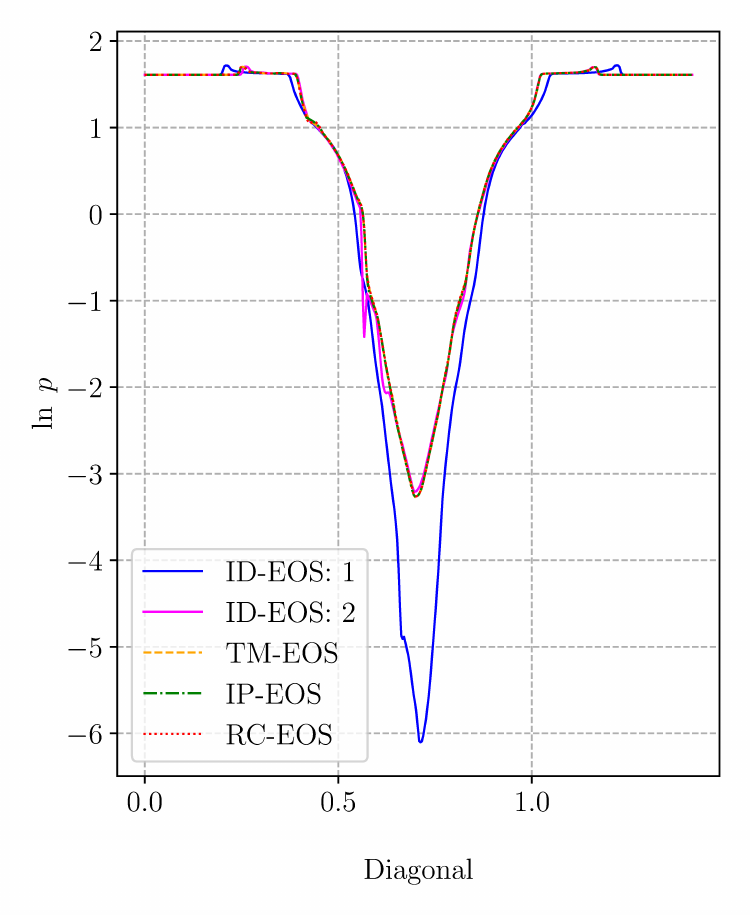}
			\caption{Cut plot from lower-left to upper-right.}
		\end{subfigure}
		\vspace{0.2cm}
		\caption{2-D Riemann problem 3: Plot of $\ln p$ with $400$ cells and $N=4$.}
		\label{fig:2dwu2rp1.lnpres}
	\end{figure}
	The interaction of the discontinuities results in the formation of a spiral structure in the solutions. The scheme captures this structure for all the equations of state with very similar solutions using ID-EOS having $\gamma = \frac{4}{3}$, TM-EOS, IP-EOS, and RC-EOS.
	Using the ID-EOS with $\gamma=\frac{5}{3}$ results in a lower fluid density and pressure in the central region of the spiral compared to the other equation of state. 
	
	\subsubsection{2-D Riemann problem 4}
	For this problem, as studied in~\cite{he2012adaptive}, we take the initial state of the fluid as,
	\[
	(\rho, v_1, v_2, p) = \begin{cases}
		(1, 0, 0, 1) & \text{if}\ x > 0.5,\ y > 0.5\\
		(0.5771, -0.3529, 0, 0.4) & \text{if}\ x < 0.5,\ y>0.5\\
		(1, -0.3529, -0.3529, 1) & \text{if}\ x < 0.5,\ y < 0.5\\
		(0.5771, 0, -0.3529, 0.4) & \text{if}\ x > 0.5,\ y<0.5.
	\end{cases}
	\]
	We run the simulations with different equations of state, taking the computational domain as $[0,1]\times [0,1]$ with outflow boundaries, and using $400\times 400$ cells with $N=4$. We present the outputs in Figure~\ref{fig:2dwu2rp2.lnden} and Figure~\ref{fig:2dwu2rp2.lnpres} at time $t=0.4$.
	\begin{figure}[]
		\centering
		\begin{subfigure}{0.31\textwidth}
			\includegraphics[width=\linewidth]{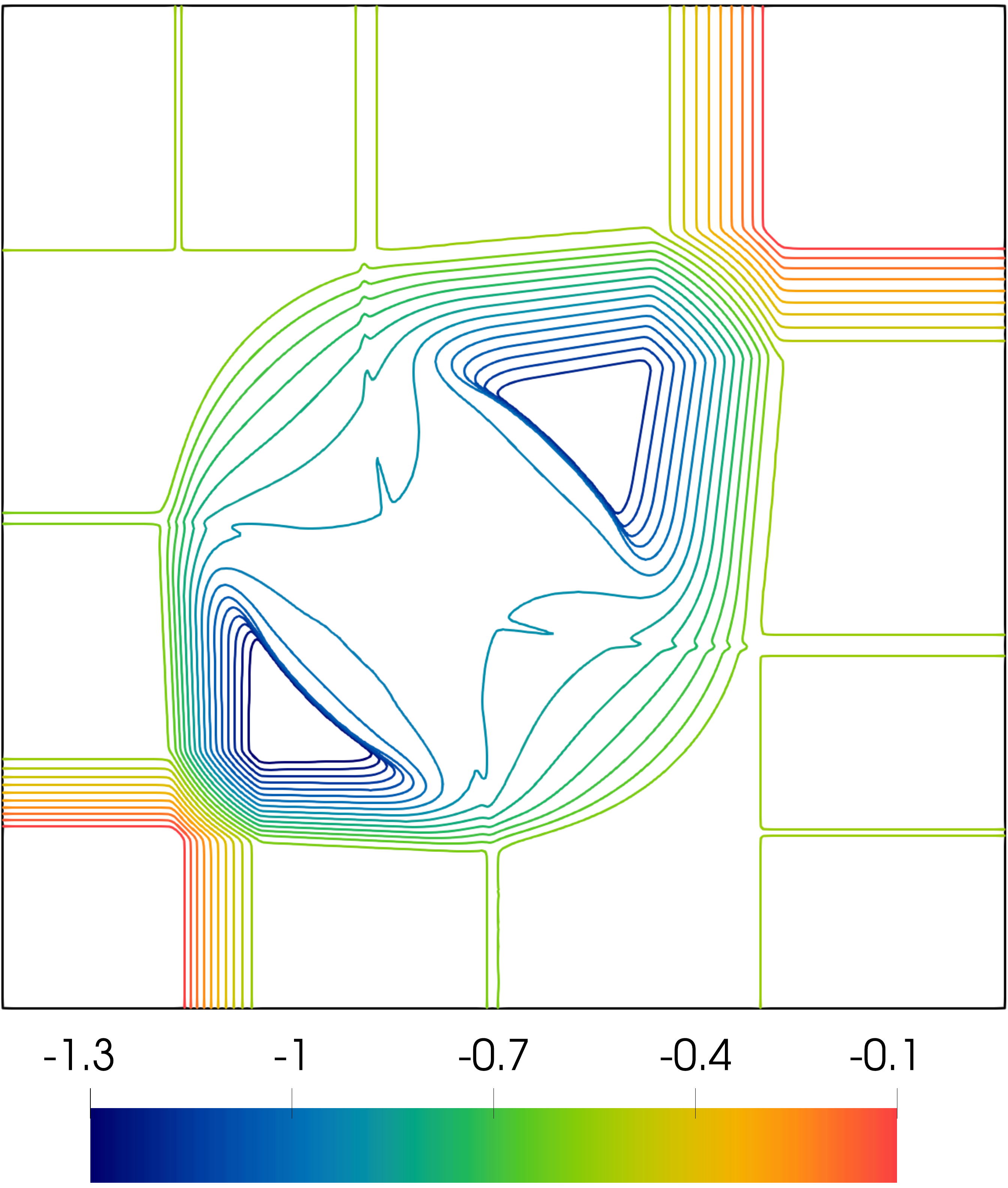}
			\caption{ID-EOS with $\gamma = \frac{5}{3}$: 25 contours in $[-1.3, -0.1]$.}
		\end{subfigure}
		\begin{subfigure}{0.31\textwidth}
			\includegraphics[width=\linewidth]{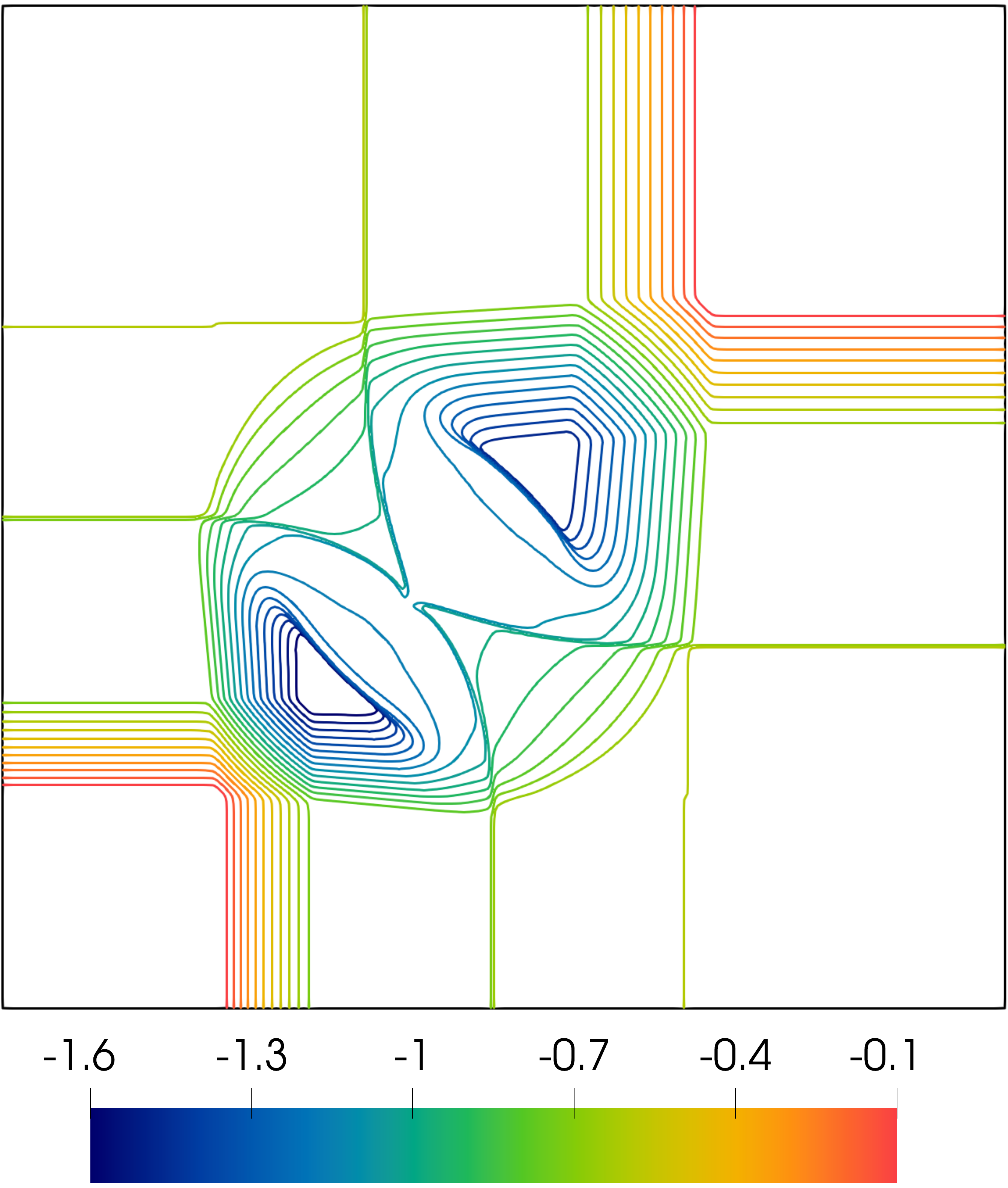}
			\caption{ID-EOS with $\gamma = \frac{4}{3}$: 25 contours in $[-1.6, -0.1]$.}
		\end{subfigure}
		\begin{subfigure}{0.31\textwidth}
			\includegraphics[width=\linewidth]{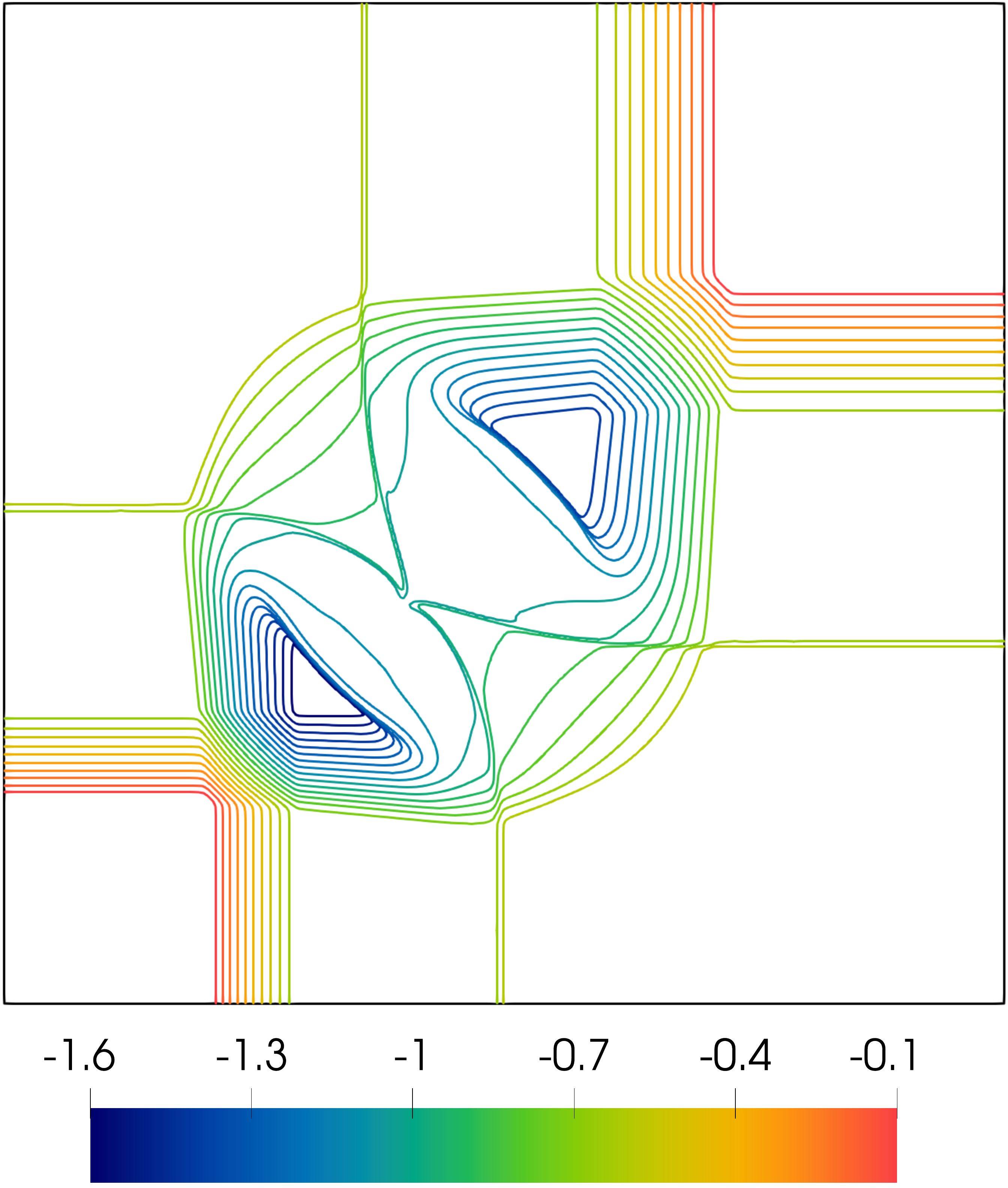}
			\caption{TM-EOS: 25 contours in $[-1.6, -0.1]$.}
		\end{subfigure}
		\begin{subfigure}{0.31\textwidth}
			\includegraphics[width=\linewidth]{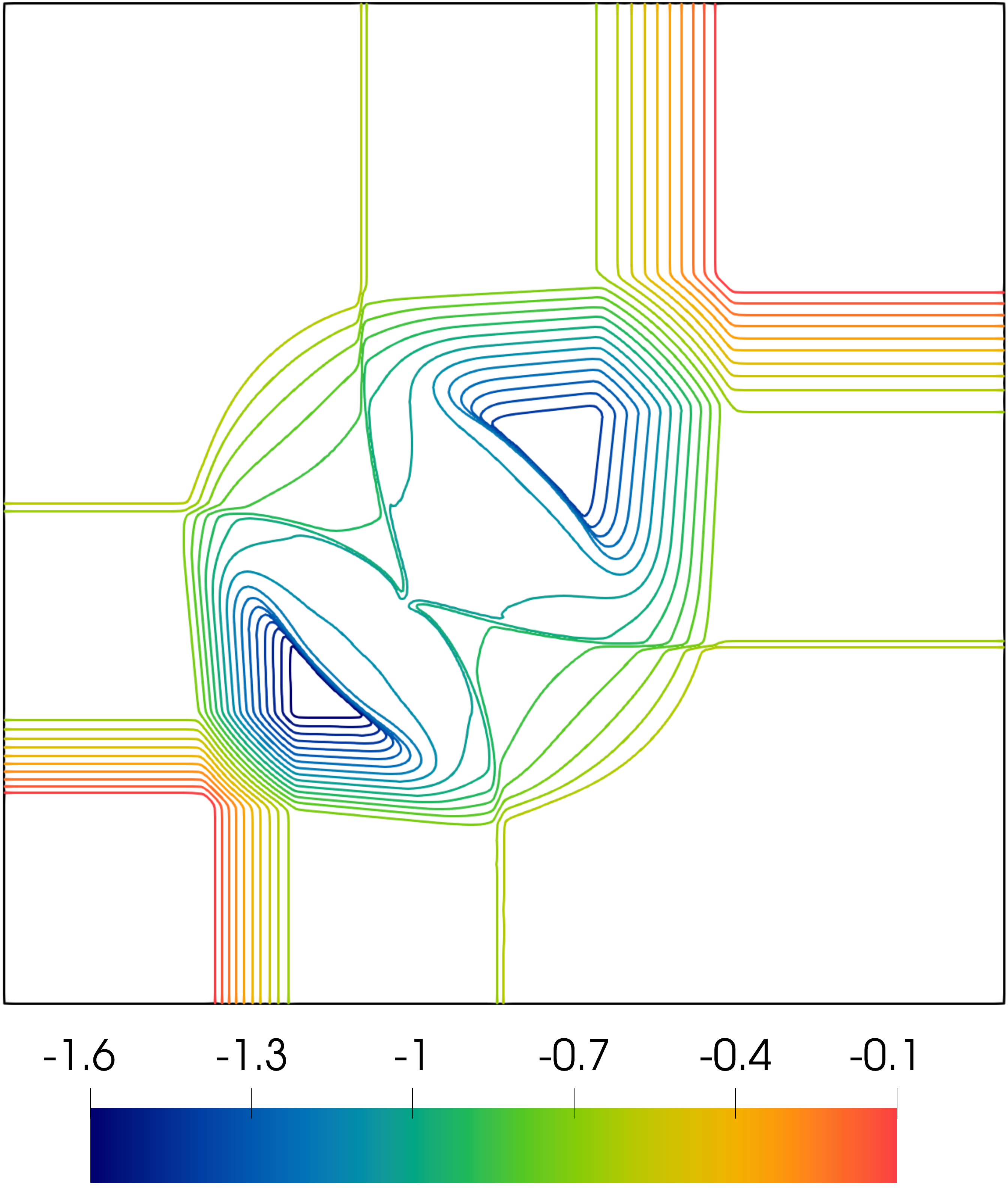}
			\caption{IP-EOS: 25 contours in $[-1.6, -0.1]$.\\}
		\end{subfigure}
		\begin{subfigure}{0.31\textwidth}
			\includegraphics[width=\linewidth]{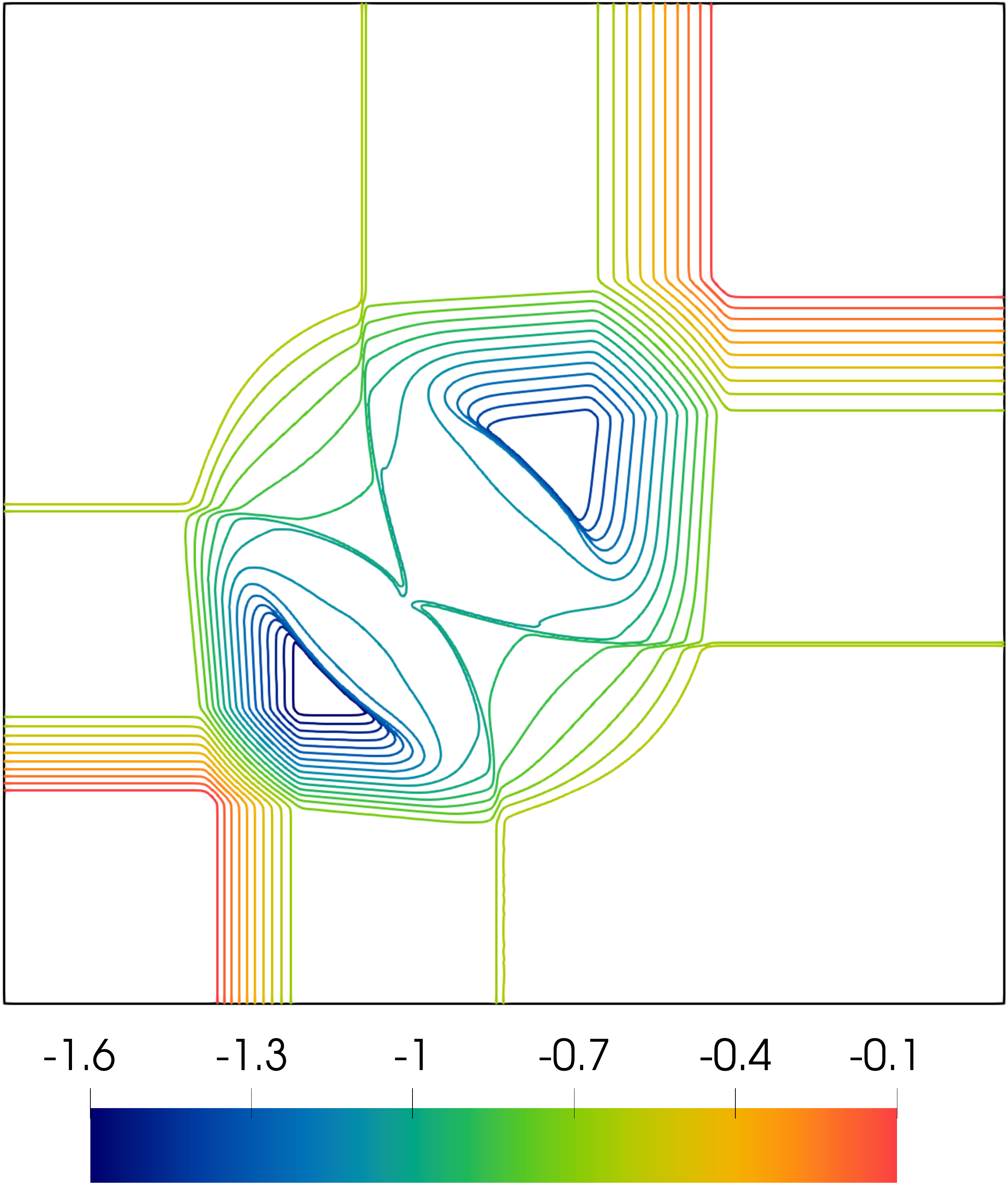}
			\caption{RC-EOS: 25 contours in $[-1.6, -0.1]$.}
		\end{subfigure}
		\begin{subfigure}{0.31\textwidth}
			\includegraphics[width=\linewidth]{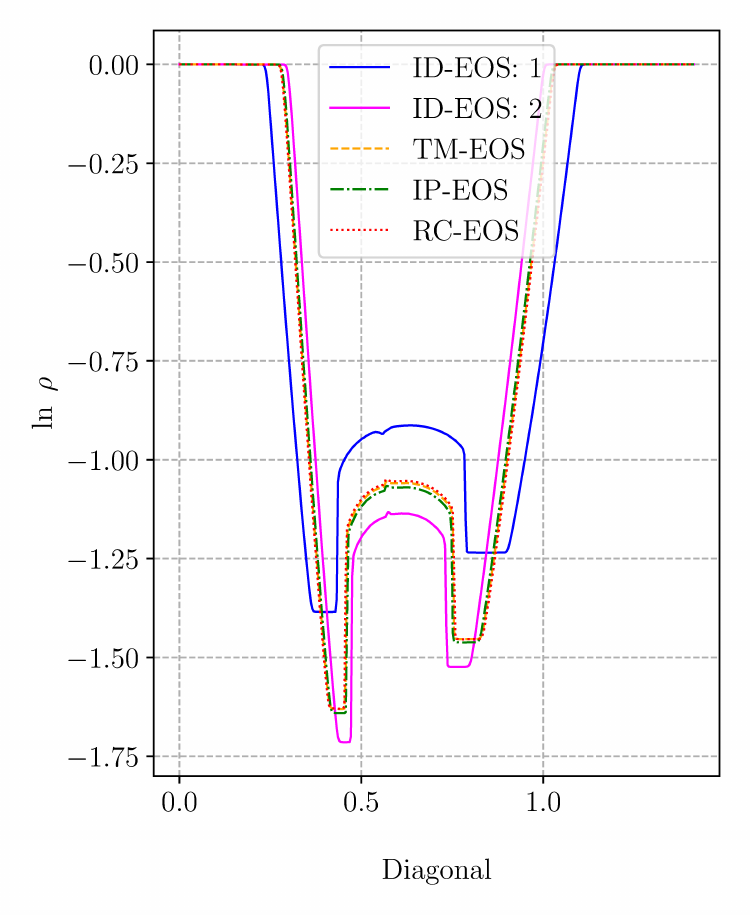}
			\caption{Cut plot from lower-left to upper-right.\\}
		\end{subfigure}
		\vspace{0.2cm}
		\caption{2-D Riemann problem 4: Plot of $\ln \rho$ with $400$ cells and $N=4$.}
		\label{fig:2dwu2rp2.lnden}
	\end{figure}
	\begin{figure}[]
		\centering
		\begin{subfigure}{0.31\textwidth}
			\includegraphics[width=\linewidth]{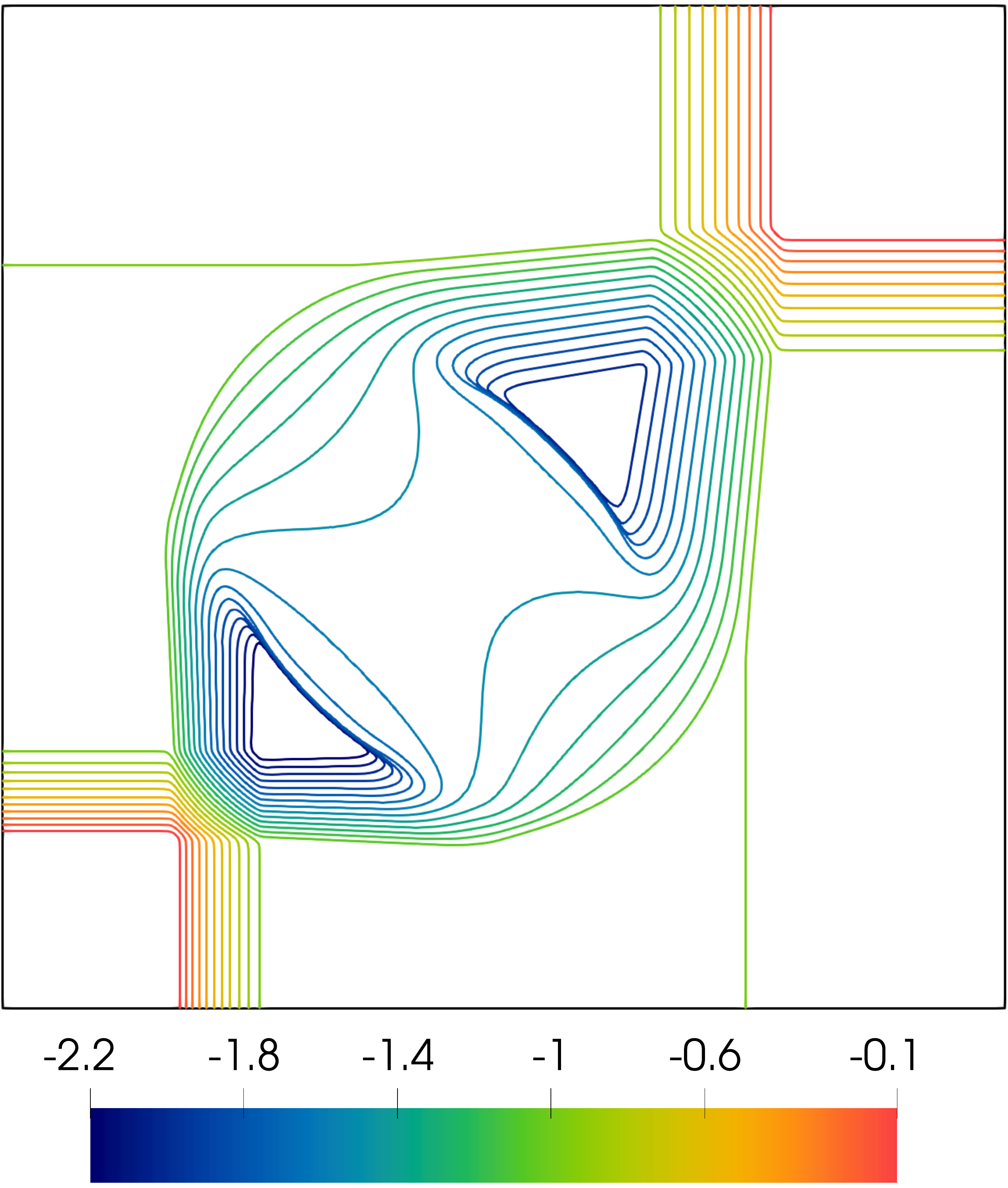}
			\caption{ID-EOS with $\gamma = \frac{5}{3}$: 25 contours in $[-2.2, -0.1]$.}
		\end{subfigure}
		\begin{subfigure}{0.31\textwidth}
			\includegraphics[width=\linewidth]{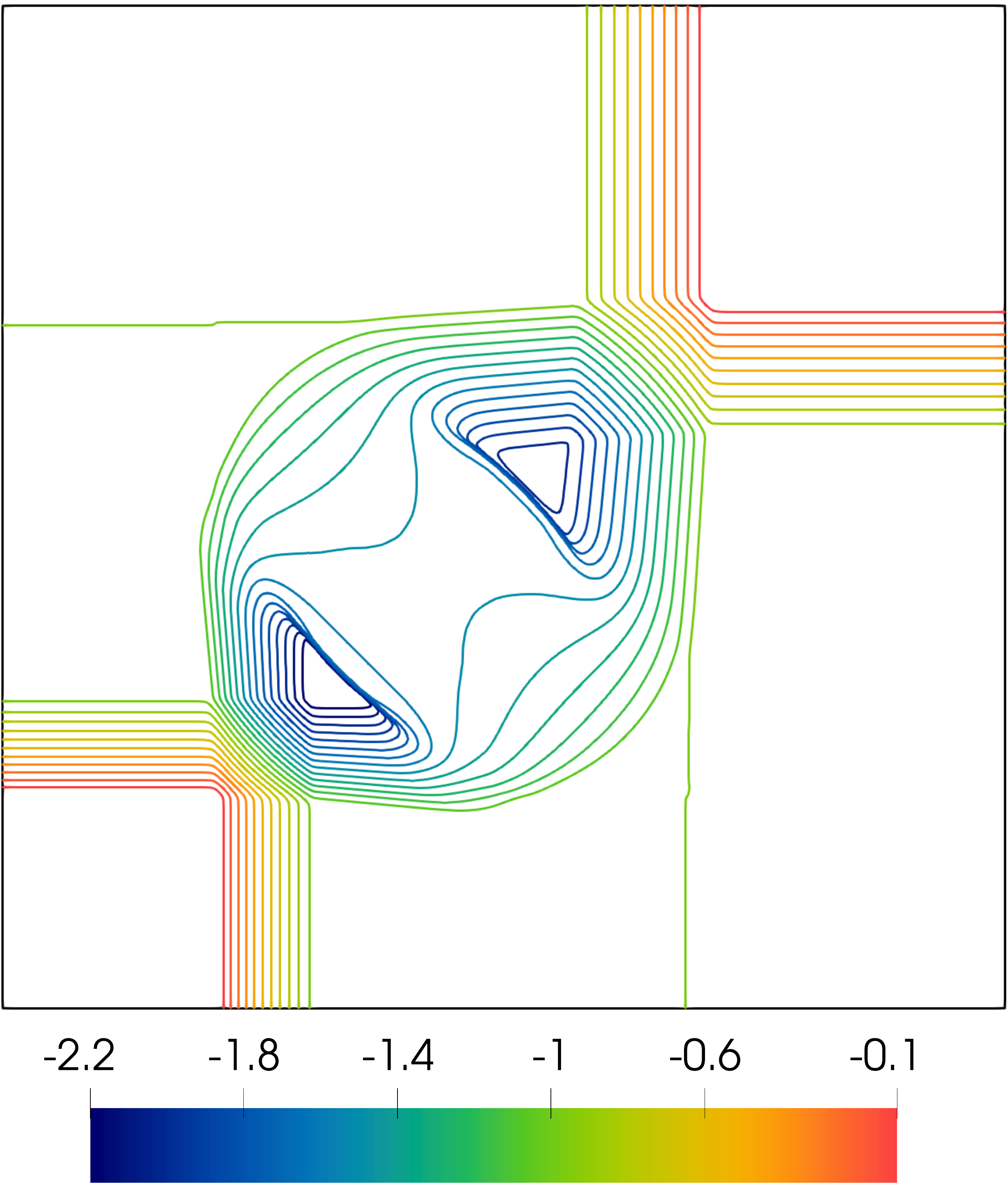}
			\caption{ID-EOS with $\gamma = \frac{4}{3}$: 25 contours in $[-2.2, -0.1]$.}
		\end{subfigure}
		\begin{subfigure}{0.31\textwidth}
			\includegraphics[width=\linewidth]{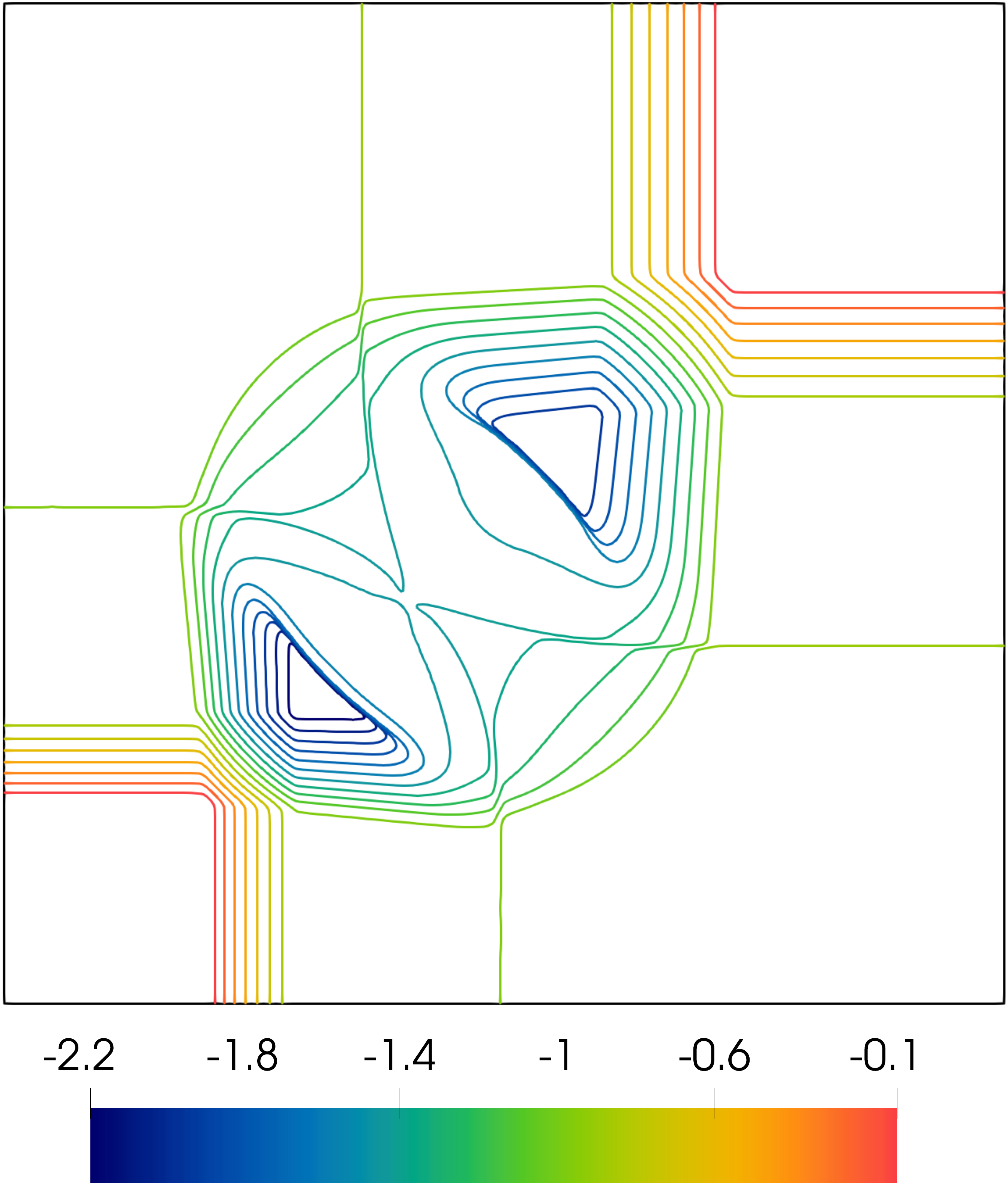}
			\caption{TM-EOS: 25 contours in $[-2.2, -0.1]$.}
		\end{subfigure}
		\begin{subfigure}{0.31\textwidth}
			\includegraphics[width=\linewidth]{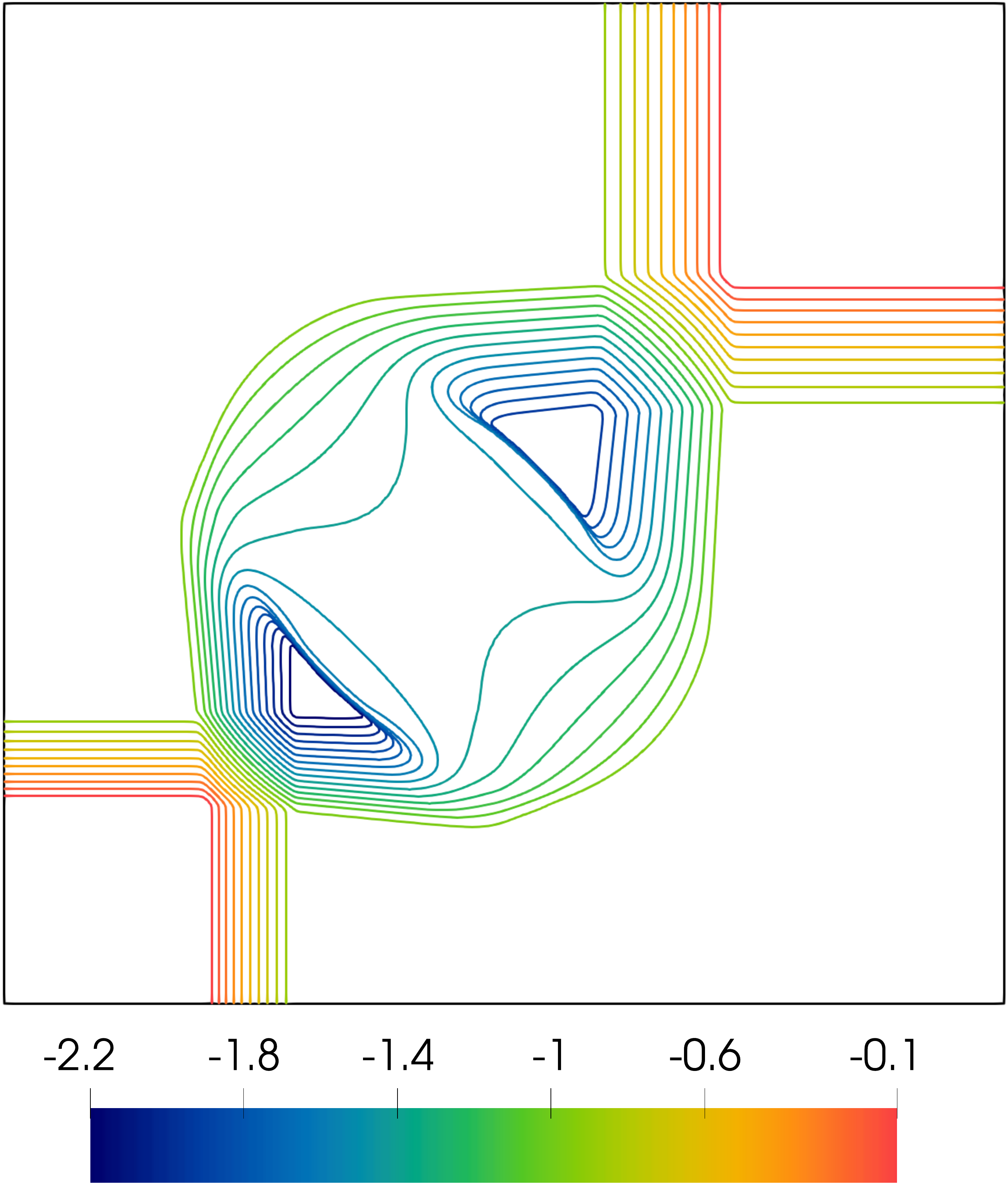}
			\caption{IP-EOS: 25 contours in $[-2.2, -0.1]$.\\}
		\end{subfigure}
		\begin{subfigure}{0.31\textwidth}
			\includegraphics[width=\linewidth]{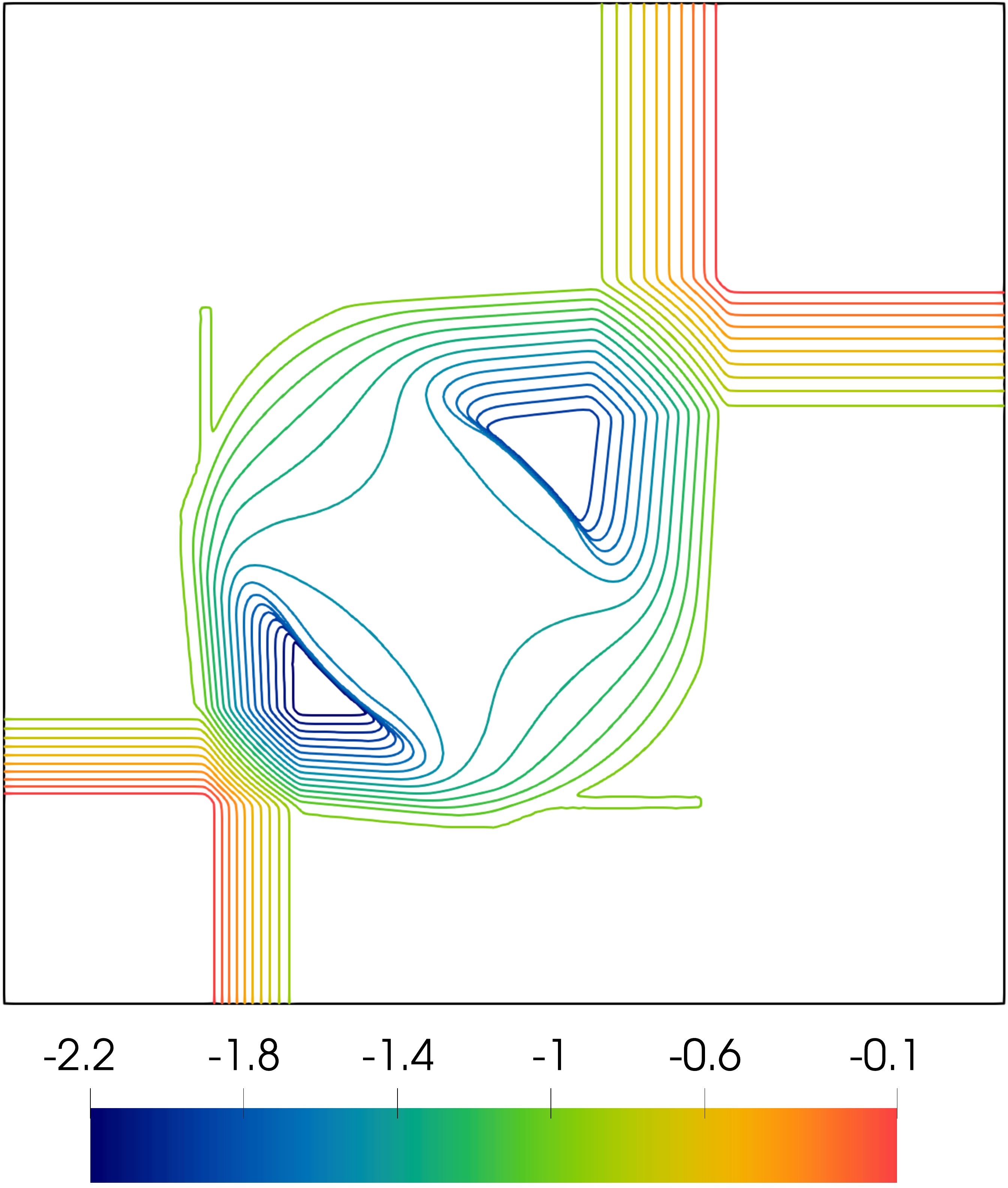}
			\caption{RC-EOS: 25 contours in $[-2.2, -0.1]$.}
		\end{subfigure}
		\begin{subfigure}{0.31\textwidth}
			\includegraphics[width=\linewidth]{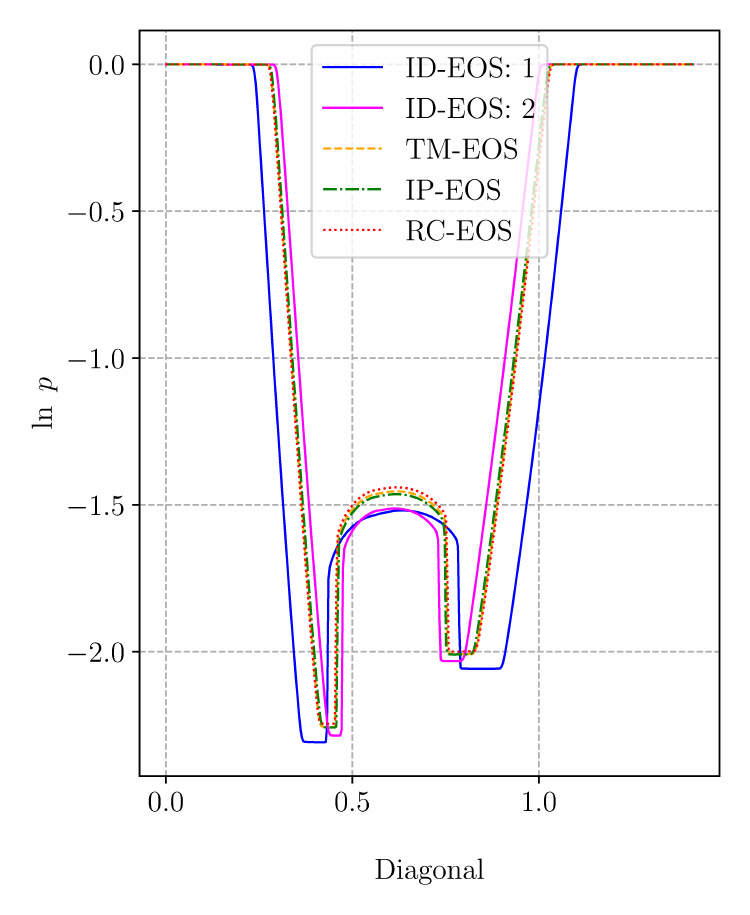}
			\caption{Cut plot from lower-left to upper-right.\\}
		\end{subfigure}
		\caption{2-D Riemann problem 4: Plot of $\ln p$ with $400$ cells and $N=4$.}
		\label{fig:2dwu2rp2.lnpres}
	\end{figure}
	The initial discontinuities evolve to form four rarefaction waves, which later interact and form two symmetric shock waves. We observe that the scheme can capture the shock waves for all the equations of state effectively. We have also compared the results along a cut from the lower-left to the upper-right corner of the domain.
	
	\subsubsection{2-D Riemann problem 5}
	This problem is also taken from~\cite{he2012adaptive}, where the authors have simulated it with ID-EOS. The initial state for this Riemann problem also has four constant states in the four quadrants of the domain $[0,1]\times [0,1]$ as below,
	\begin{align*}
		(\rho&, v_1, v_2, p)\\
		&= \begin{cases}
			(0.035145216124503, 0, 0, 0.162931056509027) & \text{if}\ x > 0.5,\ y > 0.5\\
			(0.1, 0.7, 0, 1) & \text{if}\ x < 0.5,\ y>0.5\\
			(0.5, 0, 0, 1) & \text{if}\ x < 0.5,\ y < 0.5\\
			(0.1, 0, 0.7, 1) & \text{if}\ x > 0.5,\ y<0.5.
		\end{cases}
	\end{align*}
	This problem was also used in~\cite{nunez2016xtroem,xu2024high} to verify numerical schemes. We run the simulations for this problem till time $t=0.4$ with $400\times 400$ cells and $N=4$, taking the boundaries of the domain as outflow boundaries. The results of the simulations are presented in Figure~\ref{fig:2dwu2rp3.lnden} and Figure~\ref{fig:2dwu2rp3.lnpres}.
	\begin{figure}[]
		\centering
		\begin{subfigure}{0.31\textwidth}
			\includegraphics[width=\linewidth]{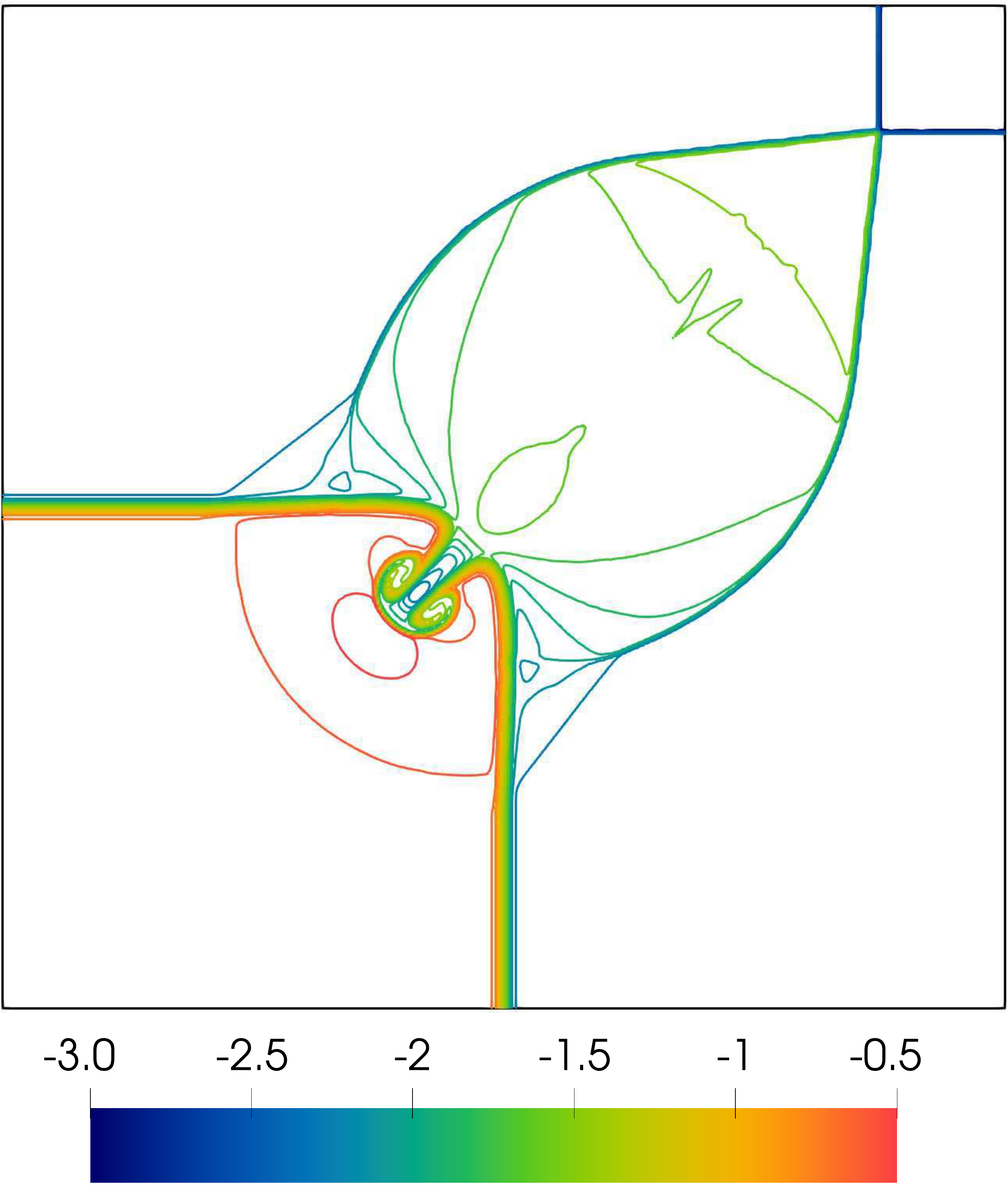}
			\caption{ID-EOS with $\gamma = \frac{5}{3}$: 25 contours in $[-3.0, -0.5]$.}
		\end{subfigure}
		\begin{subfigure}{0.31\textwidth}
			\includegraphics[width=\linewidth]{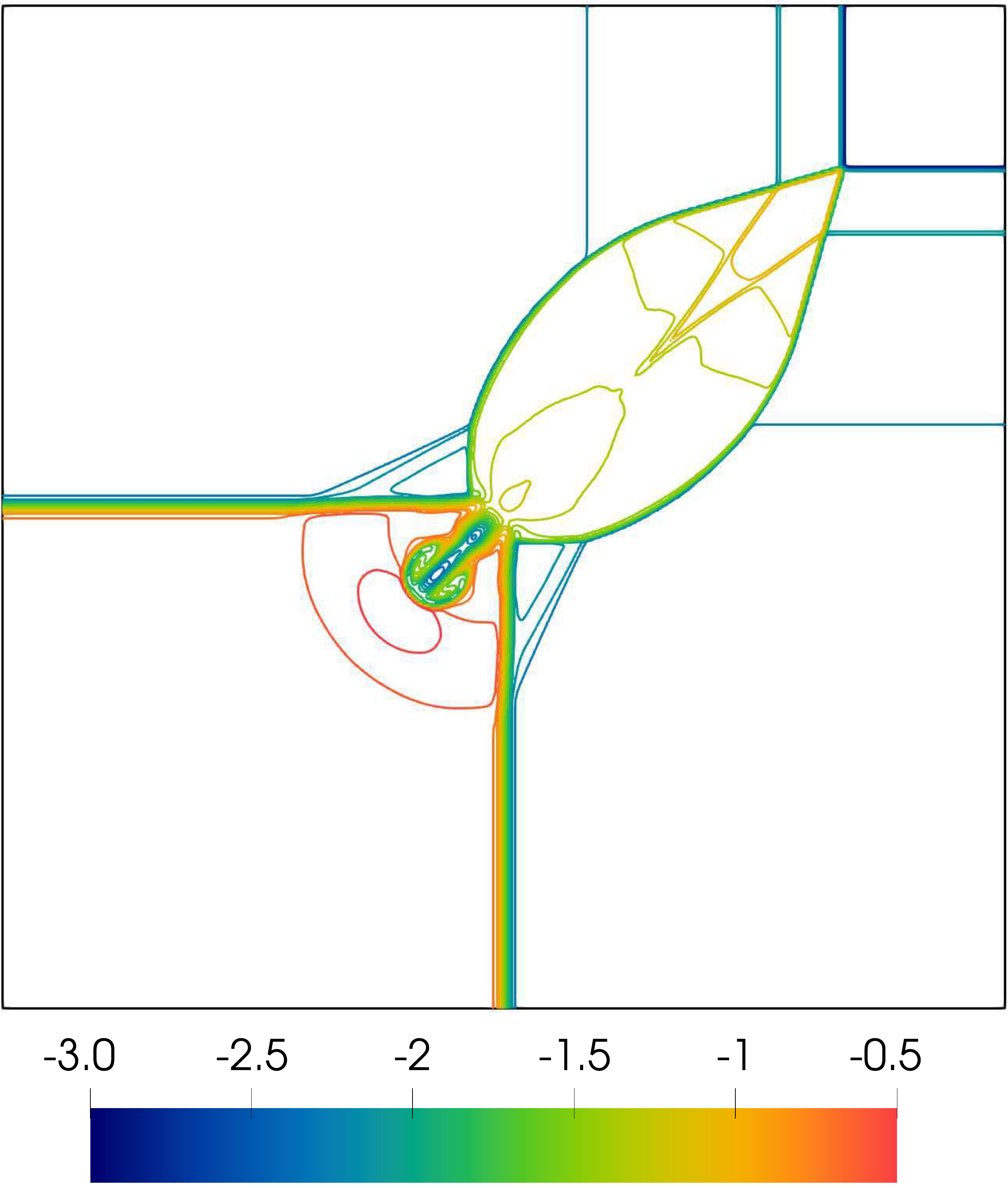}
			\caption{ID-EOS with $\gamma = \frac{4}{3}$: 25 contours in $[-3.0, -0.5]$.}
		\end{subfigure}
		\begin{subfigure}{0.31\textwidth}
			\includegraphics[width=\linewidth]{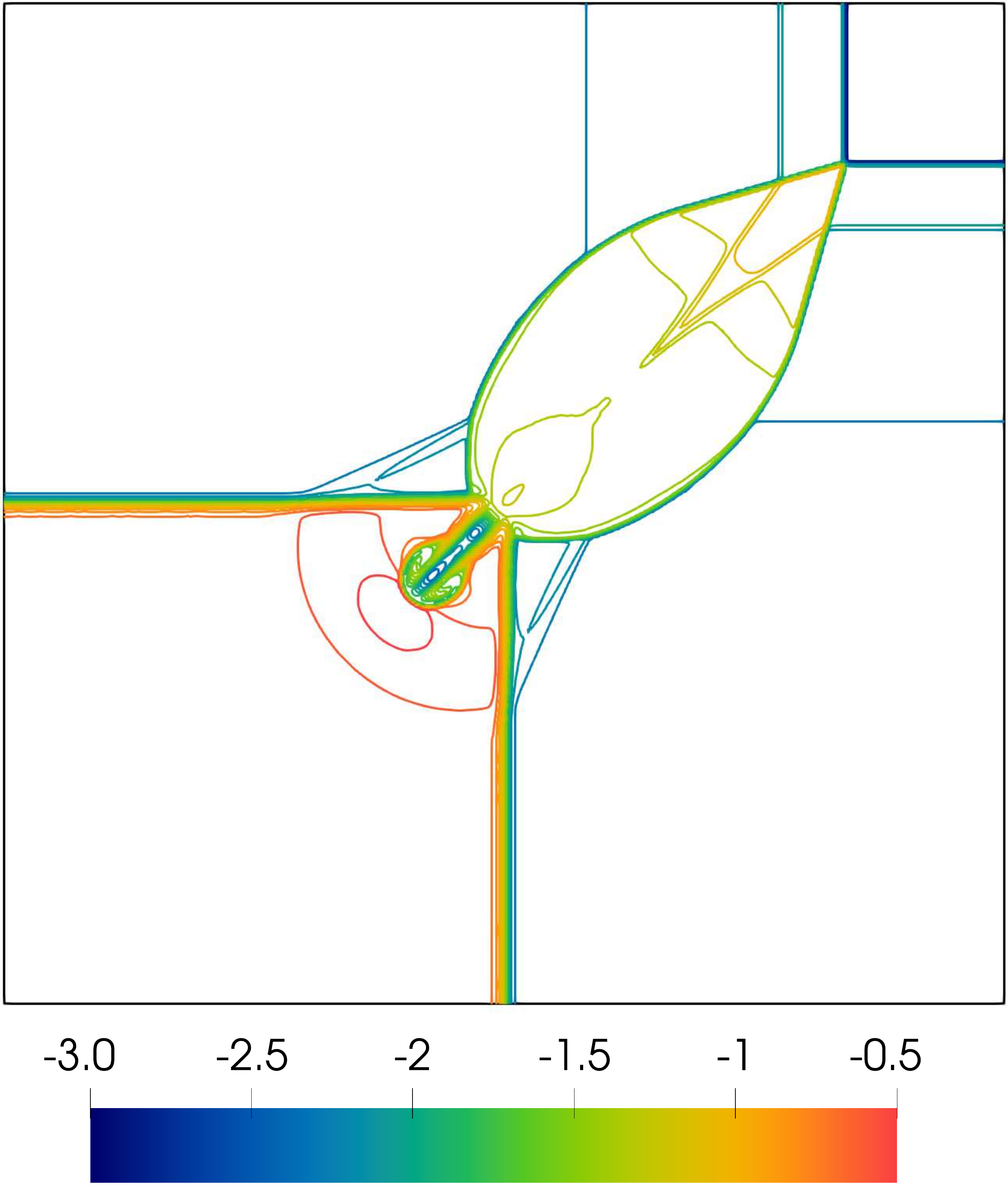}
			\caption{TM-EOS: 25 contours in $[-3.0, -0.5]$.}
		\end{subfigure}    
		\begin{subfigure}{0.31\textwidth}
			\includegraphics[width=\linewidth]{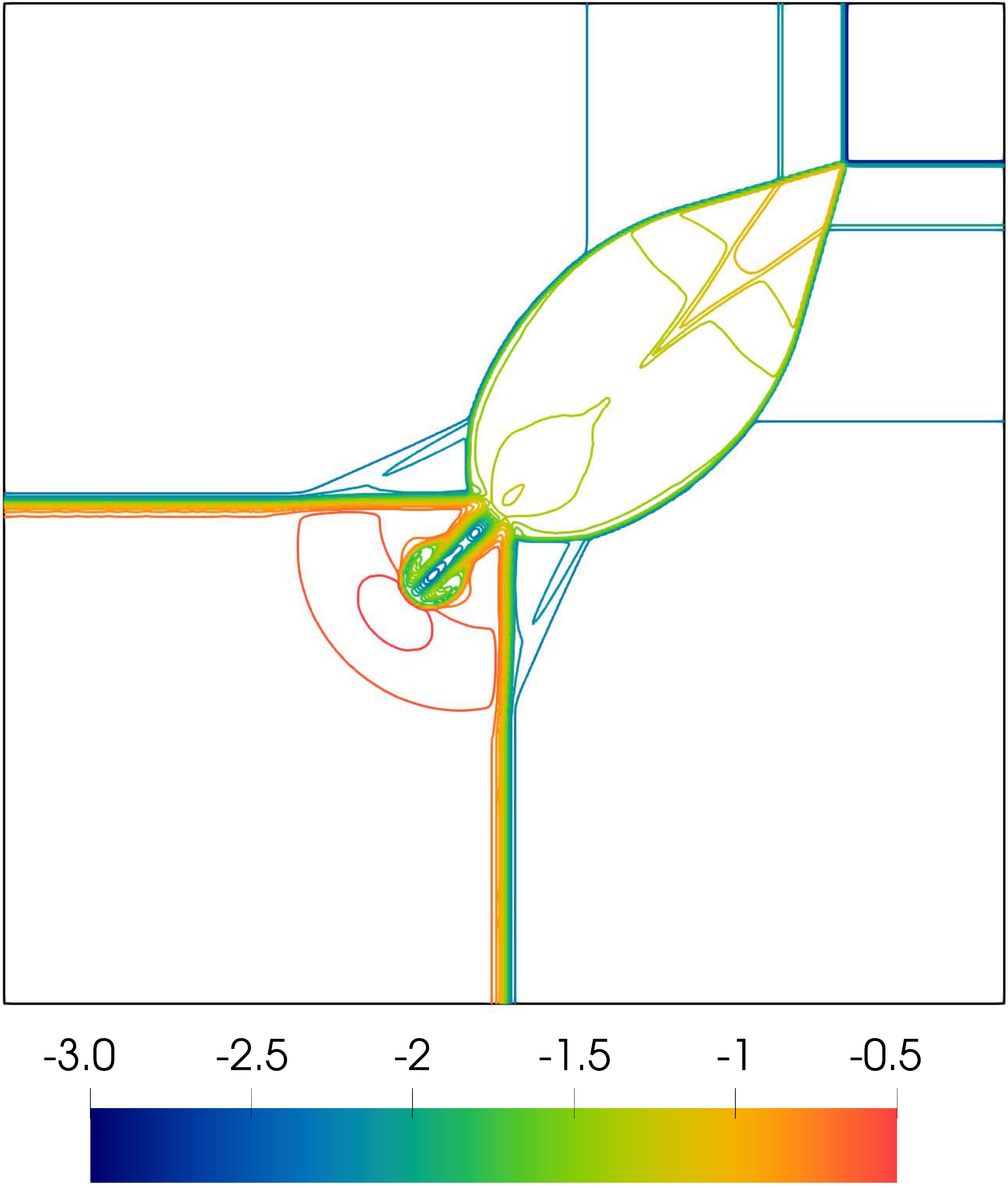}
			\caption{IP-EOS: 25 contours in $[-3.0, -0.5]$.\\}
		\end{subfigure}
		\begin{subfigure}{0.31\textwidth}
			\includegraphics[width=\linewidth]{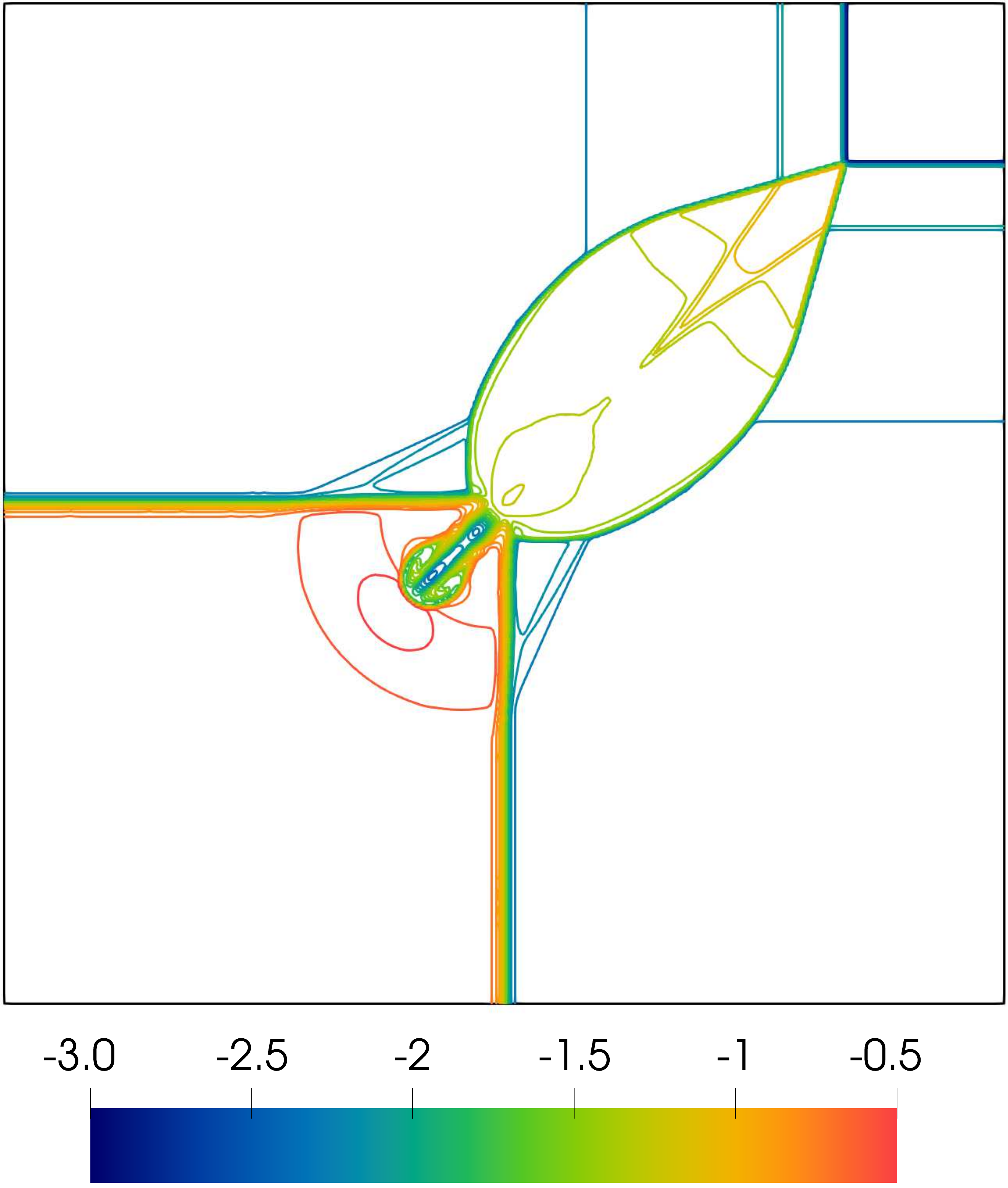}
			\caption{RC-EOS: 25 contours in $[-3.0, -0.5]$.}
		\end{subfigure}
		\begin{subfigure}{0.31\textwidth}
			\includegraphics[width=\linewidth]{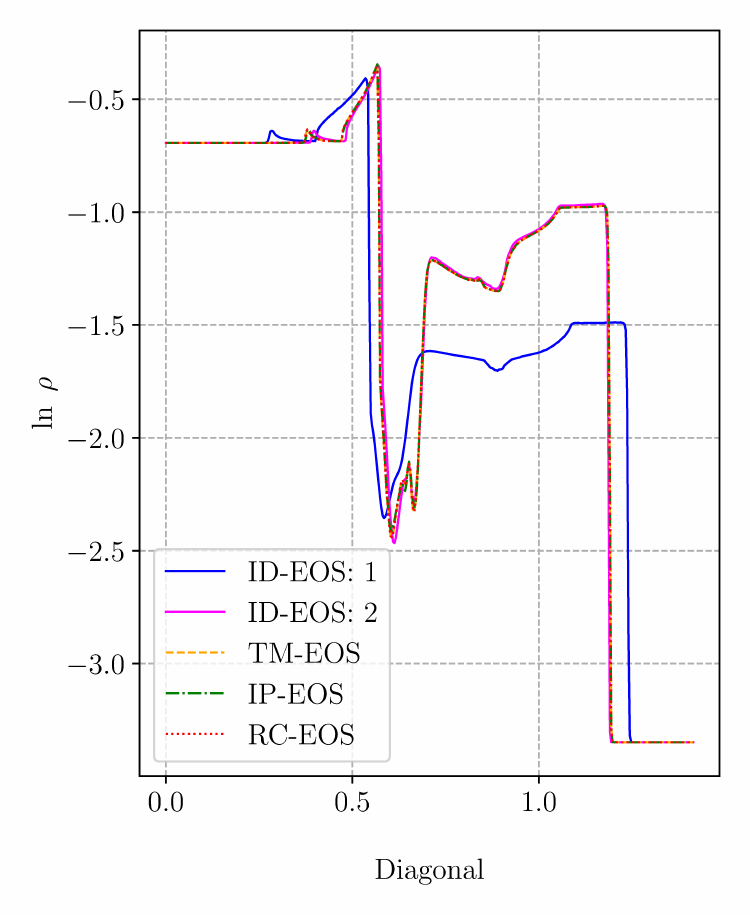}
			\caption{Cut plot from lower-left to upper-right.\\}
		\end{subfigure}
		\vspace{0.2cm}
		\caption{2-D Riemann problem 5: Plot of $\ln \rho$ with $400$ cells and $N=4$.}
		\label{fig:2dwu2rp3.lnden}
	\end{figure}
	\begin{figure}[]
		\centering
		\begin{subfigure}{0.31\textwidth}
			\includegraphics[width=\linewidth]{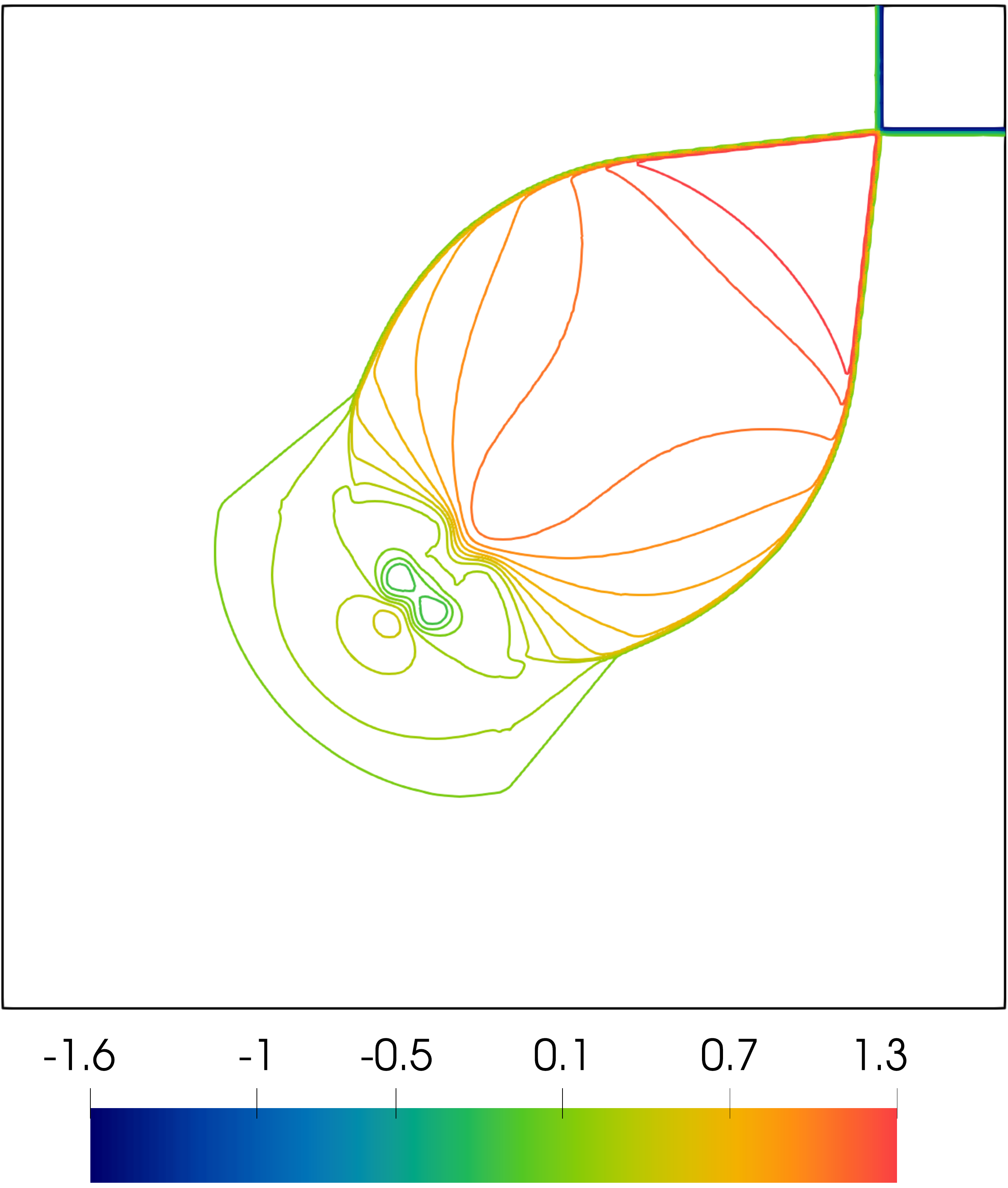}
			\caption{ID-EOS with $\gamma = \frac{5}{3}$: 25 contours in $[-1.6, 1.3]$.}
		\end{subfigure}
		\begin{subfigure}{0.31\textwidth}
			\includegraphics[width=\linewidth]{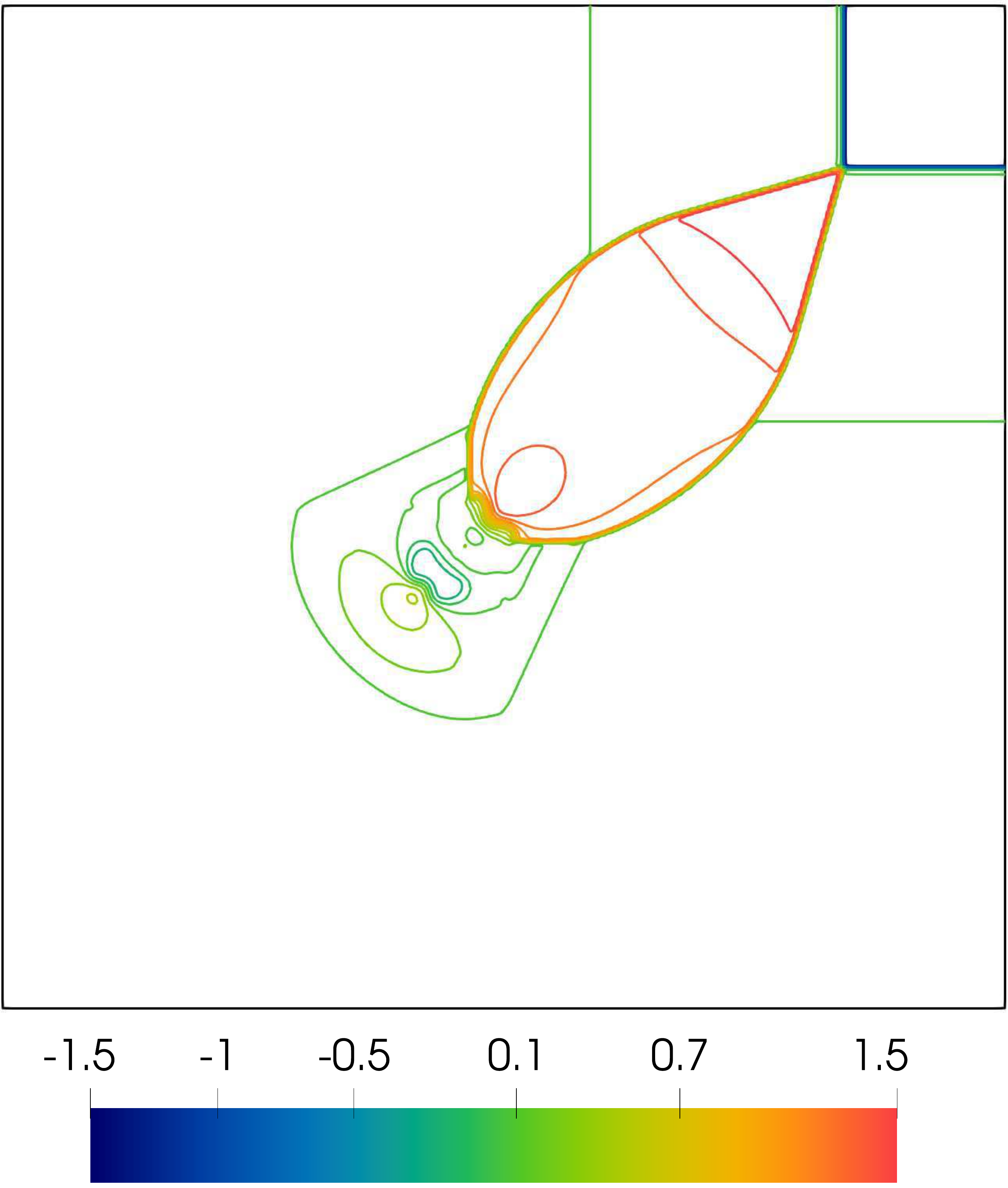}
			\caption{ID-EOS with $\gamma = \frac{4}{3}$: 25 contours in $[-1.5, 1.5]$.}
		\end{subfigure}
		\begin{subfigure}{0.31\textwidth}
			\includegraphics[width=\linewidth]{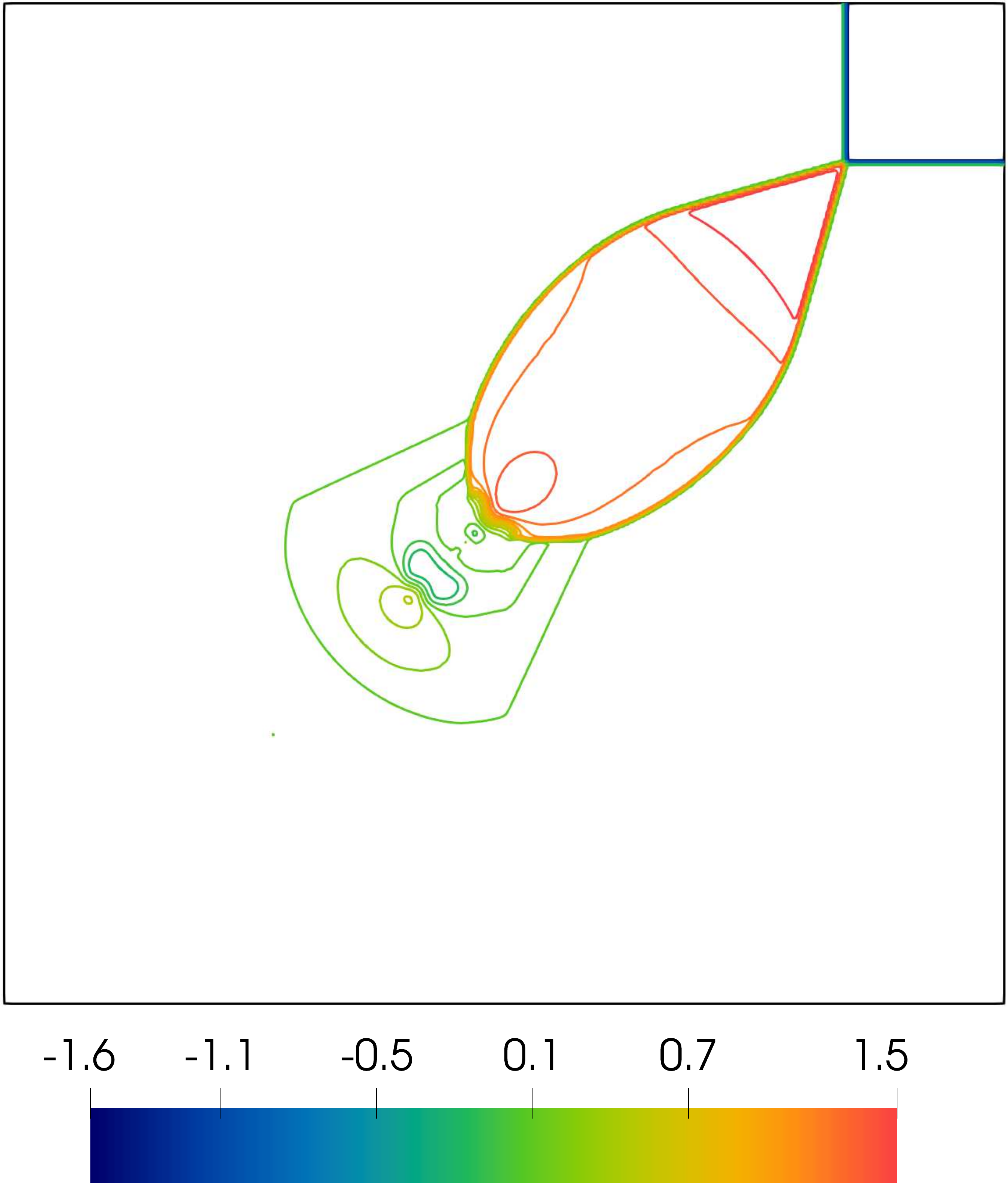}
			\caption{TM-EOS: 25 contours in $[-1.6, 1.5]$.\\}
		\end{subfigure}
		\begin{subfigure}{0.31\textwidth}
			\includegraphics[width=\linewidth]{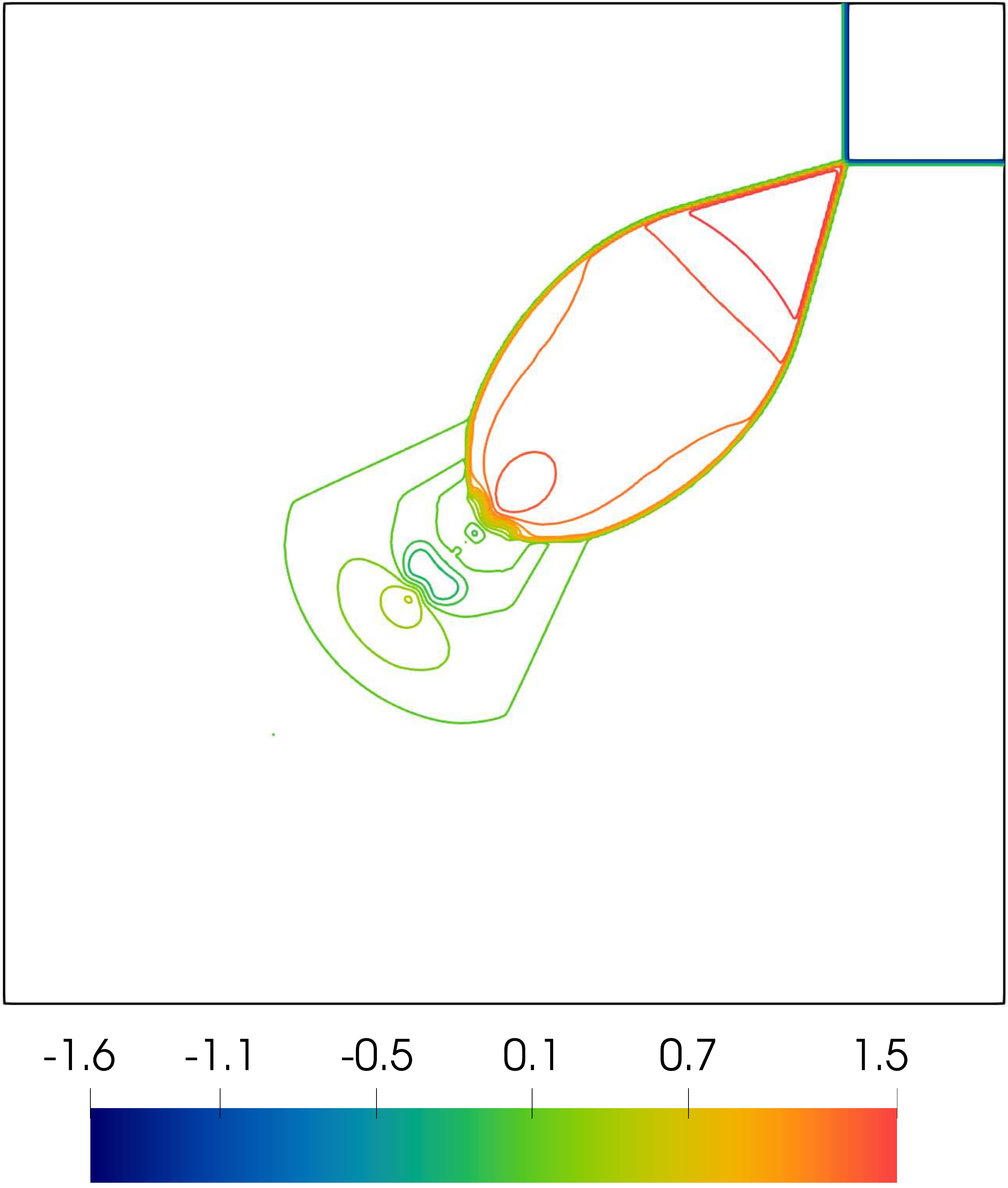}
			\caption{IP-EOS: 25 contours in $[-1.6, 1.5]$.}
		\end{subfigure}
		\begin{subfigure}{0.31\textwidth}
			\includegraphics[width=\linewidth]{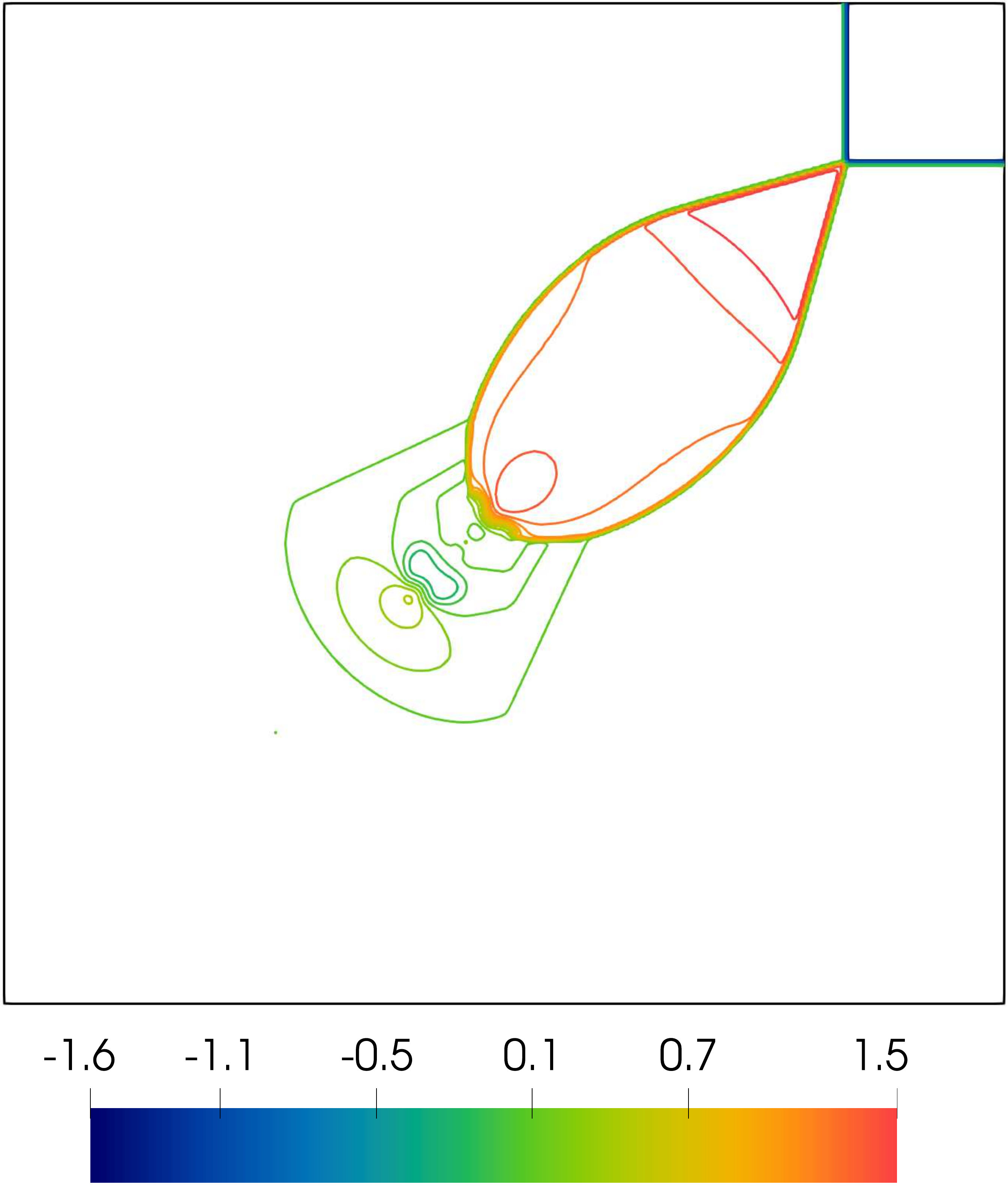}
			\caption{RC-EOS: 25 contours in $[-1.6, 1.5]$.}
		\end{subfigure}
		\begin{subfigure}{0.31\textwidth}
			\includegraphics[width=\linewidth]{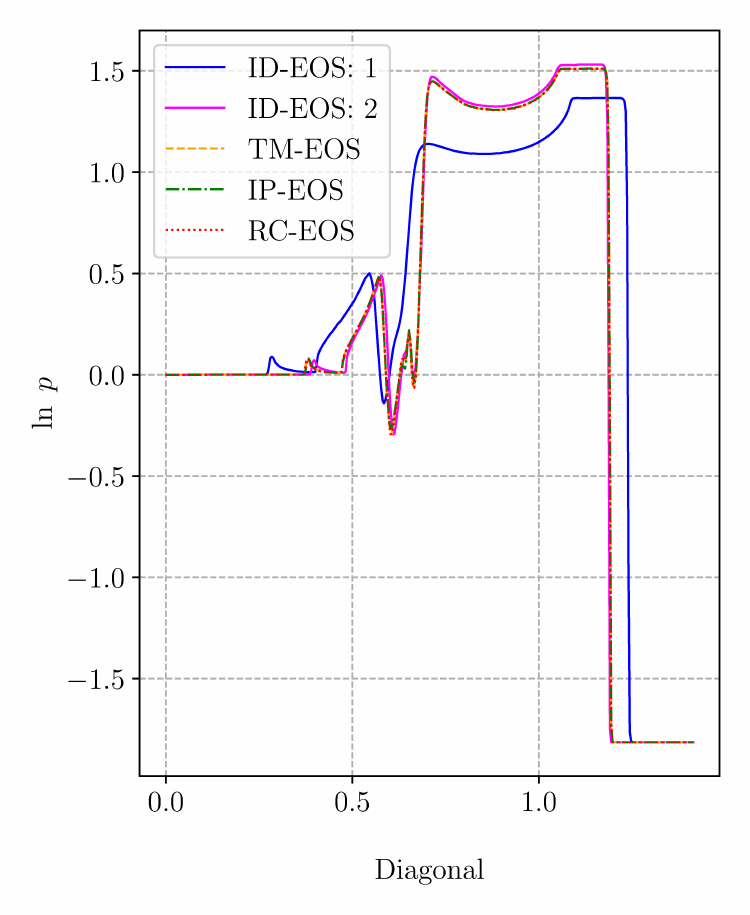}
			\caption{Cut plot from lower-left to upper-right.}
		\end{subfigure}
		\vspace{0.2cm}
		\caption{2-D Riemann problem 5: Plot of $\ln p$ with $400$ cells and $N=4$.}
		\label{fig:2dwu2rp3.lnpres}
	\end{figure}
	
	As time progresses, all the discontinuities interact with each other and a mushroom-like structure gets formed in the solution, which is captured by the scheme effectively. We can also observe from the figures that the scheme can capture the contact discontinuities and the curved shock waves in the solution. Similar to some of the other Riemann problems, here also we can observe the similarity among the results using ID-EOS with $\gamma = \frac{4}{3}$, TM-EOS, IP-EOS, and RC-EOS.
	
	\subsubsection{2-D relativistic jet}
	We now consider a test case from~\cite{wu2016physical}, which has a very high-speed jet with velocity near the speed of light. This is a good test to check the robustness of the scheme as it has strong relativistic shock wave, shear wave, interface instabilities, and ultra-relativistic region in the solution. The simulations are run with different equations of state using the scheme with $480\times 500$ cells and $N=4$ in the domain $[-12, 12]\times [0, 30]$ with outflow boundaries except the part $\{(x,y): |x|<0.5, y=0\}$, where we have used an inflow boundary condition with fluid density $\rho = 0.01$ and velocity in $y$-direction as $v_y = 0.9999$. The pressure of the inflow beam is calculated from the classical Mach number $1.74$, and the same pressure is taken in the rest of the domain initially, where the fluid is at rest with unit density. The safety factor $l_s = 0.7$ is taken for the case of ID-EOS with $\gamma = \frac{4}{3}$.
	
	The results of the simulations are presented in Figure~\ref{fig:wureljet.lnden} and Figure~\ref{fig:wureljet.lnpres} for all the equations of state at time $t=30$. We observe that the scheme can capture the Mach shock wave at the beam head effectively for all the cases. The scheme also effectively captures all the other waves formed in the domain because of the high-speed inflow beam.
	
	\begin{figure}[!htbp]
		\centering
		\begin{subfigure}{0.31\textwidth}    \includegraphics[width=\linewidth]{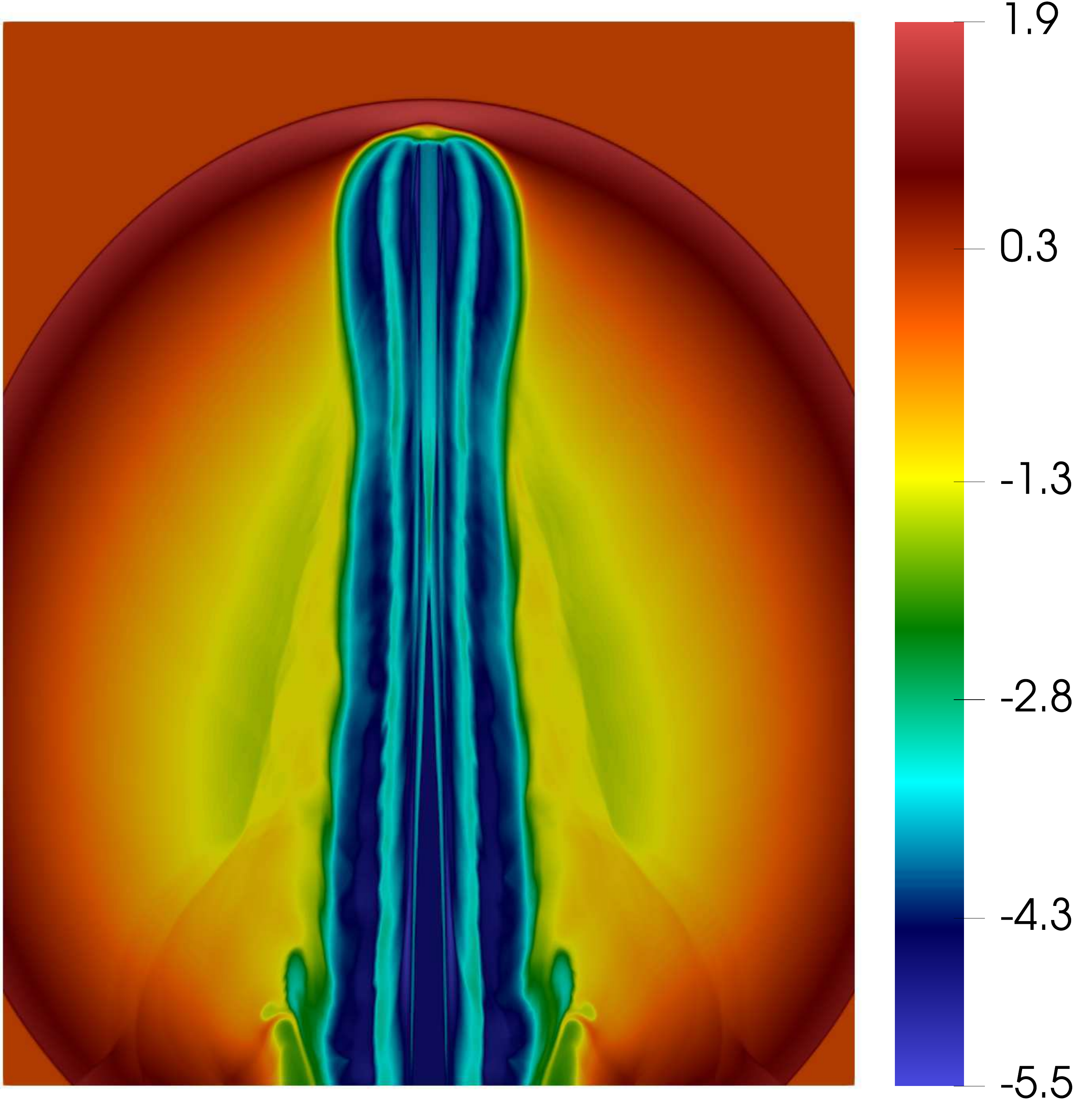}
			\caption{ID-EOS with $\gamma = \frac{5}{3}$.}
		\end{subfigure}
		\begin{subfigure}{0.31\textwidth}
			\includegraphics[width=\linewidth]{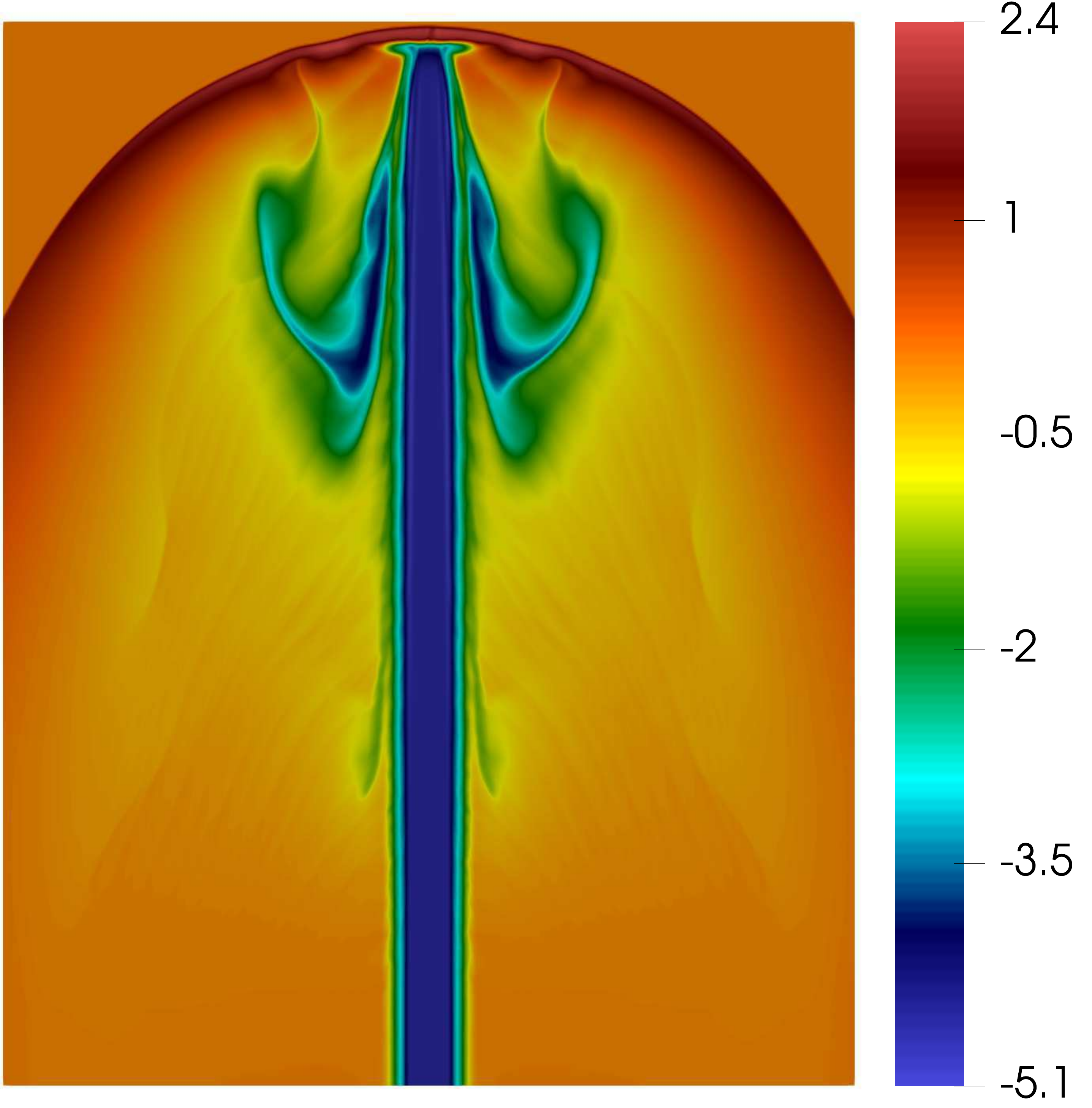}
			\caption{ID-EOS with $\gamma = \frac{4}{3}$.}
		\end{subfigure}
		\begin{subfigure}{0.31\textwidth}
			\includegraphics[width=\linewidth]{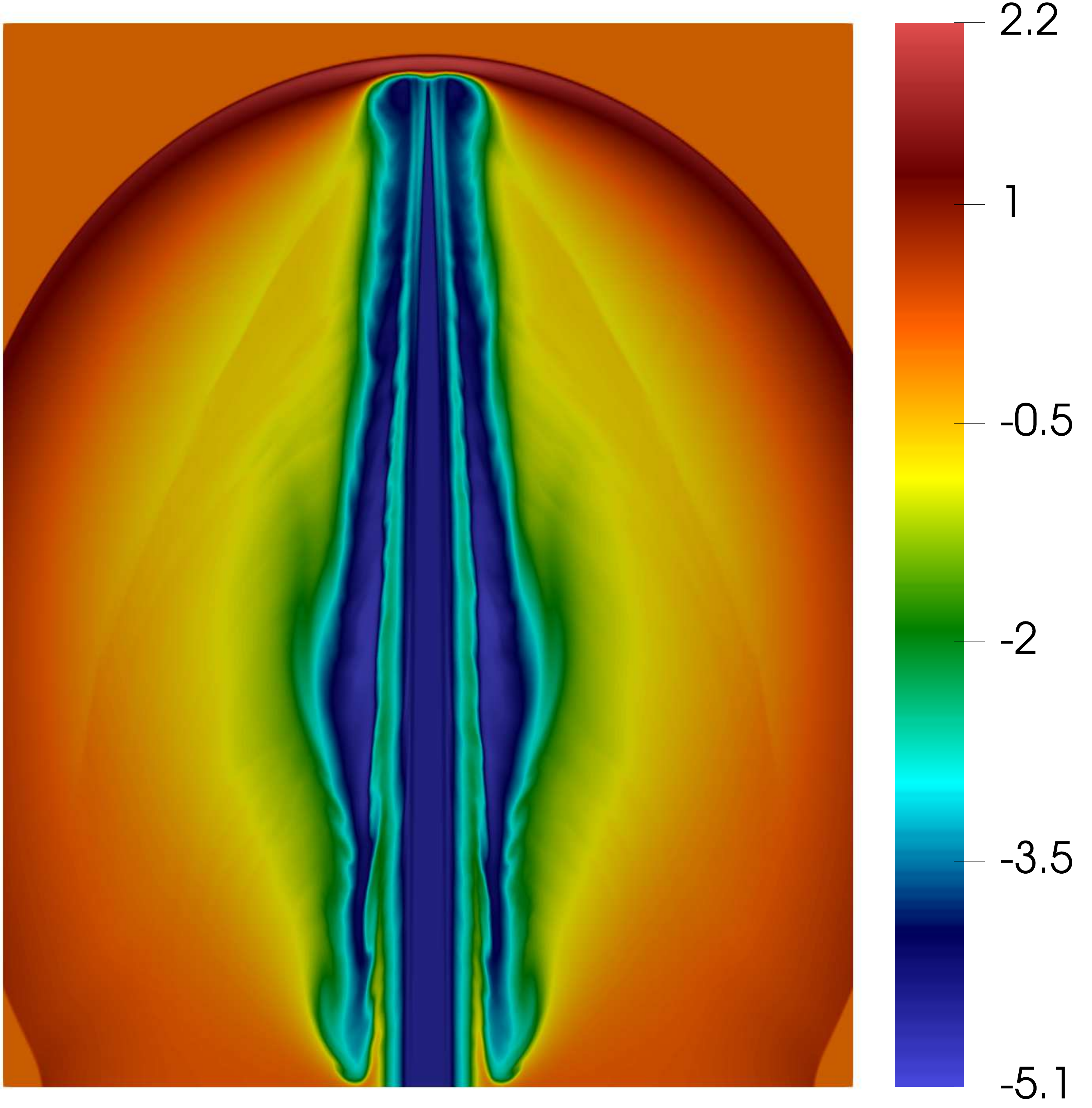}
			\caption{TM-EOS.}
		\end{subfigure}
		\begin{subfigure}{0.31\textwidth}
			\includegraphics[width=\linewidth]{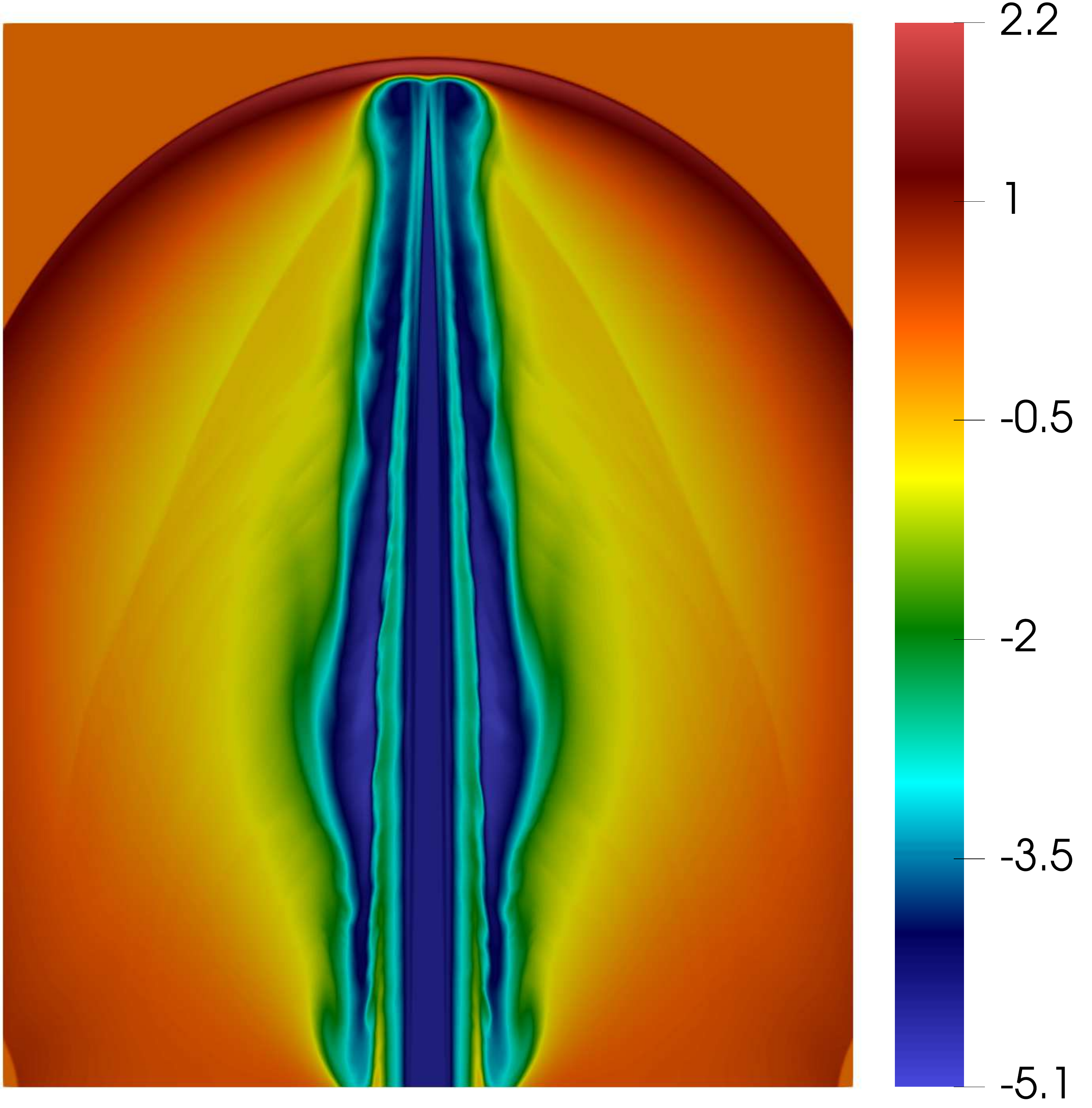}
			\caption{IP-EOS.}
		\end{subfigure}
		\begin{subfigure}{0.31\textwidth}
			\includegraphics[width=\linewidth]{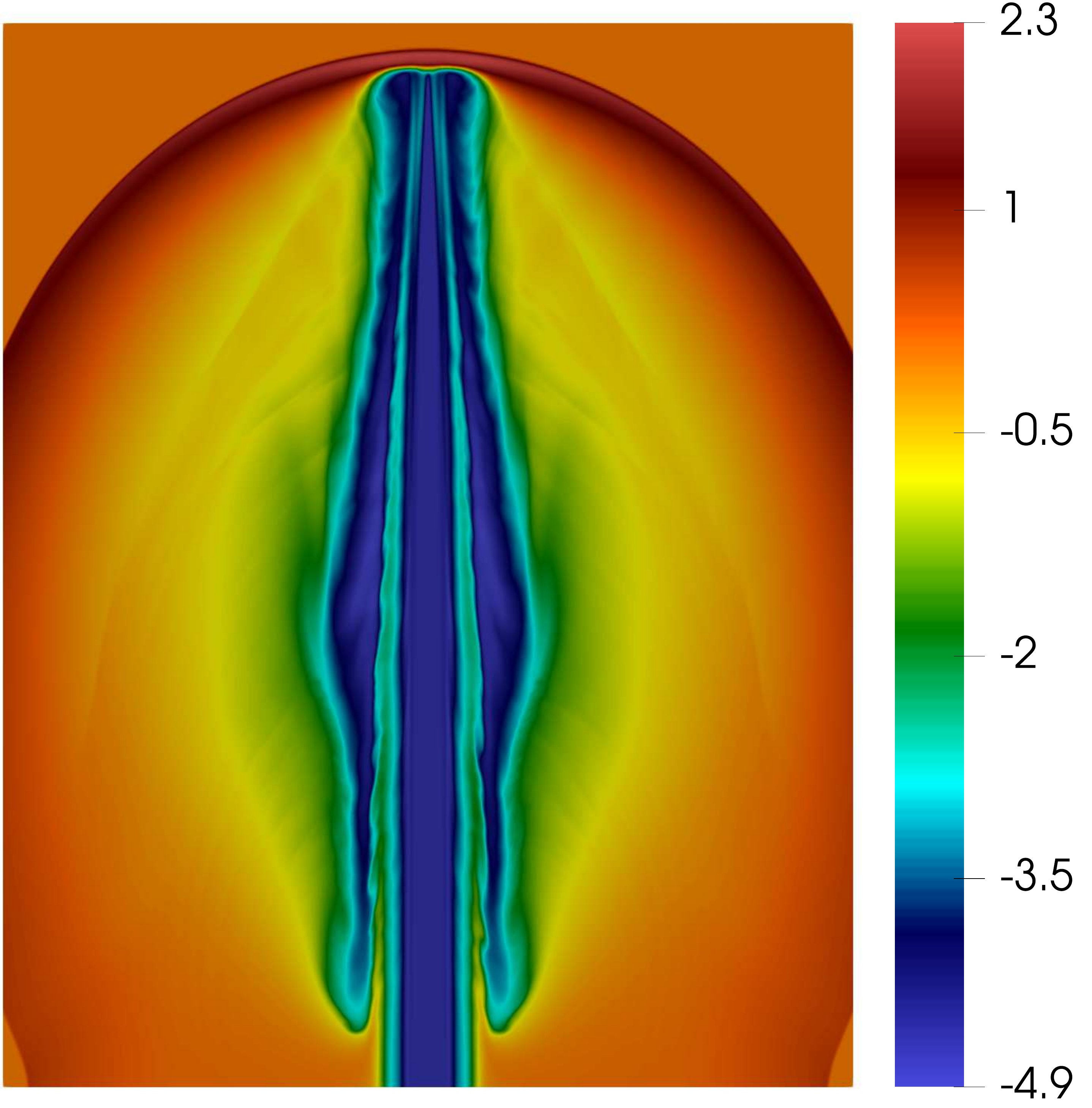}
			\caption{RC-EOS.}
		\end{subfigure}
		\caption{2-D relativistic jet: Plot of $\ln \rho$ using $480\times 500$ cells and $N=4$.}
		\label{fig:wureljet.lnden}
	\end{figure}
	
	\begin{figure}[!htbp]
		\centering
		\begin{subfigure}{0.31\textwidth}    \includegraphics[width=\linewidth]{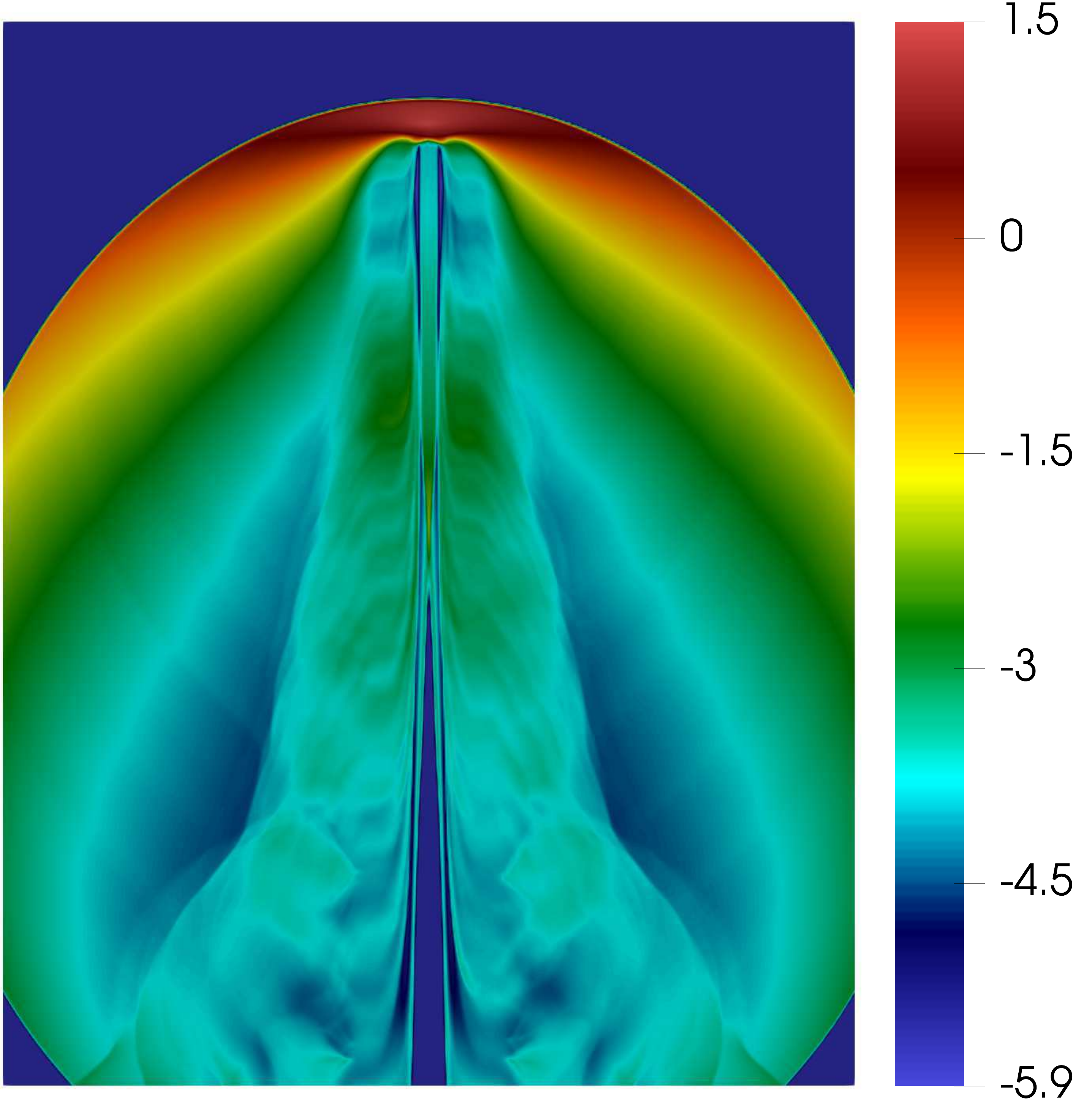}
			\caption{ID-EOS with $\gamma = \frac{5}{3}$.}
		\end{subfigure}
		\begin{subfigure}{0.31\textwidth}
			\includegraphics[width=\linewidth]{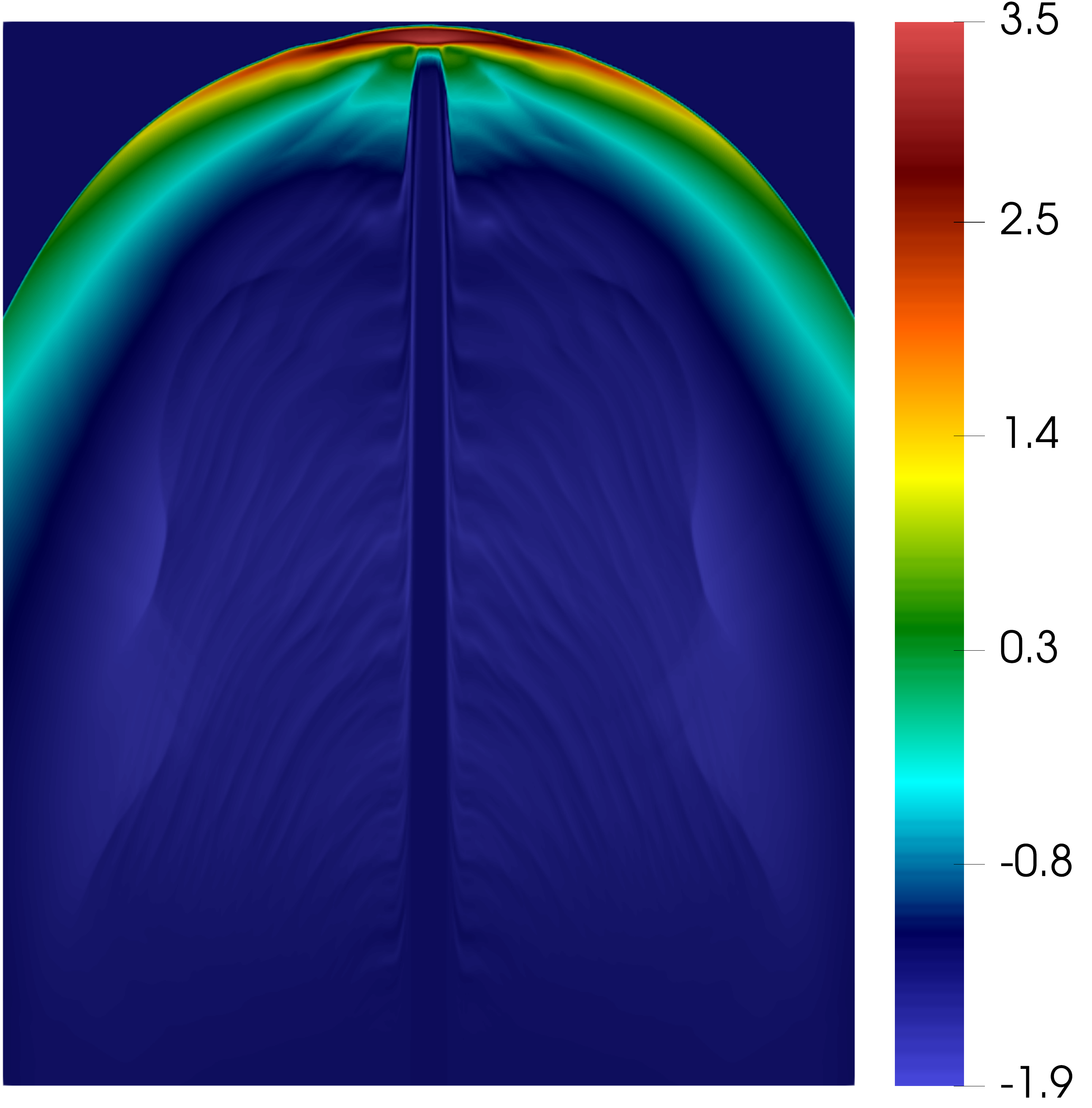}
			\caption{ID-EOS with $\gamma = \frac{4}{3}$.}
		\end{subfigure}
		\begin{subfigure}{0.31\textwidth}
			\includegraphics[width=\linewidth]{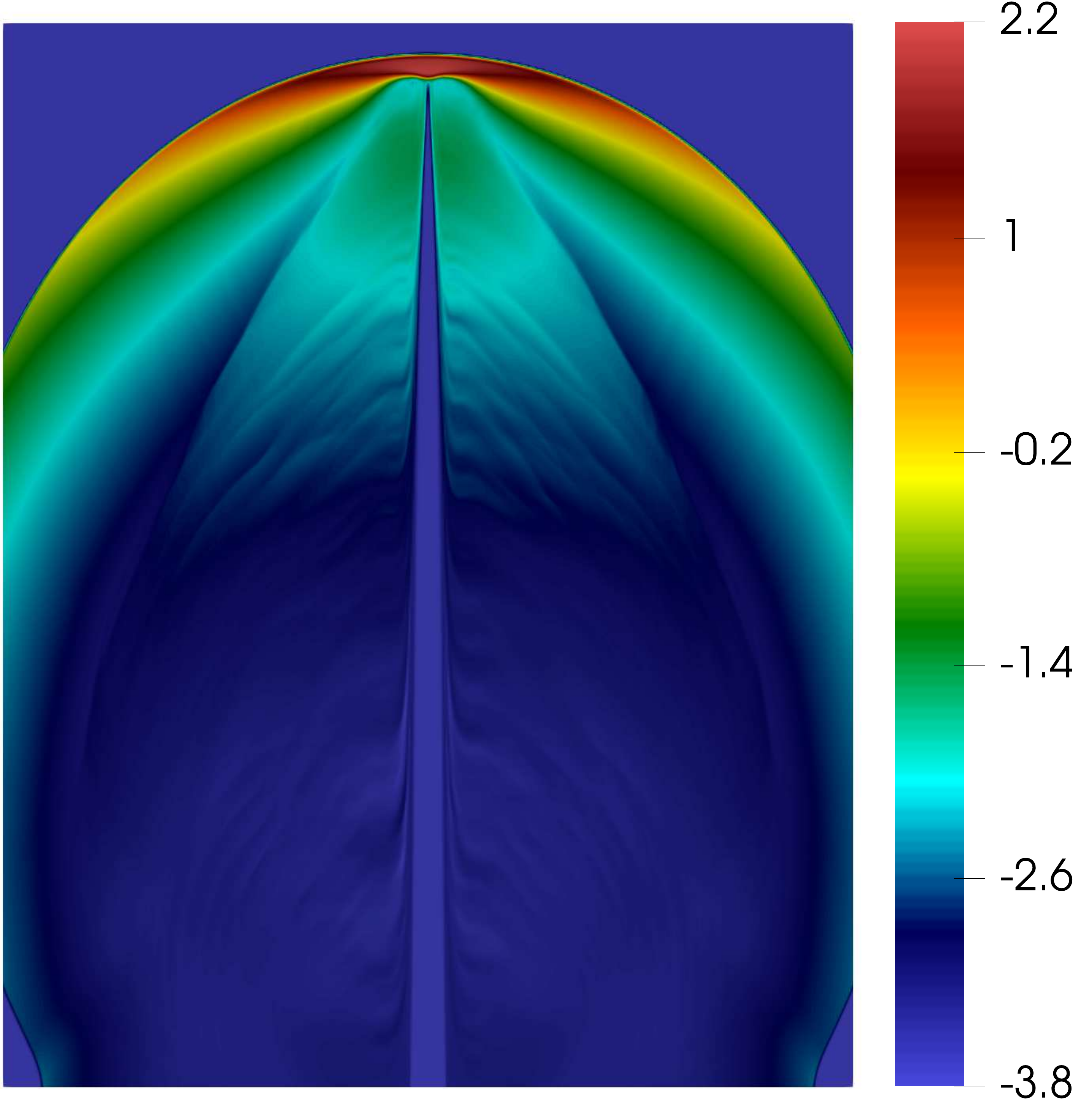}
			\caption{TM-EOS.}
		\end{subfigure}
		\begin{subfigure}{0.31\textwidth}
			\includegraphics[width=\linewidth]{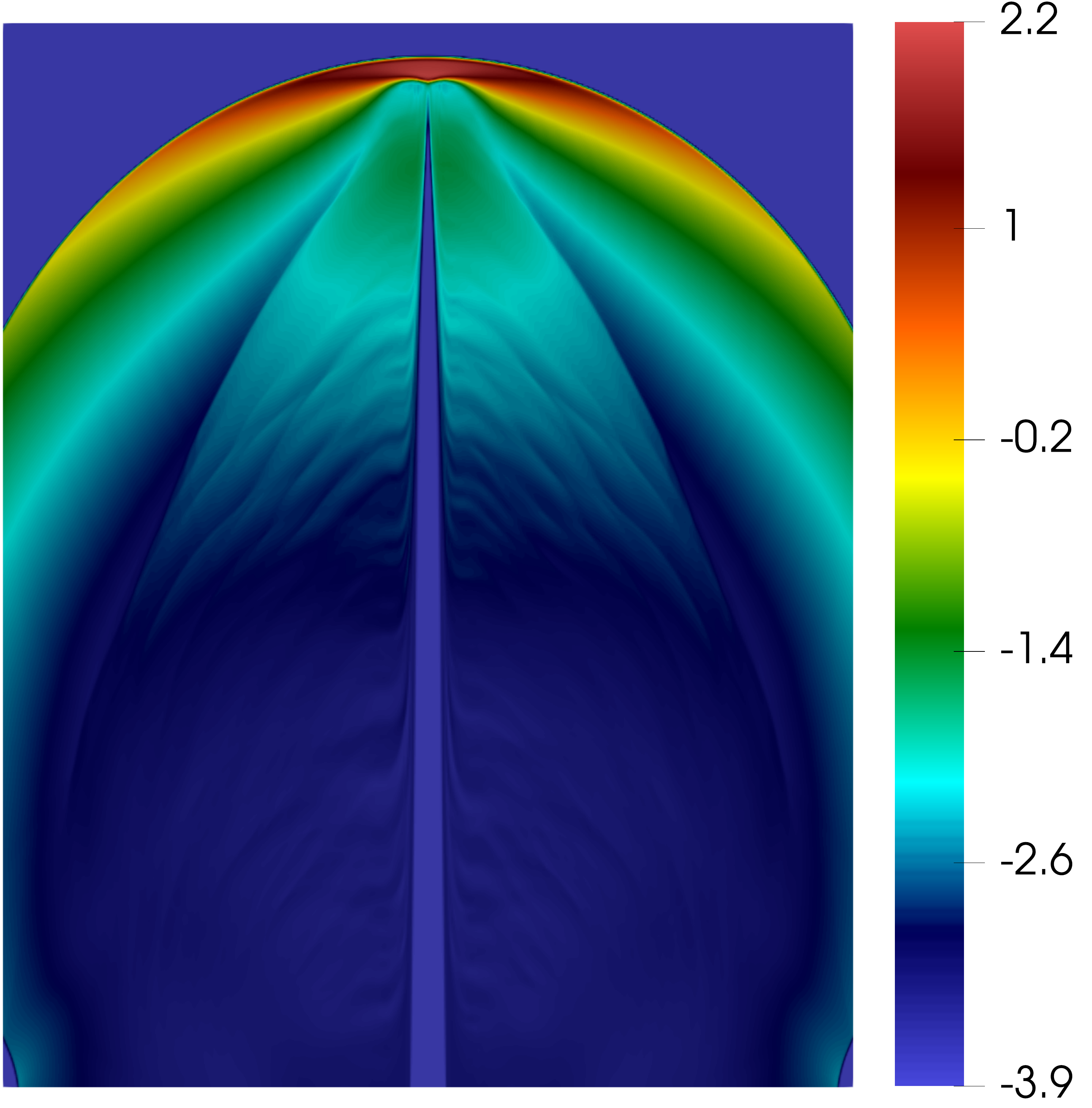}
			\caption{IP-EOS.}
		\end{subfigure}
		\begin{subfigure}{0.31\textwidth}
			\includegraphics[width=\linewidth]{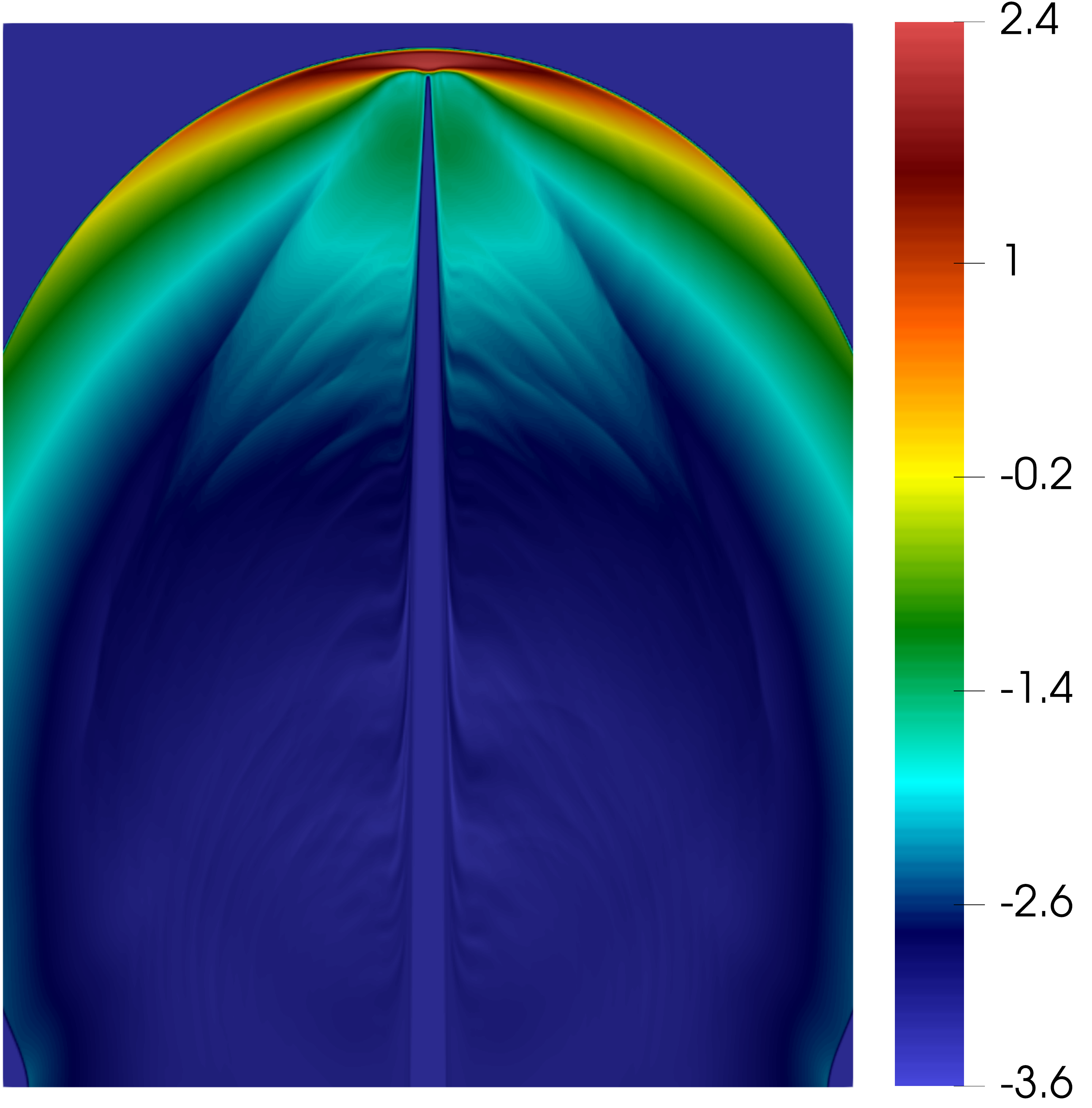}
			\caption{RC-EOS.}
		\end{subfigure}
		\caption{2-D relativistic jet: Plot of $\ln p$ using $480\times 500$ cells and $N=4$.}
		\label{fig:wureljet.lnpres}
	\end{figure}
	
	\subsubsection{2-D bubble shock interaction}
	Here, we test our scheme with a test case where a moving shock wave interacts with a bubble of lighter and higher density and forms different wave structures around it. This test case is taken from~\cite{xu2024high} and is successfully simulated with different equations of state in the domain $[0, 325]\times [0, 90]$ using our scheme with $650\times 180$ cells, $N=4$ with reflective boundaries at $y=0,\ 90$ and constant left and right shock states at the boundaries $x=0,\ 325$. Initially, a bubble of radius $25$ is placed with center at $(215, 45)$ having density $0.1358$ and $3.1538$ for Case~I and Case~II, respectively. The pressure inside the bubble is the same as the ambient pressure. A shock is placed outside the bubble at $x=265$ at time $t=0$ with,
	\begin{align*}
		(\rho&, v_1, v_2, p)\\ 
		&= \begin{cases}
			(1, 0, 0, 0.05) & \text{if}\ x < 265\\
			(1.941272902134272, -0.200661045980881, 0,  0.15) & \text{if}\ x > 265
		\end{cases}
	\end{align*}
	for both cases.
	
	We show here the results with different equations of state in Figure~\ref{fig:wu2sb1.eos1_53}- Figure~\ref{fig:wu2sb2.eos3} at different times for Case~I and Case~II. We can observe that after the interaction, the structure of the bubble gets changed, and the waves created because of the collision are striking the reflective boundaries, coming back, and heating the bubble again, forming a number of waves in the domain. Our scheme can capture all the waves during and after the interaction effectively, along with the deformed structure of the bubble.
	\begin{figure}[]
		\centering
		\begin{subfigure}{0.85\textwidth}
			\centering
			\includegraphics[width=0.7\linewidth, height = 0.8cm]{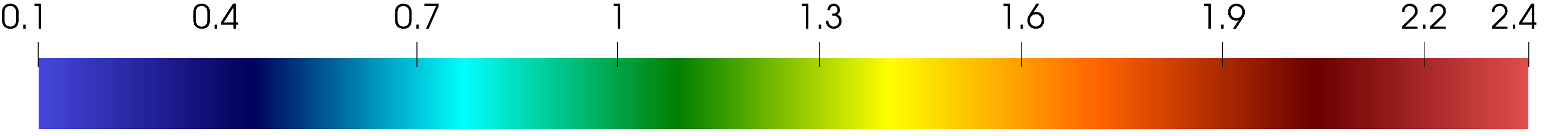}
		\end{subfigure}
		\begin{subfigure}{0.49\textwidth}
			\includegraphics[width=\linewidth]{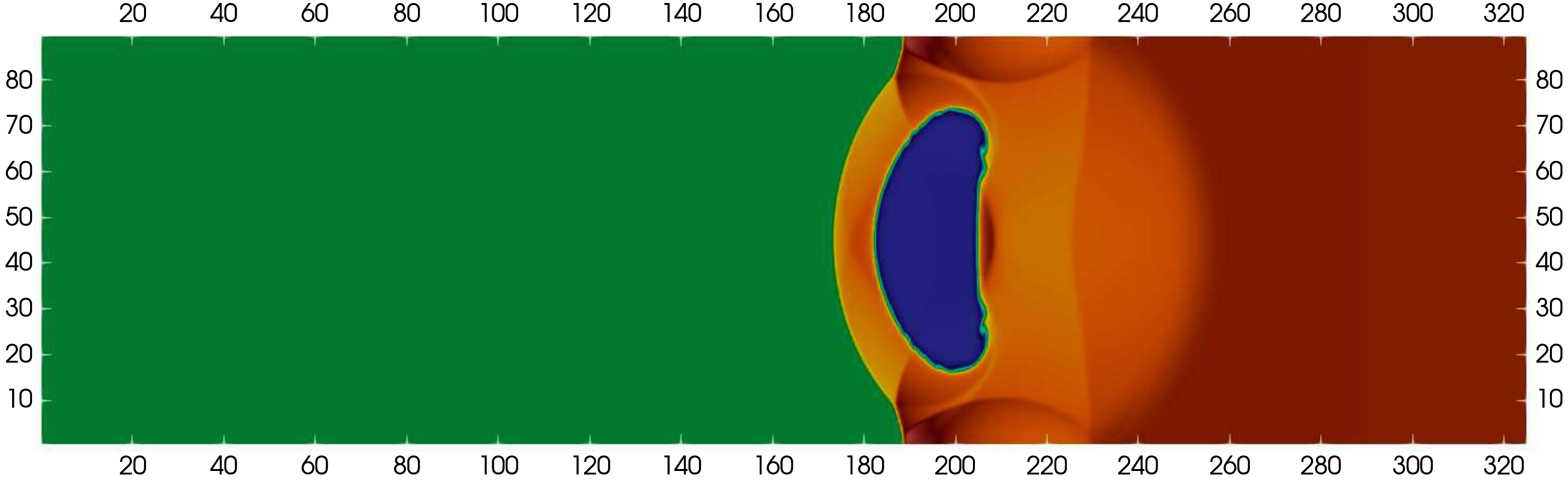}
			\caption{At time $t=180$.}
		\end{subfigure}
		\begin{subfigure}{0.49\textwidth}
			\includegraphics[width=\linewidth]{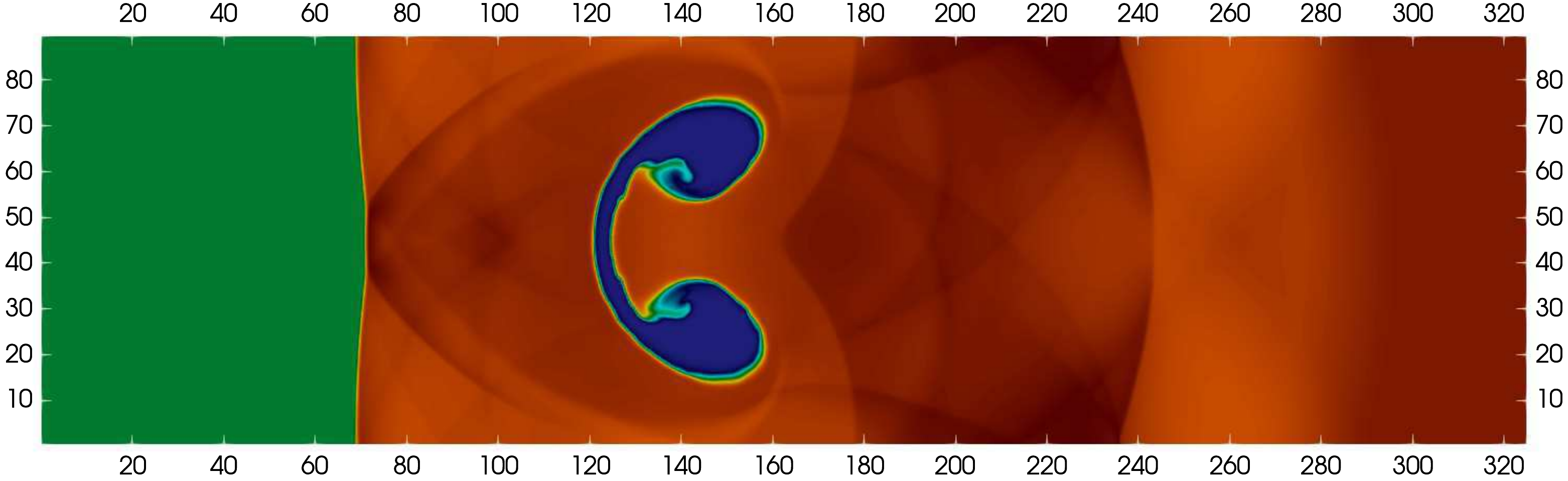}
			\caption{At time $t=450$.}
		\end{subfigure}
		\caption{2-D bubble shock interaction: Plot of density ($\rho$) with $650\times 180$ cells and $N=4$ and using ID-EOS with $\gamma = \frac{5}{3}$ for case~I.}
		\label{fig:wu2sb1.eos1_53}
	\end{figure}
	\begin{figure}[]
		\centering
		\begin{subfigure}{0.85\textwidth}
			\centering
			\includegraphics[width=0.7\linewidth, height = 0.8cm]{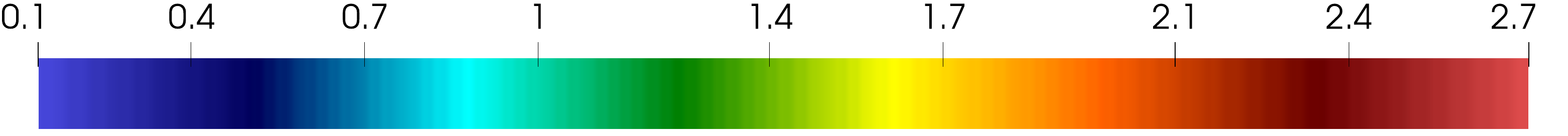}
		\end{subfigure}
		\begin{subfigure}{0.49\textwidth}
			\includegraphics[width=\linewidth]{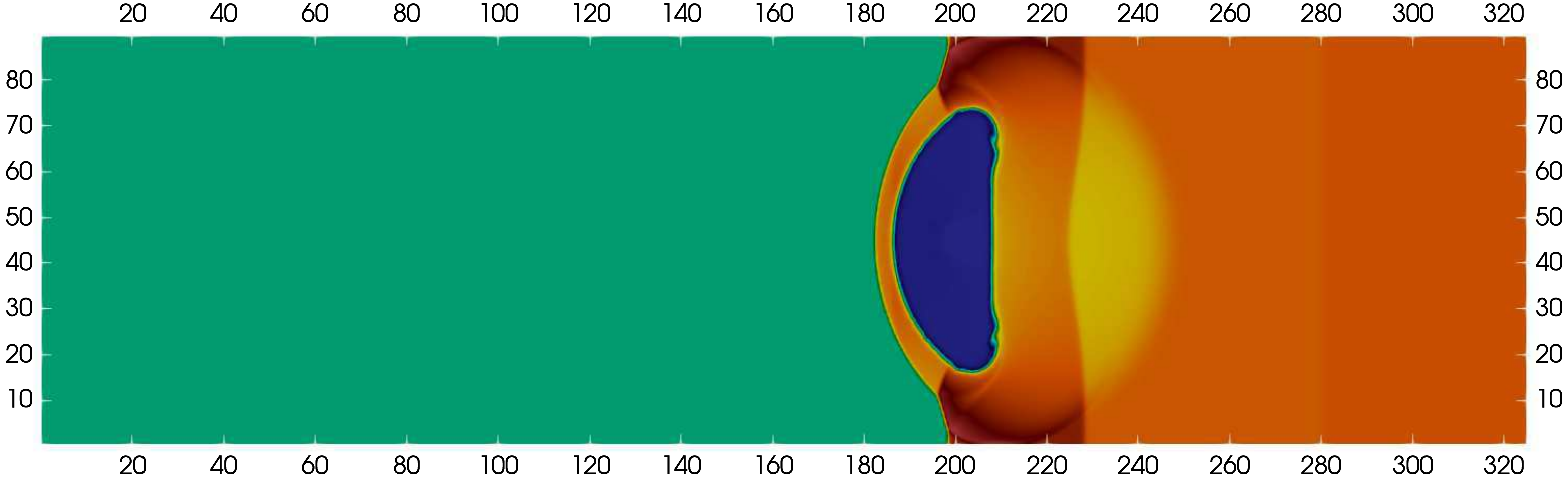}
			\caption{At time $t=180$.}
		\end{subfigure}
		\begin{subfigure}{0.49\textwidth}
			\includegraphics[width=\linewidth]{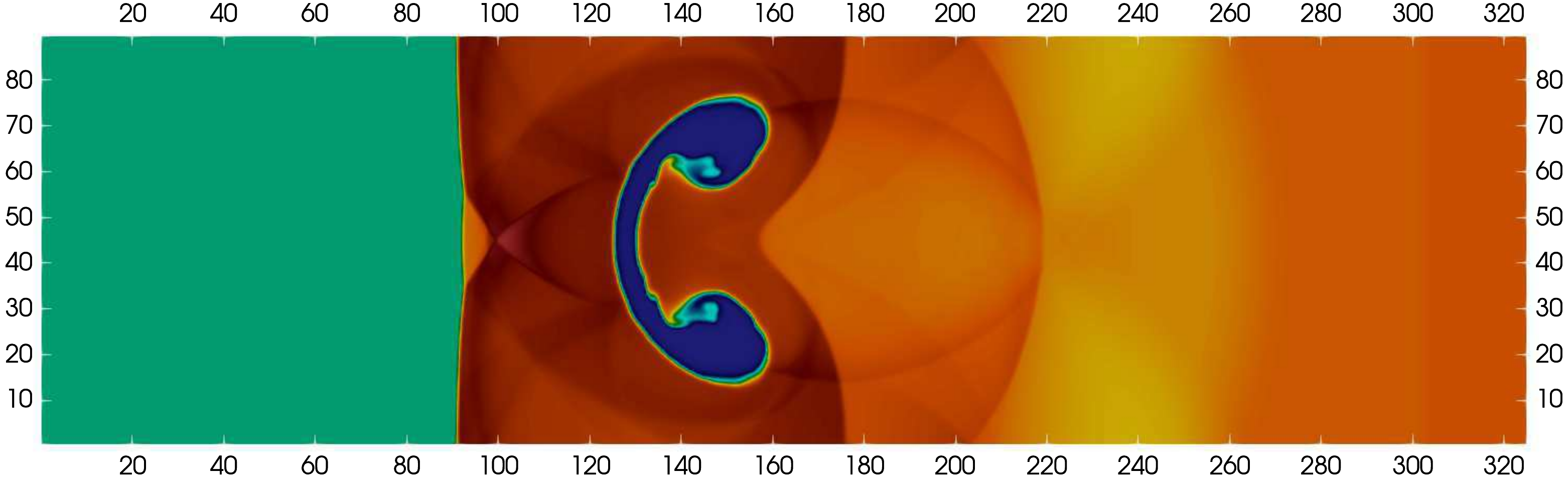}
			\caption{At time $t=450$.}
		\end{subfigure}
		\caption{2-D bubble shock interaction: Plot of density ($\rho$) with $650\times 180$ cells and $N=4$ and using ID-EOS with $\gamma = \frac{4}{3}$ for case~I.}
		\label{fig:wu2sb1.eos1_43}
	\end{figure}
	\begin{figure}[]
		\centering
		\begin{subfigure}{0.85\textwidth}
			\centering
			\includegraphics[width=0.7\linewidth, height = 0.8cm]{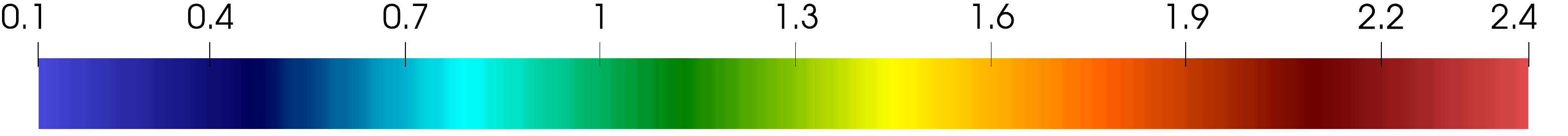}
		\end{subfigure}
		\begin{subfigure}{0.49\textwidth}
			\includegraphics[width=\linewidth]{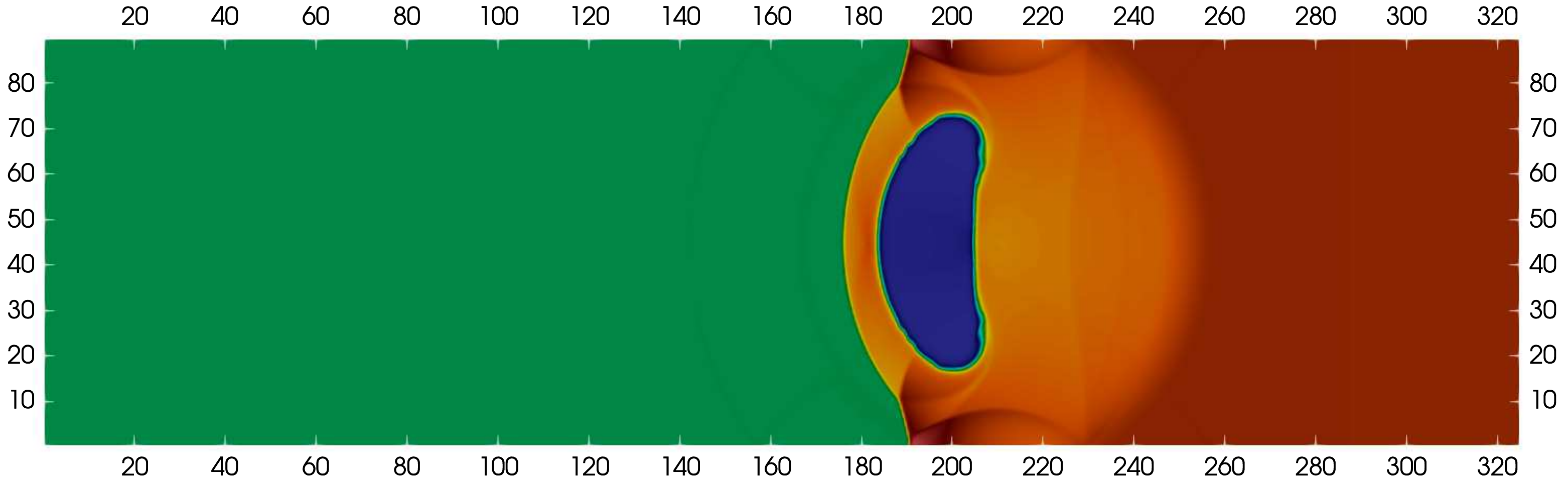}
			\caption{At time $t=180$.}
		\end{subfigure}
		\begin{subfigure}{0.49\textwidth}
			\includegraphics[width=\linewidth]{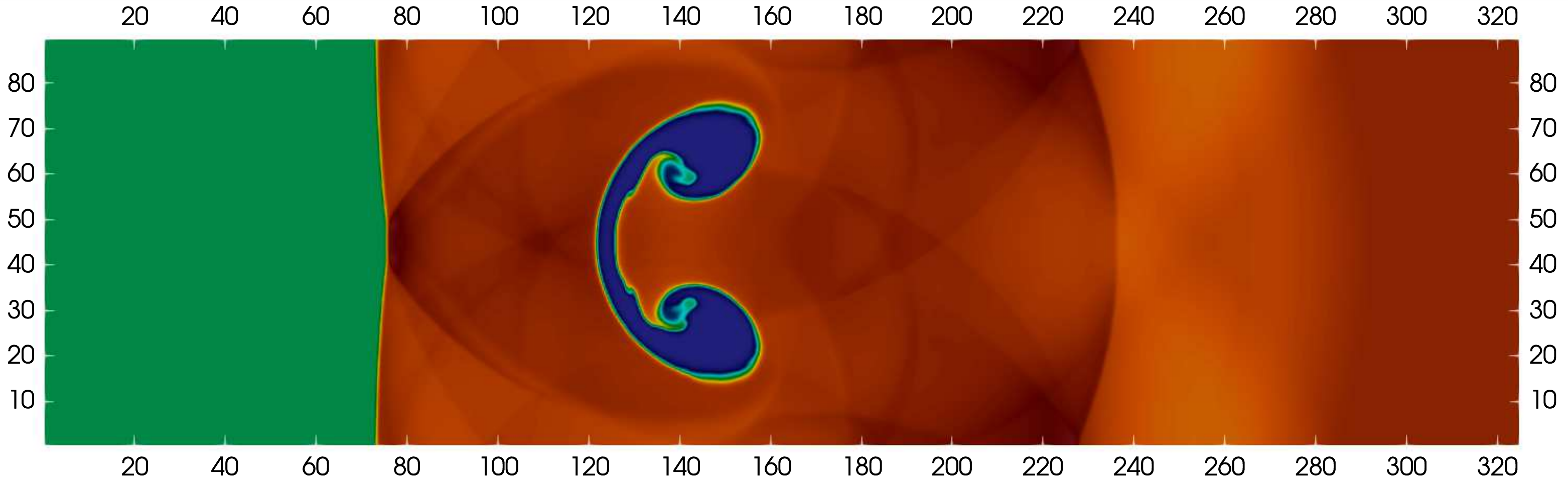}
			\caption{At time $t=450$.}
		\end{subfigure}
		\caption{2-D bubble shock interaction: Plot of density ($\rho$) with $650\times 180$ cells and $N=4$ and using TM-EOS for case~I.}
		\label{fig:wu2sb1.eos2}
	\end{figure}
	\begin{figure}[]
		\centering
		\begin{subfigure}{0.85\textwidth}
			\centering
			\includegraphics[width=0.7\linewidth, height = 0.8cm]{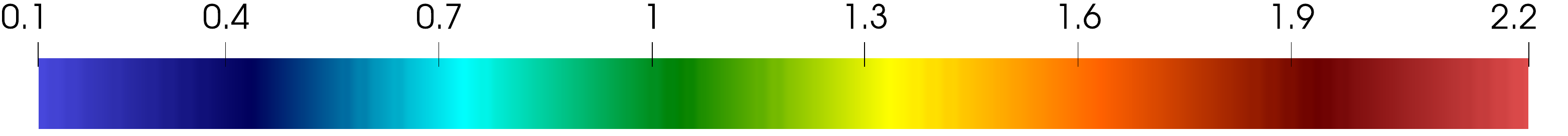}
		\end{subfigure}
		\begin{subfigure}{0.49\textwidth}
			\includegraphics[width=\linewidth]{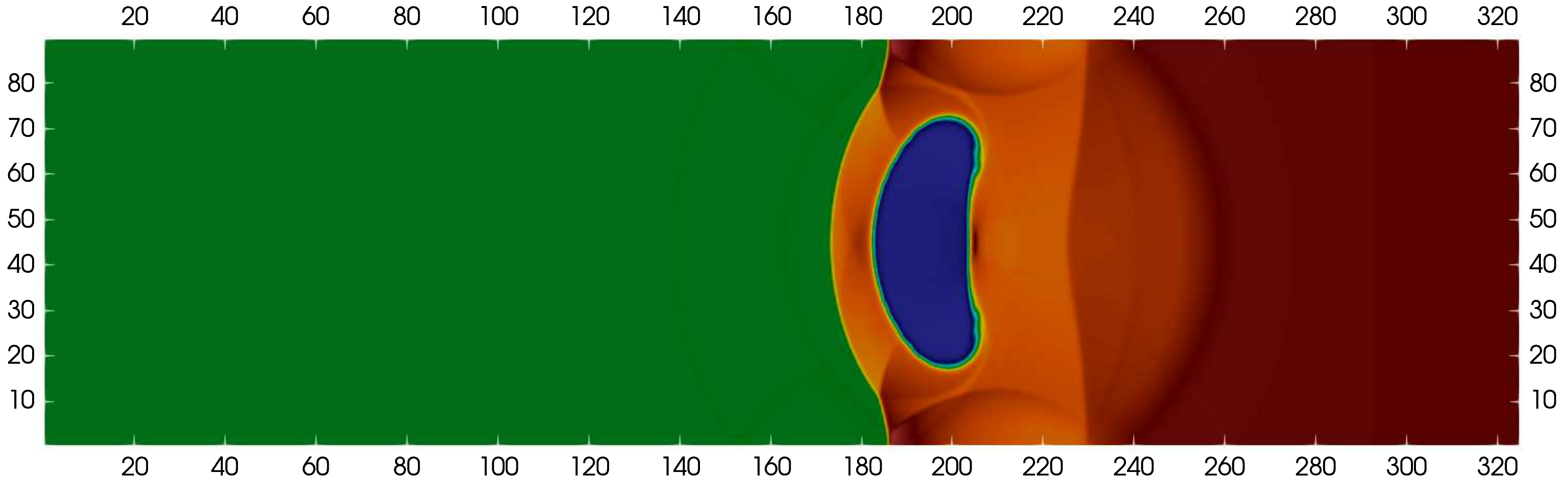}
			\caption{At time $t=180$.}
		\end{subfigure}
		\begin{subfigure}{0.49\textwidth}
			\includegraphics[width=\linewidth]{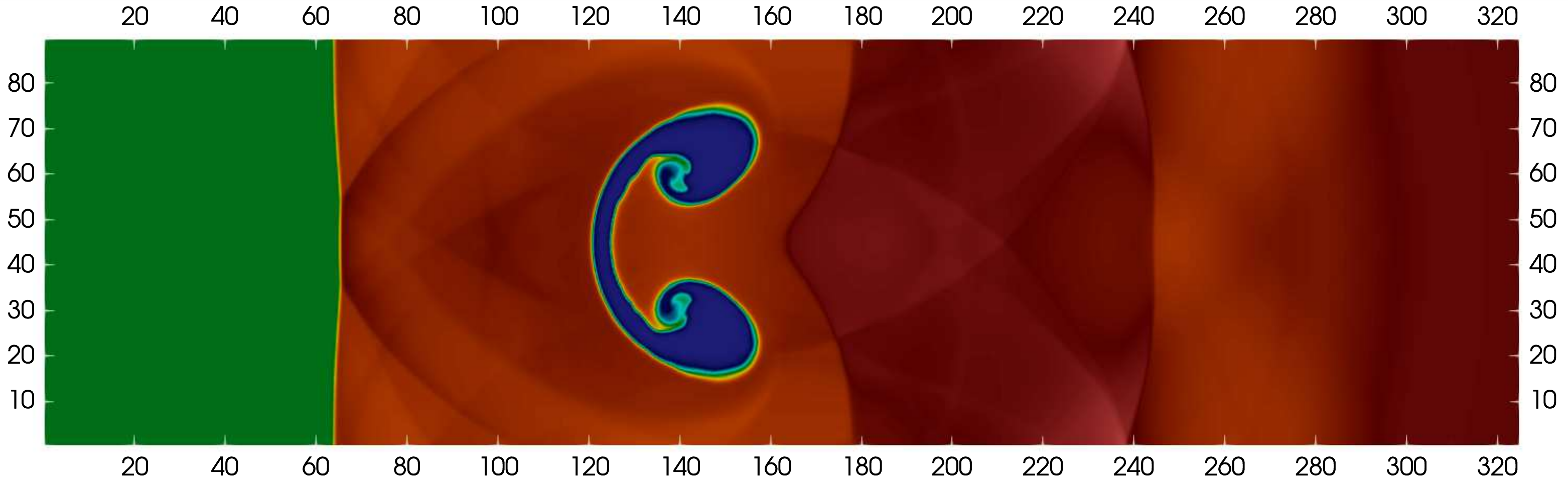}
			\caption{At time $t=450$.}
		\end{subfigure}
		\caption{2-D bubble shock interaction: Plot of density ($\rho$) with $650\times 180$ cells and $N=4$ and using IP-EOS for case~I.}
		\label{fig:wu2sb1.eos4}
	\end{figure}
	\begin{figure}[]
		\centering
		\begin{subfigure}{0.85\textwidth}
			\centering
			\includegraphics[width=0.7\linewidth, height = 0.8cm]{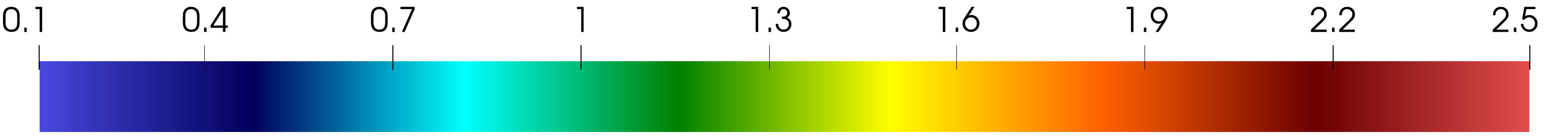}
		\end{subfigure}
		\begin{subfigure}{0.49\textwidth}
			\includegraphics[width=\linewidth]{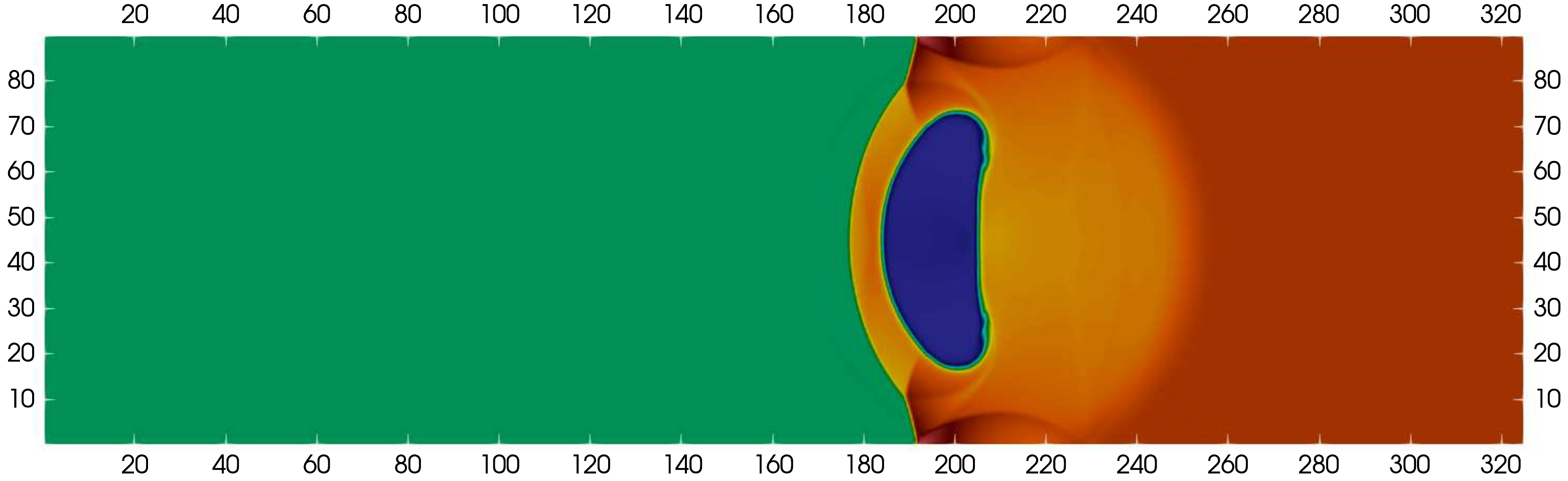}
			\caption{At time $t=180$.}
		\end{subfigure}
		\begin{subfigure}{0.49\textwidth}
			\includegraphics[width=\linewidth]{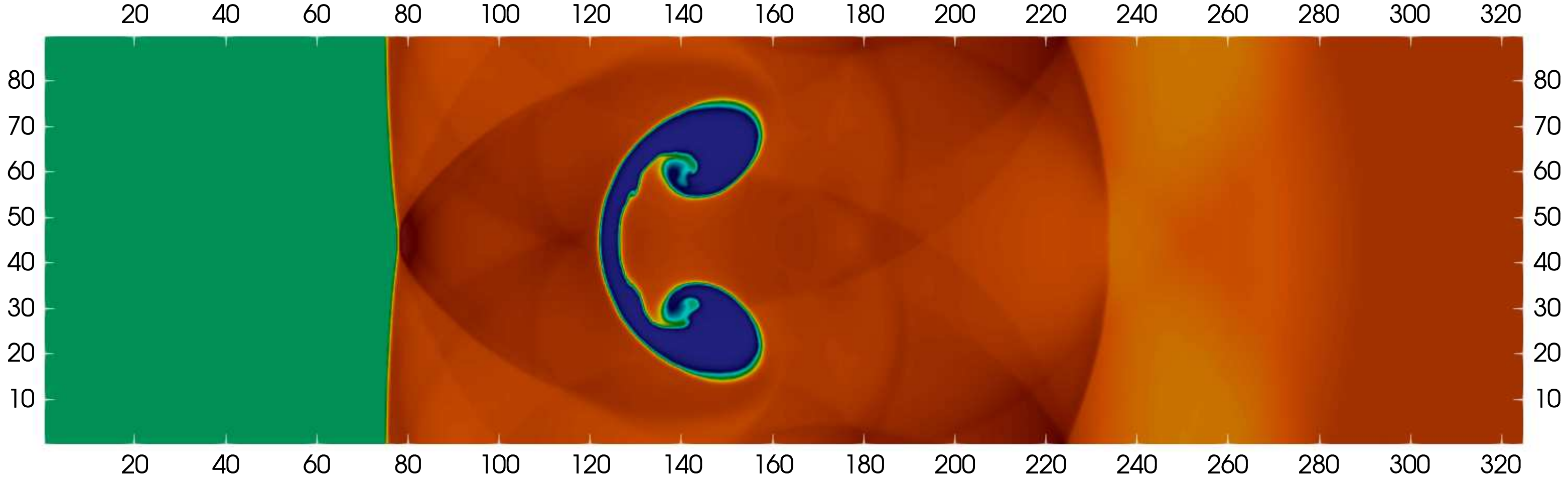}
			\caption{At time $t=450$.}
		\end{subfigure}
		\caption{2-D bubble shock interaction: Plot of density ($\rho$) with $650\times 180$ cells and $N=4$ and using RC-EOS for case~I.}
		\label{fig:wu2sb1.eos3}
	\end{figure}
	\begin{figure}[]
		\centering
		\begin{subfigure}{0.85\textwidth}
			\centering
			\includegraphics[width=0.7\linewidth, height = 0.8cm]{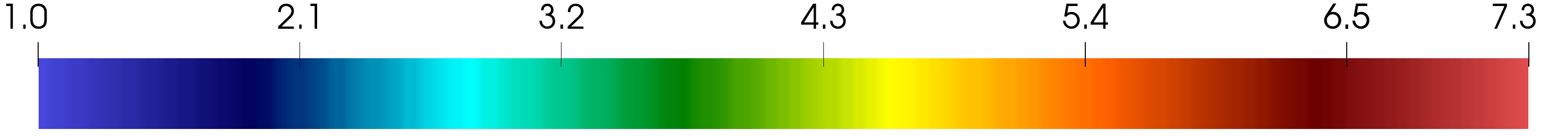}
		\end{subfigure}
		\begin{subfigure}{0.49\textwidth}
			\includegraphics[width=\linewidth]{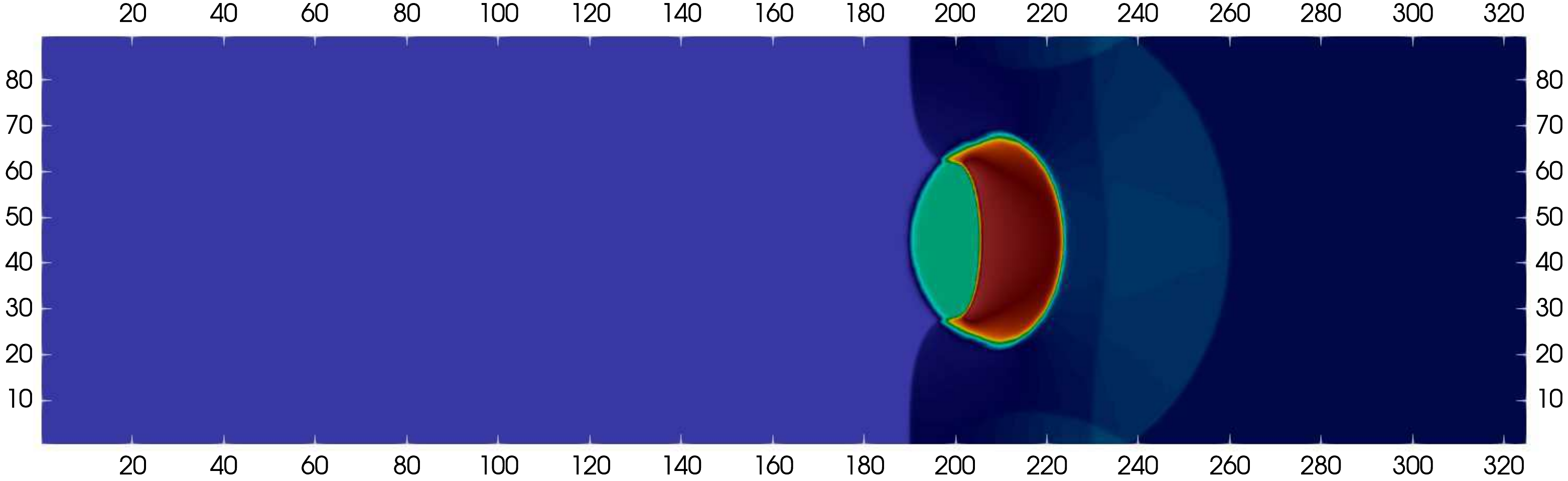}
			\caption{At time $t=180$.}
		\end{subfigure}
		\begin{subfigure}{0.49\textwidth}
			\includegraphics[width=\linewidth]{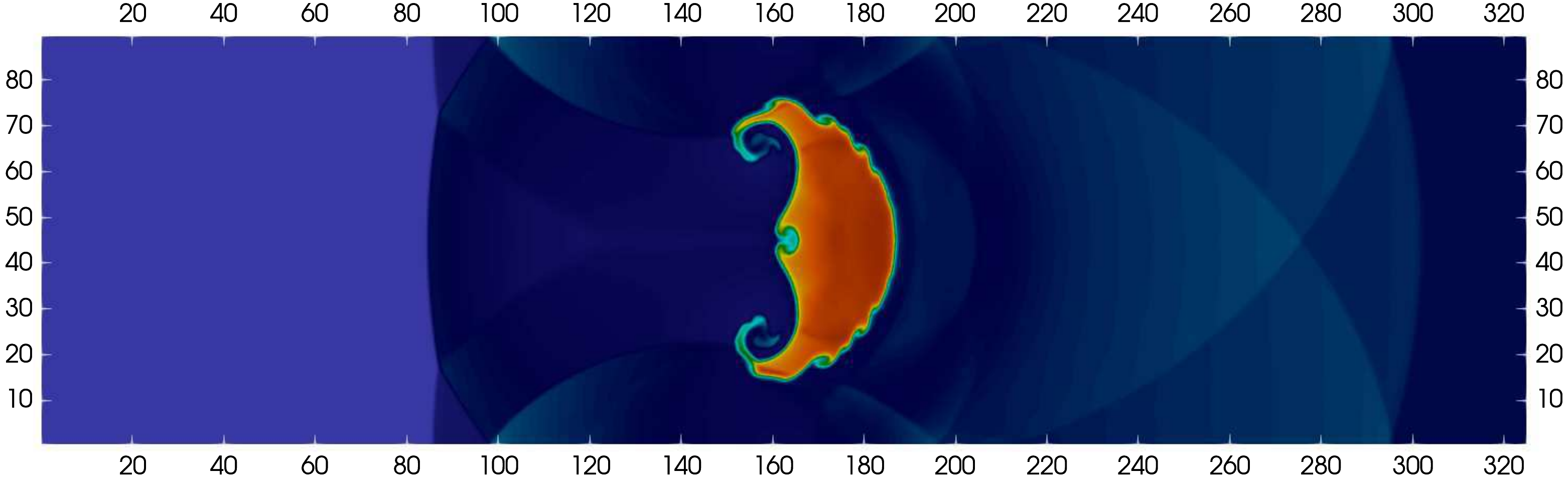}
			\caption{At time $t=450$.}
		\end{subfigure}
		\caption{2-D bubble shock interaction: Plot of density ($\rho$) with $650\times 180$ cells and $N=4$ and using ID-EOS with $\gamma = \frac{5}{3}$ for case~II.}
		\label{fig:wu2sb2.eos1_53}
	\end{figure}
	\begin{figure}[]
		\centering
		\begin{subfigure}{0.85\textwidth}
			\centering
			\includegraphics[width=0.7\linewidth, height = 0.8cm]{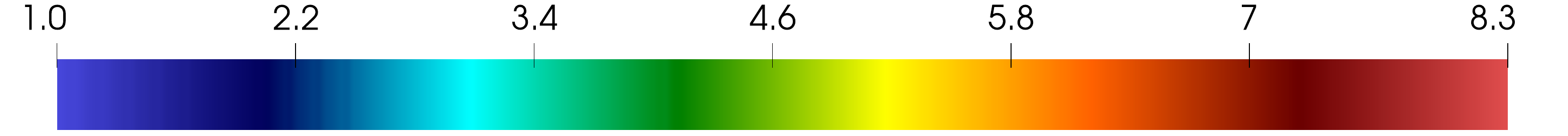}
		\end{subfigure}
		\begin{subfigure}{0.49\textwidth}
			\includegraphics[width=\linewidth]{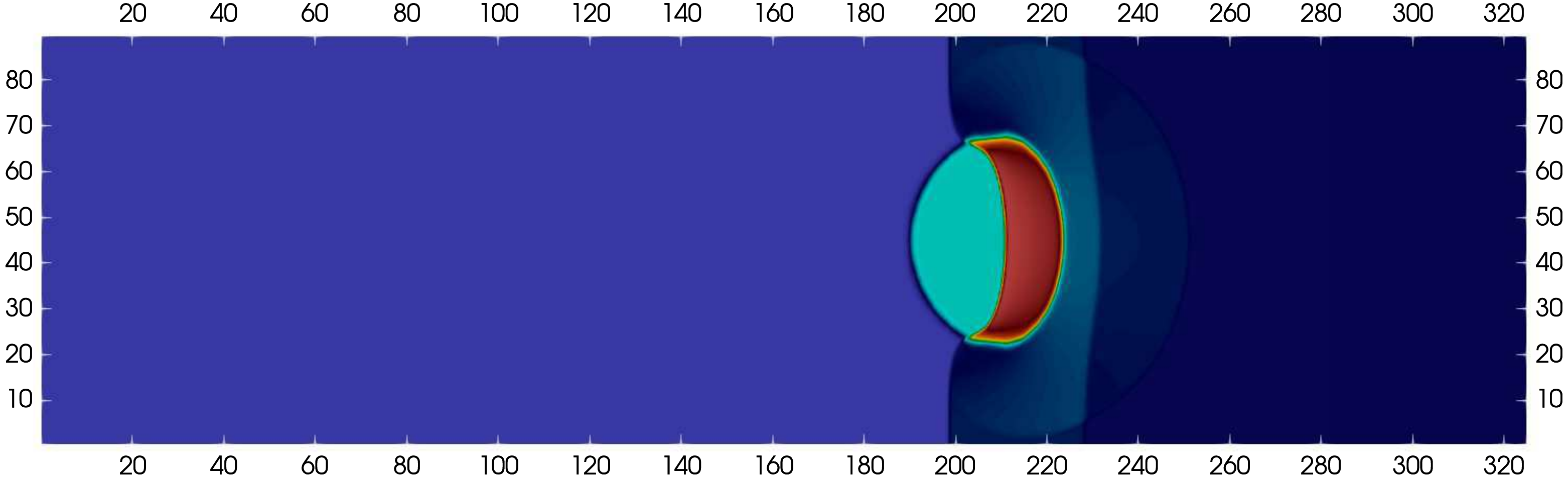}
			\caption{At time $t=180$.}
		\end{subfigure}
		\begin{subfigure}{0.49\textwidth}
			\includegraphics[width=\linewidth]{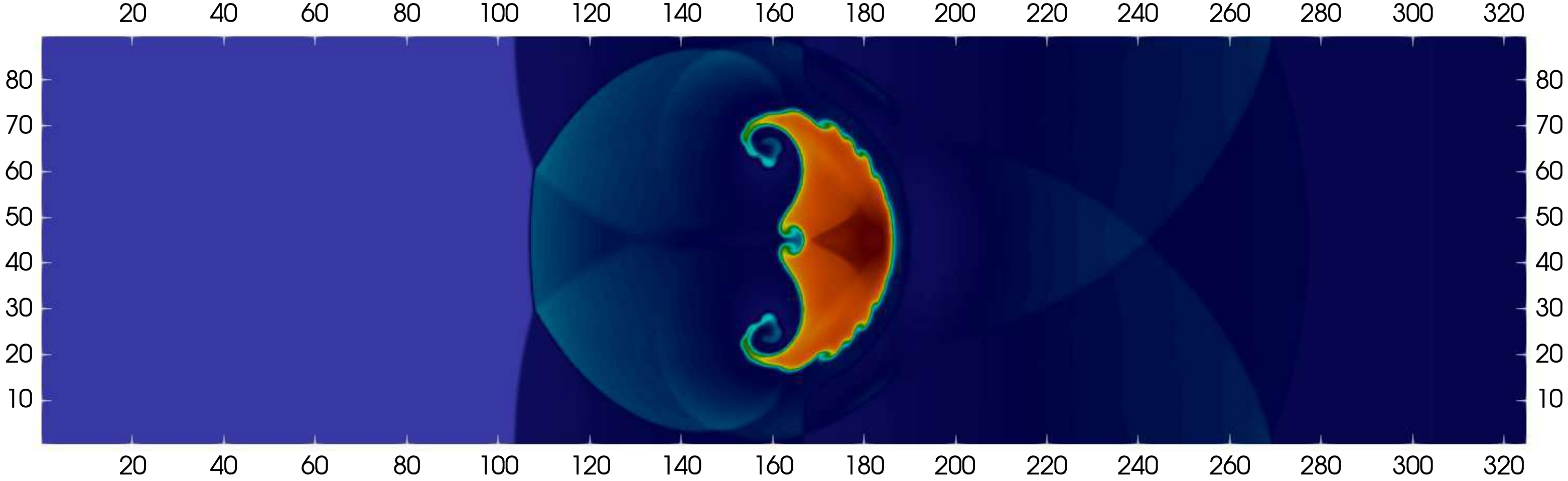}
			\caption{At time $t=450$.}
		\end{subfigure}
		\caption{2-D bubble shock interaction: Plot of density ($\rho$) with $650\times 180$ cells and $N=4$ and using ID-EOS with $\gamma = \frac{4}{3}$ for case~II.}
		\label{fig:wu2sb2.eos1_43}
	\end{figure}
	\begin{figure}[]
		\centering
		\begin{subfigure}{0.85\textwidth}
			\centering
			\includegraphics[width=0.7\linewidth, height = 0.8cm]{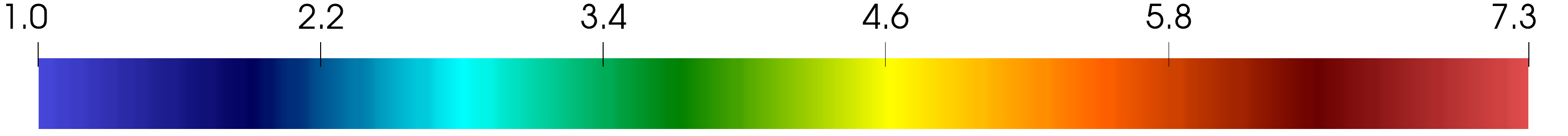}
		\end{subfigure}
		\begin{subfigure}{0.49\textwidth}
			\includegraphics[width=\linewidth]{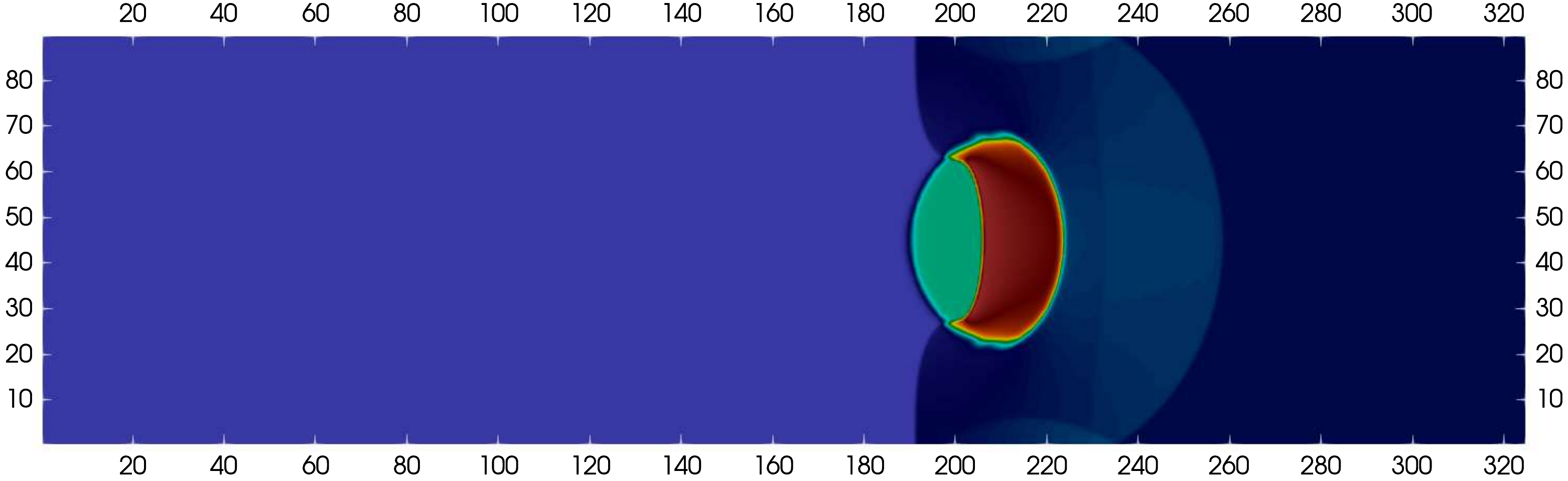}
			\caption{At time $t=180$.}
		\end{subfigure}
		\begin{subfigure}{0.49\textwidth}
			\includegraphics[width=\linewidth]{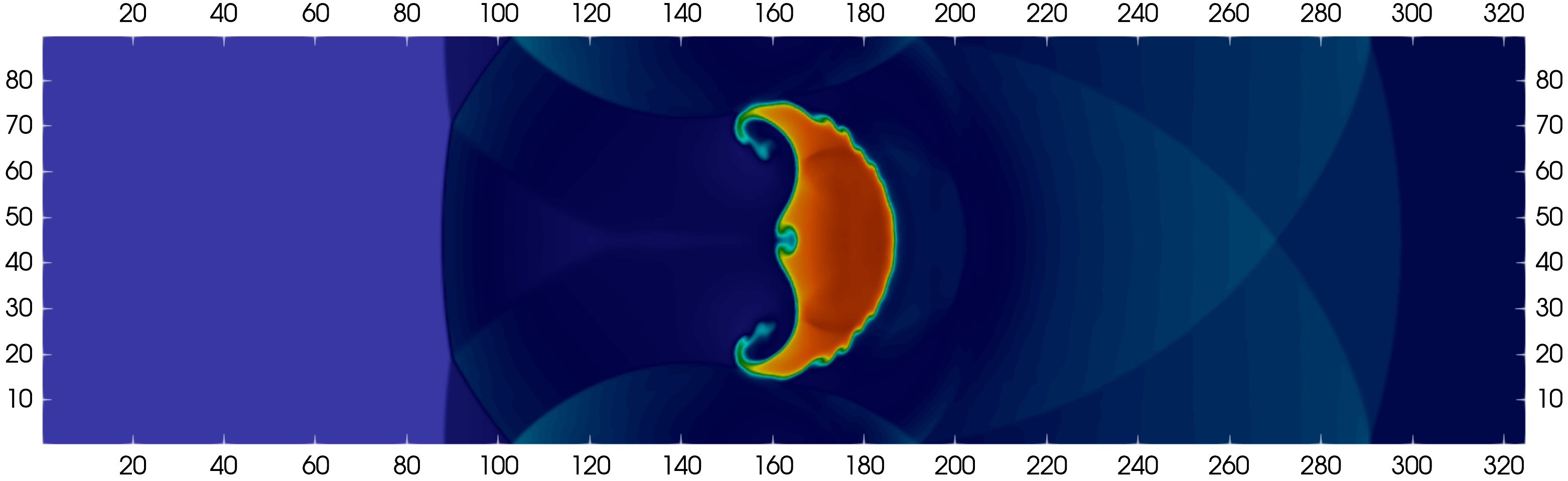}
			\caption{At time $t=450$.}
		\end{subfigure}
		\caption{2-D bubble shock interaction: Plot of density ($\rho$) with $650\times 180$ cells and $N=4$ and using TM-EOS for case~II.}
		\label{fig:wu2sb2.eos2}
	\end{figure}
	\begin{figure}[]
		\centering
		\begin{subfigure}{0.85\textwidth}
			\centering
			\includegraphics[width=0.7\linewidth, height = 0.8cm]{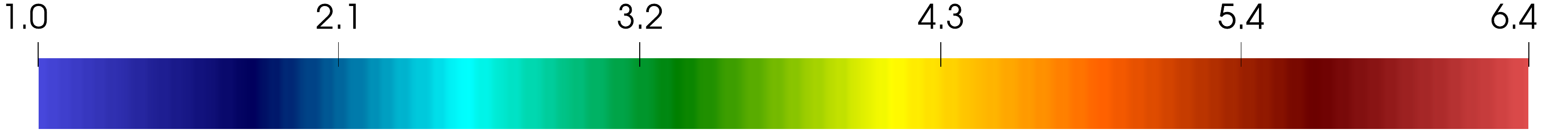}
		\end{subfigure}
		\begin{subfigure}{0.49\textwidth}
			\includegraphics[width=\linewidth]{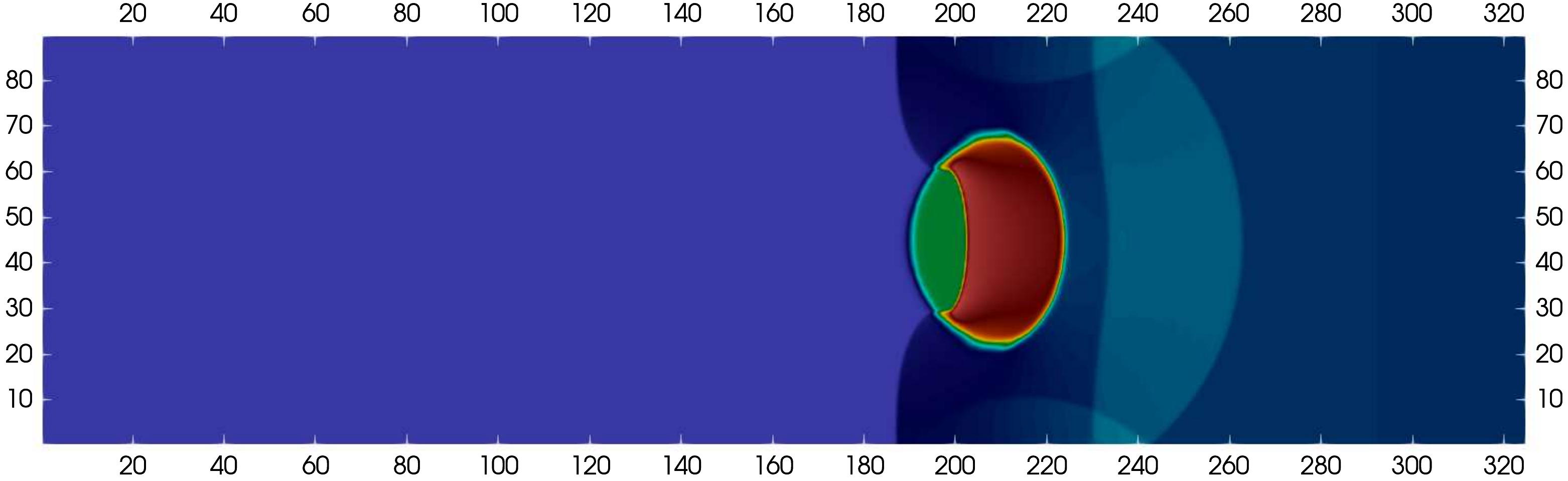}
			\caption{At time $t=180$.}
		\end{subfigure}
		\begin{subfigure}{0.49\textwidth}
			\includegraphics[width=\linewidth]{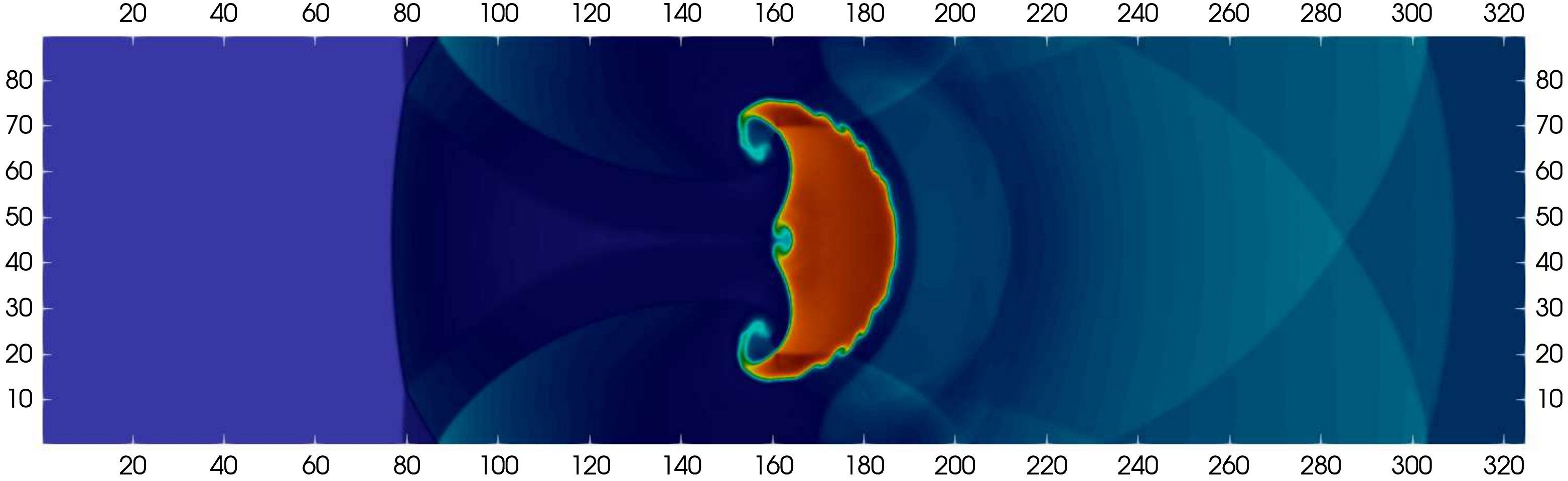}
			\caption{At time $t=450$.}
		\end{subfigure}
		\caption{2-D bubble shock interaction: Plot of density ($\rho$) with $650\times 180$ cells and $N=4$ and using IP-EOS for case~II.}
		\label{fig:wu2sb2.eos4}
	\end{figure}
	\begin{figure}[]
		\centering
		\begin{subfigure}{0.85\textwidth}
			\centering
			\includegraphics[width=0.7\linewidth, height = 0.8cm]{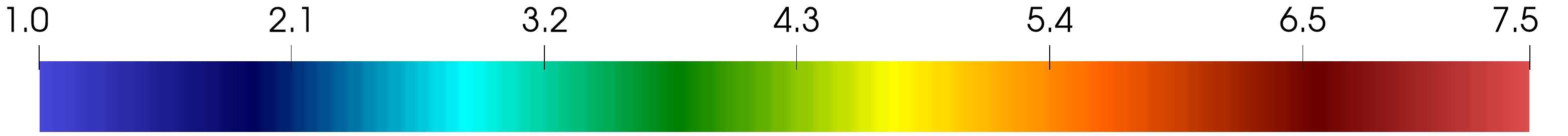}
		\end{subfigure}
		\begin{subfigure}{0.49\textwidth}
			\includegraphics[width=\linewidth]{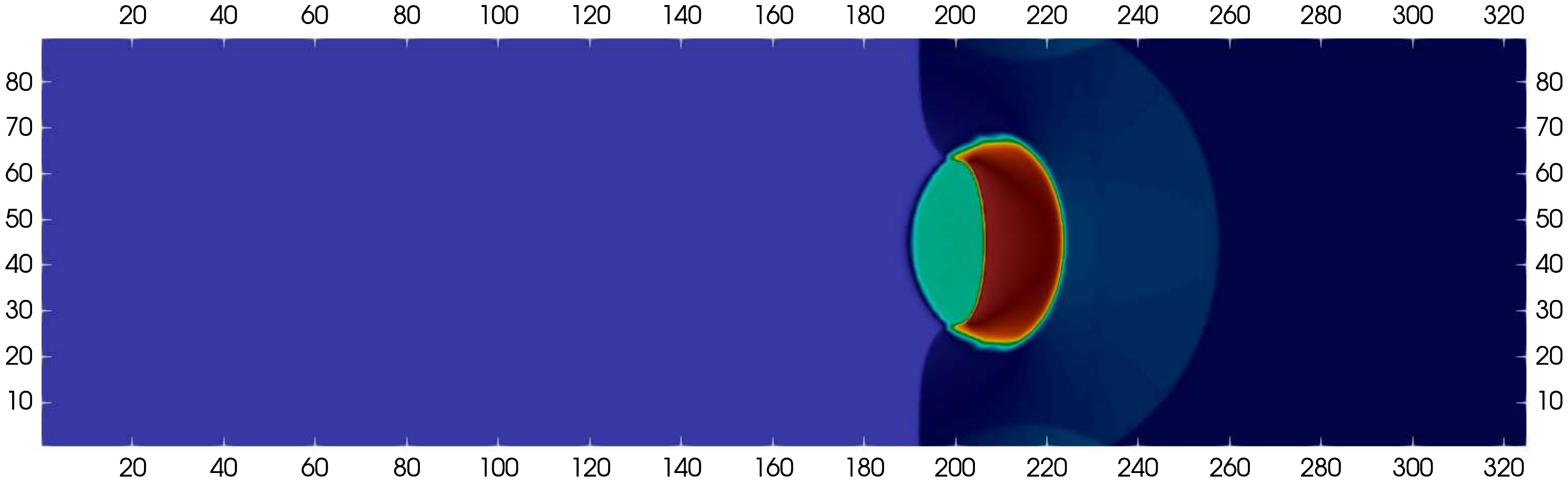}
			\caption{At time $t=180$.}
		\end{subfigure}
		\begin{subfigure}{0.49\textwidth}
			\includegraphics[width=\linewidth]{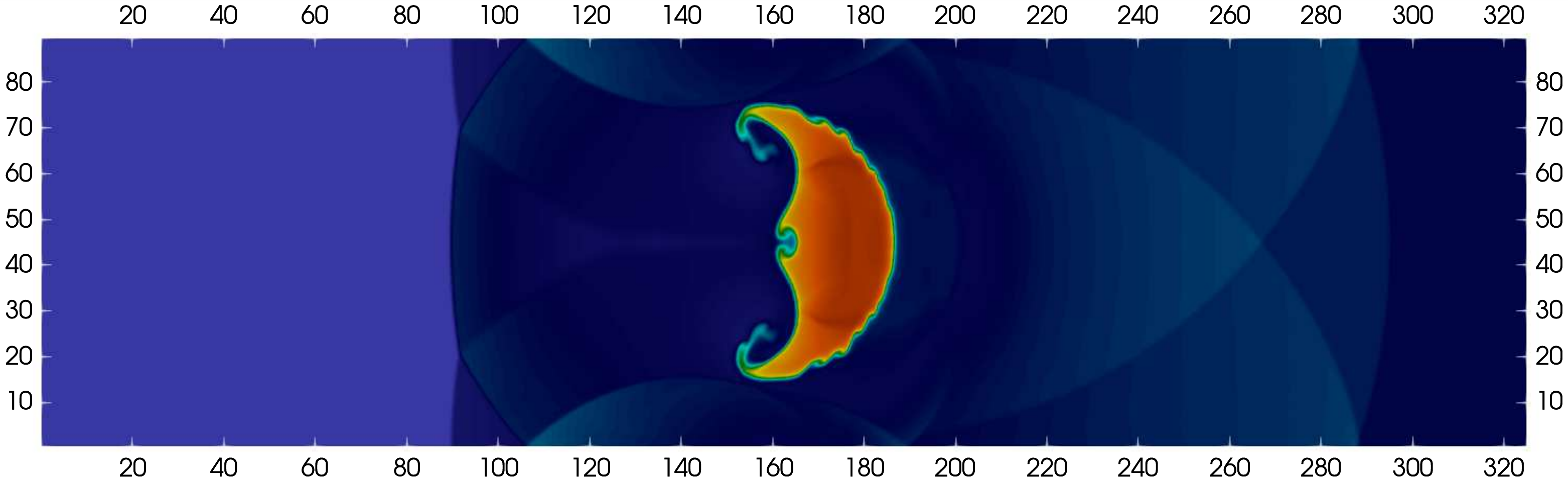}
			\caption{At time $t=450$.}
		\end{subfigure}
		\caption{2-D bubble shock interaction: Plot of density ($\rho$) with $650\times 180$ cells and $N=4$ and using RC-EOS for case~II.}
		\label{fig:wu2sb2.eos3}
	\end{figure}
	
	\subsubsection{2-D double Mach reflection}
	The double Mach reflection is a standard benchmark problem used in the literature of non-relativistic hydrodynamic codes since it was introduced in~\cite{woodward1984numerical}. In~\cite{zhang2006ram}, the authors have extended it to the ideal relativistic fluids with adiabatic index $\gamma=1.4$. Later, it was used in several works~\cite{he2012adaptive,zhao2013runge,cao2025robust} to test high-resolution shock-capturing methods for solving RHD equations with the ID-EOS. Here, we use this test case first with the ID-EOS to compare with available literature, and later for the cases of TM-EOS, IP-EOS, and RC-EOS. 
	
	At initial time, an oblique shock wave, moving with velocity $v_s=0.4984$ from left to right is placed at $(x,y)=\left(\frac{1}{6}, 0\right)$ at an angle of $60^\circ$ with the horizontal direction. At time $t$, the position of the shock front is described by~\cite{zhao2013runge,cao2025robust},
	\[
	S(x,t) = \sqrt{3}\left(x-\frac{1}{6}\right) - 2 v_s t.
	\]
	The primitive variables on the left and right of the shock wave are given by,
	\begin{align*}
		(\rho, v_1, v_2, p)|_{\raisebox{-4pt}{L}} &= (8.564, 0.4247 \sin{60^\circ}, -0.4247 \cos{60^\circ}, 0.3808),\\ 
		(\rho, v_1, v_2, p)|_{\raisebox{-4pt}{R}} &= (1.4, 0.0, 0.0, 0.0025),
	\end{align*}
	respectively. The computational domain is taken to be $[0,4]\times[0,1]$, and the boundary conditions are set as the constant post and pre-shock states at left and right boundaries, respectively. The bottom boundary has post-shock state when $x\leq\frac{1}{6}$ and a reflective boundary condition otherwise. For the upper boundary, we set post and pre-shock states when $x<x_s$ and $x>x_s$ respectively, with $x_s$ determined by solving $S(x,t) = 1$. 
	
	We run the simulations till $t=4$, taking $960\times 240$ cells with $N=4$ and present the results in Figure~\ref{fig:DM_cont} for different equations of state. We observe from the Figure~\ref{fig:DM.eos1} that the result obtained using ID-EOS with $\gamma = 1.4$ is similar to the results in the literature where no Kelvin-Helmholtz instability arises~\cite{zhang2006ram,he2012adaptive,zhao2013runge,cao2025robust}. However, the results with the other equations of state seem to develop the Kelvin-Helmholtz instabilities in the solution.
	
	\begin{figure}[]
		\centering
		\begin{subfigure}{0.49\textwidth}
			\includegraphics[width=\linewidth]{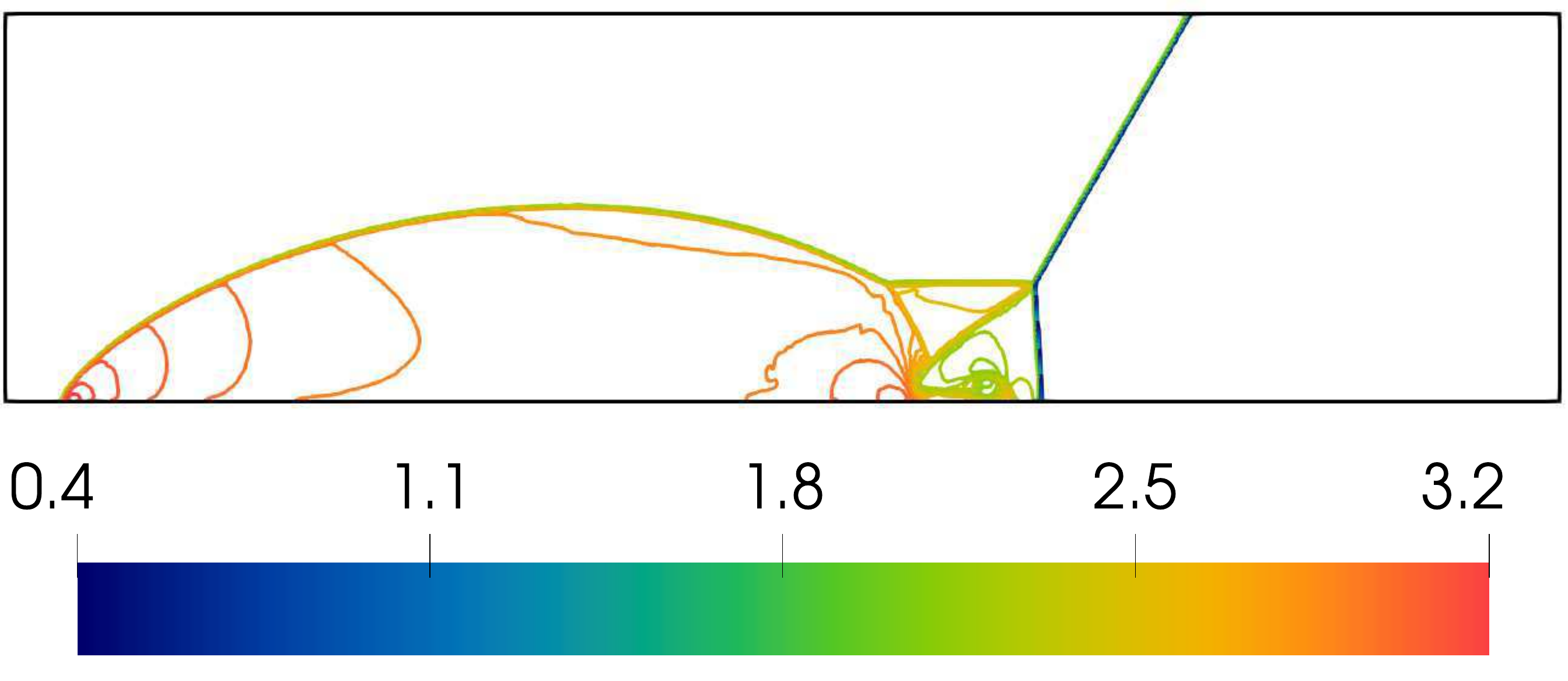}
			\caption{ID-EOS with $\gamma = 1.4$: 50 contours in $[0.4, 3.2]$.}\label{fig:DM.eos1}
		\end{subfigure}
		\begin{subfigure}{0.49\textwidth}
			\includegraphics[width=\linewidth]{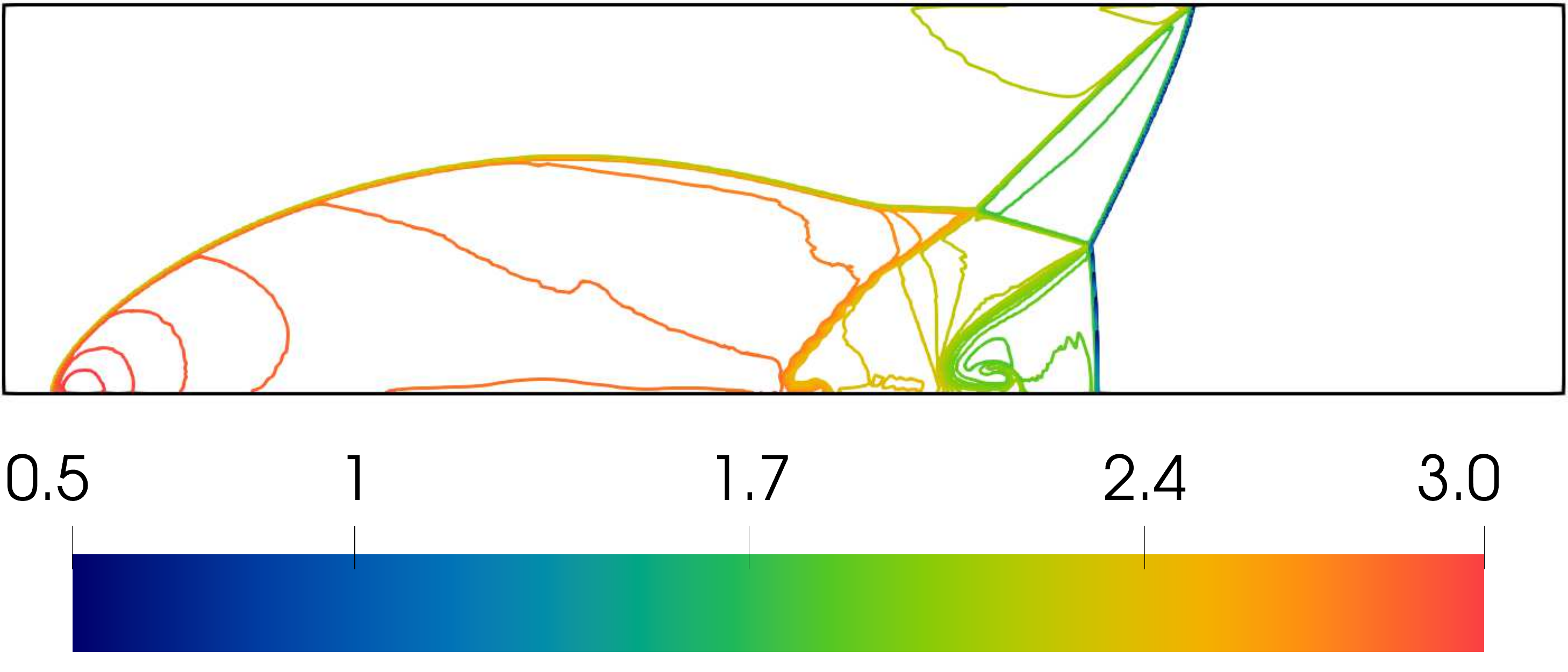}
			\caption{TM-EOS: 50 contours in $[0.5, 3.0]$.}
		\end{subfigure}
		\begin{subfigure}{0.49\textwidth}
			\includegraphics[width=\linewidth]{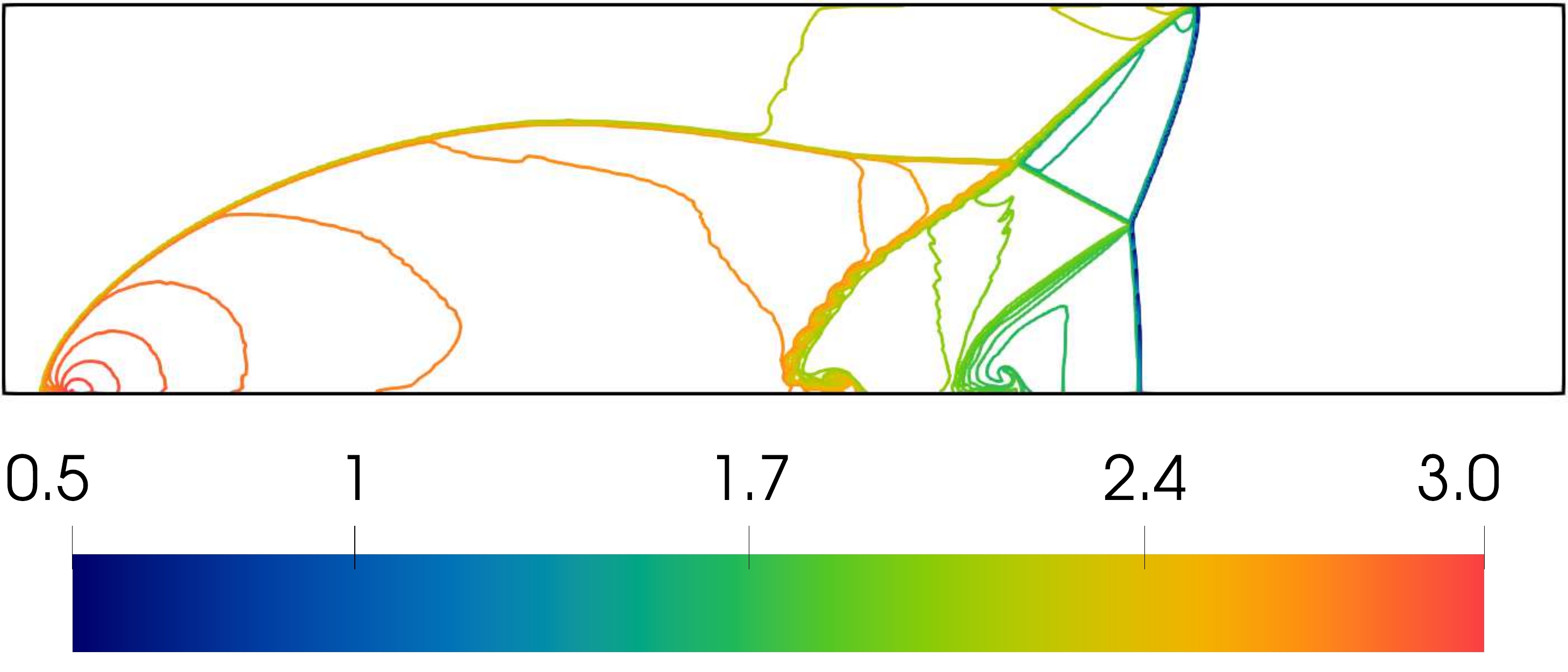}
			\caption{IP-EOS: 50 contours in $[0.5, 3.0]$.}
		\end{subfigure}
		\begin{subfigure}{0.49\textwidth}
			\includegraphics[width=\linewidth]{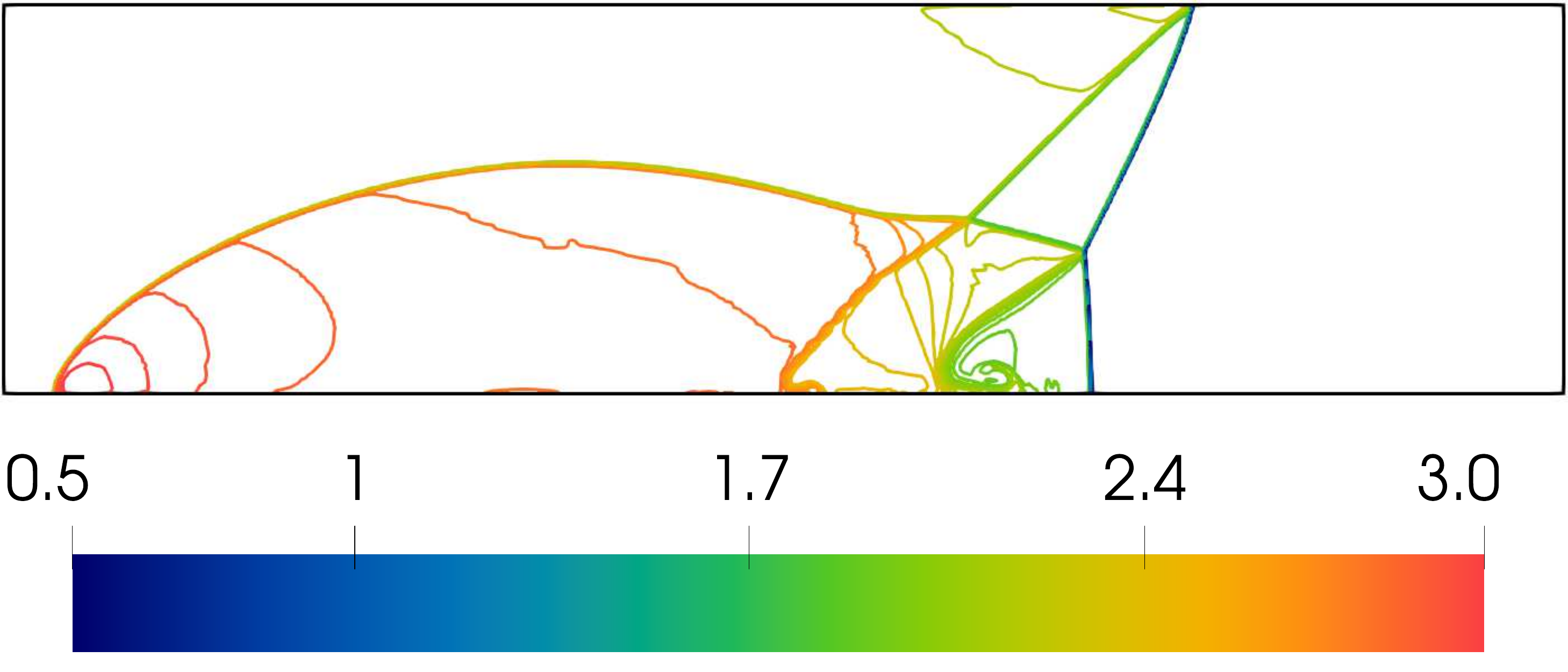}
			\caption{RC-EOS: 50 contours in $[0.5, 3.0]$.}
		\end{subfigure}
		\caption{2-D double Mach reflection: Plot of $\ln \rho$ with $960\times 240$ cells and $N=4$.}
		\label{fig:DM_cont}
	\end{figure}
	
	\subsubsection{2-D Kelvin-Helmholtz instability}
	We take a test from~\cite{beckwith2011second,radice2012thc,zanotti2015high}, known as the Kelvin-Helmholtz instability test, which is a benchmark problem for the RHD codes. The computational domain is taken to be $[-1.0, 1.0]\times [-0.5, 0.5] $ with periodic boundaries. The initial state of the fluid in the left half of the domain $(x<0)$ is given by,
	\begin{align}
		\rho &= 0.505 - 0.495 \tanh \left(\frac{x+0.5}{a}\right),\\
		v_1 &= -\eta_0 v_s \sin(2\pi y) \exp\left(\frac{-(x+0.5)^2}{\sigma} \right),\\
		v_2 &= -v_s \tanh\left(\frac{x+0.5}{a}\right),
	\end{align}
	and in the right-half of the domain $(x>0)$ is given by,
	\begin{align}
		\rho &= 0.505 + 0.495 \tanh \left(\frac{x-0.5}{a}\right),\\
		v_1 &= \eta_0 v_s \sin(2\pi y) \exp\left(\frac{-(x-0.5)^2}{\sigma} \right),\\
		v_2 &= v_s \tanh\left(\frac{x-0.5}{a}\right),
	\end{align}
	with unit pressure in the whole domain. Here, $v_s = 0.5$ and the characteristic size is taken to be $a=0.01$. The velocity in the x-direction $v_1$ is taken with a small perturbation having amplitude and length $\eta_0=0.1$ and $\sigma = 0.1$ respectively, which triggers the small instabilities in the solution. These small instabilities are very hard to capture with a diffusive scheme, and we observe that our scheme can capture these small-scale instabilities for all the equations of state. Here we have presented the result using ID-EOS with $\gamma = \frac{4}{3}$, TM-EOS, IP-EOS, and RC-EOS in Figure~\ref{fig:KH.eos3} at time $t=3$. We have taken $640\times 320$ cells with $N=4$ for the simulations with the indicator parameter $\alpha_{\max}'=0.25$ (refer to Section~5 in~\cite{my_paper} for more details about this parameter).
	
	\begin{figure}[]
		\centering
		\begin{subfigure}{0.49\textwidth}
			\includegraphics[width=\linewidth]{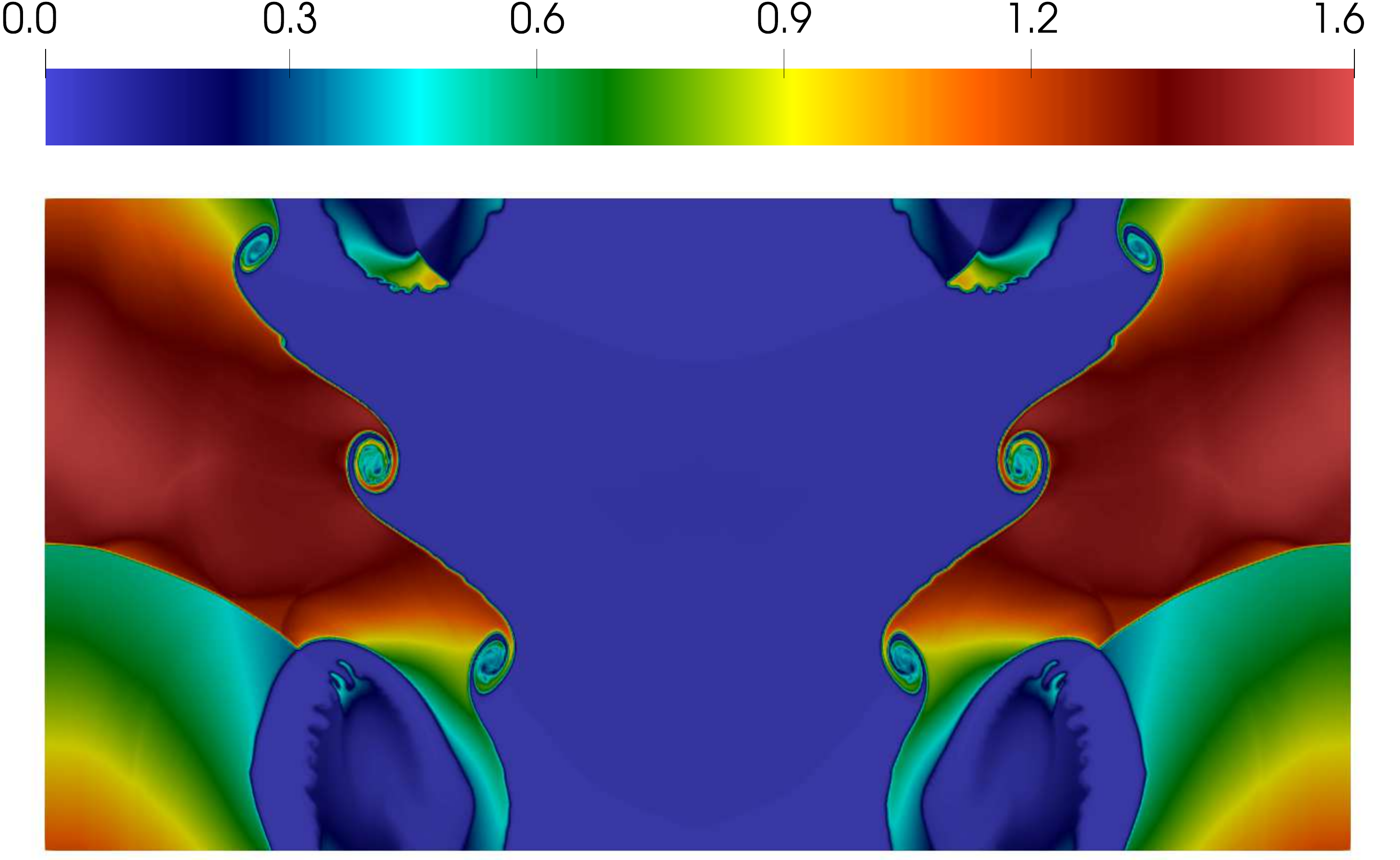}
			\caption{ID-EOS with $\gamma = \frac{4}{3}$.}
		\end{subfigure}
		\begin{subfigure}{0.49\textwidth}
			\includegraphics[width=\linewidth]{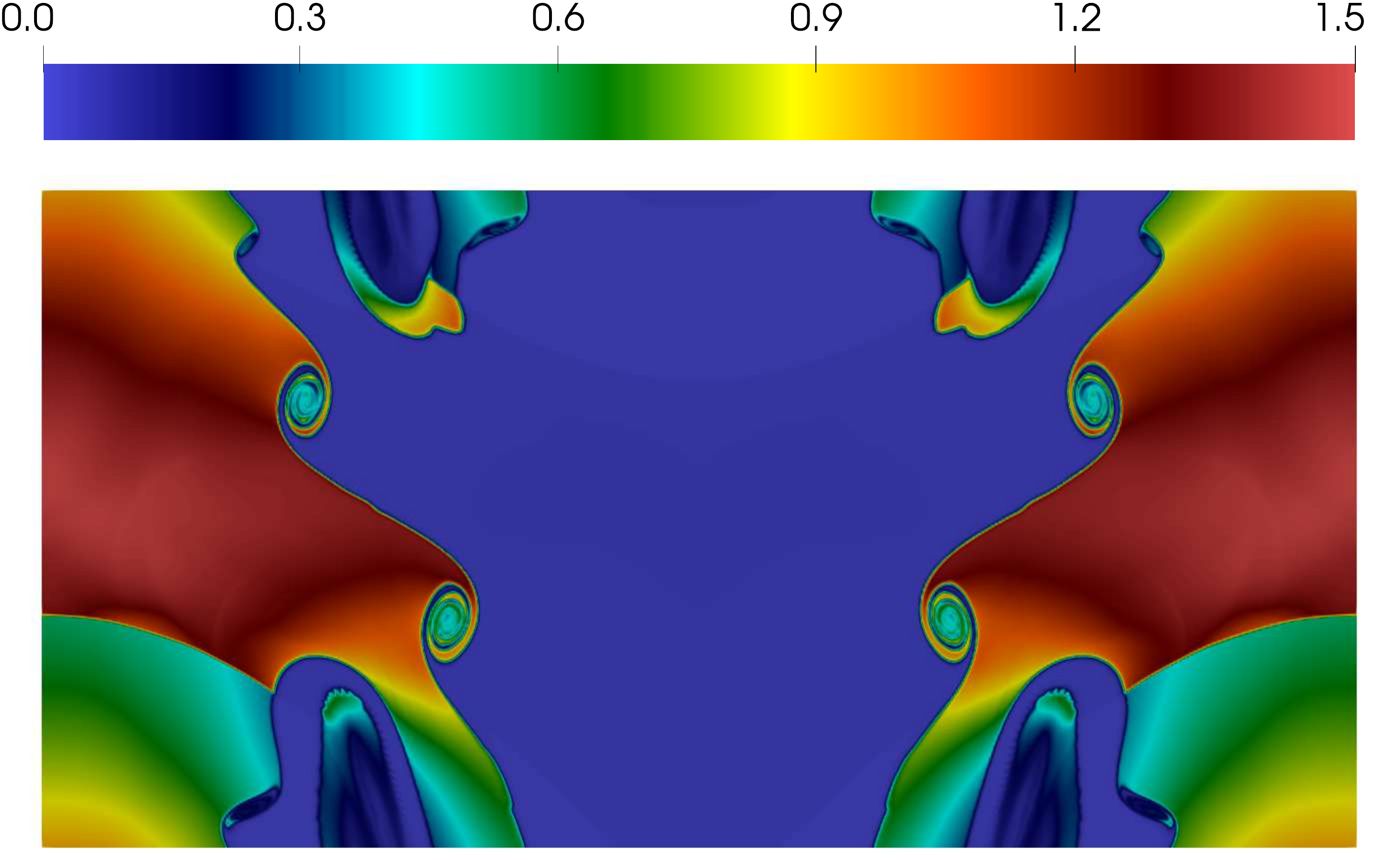}
			\caption{TM-EOS.}
		\end{subfigure}
		\begin{subfigure}{0.49\textwidth}
			\includegraphics[width=\linewidth]{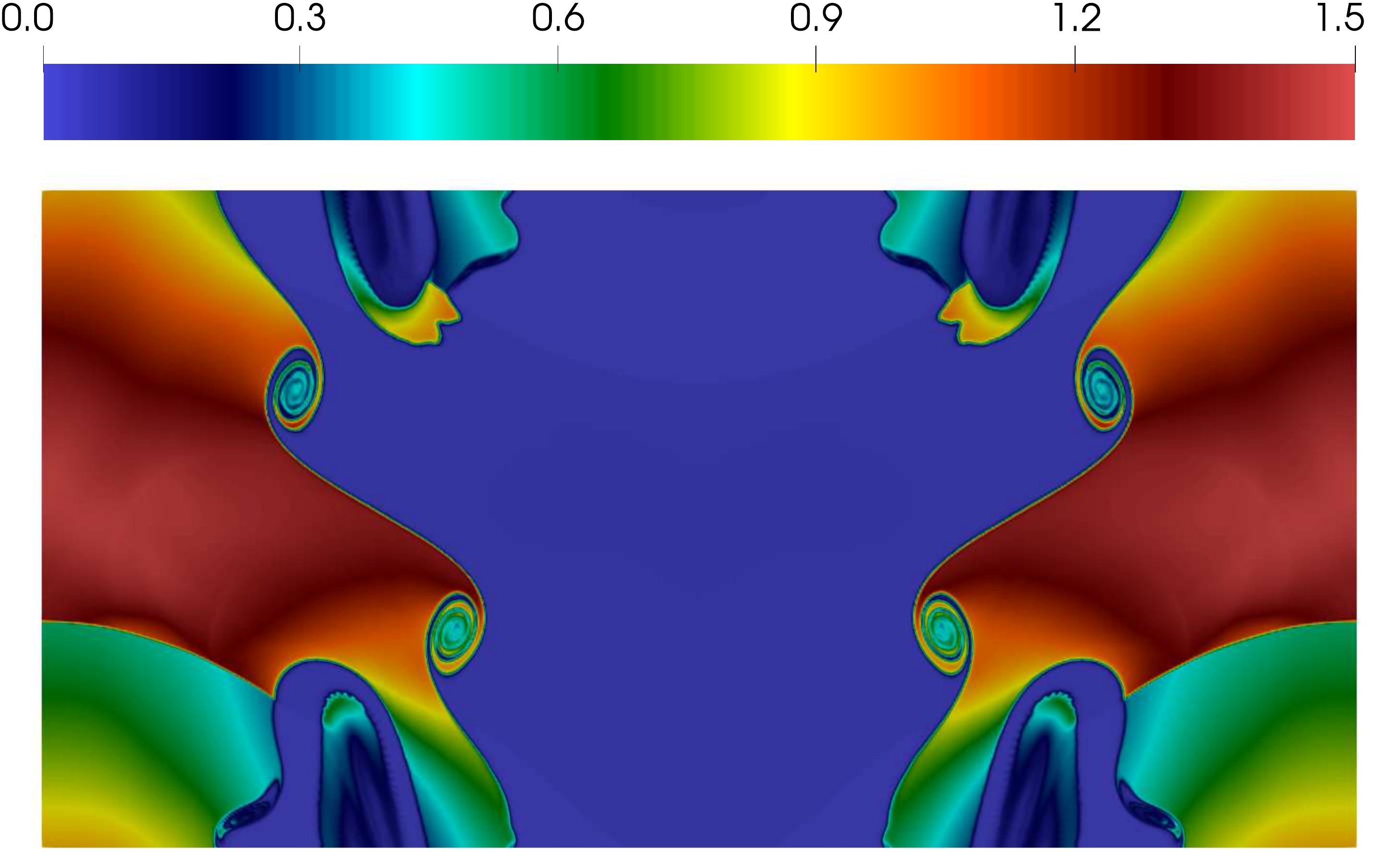}
			\caption{IP-EOS.}
		\end{subfigure}
		\begin{subfigure}{0.49\textwidth}
			\includegraphics[width=\linewidth]{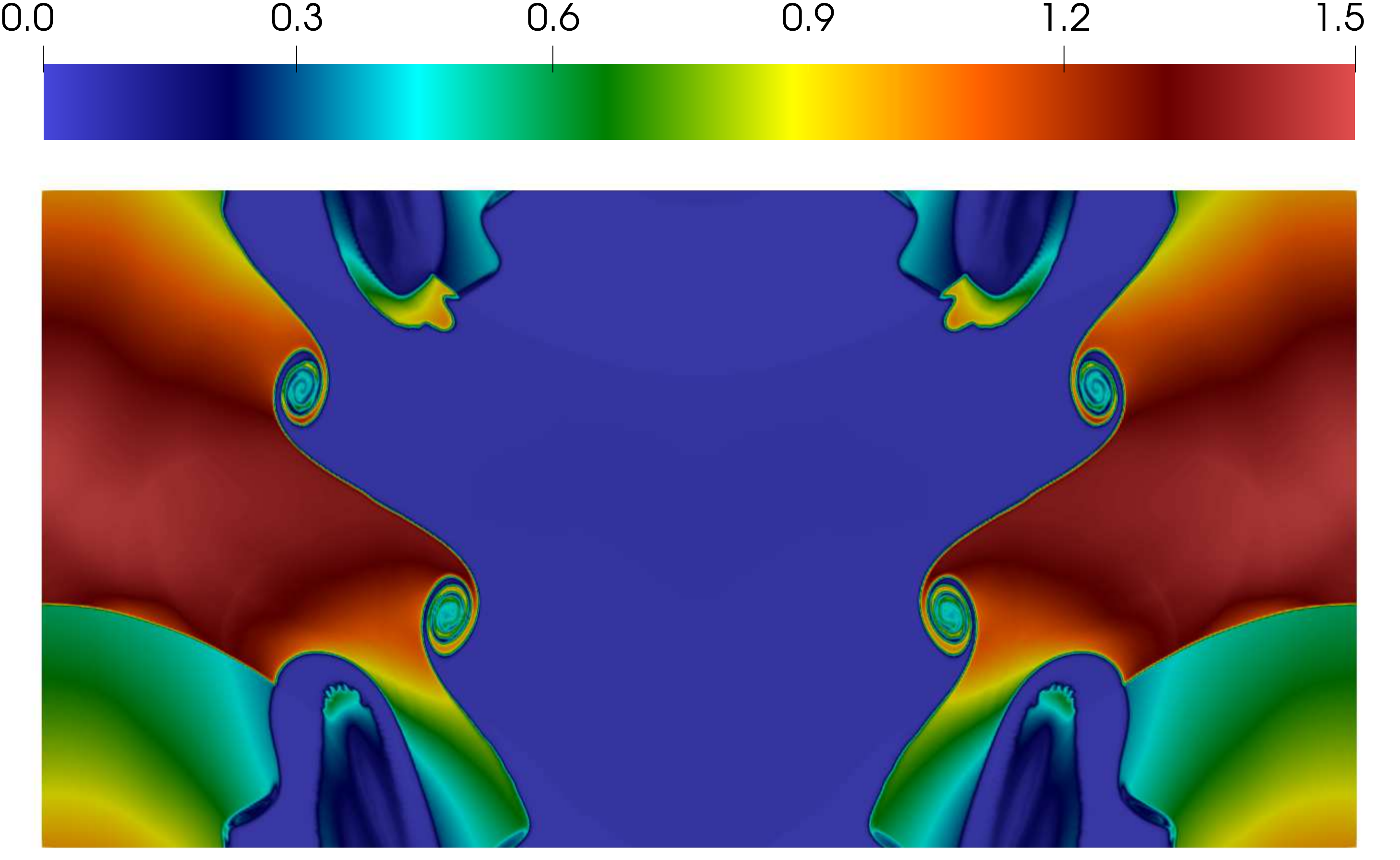}
			\caption{RC-EOS.}
		\end{subfigure}
		\caption{2-D Kelvin-Helmholtz instability: Plot of density ($\rho$) with $640\times 320$ cells and $N=4$.}
		\label{fig:KH.eos3}
	\end{figure}
	
	\section{Summary and conclusions}\label{sec: summary}
	Recently, in~\cite{my_paper}, the authors have designed the Lax-Wendroff flux reconstruction method~\cite{BABBAR2022111423} for the RHD equations with ideal equation of state (ID-EOS). The ID-EOS is derived from non-relativistic thermodynamics, resulting in a poor choice for relativistic cases, making the study of more general equations of state an active area of research. In this work, we have designed a high-order LWFR scheme for RHD equations with several equations of state. Following~\cite{wu2016physical}, we have changed the characterization of the admissible region
	to have concave constraints, which is an essential requirement for the scheme to be implemented. This alternative characterization of the admissible set has constraints that are directly computable from the conservative variables, resulting in an efficient limiting procedure identical to~\cite{my_paper}. However, the conversion from conservative to primitive variables is still needed at various other steps of the scheme. The conversion procedure for the RC-EOS, introduced in~\cite{ryu2006equation}, which is available in the same paper, does not match expectations for some cases, and hence we have proposed a new way of conversion by finding the pressure, which needs a non-linear equation to be solved by the Newton-Raphson method. The second major challenge arose when we needed to find the flux form of quantities that are not inside the admissible region in calculating the time average flux. This is because the flux needs the primitive form of the quantities, and the conservative to primitive conversion needs the conservative variables to be in the admissible region. Hence, we have scaled the required quantities to the admissible region. Again, for the admissibility of the solution, following~\cite{babbar2024admissibility,my_paper} we have blended the high-order method with a first-order finite volume method after proving its admissibility for the case of all the equations of state, used in this work. Finally, we have shown the numerical results with different test cases from the literature using our scheme. From the accuracy tests, we can observe that the additional scaling of the flux arguments does not hurt the accuracy of the scheme, and the numerical order is consistent with the analytical order of the scheme. The variety of test cases presented here shows the robustness of the scheme for the cases having high Lorentz factor, low density or pressure, rarefaction, and strong shock waves or other discontinuities.
	
	\section*{Declarations}
	\textbf{Competing interest:} The authors declare that they have no known competing financial interests or personal relationships that could have appeared to influence the work reported in this paper.
	
	\section*{Acknowledgements}
	The work of Sujoy Basak is supported by the Prime Minister's Research Fellowship, with PMRF ID 1403239.  The work of Arpit Babbar is supported by the Mainz Institute of Multiscale Modeling (M$^\text{3}$ODEL) and the Alexander von Humboldt Foundation. The work of Praveen Chandrashekar is supported by the Department of Atomic Energy, Government of India, under project no.~12-R\&D-TFR-5.01-0520. 
	
	\section*{Data availability}
	The code and data supporting the findings of this work will be made publicly available at~\cite{RHDTenkai}, after the paper gets published.

	\section*{Additional data}
	The animations showing temporal evolutions for some of the simulations can be viewed at
	
	{
		\centering
		\href{https://www.youtube.com/playlist?list=PLrZ1LUocyVaTLnzX45R9QpmqbxX35NhMs}{https://www.youtube.com/playlist?list=PLrZ1LUocyVaTLnzX45R9QpmqbxX35NhMs}
	}
	
	\appendix
	\section{Constraints preserving nature of first-order finite volume scheme}\label{sec: appendix}
	Here, we prove the admissibility constraints preserving nature of the first-order finite volume scheme for the case of TM-EOS, IP-EOS, and RC-EOS. The case of ID-EOS is proved in~\cite{my_paper}.
	The first-order finite volume method at the $i^\text{th}$ solution point can be written as,
	\begin{equation}\label{eq:app low order scheme}
		\mb{u}^{n+1}_i = \mb{u}^n_i - \frac{\Delta t}{w_i \Delta x}[\numflux{\mb{f}}(\mb{u}_i,\mb{u}_{i+1}) - \numflux{\mb{f}}(\mb{u}_{i-1},\mb{u}_i)].
	\end{equation}
	Here, the numerical flux $\numflux{\mb{f}}$ is the Rusanov flux~\eqref{eq:rusanov flux}. The equation~\eqref{eq:app low order scheme} can be expressed as,
	\begin{equation}\label{eq:conv.comb}
		\mb{u}^{n+1}_i = A \mb{u}^{n}_i + B_1 \mb{u}_{\Lambda_{i-\frac{1}{2}}}^{n+} + B_2 \mb{u}_{\Lambda_{i+\frac{1}{2}}}^{n-},
	\end{equation}
	where the coefficients are given by,
	\begin{align}\label{eq:coefficients.conv.comb}
		A = \left[1-\frac{\Delta t}{2 w_i \Delta x}(\Lambda_{i-\frac{1}{2}} + \Lambda_{i+\frac{1}{2}}) \right], \qquad
		B_1 = \frac{\Delta t \Lambda_{i-\frac{1}{2}}}{2 w_i \Delta x}, \qquad B_2 =  \frac{\Delta t \Lambda_{i+\frac{1}{2}}}{2 w_i \Delta x},
	\end{align}
	with
	\begin{equation}
		\Lambda_{i\pm\frac{1}{2}} = \max\big\{r\big(f'(\mb{u}_{i\pm 1}^n)\big), r\big(f'(\mb{u}_i^n)\big)\big\},
	\end{equation}
	and 
	\begin{equation}\label{eq:directional_evolutions_first}
		\mb{u}_{\Lambda_{i-\frac{1}{2}}}^{n+} = \mb{u}^n_{i-1} + \frac{1}{\Lambda_{i-\frac{1}{2}}}\mb{f}(\mb{u}^n_{i-1}), \qquad \mb{u}_{\Lambda_{i+\frac{1}{2}}}^{n-} = \mb{u}^n_{i+1} - \frac{1}{\Lambda_{i+\frac{1}{2}}}\mb{f}(\mb{u}^n_{i+1}).
	\end{equation}
	Now following~\cite{my_paper,wu2017design}, we will first prove the admissibility of the quantities,
	\begin{align}\label{eq:directional_evolutions_second}
		\mb{u}_{\Lambda_{i-1}}^{n+} = \mb{u}^n_{i-1} + \frac{1}{\Lambda_{i-1}}\mb{f}(\mb{u}^n_{i-1}), \qquad \mb{u}_{\Lambda_{i+1}}^{n-} = \mb{u}^n_{i+1} - \frac{1}{\Lambda_{i+1}}\mb{f}(\mb{u}^n_{i+1}).
	\end{align}
	From now on, we will again suppress the temporal and spatial indices in~\eqref{eq:directional_evolutions_second} for the sake of notational simplicity and denote the admissibility constraints~\eqref{eq: ad_region_2} for $\mb{u}_{\Lambda}^\pm$ as,
	\begin{align}\label{eq:ad.constraint.app}
		D^{\pm}_\Lambda \qquad \text{and} \qquad q^{\pm}_\Lambda = E^{\pm}_\Lambda - \sqrt{(D^{\pm}_\Lambda)^2 + |(m_1)^{\pm}_\Lambda|^2},
	\end{align}
	where $D^{\pm}_\Lambda, (m_1)^{\pm}_\Lambda$, and $E^{\pm}_\Lambda$ are the relativistic density, momentum, and energy components of $\mb{u}^{\pm}_\Lambda$.
	The proof of positivity for the first constraint $D^{\pm}_\Lambda$ can be seen directly as,
	\[
	D_\Lambda^{\pm} = D \pm \frac{1}{\Lambda} D v_1  = D\left[1 \pm \frac{v_1}{\Lambda}\right]
	> 0,    \quad \text{since}\ \Lambda>|v_1|>0,\ D>0.
	\]
	For the second constraint $q^{\pm}_\Lambda$, we first show the non-negativity of $E_\Lambda^{\pm}$, and then proving $(D_\Lambda^{\pm})^2 + |(m_1)_\Lambda^{\pm}|^2 - (E_\Lambda^{\pm})^2 < 0$ would be sufficient to show $q^{\pm}_\Lambda > 0$.
	\begin{align}
		E_\Lambda^{\pm} &= E \pm \frac{1}{\Lambda} m_1\\
		&\geq E - \frac{1}{\Lambda} |m_1|, \quad \text{since } \Lambda > 0 \nonumber\\
		&= \rho h \Gamma^2 \left(1 - \frac{1}{\Lambda}|v_1| \right) - p. \label{eq:E.first}
	\end{align}
	Now from~\cite{bhoriya2020entropy,ryu2006equation,my_paper} we have the following expressions for the eigenvalues of the flux Jacobian of the RHD equations,
	\begin{align}
		\lambda_1 = \frac{v_1 - c_s}{1-c_s v_1}, \qquad \lambda_2 = v_1, \qquad \lambda_3 = \frac{v_1 + c_s}{1 + c_s v_1},
	\end{align}
	where $0<c_s<1$ is the speed of sound. Hence, the spectral radius $\Lambda$ is given by,
	\begin{equation}\label{eq:spectral.radius}
		\Lambda = \frac{|v_1| + c_s}{1+c_s |v_1|}.
	\end{equation}
	Since $c_s<1$, it is easy to see that $\Lambda < 1$. Putting this expression of $\Lambda$ into equation~\eqref{eq:E.first} we have,
	\begin{align*}
		E_\Lambda^{\pm} &\geq \rho h \Gamma^2 \left(1 - \frac{1+c_s |v_1|}{|v_1| + c_s}|v_1| \right) - p\\
		& = \rho h \Gamma^2 \left(\frac{c_s(1-|v_1|^2)}{|v_1|+c_s} \right) - p\\
		& = \frac{\rho h c_s}{|v_1| + c_s} - p\\
		& > \frac{\rho h c_s}{1 + c_s} - p, \quad \text{since}\ |v_1|<1\ \text{and}\ \rho, h, c_s > 0\\
		& > \frac{\rho h c_s^2}{1+c_s^2} - p, \quad \text{since}\ 0<c_s<1\ \text{and}\ \rho, h > 0.
	\end{align*}
	Now for TM-EOS, using equation~\eqref{eq: TM_eos} and replacing $c_s$ using~\eqref{eq:sound.TMEOS} we have,
	\begin{align*}
		E_\Lambda^{\pm} &> \left(\frac{5p + \sqrt{9p^2 +4 \rho^2}}{2}\right) \left(\frac{5p \sqrt{9p^2+4\rho^2}+9p^2}{17 p \sqrt{9p^2 +4 \rho^2}+45p^2+6\rho^2}\right) - p\\
		&= \frac{8 p \rho^2}{34p \sqrt{9p^2 + 4\rho^2}+90p^2 + 12 \rho^2}\\
		&>0,\quad \text{since}\ p>0.
	\end{align*}
	Again for IP-EOS, using equations~\eqref{eq: IP_eos} and~\eqref{eq:sound.IPEOS} we get,
	\begin{align*}
		E_\Lambda^{\pm} &> \left(2p + \sqrt{4p^2+\rho^2}\right) \left(\frac{2p \sqrt{\rho^2+4p^2}}{4p^2 + \rho^2 +6p\sqrt{\rho^2+4p^2}}\right) - p\\
		&= \frac{p\sqrt{4p^2+ \rho^2} \left(\sqrt{4p^2+\rho^2}-2p\right)}{4p^2 + \rho^2 + 6p \sqrt{4p^2 + \rho^2}}\\
		&>0,\quad \text{since $p, \rho>0$.}
	\end{align*}
	For RC-EOS, using equations~\eqref{eq: RC_eos} and~\eqref{eq:sound.RCEOS},
	\begin{align*}
		E_\Lambda^{\pm} &> \left(\frac{12p^2+8p\rho +2\rho^2}{3p+2\rho}\right)\left(\frac{p(3p + 2\rho)(18p^2+24p\rho +5\rho^2)}{216p^4 + 432 p^3 \rho + 270 p^2 \rho^2 +70 p \rho^3 + 6\rho^4}\right) - p\\
		&= \frac{18 p^3 \rho^2 + 18 p^2 \rho^3 +4 p \rho^4}{216p^4 + 432 p^3 \rho + 270 p^2 \rho^2 +70 p \rho^3 + 6\rho^4}\\
		&>0,\quad \text{since $p,\rho>0$.}
	\end{align*}
	Hence, we have $E_\Lambda^{\pm} > 0$ for all the cases considered here. Now,
	\begin{align*}
		(D_\Lambda^{\pm})^2 + &|(m_1)_\Lambda^{\pm}|^2 - (E_\Lambda^{\pm})^2\\ 
		&=\left(D\pm \frac{1}{\Lambda}D v_1\right)^2 + \left(m_1 \pm \frac{1}{\Lambda}(m_1v_1 +p)\right)^2 - \left(E\pm \frac{1}{\Lambda}m_1\right)^2\\
		&= \left(1\pm \frac{v_1}{\Lambda}\right)^2 \Gamma^2 \left(\rho^2 + p^2 - (\rho h - p)^2\right) + p^2 \left(\frac{1}{\Lambda^2}-1\right)\\
		&\leq \left(1- \frac{|v_1|}{\Lambda}\right)^2 \Gamma^2 \left(\rho^2 + p^2 - (\rho h - p)^2\right) + p^2 \left(\frac{1}{\Lambda^2}-1\right),\\
		&\mspace{340mu}\text{since}\ \rho^2 + p^2 \leq (\rho h - p)^2,\ \text{by}~\eqref{eq: weaker_taub}\\
		&=\left(\frac{1}{\Lambda^2}-1\right)\left[\left(\frac{\Lambda^2}{1 - \Lambda^2}\right)  \left(1- \frac{|v_1|}{\Lambda}\right)^2 \Gamma^2 \left(\rho^2 + p^2 - (\rho h - p)^2\right) + p^2)\right]\\
		&=\left(\frac{1}{\Lambda^2}-1\right)\left[\left(\frac{c_s^2}{1-c_s^2}\right) \left(\rho^2 + p^2 - (\rho h - p)^2\right) + p^2)\right], \\
		& \mspace{290mu} \text{after some direct calculations using \eqref{eq:spectral.radius}}\\
		&= \left(\frac{1}{\Lambda^2}-1\right) \left(\frac{1}{1-c_s^2}\right) \left[c_s^2\left(\rho^2 - (\rho h - p)^2\right) + p^2 \right].
	\end{align*}
	Hence, to show $(D_\Lambda^{\pm})^2 + |(m_1)_\Lambda^{\pm}|^2 - (E_\Lambda^{\pm})^2 < 0$, it is sufficient to prove 
	\[ c_s^2\left(\rho^2 - (\rho h - p)^2\right) + p^2 < 0\] 
	as $\Lambda<1$ and $c_s < 1$.
	
	Now for TM-EOS, using~\eqref{eq:sound.TMEOS} and~\eqref{eq: TM_eos} we get,
	\begin{align*}
		c_s^2\big(\rho^2& - (\rho h - p)^2\big) + p^2 \\
		&= \left[\frac{5p \sqrt{9p^2 +4\rho^2} + 9p^2}{12p\sqrt{9p^2 +4\rho^2} + 36p^2 +6\rho^2}\right] \left[\rho^2 - \left( \frac{3p + \sqrt{9p^2+4\rho^2}}{2}\right)^2 \right] + p^2\\
		&= - \frac{96p^3 \sqrt{9p^2+4\rho^2}+288p^4 +96p^2\rho^2}{48p\sqrt{9p^2 + 4\rho^2} + 144 p^2 + 24 \rho^2}\\
		&<0,\qquad \text{since $p>0$.}
	\end{align*}
	Again for IP-EOS using~\eqref{eq:sound.IPEOS} and~\eqref{eq: IP_eos} we get,
	\begin{align*}
		c_s^2\big(\rho^2 &- (\rho h - p)^2\big) + p^2 \\
		&= \left[\frac{2p \sqrt{4p^2 + \rho^2}}{4p \sqrt{4p^2 + \rho^2} + 4p^2 + \rho^2}\right] \left[\rho^2 - \left(p+\sqrt{4p^2+\rho^2}\right)^2 \right] + p^2\\
		&= -\frac{6p^3\sqrt{4p^2 + \rho^2}+12 p^4 +3 p^2\rho^2}{4p \sqrt{4p^2 + \rho^2} + 4p^2 + \rho^2}\\
		&<0,\qquad \text{since $p>0$.}
	\end{align*}
	For RC-EOS, using~\eqref{eq:sound.RCEOS} and~\eqref{eq: RC_eos},
	\begin{align*}
		c_s^2\big(\rho^2 &- (\rho h - p)^2\big) + p^2 \\
		&= \left[\frac{p (3p +2\rho) (18p^2 + 24 p\rho + 5\rho^2)}{3 (6p^2 + 4p\rho + \rho^2) (9p^2 + 12 p\rho + 2\rho^2)}\right] \left[\rho^2 - \left( \frac{9p^2+6p\rho+2\rho^2}{3p+2\rho}\right)^2 \right] + p^2\\
		&= -\frac{324p^7+864p^6\rho +954 p^5 \rho^2 + 558 p^4 \rho^3 + 155 p^3\rho^4 +16 p^2 \rho^5}{162 p^5 + 432 p^4 \rho +423 p^3 \rho^2 + 198 p^2 \rho^3 + 46 p \rho^4 + 4 \rho^5}\\
		&<0,\qquad \text{since $p,\rho>0$.}
	\end{align*}
	Hence we have $D_\Lambda^\pm, q_\Lambda^\pm > 0$ implying
	\begin{align*}
		\mb{u}_{\Lambda_{i-1}}^{n+}, \mb{u}_{\Lambda_{i+1}}^{n-} \in \Uad'.
	\end{align*}
	Now using Lemma A.1 of~\cite{my_paper} we have,
	\begin{align*}
		\mb{u}_{\Lambda_{i-\frac{1}{2}}}^{n+}, \mb{u}_{\Lambda_{i+\frac{1}{2}}}^{n-} \in \Uad',
	\end{align*}
	since $\Lambda_{i\pm\frac{1}{2}} > \Lambda_{i\pm1}$. Now, as the quadrature weights and spectral radius are positive, we have $B_1, B_2$ in~\eqref{eq:coefficients.conv.comb} are positive. Again with a CFL-type restriction,
	\begin{align}
		\Delta t\left[ \frac{\Lambda_{i-\frac{1}{2}} + \Lambda_{i+\frac{1}{2}}}{2 w_i \Delta x}\right] < 1,
	\end{align}
	we have positivity of the coefficient $A$ in~\eqref{eq:coefficients.conv.comb}. Hence, the convex combination~\eqref{eq:conv.comb} is admissible, since the admissible set $\Uad'$ is a convex set (see Lemma 2.2 of~\cite{wu2015high}).
	
	\section{Proof of $S'(\Pi)>0$ for $\Pi \geq E$}\label{sec: appendix_s'pi_proof}
	From~\eqref{eq:s'pi}, we have,
	\[
	S'(\Pi) = \left(2 - \frac{T(h)}{h}-T'(h)\right)\Pi - E  = R(h) \Pi - E
	\]
	where,
	\[
	h = \frac{\sqrt{\Pi^2 - |\mb{m}|^2}}{D}, \qquad R(h) = 2 - \frac{T(h)}{h}-T'(h).
	\]
	Now, for $\Pi \geq E$ and $D,\mb{m} \in \Uad'$ we have $h>1$, and
	\begin{align*}
		R'(h) &= - \frac{1}{h^2}\left(h T'(h) + h^2 T''(h) - T(h)\right)\\
		&= - \frac{1}{3 h^2}\left[1 + \frac{288h -27h^3 -216 h^2 -128}{\left(\sqrt{(3h+8)^2 - 96}\right)^3} \right]\\
		& < 0 ,\quad \text{for}\ h>1.
	\end{align*}
	So, $R(h)$ is a decreasing function in the interval $(1, \infty)$ with $R(1)=8/5$, and
	\[
	\lim_{h\to\infty} R(h) = \frac{3}{2}.
	\]
	Hence, $R(h) > 1$ in the interval $(1, \infty)$, giving $S'(\Pi) > 0$ for $\Pi \geq E$.
	
	\newpage
	\section{Julia code for conversion to primitive variables in the case of RC-EOS}\label{sec: appendix_cons2prim}
	\begin{algorithm}[!ht]
		\includegraphics[width=\linewidth]{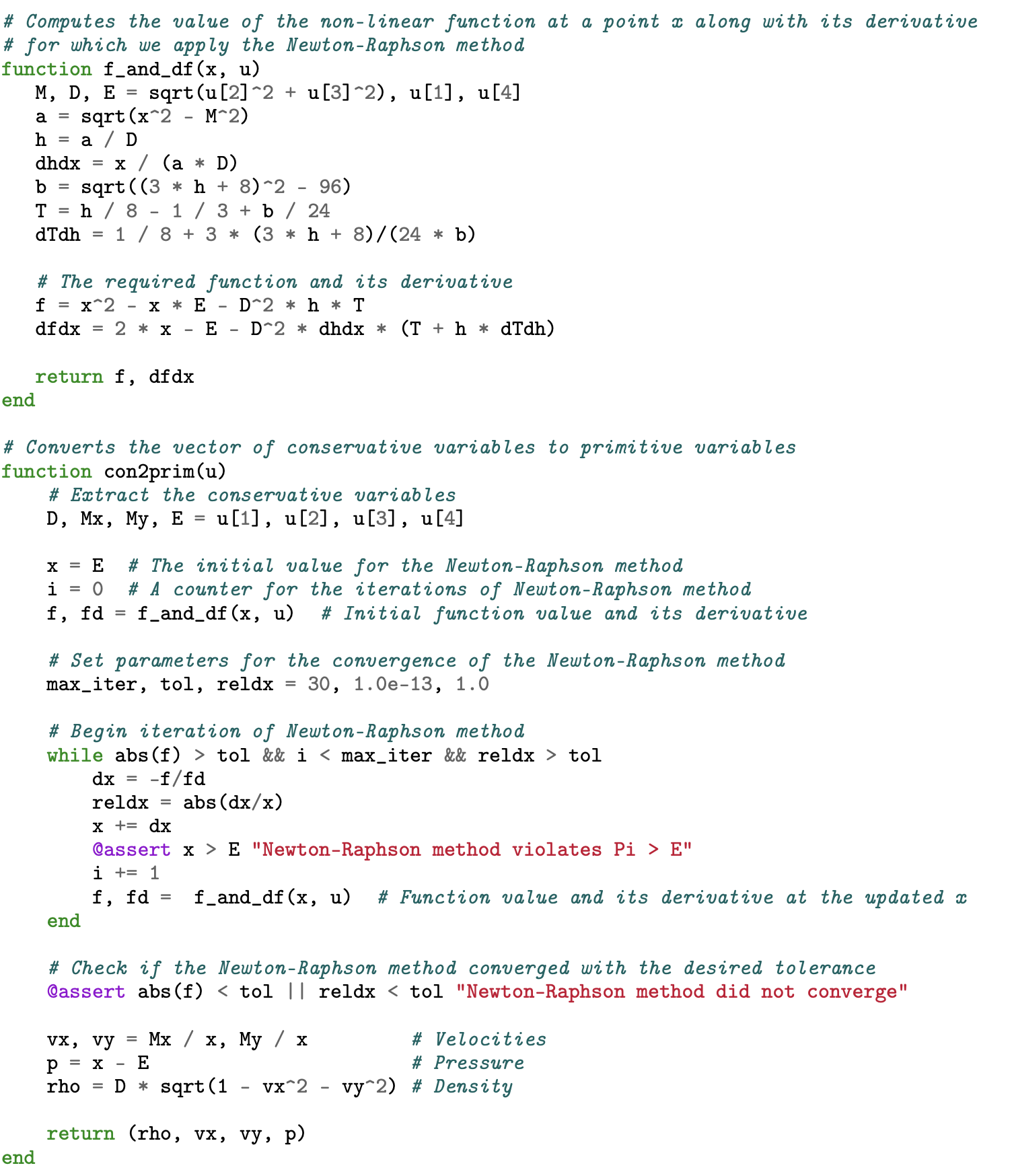}
		\caption{Conservative to primitive conversion for RC-EOS} \label{alg:cons2prim}
	\end{algorithm}
	
	\section{Comparison of conservative to primitive conversion for ideal equation of state}\label{sec: appendix_cons2prim_IDEOS}
	Here, we will compare the methods of conversion from conservative to primitive variables in one dimension from Section~2.3 in~\cite{cai2024provably} and Section~3 in~\cite{schneider1993new}. The second method was also used in~\cite{bhoriya2020entropy}.
	
	From~\cite{cai2024provably}, the pressure $p$ corresponding to the conservative variables $(D,m_1,E)$ satisfies
	\[
	\Phi(p):=\frac{p}{\gamma - 1} - E + \frac{|m_1|^2}{E + p} + D \sqrt{1 - \frac{|m_1|^2}{(E + p)^2}} = 0.
	\] 
	Again from~\cite{schneider1993new,bhoriya2020entropy}, the absolute velocity $|v_1|$ corresponding to the conservative variables $(D,m_1,E)$ satisfies
	\[
	\Xi(v_1):= |v_1|^4 + a_3 |v_1|^3 + a_2 |v_1|^2 + a_1 |v_1| + a_0 = 0,
	\]
	with 
	\begin{align*}
		a_3 &= - \frac{2 \gamma (\gamma - 1) m_1 E}{(\gamma - 1)^2 (m_1^2 + D^2)}, \ \ a_2 = \frac{(\gamma^2 E^2 + 2(\gamma - 1) m_1^2 - (\gamma - 1)^2 D^2)}{(\gamma - 1)^2 (m_1^2 + D^2)},\\
		a_1 &= \frac{-2\gamma m_1 E}{(\gamma - 1)^2 (m_1^2 + D^2)}\ \ a_0 = \frac{m_1^2}{(\gamma - 1)^2 (m_1^2 + D^2)}.
	\end{align*}
	In fact, these are the equations that are solved using some iterative root-finding methods for the conversions in the corresponding papers.
	
	Here, we will explicitly mention three examples to compare the methods in terms of accuracy. We have taken $\gamma = \frac{5}{3}$ for all the examples. We have again taking equal tolerance in the corresponding Newton-Raphson methods from~\cite{cai2024provably} and~\cite{schneider1993new} for the following comparisons.
	
	\vspace{1em}
	\noindent\textbf{Example 1:}\\
	We take the conservative variables,
	\[
	(D, m_1, E) = (0.001, 25.0, 25.001).
	\]
	The converted primitive variables with the method from~\cite{cai2024provably} are,
	\[
	(\new{\rho}, \new{v_1}, \new{p}) = (1.9913276960883976e-5, 0.999801711041084, 0.003958207130631426),
	\]
	and the converted primitive variables with the method from~\cite{schneider1993new,bhoriya2020entropy} are,
	\[
	(\old{\rho}, \old{v_1}, \old{p}) = (1.9913287827370157e-5, 0.9998017108246536, 0.003958210730555717).
	\]
	Now,
	\[
	\Phi(\new{p}) = 2.16817e-13,\quad \Phi(\old{p})= 4.52245e-8,
	\]      
	and
	\[
	\Xi(\new{v_1}) = -4.44089e-16,\quad \Xi(\old{v_1}) = -1.11022e-15.
	\]
	Hence, the new method from~\cite{cai2024provably} is more accurate.
	
	\vspace{1em}
	\noindent\textbf{Example 2:}\\
	We take the conservative variables,
	\[
	(D, m_1, E) = (0.26215012530349685, 42.10522585617847, 42.10705317285818).
	\]
	The converted primitive variables with the method from~\cite{cai2024provably} are,
	\[
	(\new{\rho}, \new{v_1}, \new{p}) = (0.003097928215833704, 0.999930172301406, 0.001112999656126819),
	\]
	and the converted primitive variables with the old method from~\cite{schneider1993new,bhoriya2020entropy} are,
	\[
	(\old{\rho}, \old{v_1}, \old{p}) = (0.003097928355847064, 0.999930172295094, 0.0011129997399655805).
	\]
	Now,
	\[
	\Phi(\new{p}) = -2.01838e-13, \quad \Phi(\old{p})= 3.62513e-9,
	\]      
	and
	\[
	\Xi(\new{v_1}) = 3.33066e-16,\quad \Xi(\old{v_1}) = 3.33066e-16.
	\]
	Here, even though $\Xi(\new{v_1}) \approx \Xi(\old{v_1})$, we can see a significant difference from the values of the $\Phi$ function, and we can say that the new method from~\cite{cai2024provably} is more accurate.
	
	\vspace{1em}
	\noindent\textbf{Example 3:}\\
	Here, we take the conservative variables,
	\[
	(D, m_1, E) = (0.1, 50.0, 50.01).
	\]
	The converted primitive variables with the method from~\cite{cai2024provably} are,
	\[
	(\new{\rho}, \new{v_1}, \new{p}) = (0.004084552892614892, 0.9991654731658531, 0.03176119254315)
	\]
	and the converted primitive variables with the old method from~\cite{schneider1993new,bhoriya2020entropy} are,
	\[
	(\old{\rho}, \old{v_1}, \old{p}) = (0.004084552899268397, 0.9991654731631332, 0.03176119262938212).
	\]
	Now,
	\[
	\Phi(\new{p}) = -3.557432126655158e-133,\quad \Phi(\old{p})= 2.3753539135640267e-9,
	\]      
	and
	\[
	\Xi(\new{v_1}) = 7.771561172376096e-16,\quad \Xi(\old{v_1}) = -8.881784197001252e-16.
	\]
	Here we see the values of the $\Xi$ functions are equal up to the machine precision, but the values of the $\Phi$ function have a significant difference, making the new method from~\cite{cai2024provably} more accurate.
	
	\bibliographystyle{abbrv}
	\bibliography{reference}

@article{begelman1984theory,
  title={Theory of extragalactic radio sources},
  author={Begelman, Mitchell C and Blandford, Roger D and Rees, Martin J},
  journal={Reviews of Modern Physics},
  volume={56},
  number={2},
  pages={255},
  year={1984},
  publisher={APS}
}

@book{bottcher2012relativistic,
  title={Relativistic jets from active galactic nuclei},
  author={B{\"o}ttcher, Markus and Harris, Daniel E and Krawczynski, Henric},
  year={2012},
  publisher={John Wiley and Sons}
}

@article{mirabel1999sources,
  title={Sources of relativistic jets in the galaxy},
  author={Mirabel, I Felix and Rodriguez, Luis F},
  journal={Annual Review of Astronomy and Astrophysics},
  volume={37},
  number={1},
  pages={409--443},
  year={1999},
  publisher={Annual Reviews 4139 El Camino Way, PO Box 10139, Palo Alto, CA 94303-0139, USA}
}

@article{zensus1997parsec,
  title={Parsec-scale jets in extragalactic radio sources},
  author={Zensus, J Anton},
  journal={Annual Review of Astronomy and Astrophysics},
  volume={35},
  number={1},
  pages={607--636},
  year={1997},
  publisher={Annual Reviews 4139 El Camino Way, PO Box 10139, Palo Alto, CA 94303-0139, USA}
}

@article{wilson1972numerical,
  title={Numerical study of fluid flow in a Kerr space},
  author={Wilson, James R},
  journal={The Astrophysical Journal},
  volume={173},
  pages={431},
  year={1972}
}

@article{marti1991numerical,
  title={Numerical relativistic hydrodynamics: Local characteristic approach},
  author={Mart{\'\i}, Jos{\'e} Ma and Ib{\'a}nez, Jos{\'e} Ma and Miralles, Juan A},
  journal={Physical Review D},
  volume={43},
  number={12},
  pages={3794},
  year={1991},
  publisher={APS}
}

@article{marti1994analytical,
  title={The analytical solution of the Riemann problem in relativistic hydrodynamics},
  author={Mart{\'\i}, Jos{\'e} Ma and M{\"u}ller, Ewald},
  journal={Journal of Fluid Mechanics},
  volume={258},
  pages={317--333},
  year={1994},
  publisher={Cambridge University Press}
}

@article{dai1997iterative,
  title={An iterative Riemann solver for relativistic hydrodynamics},
  author={Dai, Wenlong and Woodward, Paul R},
  journal={SIAM Journal on Scientific Computing},
  volume={18},
  number={4},
  pages={982--995},
  year={1997},
  publisher={SIAM}
}

@article{ibanez1999riemann,
  title={Riemann solvers in relativistic astrophysics},
  author={Ib{\'a}{\~n}ez, J M{\textordfeminine} and Mart{\i}́, J M{\textordfeminine}},
  journal={Journal of computational and applied mathematics},
  volume={109},
  number={1-2},
  pages={173--211},
  year={1999},
  publisher={Elsevier}
}

@article{marti1996extension,
  title={Extension of the piecewise parabolic method to one-dimensional relativistic hydrodynamics},
  author={Mart{\i}, Jos{\'e} Ma and M{\"u}ller, Ewald},
  journal={Journal of Computational Physics},
  volume={123},
  number={1},
  pages={1--14},
  year={1996},
  publisher={Elsevier}
}

@article{aloy1999genesis,
  title={GENESIS: A high-resolution code for three-dimensional relativistic hydrodynamics},
  author={Aloy, Miguel A and Ib{\'a}nez, J Ma and Mart{\'\i}, J Ma and M{\"u}ller, E},
  journal={The Astrophysical Journal Supplement Series},
  volume={122},
  number={1},
  pages={151},
  year={1999},
  publisher={IOP Publishing}
}

@article{mignone2005piecewise,
  title={The piecewise parabolic method for multidimensional relativistic fluid dynamics},
  author={Mignone, A and Plewa, T and Bodo, G},
  journal={The Astrophysical Journal Supplement Series},
  volume={160},
  number={1},
  pages={199},
  year={2005},
  publisher={IOP Publishing}
}

@article{dolezal1995relativistic,
  title={Relativistic hydrodynamics and essentially non-oscillatory shock capturing schemes},
  author={Dolezal, A and Wong, SSM},
  journal={Journal of Computational Physics},
  volume={120},
  number={2},
  pages={266--277},
  year={1995},
  publisher={Elsevier}
}

@article{del2002efficient,
  title={An efficient shock-capturing central-type scheme for multidimensional relativistic flows-I. Hydrodynamics},
  author={Del Zanna, Luca and Bucciantini, Niccolo},
  journal={Astronomy \& Astrophysics},
  volume={390},
  number={3},
  pages={1177--1186},
  year={2002},
  publisher={EDP Sciences}
}

@article{tchekhovskoy2007wham,
  title={WHAM: a WENO-based general relativistic numerical scheme--I. Hydrodynamics},
  author={Tchekhovskoy, Alexander and McKinney, Jonathan C and Narayan, Ramesh},
  journal={Monthly Notices of the Royal Astronomical Society},
  volume={379},
  number={2},
  pages={469--497},
  year={2007},
  publisher={Blackwell Publishing Ltd Oxford, UK}
}

@article{radice2011discontinuous,
  title={Discontinuous Galerkin methods for general-relativistic hydrodynamics: Formulation and application to spherically symmetric spacetimes},
  author={Radice, David and Rezzolla, Luciano},
  journal={Physical Review D},
  volume={84},
  number={2},
  pages={024010},
  year={2011},
  publisher={APS}
}

@article{zhao2013runge,
  title={Runge--Kutta discontinuous Galerkin methods with WENO limiter for the special relativistic hydrodynamics},
  author={Zhao, Jian and Tang, Huazhong},
  journal={Journal of computational physics},
  volume={242},
  pages={138--168},
  year={2013},
  publisher={Elsevier}
}

@article{zhang2006ram,
  title={RAM: a relativistic adaptive mesh refinement hydrodynamics code},
  author={Zhang, Weiqun and MacFadyen, Andrew I},
  journal={The Astrophysical Journal Supplement Series},
  volume={164},
  number={1},
  pages={255},
  year={2006},
  publisher={IOP Publishing}
}

@article{hughes2002three,
  title={Three-dimensional hydrodynamic simulations of relativistic extragalactic jets},
  author={Hughes, Philip A and Miller, Mark A and Duncan, G Comer},
  journal={The Astrophysical Journal},
  volume={572},
  number={2},
  pages={713},
  year={2002},
  publisher={IOP Publishing}
}

@article{wu2015high,
  title={High-order accurate physical-constraints-preserving finite difference WENO schemes for special relativistic hydrodynamics},
  author={Wu, Kailiang and Tang, Huazhong},
  journal={Journal of Computational Physics},
  volume={298},
  pages={539--564},
  year={2015},
  publisher={Elsevier}
}

@article{wu2016physical,
  title={Physical-constraint-preserving central discontinuous Galerkin methods for special relativistic hydrodynamics with a general equation of state},
  author={Wu, Kailiang and Tang, Huazhong},
  journal={The Astrophysical Journal Supplement Series},
  volume={228},
  number={1},
  pages={3},
  year={2016},
  publisher={IOP Publishing}
}

@article{wu2017design,
  title={Design of provably physical-constraint-preserving methods for general relativistic hydrodynamics},
  author={Wu, Kailiang},
  journal={Physical Review D},
  volume={95},
  number={10},
  pages={103001},
  year={2017},
  publisher={APS}
}

@article{qin2016bound,
  title={Bound-preserving discontinuous Galerkin methods for relativistic hydrodynamics},
  author={Qin, Tong and Shu, Chi-Wang and Yang, Yang},
  journal={Journal of Computational Physics},
  volume={315},
  pages={323--347},
  year={2016},
  publisher={Elsevier}
}

@article{schneider1993new,
  title={New algorithms for ultra-relativistic numerical hydrodynamics},
  author={Schneider, V and Katscher, U and Rischke, DH and Waldhauser, B and Maruhn, JA and Munz, C-D},
  journal={Journal of Computational Physics},
  volume={105},
  number={1},
  pages={92--107},
  year={1993},
  publisher={Elsevier}
}

@article{bhoriya2020entropy,
  title={Entropy-stable schemes for relativistic hydrodynamics equations},
  author={Bhoriya, Deepak and Kumar, Harish},
  journal={Zeitschrift f{\"u}r angewandte Mathematik und Physik},
  volume={71},
  pages={1--29},
  year={2020},
  publisher={Springer}
}

@article{LW1,
author = {Lax, Peter and Wendroff, Burton},
title = {Systems of conservation laws},
journal = {Communications on Pure and Applied Mathematics},
volume = {13},
number = {2},
pages = {217-237},
year = {1960}
}

@article{LWDG2,
  title={The discontinuous Galerkin method with Lax--Wendroff type time discretizations},
  author={Qiu, Jianxian and Dumbser, Michael and Shu, Chi-Wang},
  journal={Computer methods in applied mechanics and engineering},
  volume={194},
  number={42-44},
  pages={4528--4543},
  year={2005},
  publisher={Elsevier}
}

@article{lou2020flux,
  title={The flux reconstruction method with Lax--Wendroff type temporal discretization for hyperbolic conservation laws},
  author={Lou, Shuai and Yan, Chao and Ma, Li-Bin and Jiang, Zhen-Hua},
  journal={Journal of Scientific Computing},
  volume={82},
  pages={1--25},
  year={2020},
  publisher={Springer}
}

@inproceedings{huynh2007flux,
  title={A flux reconstruction approach to high-order schemes including discontinuous Galerkin methods},
  author={Huynh, Hung T},
  booktitle={18th AIAA computational fluid dynamics conference},
  pages={4079},
  year={2007}
}

@inproceedings{vincent2016towards,
  title={Towards green aviation with python at petascale},
  author={Vincent, Peter and Witherden, Freddie and Vermeire, Brian and Park, Jin Seok and Iyer, Arvind},
  booktitle={SC'16: Proceedings of the International Conference for High Performance Computing, Networking, Storage and Analysis},
  pages={1--11},
  year={2016},
  organization={IEEE}
}

@inproceedings{lopez2014verification,
  title={Verification and Validation of HiFiLES: a High-Order LES unstructured solver on multi-GPU platforms},
  author={L{\'o}pez, Manuel R and Sheshadri, Abhishek and Bull, Jonathan R and Economon, Thomas D and Romero, Joshua and Watkins, Jerry E and Williams, David M and Palacios, Francisco and Jameson, Antony and Manosalvas, David E},
  booktitle={32nd AIAA applied aerodynamics conference},
  pages={3168},
  year={2014}
}

@article{vandenhoeck2019implicit,
  title={Implicit high-order flux reconstruction solver for high-speed compressible flows},
  author={Vandenhoeck, Ray and Lani, Andrea},
  journal={Computer Physics Communications},
  volume={242},
  pages={1--24},
  year={2019},
  publisher={Elsevier}
}

@article{burger2017approximate,
  title={Approximate Lax--Wendroff discontinuous Galerkin methods for hyperbolic conservation laws},
  author={B{\"u}rger, Raimund and Kenettinkara, Sudarshan Kumar and Zor{\'\i}o, David},
  journal={Computers \& Mathematics with Applications},
  volume={74},
  number={6},
  pages={1288--1310},
  year={2017},
  publisher={Elsevier}
}

@article{BABBAR2022111423,
  title={Lax-Wendroff flux reconstruction method for hyperbolic conservation laws},
  author={Babbar, Arpit and Kenettinkara, Sudarshan Kumar and Chandrashekar, Praveen},
  journal={Journal of Computational Physics},
  volume={467},
  pages={111423},
  year={2022},
  publisher={Elsevier}
}

@article{babbar2024admissibility,
  title={Admissibility Preserving Subcell Limiter for Lax--Wendroff Flux Reconstruction},
  author={Babbar, Arpit and Kenettinkara, Sudarshan Kumar and Chandrashekar, Praveen},
  journal={Journal of Scientific Computing},
  volume={99},
  number={2},
  pages={31},
  year={2024},
  publisher={Springer}
}

@article{ryu2006equation,
  title={Equation of state in numerical relativistic hydrodynamics},
  author={Ryu, Dongsu and Chattopadhyay, Indranil and Choi, Eunwoo},
  journal={The Astrophysical Journal Supplement Series},
  volume={166},
  number={1},
  pages={410},
  year={2006},
  publisher={IOP Publishing}
}

@article{vincent2011new,
  title={A new class of high-order energy stable flux reconstruction schemes},
  author={Vincent, Peter E and Castonguay, Patrice and Jameson, Antony},
  journal={Journal of Scientific Computing},
  volume={47},
  pages={50--72},
  year={2011},
  publisher={Springer}
}

@article{zorio2017approximate,
  title={An approximate Lax--Wendroff-type procedure for high order accurate schemes for hyperbolic conservation laws},
  author={Zor{\'\i}o, David and Baeza, Antonio and Mulet, Pep},
  journal={Journal of Scientific Computing},
  volume={71},
  pages={246--273},
  year={2017},
  publisher={Springer}
}

@article{rusanov1962calculation,
  title={The calculation of the interaction of non-stationary shock waves and obstacles},
  author={Rusanov, Vladimir Vasil’evich},
  journal={USSR Computational Mathematics and Mathematical Physics},
  volume={1},
  number={2},
  pages={304--320},
  year={1962},
  publisher={Elsevier}
}

@article{mignone2005hllc,
  title={An HLLC Riemann solver for relativistic flows—I. Hydrodynamics},
  author={Mignone, A and Bodo, G},
  journal={Monthly Notices of the Royal Astronomical Society},
  volume={364},
  number={1},
  pages={126--136},
  year={2005},
  publisher={Blackwell Science Ltd Oxford, UK}
}

@article{zhang2010maximum,
  title={On maximum-principle-satisfying high order schemes for scalar conservation laws},
  author={Zhang, Xiangxiong and Shu, Chi-Wang},
  journal={Journal of Computational Physics},
  volume={229},
  number={9},
  pages={3091--3120},
  year={2010},
  publisher={Elsevier}
}

@article{marti2003numerical,
  title={Numerical hydrodynamics in special relativity},
  author={Mart{\'\i}, Jos{\'e} Maria and M{\"u}ller, Ewald},
  journal={Living Reviews in Relativity},
  volume={6},
  pages={1--100},
  year={2003},
  publisher={Springer}
}

@article{nunez2016xtroem,
  title={XTROEM-FV: a new code for computational astrophysics based on very high order finite-volume methods--II. Relativistic hydro-and magnetohydrodynamics},
  author={N{\'u}{\~n}ez-de La Rosa, Jonatan and Munz, Claus-Dieter},
  journal={Monthly Notices of the Royal Astronomical Society},
  volume={460},
  number={1},
  pages={535--559},
  year={2016},
  publisher={Oxford University Press}
}

@article{he2012adaptive,
  title={An adaptive moving mesh method for two-dimensional relativistic hydrodynamics},
  author={He, Peng and Tang, Huazhong},
  journal={Communications in Computational Physics},
  volume={11},
  number={1},
  pages={114--146},
  year={2012},
  publisher={Cambridge University Press}
}

@article{biswas2022entropy,
  title={Entropy stable discontinuous Galerkin schemes for the special relativistic hydrodynamics equations},
  author={Biswas, Biswarup and Kumar, Harish and Bhoriya, Deepak},
  journal={Computers \& Mathematics with Applications},
  volume={112},
  pages={55--75},
  year={2022},
  publisher={Elsevier}
}

@article{beckwith2011second,
  title={A second-order Godunov method for multi-dimensional relativistic magnetohydrodynamics},
  author={Beckwith, Kris and Stone, James M},
  journal={The Astrophysical Journal Supplement Series},
  volume={193},
  number={1},
  pages={6},
  year={2011},
  publisher={IOP Publishing}
}

@article{radice2012thc,
  title={THC: a new high-order finite-difference high-resolution shock-capturing code for special-relativistic hydrodynamics},
  author={Radice, David and Rezzolla, Luciano},
  journal={Astronomy \& Astrophysics},
  volume={547},
  pages={A26},
  year={2012},
  publisher={EDP Sciences}
}

@article{zanotti2015high,
  title={A high order special relativistic hydrodynamic and magnetohydrodynamic code with space--time adaptive mesh refinement},
  author={Zanotti, Olindo and Dumbser, Michael},
  journal={Computer Physics Communications},
  volume={188},
  pages={110--127},
  year={2015},
  publisher={Elsevier}
}

@article{cockburn1991runge,
  title={The Runge-Kutta local projection-discontinuous-Galerkin finite element method for scalar conservation laws},
  author={Cockburn, Bernardo and Shu, Chi-Wang},
  journal={ESAIM: Mathematical Modelling and Numerical Analysis},
  volume={25},
  number={3},
  pages={337--361},
  year={1991},
  publisher={EDP Sciences}
}

@article{cockburn1989tvb,
  title={TVB Runge-Kutta local projection discontinuous Galerkin finite element method for conservation laws III: one-dimensional systems},
  author={Cockburn, Bernardo and Lin, San-Yih and Shu, Chi-Wang},
  journal={Journal of computational Physics},
  volume={84},
  number={1},
  pages={90--113},
  year={1989},
  publisher={Elsevier}
}

@misc{tenkai,
  title={{T}enkai.jl: {T}emporal discretizations of high-order {PDE} solvers},
  author={Babbar, Arpit and Chandrashekar, Praveen and
          Kenettinkara, Sudarshan Kumar},
  year={2023},
  howpublished={\url{https://github.com/Arpit-Babbar/Tenkai.jl}},
  doi={https://doi.org/10.5281/zenodo.7807833}
}

@article{BALSARA1994,
title = {Riemann Solver for Relativistic Hydrodynamics},
journal = {Journal of Computational Physics},
volume = {114},
number = {2},
pages = {284-297},
year = {1994},
issn = {0021-9991},
author = {Dinshaw S. Balsara},
abstract = {In this paper we construct an efficient, accurate, and rugged Riemann solver for relativistic hydrodynamics. The algorithm is an extension of the two shock approximation of Colella to the relativistic regime. The Riemann solver constructed here is made to converge to the solution via iteration. Two different iterative techniques are presented, one based on a secant method and the other on a Newton method. The method presented here provides an exact treatment of the transverse velocities across general, oblique shocks. This is a non-trivial but very desirable property to have in a Riemann solver for relativistic flow. We also show the equivalence of our new formulation to the previous ones in the non-relativistic limit.}
}

@article{sokolov2001simple,
  title={Simple and efficient Godunov scheme for computational relativistic gas dynamics},
  author={Sokolov, IV and Zhang, H-M and Sakai, JI},
  journal={Journal of Computational Physics},
  volume={172},
  number={1},
  pages={209--234},
  year={2001},
  publisher={Elsevier}
}

@article{xu2024high,
  title={High-Order Accurate Entropy Stable Schemes for Relativistic Hydrodynamics with General Synge-Type Equation of State},
  author={Xu, Linfeng and Ding, Shengrong and Wu, Kailiang},
  journal={Journal of Scientific Computing},
  volume={98},
  number={2},
  pages={43},
  year={2024},
  publisher={Springer}
}

@book{MR0088362,
	Author = {Synge, J. L.},
	Mrclass = {83.0X},
	Mrnumber = {0088362},
	Mrreviewer = {A. H. Taub},
	Pages = {xi+108},
	Publisher = {North-Holland Publishing Company, Amsterdam; Interscience Publishers Inc., New York},
	Title = {The relativistic gas},
	Year = {1957}}

@article{falle1996upwind,
  title={An upwind numerical scheme for relativistic hydrodynamics with a general equation of state},
  author={Falle, SAEG and Komissarov, SS},
  journal={Monthly Notices of the Royal Astronomical Society},
  volume={278},
  number={2},
  pages={586--602},
  year={1996},
  publisher={Blackwell Science Ltd}
}

@article{mathews1971hydromagnetic,
  title={The hydromagnetic free expansion of a relativistic gas},
  author={Mathews, William G},
  journal={Astrophysical Journal, vol. 165, p. 147},
  volume={165},
  pages={147},
  year={1971}
}

@article{gottlieb2009high,
  title={High order strong stability preserving time discretizations},
  author={Gottlieb, Sigal and Ketcheson, David I and Shu, Chi-Wang},
  journal={Journal of Scientific Computing},
  volume={38},
  number={3},
  pages={251--289},
  year={2009},
  publisher={Springer}
}

@article{my_paper,
title = {Bound preserving Lax-Wendroff flux reconstruction method for special relativistic hydrodynamics},
journal = {Journal of Computational Physics},
volume = {527},
pages = {113815},
year = {2025},
issn = {0021-9991},
author = {Sujoy Basak and Arpit Babbar and Harish Kumar and Praveen Chandrashekar},
keywords = {Relativistic hydrodynamics, Lax-Wendroff flux reconstruction, Admissibility preservation},
abstract = {Lax-Wendroff flux reconstruction (LWFR) schemes have high order of accuracy in both space and time despite having a single internal time step. Here, we design a Jacobian-free LWFR type scheme to solve the special relativistic hydrodynamics equations on Cartesian grids. We then blend the scheme with a first-order finite volume scheme to control the oscillations near discontinuities. We also use a scaling limiter to preserve the physical admissibility of the solution after ensuring the scheme is admissible in means. A particular focus is given to designing a discontinuity indicator model to detect the local non-smoothness in the solution of the highly non-linear relativistic hydrodynamics equations. Finally, we present numerical results for a wide range of test cases to show the robustness and efficiency of the proposed scheme.}
}

@article{yang2011direct,
  title={A direct Eulerian GRP scheme for relativistic hydrodynamics: one-dimensional case},
  author={Yang, Zhicheng and He, Peng and Tang, Huazhong},
  journal={Journal of Computational Physics},
  volume={230},
  number={22},
  pages={7964--7987},
  year={2011},
  publisher={Elsevier}
}

@article{lucas2004assessment,
  title={Assessment of a high-resolution central scheme for the solution of the relativistic hydrodynamics equations},
  author={Lucas-Serrano, Arturo and Font, Jos{\'e} A and Ib{\'a}nez, Jos{\'e} Ma and Marti, Jose M},
  journal={Astronomy \& Astrophysics},
  volume={428},
  number={2},
  pages={703--715},
  year={2004},
  publisher={EDP Sciences}
}

@article{rueda2021subcell,
  title={A subcell finite volume positivity-preserving limiter for DGSEM discretizations of the Euler equations},
  author={Rueda-Ram{\'\i}rez, Andr{\'e}s M and Gassner, Gregor J},
  journal={arXiv preprint arXiv:2102.06017},
  year={2021}
}

@article{cao2025robust,
  title={Robust Discontinuous Galerkin Methods Maintaining Physical Constraints for General Relativistic Hydrodynamics},
  author={Cao, Huihui and Peng, Manting and Wu, Kailiang},
  journal={Journal of Computational Physics},
  pages={113770},
  year={2025},
  publisher={Elsevier}
}

@article{woodward1984numerical,
  title={The numerical simulation of two-dimensional fluid flow with strong shocks},
  author={Woodward, Paul and Colella, Phillip},
  journal={Journal of computational physics},
  volume={54},
  number={1},
  pages={115--173},
  year={1984},
  publisher={Elsevier}
}

@article{chen2022physical,
  title={A physical-constraint-preserving finite volume WENO method for special relativistic hydrodynamics on unstructured meshes},
  author={Chen, Yaping and Wu, Kailiang},
  journal={Journal of Computational Physics},
  volume={466},
  pages={111398},
  year={2022},
  publisher={Elsevier}
}

@article{wu2014finite,
  title={Finite volume local evolution Galerkin method for two-dimensional relativistic hydrodynamics},
  author={Wu, Kailiang and Tang, Huazhong},
  journal={Journal of Computational Physics},
  volume={256},
  pages={277--307},
  year={2014},
  publisher={Elsevier}
}

@article{wu2014third,
  title={A third-order accurate direct Eulerian GRP scheme for one-dimensional relativistic hydrodynamics},
  author={Wu, Kailiang and Yang, Zhicheng and Tang, Huazhong},
  journal={East Asian Journal on Applied Mathematics},
  volume={4},
  number={2},
  pages={95--131},
  year={2014},
  publisher={Cambridge University Press}
}

@article{wu2021minimum,
  title={Minimum principle on specific entropy and high-order accurate invariant-region-preserving numerical methods for relativistic hydrodynamics},
  author={Wu, Kailiang},
  journal={SIAM Journal on Scientific Computing},
  volume={43},
  number={6},
  pages={B1164--B1197},
  year={2021},
  publisher={SIAM}
}

@misc{RHDTenkai,
  title={{RHDT}enkai.jl: {R}elativistic {H}ydrodynamics with general equations of state using {LWFR} scheme},
  author={Basak, Sujoy and Babbar, Arpit and Kumar, Harish and Chandrashekar, Praveen},
  year={2025},
  howpublished={\url{https://github.com/sujoy-basak/RHDTenkai.jl}},
  doi={https://doi.org/10.5281/zenodo.15357955}
}

@article{agnihotri2025second,
  title={Second order divergence constraint preserving schemes for two-fluid relativistic plasma flow equations},
  author={Agnihotri, Jaya and Bhoriya, Deepak and Kumar, Harish and Chandrashekar, Praveen and Balsara, Dinshaw S},
  journal={arXiv preprint arXiv:2503.20372},
  year={2025}
}

@article{bhoriya2023entropy,
  title={Entropy stable discontinuous Galerkin schemes for two-fluid relativistic plasma flow equations},
  author={Bhoriya, Deepak and Biswas, Biswarup and Kumar, Harish and Chandrashekhar, Praveen},
  journal={Journal of Scientific Computing},
  volume={97},
  number={3},
  pages={72},
  year={2023},
  publisher={Springer}
}

@article{bhoriya2023high,
  title={High-order finite-difference entropy stable schemes for two-fluid relativistic plasma flow equations},
  author={Bhoriya, Deepak and Kumar, Harish and Chandrashekar, Praveen},
  journal={Journal of Computational Physics},
  volume={488},
  pages={112207},
  year={2023},
  publisher={Elsevier}
}

@article{wu2023geometric,
  title={Geometric quasilinearization framework for analysis and design of bound-preserving schemes},
  author={Wu, Kailiang and Shu, Chi-Wang},
  journal={SIAM Review},
  volume={65},
  number={4},
  pages={1031--1073},
  year={2023},
  publisher={SIAM}
}

@article{cai2024provably,
  title={Provably convergent Newton--Raphson methods for recovering primitive variables with applications to physical-constraint-preserving Hermite WENO schemes for relativistic hydrodynamics},
  author={Cai, Chaoyi and Qiu, Jianxian and Wu, Kailiang},
  journal={Journal of Computational Physics},
  volume={498},
  pages={112669},
  year={2024},
  publisher={Elsevier}
}
\end{document}